\newtheorem{thm}{Theorem}[section]
\newtheorem*{thm*}{Theorem}
\newtheorem{claim}[thm]{Claim}
\newtheorem{cor}[thm]{Corollary}
\newtheorem{lem}[thm]{Lemma}
\newtheorem*{lem*}{Lemma}
\newtheorem{mainthm}{Theorem}
\newtheorem*{mainthm*}{Theorem}
\newtheorem{maincor}[mainthm]{Corollary}
\newtheorem{prop}[thm]{Proposition}
\theoremstyle{definition}
\newtheorem*{case*}{Case}
\newtheorem{defn}[thm]{Definition}
\newtheorem*{defn*}{Definition}
\newtheorem*{exmp*}{Example}
\newtheorem{hyp}[thm]{Hypothesis}
\renewcommand{\thestep}{}
\theoremstyle{remark}
\renewcommand{\thecase}{}
\newtheorem{rmk}[thm]{Remark}
\newtheorem*{rmk*}{Remark}
\def\alphenumi{
  \def\theenumi{\alph{enumi}}
  \def\p@enumi{\theenumi}
  \def\labelenumi{(\@alph\c@enumi)}}
\def\thecase{\@arabic\c@case}
\def\thestep{\@arabic\c@step}
\newcommand{\tpitchfork}{%
  \vbox{
    \baselineskip\z@skip
    \lineskip-.52ex
    \lineskiplimit\maxdimen
    \m@th
    \ialign{##\crcr\hidewidth\smash{$-$}\hidewidth\crcr$\pitchfork$\crcr}
  }%
}
\DeclareFontFamily{U}{mathx}{\hyphenchar\font45}
\DeclareFontShape{U}{mathx}{m}{n}{
      <5> <6> <7> <8> <9> <10>
      <10.95> <12> <14.4> <17.28> <20.74> <24.88>
      mathx10
      }{}
\DeclareSymbolFont{mathx}{U}{mathx}{m}{n}
\DeclareMathAccent{\widecheck}{0}{mathx}{"71}
\DeclareMathAccent{\wideparen}{0}{mathx}{"75}
\def\hhmm{\number\hh:\ifnum\mm<10{}0\fi\number\mm}
\let\oldmarginpar\marginpar
\renewcommand\marginpar[1]{\-\oldmarginpar[\raggedleft\footnotesize #1]%
{\raggedright\footnotesize #1}}
\newcommand\CC{\mathbb{C}}
\newcommand\HH{\mathbb{H}}
\newcommand\KK{\mathbb{K}}
\newcommand\NN{\mathbb{N}}
\newcommand\PP{\mathbb{P}}
\newcommand\RR{\mathbb{R}}
\newcommand\cA{{\mathcal{A}}}
\newcommand\cF{{\mathcal{F}}}
\newcommand\calV{{\mathcal{V}}}
\newcommand\cW{{\mathcal{W}}}
\newcommand\cX{{\mathcal{X}}}
\newcommand\sE{{\mathscr{E}}}
\newcommand\sF{{\mathscr{F}}}
\newcommand\sG{{\mathscr{G}}}
\newcommand\sH{{\mathscr{H}}}
\newcommand\sK{{\mathscr{K}}}
\newcommand\sL{{\mathscr{L}}}
\newcommand\sM{{\mathscr{M}}}
\newcommand\sN{{\mathscr{N}}}
\newcommand\sO{{\mathscr{O}}}
\newcommand\sQ{{\mathscr{Q}}}
\newcommand\sR{{\mathscr{R}}}
\newcommand\sU{{\mathscr{U}}}
\newcommand\sV{{\mathscr{V}}}
\newcommand\sW{{\mathscr{W}}}
\newcommand\sX{{\mathscr{X}}}
\newcommand\sY{{\mathscr{Y}}}
\newcommand\sZ{{\mathscr{Z}}}
\newcommand\eps{\varepsilon}
\newcommand\GL{\operatorname{GL}}
\DeclareMathOperator{\PSL}{PSL}
\newcommand\SU{\operatorname{SU}}
\newcommand\less{\setminus}
\newcommand\Coker{\operatorname{Coker}}
\DeclareMathOperator{\Crit}{Crit}
\DeclareMathOperator{\Emb}{Emb}
\newcommand\grad{\operatorname{grad}}
\DeclareMathOperator{\Imm}{Imm}
\newcommand\Ind{\operatorname{Index}}
\DeclareMathOperator{\Iso}{Iso}
\newcommand\Ker{\operatorname{Ker}}
\newcommand\Map{\operatorname{Map}}
\newcommand\Ran{\operatorname{Ran}}
\newcommand\tr{\operatorname{tr}}
\newcommand\vol{\operatorname{vol}}
\newcommand\Vol{\operatorname{Vol}}
\newcommand\apriori{{\emph{a priori }}}
\newcommand\Apriori{{\emph{A priori }}}
\newcommand\Harm{{\mathrm{Harm}}}
\newcommand\id{{\mathrm{id}}}
\newcommand\round{{\mathrm{round}}}
\newcommand\sym{{\mathrm{sym}}}
\numberwithin{equation}{section}
\numberwithin{figure}{section}
\begin{document}

\title[Morse--Bott Property for Analytic Functions with {\L}ojasiewicz Exponent one half]{On the Morse--Bott Property of Analytic Functions on Banach Spaces with {\L}ojasiewicz Exponent One Half}

\author[Paul M. N. Feehan]{Paul M. N. Feehan}
\address{Department of Mathematics, Rutgers, The State University of New Jersey, 110 Frelinghuysen Road, Piscataway, NJ 08854-8019, United States of America}
\email{feehan@math.rutgers.edu}

\date{This version: April 12, 2020, incorporating final galley proof corrections. \emph{Calculus of Variations and Partial Differential Equations} (2020), \url{https://doi.org/10.1007/s00526-020-01734-4}.}

\begin{abstract}
  It is a consequence of the Morse--Bott Lemma (see Theorems \ref{thm:Morse-Bott_Lemma_Banach} and \ref{thm:Morse-Bott_Lemma_Banach_refined}) that a $C^2$ Morse--Bott function on an open neighborhood of a critical point in a Banach space obeys a {\L}ojasiewicz gradient inequality with the optimal exponent one half. In this article we prove converses (Theorems \ref{mainthm:Analytic_function_Lojasiewicz_exponent_one-half_Morse-Bott_Banach}, \ref{mainthm:Analytic_function_Lojasiewicz_exponent_one-half_Morse-Bott_Banach_refined}, and Corollary \ref{maincor:Analytic_function_Lojasiewicz_exponent_one-half_Morse-Bott}) for analytic functions on Banach spaces: If the {\L}ojasiewicz exponent of an analytic function is equal to one half at a critical point, then the function is Morse--Bott and thus its critical set nearby is an analytic submanifold. The main ingredients in our proofs are the {\L}ojasiewicz gradient inequality for an analytic function on a finite-dimensional vector space \cite{Lojasiewicz_1965} and the Morse Lemma (Theorems \ref{mainthm:Hormander_C-6-1_Banach} and \ref{mainthm:Hormander_C-6-1_Banach_refined}) for functions on Banach spaces with degenerate critical points that generalize previous versions in the literature, and which we also use to give streamlined proofs of the {\L}ojasiewicz--Simon gradient inequalities for analytic functions on Banach spaces (Theorems \ref{mainthm:Lojasiewicz-Simon_gradient_inequality} and \ref{mainthm:Lojasiewicz-Simon_gradient_inequality_refined}).
\end{abstract}

\subjclass[2010]{Primary 32B20, 32C05, 32C18, 32C25, 58E05; secondary 14E15, 32S45, 57R45, 58A07, 58A35}

\keywords{Analytic varieties, {\L}ojasiewicz inequalities, gradient flow, Morse theory, Morse--Bott functions, resolution of singularities, semianalytic sets and subanalytic sets, singularities}

\thanks{The author was partially supported by National Science Foundation grant DMS-1510064 and the Dublin Institute for Advanced Studies.}

\maketitle
\tableofcontents

\section{Introduction}
\label{sec:Introduction}
Let $\KK$ be the field of real numbers $\RR$ or complex numbers $\CC$, and $d\geq 1$ be an integer, and let $\KK^{d*} = (\KK^d)^*$ denote the dual space. In order to better motivate our main results (Theorems \ref{mainthm:Analytic_function_Lojasiewicz_exponent_one-half_Morse-Bott_Banach}, \ref{mainthm:Analytic_function_Lojasiewicz_exponent_one-half_Morse-Bott_Banach_refined}, and Corollary \ref{maincor:Analytic_function_Lojasiewicz_exponent_one-half_Morse-Bott}), we begin by recalling the well-known

\begin{thm}[{\L}ojasiewicz gradient inequality for an analytic function]
\label{thm:Lojasiewicz_gradient_inequality}
Let $U \subset \KK^d$ be an open neighborhood of the origin and $f:U\to\KK$ be an analytic function. If $f(0) = 0$ and $f'(0) = 0$ then, after possibly shrinking $U$, there are constants $C \in (0, \infty)$ and $\theta \in [1/2,1)$ such that
\begin{equation}
\label{eq:Lojasiewicz_gradient_inequality}
\|f'(x)\|_{\KK^{d*}} \geq C|f(x)|^\theta, \quad\text{for all } x \in U.
\end{equation}
\end{thm}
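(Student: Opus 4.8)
The plan is to reduce the complex case $\KK=\CC$ to the real case and then prove the real case by the curve selection lemma. First, two cheap reductions. Where $f(x)=0$ the right-hand side of \eqref{eq:Lojasiewicz_gradient_inequality} vanishes and the inequality is automatic, so it suffices to work on the punctured set $\{x\in U: f(x)\neq0\}$. Moreover the restriction $\theta\ge 1/2$ costs nothing: after shrinking $U$ so that $|f|\le 1$ on $U$, an exponent may always be increased, since $|f(x)|^{\theta'}\le|f(x)|^{\theta}$ whenever $\theta\le\theta'$; thus it is enough to produce \emph{some} $\theta\in(0,1)$ and $C>0$ with $\|f'(x)\|\ge C|f(x)|^{\theta}$, and then replace $\theta$ by $\max\{\theta,1/2\}$. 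For $\KK=\CC$ I would pass to the real-analytic function $g:=|f|^2=(\Real f)^2+(\Imag f)^2$ on $U\subset\CC^d\cong\RR^{2d}$. A short computation with the Cauchy--Riemann equations gives $|\nabla\Real f|=|\nabla\Imag f|=\|f'\|_{\CC^{d*}}$ and $\langle\nabla\Real f,\nabla\Imag f\rangle=0$ (real gradients on $\RR^{2d}$), whence $|\nabla g|=2|f|\,\|f'\|_{\CC^{d*}}$. Applying the real inequality to $g$ (which satisfies $g(0)=0$, $\nabla g(0)=0$) yields $2|f|\,\|f'\|\ge C'|f|^{2\theta'}$, hence $\|f'\|\ge\tfrac{C'}{2}|f|^{2\theta'-1}$ on $\{f\neq0\}$; this is the desired bound with exponent $\theta=2\theta'-1<1$, which we then bump up into $[1/2,1)$. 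So everything reduces to the real case $f:U\subset\RR^d\to\RR$, where I write $\nabla f$ for the gradient, so that $\|f'(x)\|_{\RR^{d*}}=|\nabla f(x)|$.

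For the real case the claim is equivalent to showing that
\[
\Theta^\ast:=\limsup_{x\to0,\,f(x)\neq0}\frac{\log|\nabla f(x)|}{\log|f(x)|}<1.
\]
Indeed, taking logarithms in \eqref{eq:Lojasiewicz_gradient_inequality} and dividing by $\log|f(x)|<0$ shows that the inequality with exponent $\theta$ on a small punctured neighborhood corresponds to $\limsup_{x\to0}\rho(x)\le\theta$, where $\rho:=\log|\nabla f|/\log|f|$; so any exponent $\theta\in(\Theta^\ast,1)$ works as soon as $\Theta^\ast<1$. The key analytic-geometry input is that this $\limsup$ is \emph{attained} along a real-analytic arc: by (a consequence of) the curve selection lemma for semianalytic sets, there is an analytic arc $\gamma:[0,\varepsilon)\to\RR^d$ with $\gamma(0)=0$, $f(\gamma(t))\neq0$ for $t\in(0,\varepsilon)$, and $\rho(\gamma(t))\to\Theta^\ast$ as $t\to0^+$.

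On such an arc the one-variable functions $f\circ\gamma$ and $|\nabla f|^2\circ\gamma$ are analytic (hence admit Puiseux-type monomial asymptotics), so $|f(\gamma(t))|\sim c\,t^{a}$ and $|\nabla f(\gamma(t))|\sim e\,t^{b}$ as $t\to0^+$, with $c,e>0$, $a\in\ZZ_{>0}$ finite (because $f\circ\gamma\not\equiv0$), and $b\ge0$; thus $\Theta^\ast=b/a$. Differentiating and applying Cauchy--Schwarz,
\[
\bigl|(f\circ\gamma)'(t)\bigr|=\bigl|\langle\nabla f(\gamma(t)),\dot\gamma(t)\rangle\bigr|\le|\nabla f(\gamma(t))|\,|\dot\gamma(t)|.
\]
The left-hand side has order $a-1$ in $t$, while the right-hand side has order $b+(m-1)$, where $m\ge1$ is the vanishing order of $\gamma$ at $t=0$; comparing orders gives $a-1\ge b+(m-1)\ge b$, that is $b\le a-1$. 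Hence $\Theta^\ast=b/a\le 1-1/a<1$, which completes the real case and, with the reductions above, the theorem.

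I expect the main obstacle to be the attainment step, namely the passage from the ambient $\limsup$ to a single analytic arc realizing it. This is exactly where the analytic geometry of \cite{Lojasiewicz_1965} enters: it rests on the curve selection lemma together with the finiteness properties of semianalytic sets, and it cannot be replaced by a crude optimization over all arcs, because the vanishing order $a=\ord_t f(\gamma(t))$ is \emph{unbounded} over arbitrary arcs through $0$ (for instance, for $f(x,y)=y^2-x^3$ the arcs $\gamma(t)=(t^2,t^3+t^k)$ give $a=3+k$), whereas an extremal arc producing $\Theta^\ast$ necessarily has finite $a$ and yields the uniform bound $\Theta^\ast\le 1-1/a$. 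All of the ingredients invoked here---curve selection, Puiseux expansions, and the resulting attainment statement---are classical, so the argument reduces the theorem to results already available in \cite{Lojasiewicz_1965}.
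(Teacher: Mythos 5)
Your argument is correct in outline, but it follows a genuinely different route from the paper's. The paper deduces Theorem \ref{thm:Lojasiewicz_gradient_inequality} from resolution of singularities: Theorem \ref{thm:Monomialization_analytic_function} monomializes $f$, Lemma \ref{lem:Lojasiewicz exponent_monomial_function} computes the exponent of the resulting monomial exactly as $\theta = 1-1/N$ via the Generalized Young Inequality, and Lemma \ref{lem:Pushforwards_preserve_Lojasiewicz_exponents} pushes the inequality forward through the (open) resolution map. You instead follow what is essentially {\L}ojasiewicz's original semianalytic route: reduce $\KK=\CC$ to $\KK=\RR$ via $g=|f|^2$ (your identity $|\nabla g| = 2|f|\,\|f'\|_{\CC^{d*}}$ is correct), characterize the optimal exponent as $\Theta^* = \limsup \log|\nabla f|/\log|f|$, realize $\Theta^*$ along a single analytic arc, and conclude $\Theta^* = b/a \le 1-1/a < 1$ by differentiating $f\circ\gamma$ and comparing vanishing orders --- pleasingly, the same $1-1/N$ mechanism that resolution produces in the paper. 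Both proofs outsource their hardest step, the paper to Hironaka's theorem and you to the attainment of the exponent along an arc; the paper's route buys the explicit identification \eqref{eq:Lojasiewicz exponent_monomial_function} and the structural dichotomy \eqref{eq:Pullback_analytic_function_simple_normal_crossing_Lojasiewicz_one_half} that drive its main results, while yours is closer to elementary and avoids resolution entirely. One caution: your phrase ``by (a consequence of) the curve selection lemma'' understates the attainment step. Since $\rho = \log|\nabla f|/\log|f|$ is not semianalytic, curve selection applies directly only to sets such as $\{|\nabla f|^q < |f|^p\}$ for rational $p/q < \Theta^*$, and, as you yourself observe, optimizing over the resulting arcs yields only $\Theta^* \le 1$ because the order $a$ is unbounded over such arcs; obtaining a single arc that realizes $\Theta^*$ requires {\L}ojasiewicz's projection and finiteness theorems for semianalytic sets (for instance, that $s\mapsto \inf\{|\nabla f(x)| : |f(x)|=s,\ |x|\le\eps\}$ is semianalytic and admits a Puiseux expansion at $s=0$), so you are invoking a substantial theorem of \cite{Lojasiewicz_1965} rather than a lemma. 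Finally, your bump-up of the exponent into $[1/2,1)$ after shrinking $U$ so that $|f|\le 1$ does prove the theorem as literally stated, but it does not by itself recover the sharper fact that the intrinsically produced exponent already lies in $[1/2,1)$; in your framework that follows from the extra observation that a generic radial arc gives $b/a = (k-1)/k \ge 1/2$, where $k\ge 2$ is the vanishing order of $f$ at the origin.
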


{\L}ojasiewicz used the theory of semianalytic sets to prove Theorem \ref{thm:Lojasiewicz_gradient_inequality} as\footnote{The first page number refers to the version of {\L}ojasiewicz's original manuscript mimeographed by IHES while the page number in parentheses refers to the cited LaTeX version of his manuscript prepared by M. Coste and available on the Internet.}
\cite[Proposition 1, p. 92 (67)]{Lojasiewicz_1965} when $\KK=\RR$ and gave the range for $\theta$ as the interval $(0,1)$. His article remained unpublished, but Bierstone and Milman gave a simplified and streamlined exposition of {\L}ojasiewicz's method in \cite{BierstoneMilman} for $\KK=\RR$ and later gave an elegant and entirely new proof in \cite{Bierstone_Milman_1997} of \eqref{eq:Lojasiewicz_gradient_inequality} for $\KK=\RR$ using resolution of singularities for analytic varieties \cite{Hironaka_1964-I-II} and for which they also gave a new and significantly simplified proof. In \cite{BierstoneMilman, Bierstone_Milman_1997}, Bierstone and Milman state the range as for $\theta$ as the interval $(0,1)$.

In \cite{Feehan_lojasiewicz_inequality_all_dimensions}, we proved Theorem \ref{thm:Lojasiewicz_gradient_inequality} as \cite[Theorem 1]{Feehan_lojasiewicz_inequality_all_dimensions} by also appealing to resolution of singularities for analytic sets but in a different way from that of Bierstone and Milman \cite{Bierstone_Milman_1997}. Our approach is valid for both $\KK=\RR$ or $\CC$ and it allowed us to give the forthcoming partial identification \eqref{eq:Lojasiewicz exponent_monomial_function} of the {\L}ojasiewicz exponent, $\theta$, and show that it is restricted to the interval $[1/2,1)$, sharpening the range $(0,1)$ provided in \cite{BierstoneMilman, Bierstone_Milman_1997, Lojasiewicz_1965}. Resolution of singularities for analytic varieties yields the following special case of \cite[Theorem 4.5]{Feehan_lojasiewicz_inequality_all_dimensions} (see \cite[Sections 4.3 and 4.4]{Feehan_lojasiewicz_inequality_all_dimensions} for details of references to statements and proofs):

\begin{thm}[Monomialization of an analytic function]
\label{thm:Monomialization_analytic_function}
Let $U \subset \KK^d$ be an open neighborhood of the origin and $f:U\to\KK$ be an analytic function. If $f(0) = 0$ then, after possibly shrinking $U$, there are an open neighborhood $V \subset \KK^d$ of the origin and an analytic map,
\begin{equation}
\label{eq:Resolution_morphism}
\pi:V \ni y \mapsto x \in U,
\end{equation}
such that $\pi(0)=0$ and $\pi$ restricts to an analytic diffeomorphism on the complement of the zero set, $Z := f^{-1}(0)$,
\[
\pi:V \less \pi^{-1}(Z) \cong U \less Z,
\]
and $\pi^*f$ is a simple normal crossing function, that is,
\begin{equation}
\label{eq:Pullback_analytic_function_simple_normal_crossing}
\pi^*f(y) = y_1^{n_1}y_2^{n_2}\cdots y_d^{n_d}, \quad\text{for all } y \in V,
\end{equation}
where the $n_i$ are non-negative integers for $i=1,\ldots,d$.
\end{thm}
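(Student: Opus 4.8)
The plan is to obtain Theorem \ref{thm:Monomialization_analytic_function} directly from embedded resolution of singularities applied to the analytic hypersurface $Z = f^{-1}(0) \subset U$; this is the mechanism already advertised by the citation of \cite[Theorem 4.5]{Feehan_lojasiewicz_inequality_all_dimensions} and of Hironaka \cite{Hironaka_1964-I-II}. The deep input I would invoke is that, after shrinking $U$, there exist a smooth analytic variety $\tX$ and a proper analytic map $\sigma : \tX \to U$, realized as a finite composition of blow-ups along smooth centers contained in $Z$, such that: (i) $\sigma$ restricts to an analytic isomorphism $\tX \less \sigma^{-1}(Z) \cong U \less Z$; and (ii) the total transform of the divisor of $f$ is a simple normal crossing divisor, so that about each point of $\sigma^{-1}(0)$ there are local analytic coordinates $(y_1,\dots,y_d)$ in which $\sigma^* f = u\,y_1^{n_1}\cdots y_d^{n_d}$ for a nowhere-vanishing analytic unit $u$ and integers $n_i \geq 0$.

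The one discrepancy between (ii) and the asserted conclusion \eqref{eq:Pullback_analytic_function_simple_normal_crossing} is the unit $u$, which I would eliminate by an analytic change of coordinates; I expect this to be the principal hands-on step. Fix a point $\tp \in \sigma^{-1}(0)$ and coordinates as in (ii) centered at $\tp$. Since $\sigma(\tp) = 0 \in Z$, we have $\sigma^* f(\tp) = 0$ while $u(\tp) \neq 0$, so some exponent $n_{i_0} \geq 1$. As $u$ is nowhere vanishing on a connected coordinate neighborhood, it possesses an analytic $n_{i_0}$-th root there, obtained from a branch of $t \mapsto t^{1/n_{i_0}}$ on a small disk about $u(\tp) \neq 0$. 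Setting $\ty_{i_0} = y_{i_0}\,u^{1/n_{i_0}}$ and $\ty_i = y_i$ for $i \neq i_0$ yields an analytic coordinate change whose Jacobian at $\tp$ is $u(\tp)^{1/n_{i_0}} \neq 0$, and in which $\sigma^* f = \ty_1^{n_1}\cdots \ty_d^{n_d}$. Here the real-analytic case ($\KK = \RR$) needs extra care: when $n_{i_0}$ is even the root exists only after fixing the (constant) sign of $u$, and a residual sign is absorbed into a coordinate carrying an odd exponent or, when none exists, carried as an overall sign factor.

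With the unit removed, I would take $V$ to be the resulting coordinate domain, identified with an open neighborhood of the origin in $\KK^d$ via $\ty$, and put $\pi = \sigma|_V$, so that $\pi(0) = 0$ and \eqref{eq:Pullback_analytic_function_simple_normal_crossing} holds. The diffeomorphism statement $\pi : V \less \pi^{-1}(Z) \cong U \less Z$ reduces to property (i): over $U \less Z$ the map $\sigma$ is an isomorphism, so $\pi$ is at least an open analytic embedding on $V \less \pi^{-1}(Z)$. The genuine obstacle, which I would defer to the precise formulation in \cite[Theorem 4.5]{Feehan_lojasiewicz_inequality_all_dimensions}, is matching this chart-local normal form to the global surjectivity onto $U \less Z$: the exceptional fibre $\sigma^{-1}(0)$ is in general covered by several charts, no single one of which dominates $U \less Z$ (already visible for $f = x_1^2 + x_2^2$ over $\RR$), so the passage to a single monomial model on one $V$ is a bookkeeping matter handled by properness of $\sigma$ and the way the resolution is packaged in that reference, rather than by the normal-form computation itself.
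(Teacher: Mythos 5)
Your route coincides with the paper's: Theorem \ref{thm:Monomialization_analytic_function} is not proved in this article at all but imported from \cite[Theorem 4.5]{Feehan_lojasiewicz_inequality_all_dimensions}, i.e.\ from Hironaka-type principalization of the ideal $(f)$, and your contribution is to fill in the chart-local normal-form step that resolution leaves over. That step is done correctly: with coordinates centered at $\tp$ the substitution $\ty_{i_0}=y_{i_0}u^{1/n_{i_0}}$ has triangular Jacobian with determinant $u(\tp)^{1/n_{i_0}}\neq 0$, and your caution about $\KK=\RR$ is warranted rather than pedantic --- when $u<0$ and every positive exponent is even the sign cannot be removed, so \eqref{eq:Pullback_analytic_function_simple_normal_crossing} should really carry a $\pm$, exactly as the paper's own display \eqref{eq:Pullback_analytic_function_simple_normal_crossing_Lojasiewicz_one_half} does. (The sign is harmless for Lemma \ref{lem:Lojasiewicz exponent_monomial_function} and everything downstream.)

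The one step you must not defer as ``bookkeeping'' is the passage from chart-local normal forms to a single open $V\subset\KK^d$ on which $\pi^*f$ is a global monomial \emph{and} $\pi:V\less\pi^{-1}(Z)\to U\less Z$ is a diffeomorphism onto all of $U\less Z$. No choice of chart and no appeal to properness can produce this literally. Over $\RR$ your own test case $f=x_1^2+x_2^2$ gives a connectedness obstruction: $U\less Z$ is a punctured ball, hence connected, while $\pi^{-1}(Z)=\{y_1^{n_1}\cdots y_d^{n_d}=0\}$ contains a coordinate hyperplane (some $n_i\geq 1$ since $\pi(0)=0\in Z$), so $V\less\pi^{-1}(Z)$ is disconnected. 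Over $\CC$ the claim fails for a different reason: a biholomorphism $V\less\pi^{-1}(Z)\cong U\less Z$ between complements of proper analytic subsets extends across the hypersurfaces by the Riemann extension theorem, which would force $f$ itself to be a monomial in suitable coordinates --- false already for the cusp $x_1^2-x_2^3$. The statement therefore has to be read chart-locally: $\pi$ is a diffeomorphism of $V\less\pi^{-1}(Z)$ onto its \emph{image} in $U\less Z$, with finitely many such charts covering the compact fibre $\sigma^{-1}(0)$ and properness of $\sigma$ ensuring their images cover a punctured neighborhood of $0$. That multi-chart version is what the cited resolution theorem actually delivers and is all that the application through Lemma \ref{lem:Pushforwards_preserve_Lojasiewicz_exponents} needs (take the minimum of the finitely many chart constants); your write-up should either prove that version explicitly or record that the single-chart formulation is a shorthand, rather than asserting it follows by bookkeeping.
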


The {\L}ojasiewicz exponent of a monomial function can easily be computed exactly using the Generalized Young Inequality (see \cite[Remark 3.1]{Feehan_lojasiewicz_inequality_all_dimensions}) to give the

\begin{lem}[{\L}ojasiewicz exponent of a monomial function]
\label{lem:Lojasiewicz exponent_monomial_function}
(See Feehan \cite[Theorem 5]{Feehan_lojasiewicz_inequality_all_dimensions} or Haraux \cite[Theorem 3.1]{Haraux_2005}.)
Let $g:\KK^d\to\KK$ be an analytic function given by $g(y)=y_1^{n_1}y_2^{n_2}\cdots y_d^{n_d}$ for $y\in \KK^d$, where the $n_i$ are non-negative integers for $i=1,\ldots,d$. If $g(0)=0$ and $g'(0)=0$, then $g$ obeys the {\L}ojasiewicz gradient inequality \eqref{eq:Lojasiewicz_gradient_inequality} on $U=\KK^d$ for a constant $C\in(0,\infty)$ and exponent
\begin{equation}
\label{eq:Lojasiewicz exponent_monomial_function}
\theta = 1-\frac{1}{N} \in [1/2,1), \quad\text{where } N := \sum_{i=1}^{d}n_i \geq 2.
\end{equation}
\end{lem}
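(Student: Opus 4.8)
The plan is to reduce the entire statement to a single application of the weighted arithmetic--geometric mean inequality (the Generalized Young inequality referenced in \cite[Remark 3.1]{Feehan_lojasiewicz_inequality_all_dimensions}), after computing the partial derivatives of $g$ explicitly. Since all norms on the finite-dimensional space $\KK^d$ and its dual $\KK^{d*}$ are equivalent, I may work with whichever norm is convenient and absorb the comparison constants into $C$; concretely I will use the elementary bound $\|g'(y)\|_{\KK^{d*}} \geq c\,\max_i |\partial g/\partial y_i(y)|$ for a fixed $c = c(d) \in (0,\infty)$, valid because each coordinate functional is bounded.

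First I would dispose of the degenerate region. Because the hypotheses $g(0)=0$ and $g'(0)=0$ force $N \geq 2$ (the condition $g(0)=0$ gives $N\geq 1$, and $g'(0)=0$ rules out the linear case $N=1$), the target exponent satisfies $1 - 1/N > 0$. Hence the desired inequality holds automatically wherever $g(y) = 0$, and in particular wherever $y_i = 0$ for some index $i$ with $n_i \geq 1$. It therefore suffices to prove the estimate at points $y$ with $g(y)\neq 0$, that is, $y_i \neq 0$ for every $i$ in the support $I := \{i : n_i \geq 1\}$. Writing $a_i = |y_i|$, the chain rule gives $\partial g/\partial y_i = n_i\, g(y)/y_i$ for $i \in I$, so that $|\partial g/\partial y_i| = n_i |g(y)|/a_i$.

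The key step is to apply weighted AM--GM with the weights $\lambda_i = n_i/N$ for $i \in I$, which are positive and sum to $1$ by the definition of $N$. This yields
\begin{equation*}
\prod_{i \in I}\left|\frac{\partial g}{\partial y_i}\right|^{n_i/N} \leq \sum_{i \in I}\frac{n_i}{N}\left|\frac{\partial g}{\partial y_i}\right| \leq \max_{i\in I}\left|\frac{\partial g}{\partial y_i}\right|.
\end{equation*}
Substituting $|\partial g/\partial y_i| = n_i |g(y)|/a_i$ into the product on the left and using both $\sum_{i\in I} n_i/N = 1$ and $\prod_{i\in I} a_i^{-n_i/N} = (\prod_{i\in I} a_i^{n_i})^{-1/N} = |g(y)|^{-1/N}$, the left-hand side collapses exactly to $\left(\prod_{i\in I} n_i^{n_i/N}\right)|g(y)|^{1-1/N}$. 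Combining this with the norm comparison gives $\|g'(y)\|_{\KK^{d*}} \geq C|g(y)|^{1-1/N}$ with $C = c\prod_{i\in I} n_i^{n_i/N} \in (0,\infty)$, which is \eqref{eq:Lojasiewicz_gradient_inequality} with $\theta = 1 - 1/N$; and $N \geq 2$ gives $\theta \in [1/2,1)$ as claimed.

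I do not anticipate a serious obstacle: the argument is essentially one chain-rule computation followed by one inequality. The only points requiring care are the bookkeeping of the zero set $\{g=0\}$, handled by the positivity of the exponent, and the norm-equivalence constant. To confirm that $\theta = 1 - 1/N$ is the genuine \emph{{\L}ojasiewicz exponent} rather than merely an admissible one, I would note that equality in AM--GM occurs precisely when all $|\partial g/\partial y_i|$ coincide, i.e.\ along the ray $y_i = t\,n_i$ ($i\in I$) with $t \downarrow 0$; along this ray $\|g'(y)\|_{\KK^{d*}}$ is comparable to $|g(y)|^{1-1/N}$, so no strictly smaller exponent can satisfy \eqref{eq:Lojasiewicz_gradient_inequality}. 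This sharpness verification is the only part beyond the one-line computation, and it is routine.
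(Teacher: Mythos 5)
Your argument is correct and is essentially the proof the paper intends: the paper does not reproduce a proof but explicitly attributes the computation to the Generalized Young Inequality of \cite[Remark 3.1]{Feehan_lojasiewicz_inequality_all_dimensions}, which is precisely the weighted AM--GM step $\prod_{i\in I}|\partial g/\partial y_i|^{n_i/N}\leq\max_{i\in I}|\partial g/\partial y_i|$ you apply after the chain-rule identity $|\partial g/\partial y_i|=n_i|g(y)|/|y_i|$. Your closing sharpness check along the ray $y_i=t\,n_i$ is a correct bonus but is not needed for the statement as written, which only asserts that $\theta=1-1/N$ is an admissible exponent.
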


We apply Theorem \ref{thm:Monomialization_analytic_function} and Lemma \ref{lem:Lojasiewicz exponent_monomial_function} to prove Theorem \ref{thm:Lojasiewicz_gradient_inequality} using the elementary

\begin{lem}[{\L}ojasiewicz exponents and maps]
\label{lem:Pushforwards_preserve_Lojasiewicz_exponents}
Let $d$ and $e$ be positive integers, $V\subset \KK^e$ and $U\subset \KK^d$ be open neighborhoods of the origins and $\phi:V \to U$ be an open $C^1$ map such that $\phi(0) = 0$. If $f:U\to\KK$ is a $C^1$ function such that $\phi^*f$ obeys the {\L}ojasiewicz gradient inequality \eqref{eq:Lojasiewicz_gradient_inequality} at the origin with exponent $\theta \geq 0$ then, after possibly shrinking $U$, the function $f$ obeys the {\L}ojasiewicz gradient inequality \eqref{eq:Lojasiewicz_gradient_inequality} with the same exponent $\theta$ and a possibly smaller constant $C\in(0,\infty)$.
\end{lem}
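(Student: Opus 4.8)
The plan is to transfer the {\L}ojasiewicz inequality from $\phi^*f = f\circ\phi$ to $f$ by combining the chain rule with the hypothesis that $\phi$ is an open map. First I would apply the chain rule, $(f\circ\phi)'(y) = f'(\phi(y))\circ\phi'(y)$, where $\phi'(y) \in \Hom(\KK^e,\KK^d)$ and $f'(\phi(y)) \in \KK^{d*}$. Taking operator norms of this composition of linear maps yields the pointwise submultiplicative bound
\[
\|(f\circ\phi)'(y)\|_{\KK^{e*}} \leq \|f'(\phi(y))\|_{\KK^{d*}}\,\|\phi'(y)\|_{\Hom(\KK^e,\KK^d)},
\]
valid for every $y \in V$.

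Next I would exploit the $C^1$ hypothesis on $\phi$. Since $y\mapsto \phi'(y)$ is continuous, I may choose a closed ball $\barB \subset V$ centered at the origin, small enough that $\phi^*f$ obeys the {\L}ojasiewicz gradient inequality \eqref{eq:Lojasiewicz_gradient_inequality} on $\barB$ with some constant $\tilde C\in(0,\infty)$ and the given exponent $\theta$, and on which $M := \sup_{y\in\barB}\|\phi'(y)\|_{\Hom(\KK^e,\KK^d)} < \infty$. Openness of $\phi$ forces $M > 0$, since $\phi$ cannot be constant on any open set. Combining the displayed bound with the pullback inequality $\|(f\circ\phi)'(y)\|_{\KK^{e*}} \geq \tilde C\,|f(\phi(y))|^\theta$ gives, for all $y$ in the open ball $B$,
\[
\|f'(\phi(y))\|_{\KK^{d*}} \geq \frac{\tilde C}{M}\,|f(\phi(y))|^\theta.
\]

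Finally I would invoke openness of $\phi$ once more to convert this into a statement about a full neighborhood of the origin in $U$. The image $\phi(B)$ is an open subset of $U$ containing $\phi(0)=0$, so after shrinking $U$ to $U' := \phi(B)$, every point $x \in U'$ is of the form $x = \phi(y)$ for some $y \in B$. Substituting into the previous display yields
\[
\|f'(x)\|_{\KK^{d*}} \geq C\,|f(x)|^\theta, \quad\forall\, x \in U',
\]
with $C := \tilde C/M$ (which may be further decreased if a smaller constant is desired), and this is precisely \eqref{eq:Lojasiewicz_gradient_inequality} for $f$ with the same exponent $\theta$. The argument is elementary and uniform in $\theta \geq 0$, so I anticipate no genuine obstacle; the only point requiring care is the role of openness, which is exactly what guarantees that the inequality derived at the points $\phi(y)$ covers an honest neighborhood of the origin in $U$. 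Without this hypothesis one could only assert the inequality on the possibly meager set $\phi(V)\cap U$, which need not contain any neighborhood of the origin.
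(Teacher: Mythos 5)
Your proposal is correct and follows essentially the same route as the paper's proof (given there for the Banach-space generalization, Lemma \ref{lem:Pushforwards_preserve_Lojasiewicz_exponents_Banach_spaces}): chain rule to bound $\|(f\circ\phi)'(y)\|$ by $M\|f'(\phi(y))\|$ with $M=\sup\|\phi'\|$ on a shrunken neighborhood, then openness of $\phi$ to ensure the resulting inequality covers a full neighborhood $\phi(B)$ of the origin in $U$. Your additional remark that openness forces $M>0$ is a small but legitimate point the paper leaves implicit.
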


Theorem \ref{thm:Lojasiewicz_gradient_inequality} now follows as an immediate corollary of Theorem \ref{thm:Monomialization_analytic_function} and Lemmas \ref{lem:Lojasiewicz exponent_monomial_function} and \ref{lem:Pushforwards_preserve_Lojasiewicz_exponents}. The exponent $\theta=1/2$ is optimal in the sense that if a solution $x(t)$, for $t\in[0,\infty)$, to the negative gradient flow defined by $f$,
\[
\frac{dx}{dt} = -\grad f(x(t)), \quad x(0) = x_0 \in U,
\]
converges to a point $x_\infty \in \Crit f$ as $t\to\infty$, then the norm of the difference, $\|x(t)-x_\infty\|_{\KK^d}$, converges to zero as $t\to\infty$ like $\exp(-ct)$ for some $c>0$ when $\theta=1/2$ but only like $t^{-\gamma}$ for some $\gamma>0$ when $\theta \in (1/2,1)$: see Appendix \ref{sec:Convergence_rate} for a discussion and references.

The optimal exponent, $\theta=1/2$, is achieved when $f:U\to\KK$ is a $C^2$ function that is Morse--Bott at the origin, that is, when the critical set $\Crit f := \{x\in U:f'(x) = 0\}$ is a connected, smooth submanifold of $U$ (after possibly shrinking $U$) of dimension equal to $\dim\Ker f''(0)$. This is readily seen by applying the Morse--Bott Lemma (see Theorems \ref{thm:Morse-Bott_Lemma_Banach} or \ref{thm:Morse-Bott_Lemma_Banach_refined}) to produce (after possibly shrinking $U$) a $C^2$ diffeomorphism $\Phi:V \to U$ from an open neighborhood $V\subset \KK^d$ of the origin onto $U$ such that $\Phi(0)=0$ and
\[
\Phi^*f(y) = \sum_{i=1}^{d-c}a_iy_i^2, \quad\text{for all } y \in V,
\]
where $c = \dim\Ker f''(0)$ and $a_i \in \KK\less\{0\}$ for $i=1,\ldots,d-c$. The {\L}ojasiewicz gradient inequality \eqref{eq:Lojasiewicz_gradient_inequality} for $f$ with exponent $\theta=1/2$ follows immediately by direct calculation for $\Phi^*f$ and invariance of the {\L}ojasiewicz exponent under diffeomorphisms. Proofs of the optimal {\L}ojasiewicz gradient inequality for Morse--Bott functions on $\KK^d$ or Banach spaces over $\KK$ were provided by the author in \cite[Theorem 3]{Feehan_lojasiewicz_inequality_all_dimensions} and \cite[Theorem 3]{Feehan_lojasiewicz_inequality_ground_state} and by the author and Maridakis \cite[Theorems 3 and 4]{Feehan_Maridakis_Lojasiewicz-Simon_Banach} without relying on the Morse--Bott Lemma.

The main goal of this article is explore whether the converse is true:
\begin{quote}
\emph{If $f:U\to\KK$ is a $C^2$ function that obeys the {\L}ojasiewicz gradient inequality \eqref{eq:Lojasiewicz_gradient_inequality} with exponent $\theta = 1/2$, then is $f$ Morse--Bott at the origin?}
\end{quote}
As we shall see in Theorems \ref{mainthm:Analytic_function_Lojasiewicz_exponent_one-half_Morse-Bott_Banach}, \ref{mainthm:Analytic_function_Lojasiewicz_exponent_one-half_Morse-Bott_Banach_refined} and Corollary \ref{maincor:Analytic_function_Lojasiewicz_exponent_one-half_Morse-Bott}, this converse is indeed true in great generality --- for a broad class of analytic functions, $f:\sX\supset\sU\to\KK$, on Banach spaces $\sX$ over $\KK$ and for any analytic function $f$ when $\sX=\KK^d$. It is apparent from examples that $\theta \in [1/2,1)$ provides a measure of complexity of the singularity of the critical set of $f$, at the origin. Theorems \ref{mainthm:Analytic_function_Lojasiewicz_exponent_one-half_Morse-Bott_Banach}, \ref{mainthm:Analytic_function_Lojasiewicz_exponent_one-half_Morse-Bott_Banach_refined} and Corollary \ref{maincor:Analytic_function_Lojasiewicz_exponent_one-half_Morse-Bott} make this informal measure of complexity of the singularity precise for analytic functions on open neighborhoods of the origin in $\KK^d$ with arbitrary $d\geq 1$ and even Banach spaces over $\KK$: the critical set is an analytic submanifold of the expected dimension when $\theta=1/2$.

Intuition supporting the preceding conclusion can be obtained by examining the structure of the function $\pi^*f$ in \eqref{eq:Pullback_analytic_function_simple_normal_crossing} when $N=2$, the lowest possible total degree of the monomial. Indeed, if $N=2$, then (after relabeling coordinates) either $n_1=2$ and $n_i=0$ for all $i\geq 2$ or $n_1=n_2=1$ and $n_i=0$ for all $i \geq 3$ and thus\footnote{We omit the pair of possible signs, $\pm$, when $\KK=\CC$.}
\begin{equation}
\label{eq:Pullback_analytic_function_simple_normal_crossing_Lojasiewicz_one_half}
\pi^*f(y) = \pm y_1^2 \quad\text{or}\quad y_1y_2, \quad\text{for all } y \in V,
\end{equation}
together with
\[
\pi^{-1}(f^{-1}(0)) = \{y\in V: y_1 = 0\} \quad\text{or}\quad \{y\in V: y_1 = 0 \text{ or } y_2 = 0\}.
\]
In particular, the critical set of $\pi^*f$ is either the codimension-one submanifold $\{y\in V: y_1 = 0\}$ or the codimension-two submanifold $\{y\in V: y_1 = y_2 = 0\}$. These observations tell us that the condition $\theta=1/2$ imposes strong constraints on resolution morphism $\pi$ and the structure of the analytic function $f$ itself since resolution of singularities tends to `increase degrees'.

Because the identification \eqref{eq:Pullback_analytic_function_simple_normal_crossing} of $\pi^*f$ as a simple normal crossing function comes from resolution of singularities for analytic sets, one might expect that methods of algebraic geometry could be used to compute $\theta$ directly in terms of $f$ and also lead to the conclusion that $f$ must be Morse--Bott, at least when $f$ is a polynomial and possibly even when $f$ is analytic. However, while the {\L}ojasiewicz exponent has been estimated for certain classes of polynomials (see \cite[Section 1]{Feehan_lojasiewicz_inequality_all_dimensions} for a survey), it appears difficult to estimate the exponent in any generality, even for polynomial functions. Using the fact that $\pi^*f(y) = \pm y_1^2$ or $y_1y_2$ when $f$ has {\L}ojasiewicz exponent $1/2$ to directly constrain the structure of $f$ and the resolution morphism $\pi$ in the proof of resolution of singularities appears challenging, although this may provide one route to a proof of Corollary \ref{maincor:Analytic_function_Lojasiewicz_exponent_one-half_Morse-Bott} using methods of algebraic geometry.

Our approach to proving Theorems \ref{mainthm:Analytic_function_Lojasiewicz_exponent_one-half_Morse-Bott_Banach}, \ref{mainthm:Analytic_function_Lojasiewicz_exponent_one-half_Morse-Bott_Banach_refined} in this article is analytic and relies on a version\footnote{I am indebted to Michael Greenblatt and Andr{\'a}s N{\'e}methi for pointing out to me that this should be a key analytical tool.} (see Section \ref{subsec:Morse_lemma_functions_Banach_space_degenerate_critical_points}) of the Morse Lemma for \emph{analytic} functions $f$ with \emph{degenerate} critical points, together with our identification \eqref{eq:Pullback_analytic_function_simple_normal_crossing_Lojasiewicz_one_half} of $\pi^*f$ when $f$ has {\L}ojasiewicz exponent $1/2$ and $\pi$ is a resolution of singularities \eqref{eq:Resolution_morphism} for the zero set $f^{-1}(0)$.

The concept of a Morse--Bott function was introduced by Bott in \cite[Definition, p. 248]{Bott_1954} and used by him in his first proof of the Bott Periodicity Theorem  \cite{Bott_1959}. Morse--Bott functions were employed by Austin and Braam \cite[Section 3]{Austin_Braam_1995} in their approach to developing a Morse theoretic approach to equivariant cohomology.

\subsection{Morse--Bott property of analytic functions with {\L}ojasiewicz exponent one half}
\label{subsec:Main_results_Morse-Bott_property_functions_Lojasiewicz_exponent_half}
Let $\sX,\sY$ be Banach spaces over $\KK$, and $\sL(\sX,\sY)$ denote the Banach space of bounded linear operators from $\sX$ to $\sY$, and $\Ker A$ and $\Ran A$ denote the kernel and range of $A \in \sL(\sX,\sY)$, and $\sX^*$ denote the continuous dual space of $\sX$. Let $\sL_\sym(\sX, \sX^*) \subset \sL(\sX, \sX^*)$ denote the closed subspace of operators $A$ that are symmetric in the sense that $\langle x, Ay\rangle_{\sX\times\sX^*} = \langle y, Ax\rangle_{\sX\times\sX^*}$ for all $x, y \in \sX$, where $\langle \cdot, \cdot\rangle_{\sX\times\sX^*}$ denotes the canonical pairing, $\sX\times\sX^* \ni (x,\alpha) \mapsto \alpha(x) \in \KK$. We recall the canonical identifications,
\[
\sL_\sym(\sX, \sX^*) = \sL_\sym^2(\sX,\KK) = \sL_\sym(\sX\otimes\sX,\KK),
\]
where $\sL^n(\sX,\KK)$ (respectively, $\sL(\otimes^n\sX,\KK)$) is the Banach space of continuous $n$-linear (respectively, linear) functions, $A:\times^n\sX\to\KK$ (respectively, $A:\otimes^n\sX\to\KK$), for integers $n\geq 1$.

Let $\cX$ be a $C^p$ Banach manifold ($p\geq 0$) modeled on a Banach space $\sX$ (see \cite[Section II.1]{Lang_fundamentals_differential_geometry} or \cite[Definitions 3.1.1 and 3.1.3]{AMR}). A subset $\cW \subset \cX$ is a $C^p$ Banach \emph{submanifold} \cite[Section II.2]{Lang_fundamentals_differential_geometry}, \cite[Definition 3.2.1]{AMR} modeled on a Banach space $\sW$ if there is a Banach space $\sN$ such that $\sX=\sW\oplus\sN$ as a direct sum of Banach spaces and, for each point $w \in \cW$, a chart $\psi:\cX \supset \calV \to \sX$ such that $\psi(w)=0$ and
\[
  \psi(\calV\cap\cX) = \psi(\calV)\cap(\sW\oplus\{0\}).
\]
Observe that $T_w\cX \cong \sX$ and $T_w\cW \cong \sW$ with $T_w\cX \cong T_w\cW \oplus \sN$; in particular, $T_w\cW$ is a closed subspace with closed complement in $T_w\cX$.

If $\sU\subset\sX$ is an open subset, $f:\sU\to\KK$ is a $C^2$ function, and $\Crit f = \{x\in \sU:f'(x) = 0\}$ is a $C^2$, connected submanifold of $\sU$, then\footnote{For example, see the discussion prior to Theorem \ref{thm:Morse-Bott_Lemma_Banach}.} the tangent space $T_x\Crit f$ is contained in $\Ker f''(x)$, for each $x \in \Crit f$, where $f'(x)\in\sX^*$ and $f''(x)\in\sL_\sym(\sX,\sX^*)$.

\begin{defn}[Morse--Bott properties]
\label{defn:Morse-Bott_function}
Let $\sX$ be a Banach space over $\KK$, and $\sU \subset \sX$ be an open neighborhood of the origin, and $f:\sU\to\KK$ be a $C^2$ function such that $\Crit f$ is a $C^2$, connected submanifold.
\begin{enumerate}
  \item
  \label{item:Morse-Bott_point}
  If $x_0 \in \Crit f$ and $\Ker f''(x_0) \subset \sX$ has a closed complement $\sX_0$ and $\Ran f''(x_0) = \sX_0^*$, and $T_{x_0}\Crit f = \Ker f''(x_0)$, then $f$ is \emph{Morse--Bott at the point} $x_0$;
  \item
  \label{item:Morse-Bott_critical_set}
  If $f$ is \emph{Morse--Bott at each point} $x \in \Crit f$, then $f$ is \emph{Morse--Bott along $\Crit f$} or a \emph{Morse--Bott function}.
\end{enumerate}
\end{defn}

\begin{rmk}[On the assumption that $\Ker f''(x_0)$ has a closed complement]
\label{rmk:Existence_closed_complement}  
Because $\Crit f \subset \sX$ is a submanifold in Definition \ref{defn:Morse-Bott_function}, then $T_{x_0}\Crit f$ automatically has a closed complement in $\sX$ and because $T_{x_0}\Crit f = \Ker f''(x_0)$ in Definition \ref{defn:Morse-Bott_function} \eqref{item:Morse-Bott_point}, then $\Ker f''(x_0)$ automatically has a closed complement in $\sX$. However, we include the closed complement assumption for $\Ker f''(x_0)$ in our definition for the sake of emphasis.
\end{rmk}  

The Morse--Bott Lemma (see Theorem \ref{thm:Morse-Bott_Lemma_Banach}) implies that if $f$ is Morse--Bott at a point, as in Definition \ref{defn:Morse-Bott_function} \eqref{item:Morse-Bott_point}, then $f$ is Morse--Bott along an open neighborhood of that point in $\Crit f$, as in Definition \ref{defn:Morse-Bott_function} \eqref{item:Morse-Bott_critical_set}.
If $\Crit f$ in Definition \ref{defn:Morse-Bott_function} consists of isolated points and $f''(x_0) \in \sL(\sX,\sX^*)$ is invertible for each $x_0 \in \Crit f$, then $f$ is a \emph{Morse function}. The finite-dimensional analogue of Definition \ref{defn:Morse-Bott_function} \eqref{item:Morse-Bott_critical_set} is well-known.

\begin{rmk}[Morse--Bott functions on Euclidean space]
\label{rmk:Morse-Bott_function_finite_dimension}
When $\sX$ is finite-dimensional, the definition of a Morse--Bott function was given by Bott \cite[Definition, p. 248]{Bott_1954}, \cite{Bott_1959}. See Nicolaescu \cite[Definition 2.41]{Nicolaescu_morse_theory} for a modern exposition.
\end{rmk}

When $\sX$ is infinite-dimensional, then one must impose hypotheses on $f$ in addition to those of Bott in the finite-dimensional case in order to obtain a tractable version, such as Theorem \ref{thm:Morse-Bott_Lemma_Banach}, of the classical Morse--Bott Lemma (for example, Nicolaescu \cite[Proposition 2.42]{Nicolaescu_morse_theory}). Because the operator $f''(x_0) \in \sL(\sX,\sX^*)$ is symmetric, the forthcoming Lemma \ref{lem:Range_symmetric_operator_kernel_has_closed_complement} \eqref{item:Range_symmetric_operator_kernel_has_closed_complement} implies that one always has $\Ran f''(x_0) \subset \sX_0^*$ if $\sX_0$ is a closed complement of $\Ker f''(x_0)$. Thus Item \eqref{item:Morse-Bott_point} in Definition \ref{defn:Morse-Bott_function} imposes the non-degeneracy condition $\Ran f''(x_0) = \sX_0^*$, like in the finite-dimensional case, given the property that $\Ker f''(x_0)$ has a closed complement.

When the operator $f''(x_0) \in \sL(\sX,\sX^*)$ is Fredholm with index zero, Lemma \ref{lem:Range_symmetric_operator_kernel_has_closed_complement} \eqref{item:Range_Fredholm_operator_index_zero} yields the non-degeneracy condition, $\Ran f''(x_0) = \sX_0^*$. When the operator $f''(x_0) \in \sL(\sX,\sX^*)$ is not Fredholm but $\sX$ is reflexive and $\Ker f''(x_0)$ has a closed complement, Lemma \ref{lem:Isomorphism_properties_symmetric_operator} implies that the condition that $\Ran f''(x_0)=\sX_0^*$ in Item \eqref{item:Morse-Bott_point} of Definition \ref{defn:Morse-Bott_function} is equivalent to the condition that $\Ran f''(x_0)\subset\sX^*$ be a closed subspace. We now state the main results of this article.

\begin{mainthm}[Morse--Bott property of an analytic function with {\L}ojasiewicz exponent one half]
\label{mainthm:Analytic_function_Lojasiewicz_exponent_one-half_Morse-Bott_Banach}
Let $\sX$ be a Banach space over $\KK$, and $\sU \subset \sX$ be an open neighborhood of the origin, and $f:\sU\to\KK$ be a
non-constant analytic function such that $f(0) = 0$ and $f'(0) = 0$ and $f''(0) \in \sL(\sX,\sX^*)$ is a Fredholm operator with index zero. If there is a constant $C \in (0,\infty)$ such that, after possibly shrinking $\sU$,
\begin{equation}
\label{eq:Lojasiewicz_gradient_inequality_dual_space}
\|f'(x)\|_{\sX^*} \geq C|f(x)|^{1/2}, \quad\text{for all } x \in \sU,
\end{equation}
then $f$ is a Morse--Bott function in the sense of Definition \ref{defn:Morse-Bott_function}.
\end{mainthm}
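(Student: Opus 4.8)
The plan is to reduce the problem to a finite-dimensional one by means of the generalized Morse Lemma for analytic functions with degenerate critical points (Theorem \ref{mainthm:Hormander_C-6-1_Banach}), and then to exploit the rigidity forced by the exponent $1/2$. Since $f''(0) \in \sL(\sX,\sX^*)$ is Fredholm of index zero, its kernel $\sK := \Ker f''(0)$ is finite-dimensional, of dimension $c$ say, and admits a closed complement $\sX_0$ with $\Ran f''(0) = \sX_0^*$. First I would apply Theorem \ref{mainthm:Hormander_C-6-1_Banach} to produce, after shrinking $\sU$, an analytic diffeomorphism $\Phi$ from an open neighborhood of the origin in $\sX$ onto $\sU$ with $\Phi(0)=0$, together with the splitting $x = \xi + \eta$ ($\xi \in \sK$, $\eta \in \sX_0$) and the normal form
\[
(\Phi^*f)(\xi,\eta) = Q(\eta) + \hat f(\xi),
\]
where $Q(\eta) = \tfrac12\langle \eta, f''(0)\eta\rangle$ is nondegenerate on $\sX_0$ (in the sense that $\eta \mapsto f''(0)\eta$ is an isomorphism $\sX_0 \to \sX_0^*$) and $\hat f$ is analytic on a neighborhood of the origin in the finite-dimensional space $\sK$ with $\hat f(0)=\hat f'(0)=0$ and $\hat f''(0)=0$.

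Next I would transport the {\L}ojasiewicz inequality \eqref{eq:Lojasiewicz_gradient_inequality_dual_space} through this reduction. Because $\Phi$ is an analytic diffeomorphism, the chain rule gives $(\Phi^*f)'(y) = \Phi'(y)^* f'(\Phi(y))$ with $\Phi'(y)$ boundedly invertible near the origin, so $\Phi^*f$ again obeys the inequality with exponent $1/2$ and a possibly smaller constant (the Banach-space counterpart of Lemma \ref{lem:Pushforwards_preserve_Lojasiewicz_exponents}). Restricting to the slice $\eta=0$ and noting that $(\Phi^*f)(\xi,0)=\hat f(\xi)$ while $(\Phi^*f)'(\xi,0)$ has vanishing $\sX_0$-component (since $Q'(0)=0$) and $\sK$-component equal to $\hat f'(\xi)$, the inequality descends --- up to equivalence of norms on the finite-dimensional factor --- to $\|\hat f'(\xi)\|_{\sK^*} \geq C'|\hat f(\xi)|^{1/2}$ for all $\xi$ near the origin in $\sK$, for some $C' \in (0,\infty)$.

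The heart of the matter is then to show $\hat f \equiv 0$ near the origin, and I would argue by orders of vanishing. Suppose $\hat f \not\equiv 0$; by analyticity $m := \ord_0 \hat f$ is finite, and $m \geq 3$ because $\hat f(0)=\hat f'(0)=\hat f''(0)=0$. Let $P_m \neq 0$ be the homogeneous degree-$m$ leading part of $\hat f$ and choose $v \in \sK$ with $P_m(v)\neq 0$; by Euler's identity $\langle v, dP_m(v)\rangle = m P_m(v) \neq 0$, so $dP_m(v)\neq 0$. Along the ray $\xi = tv$, $t\to 0^+$, one has $\hat f(tv)=P_m(v)t^m + O(t^{m+1})$ and $\hat f'(tv)=t^{m-1}dP_m(v) + O(t^m)$, whence
\[
\frac{\|\hat f'(tv)\|_{\sK^*}}{|\hat f(tv)|^{1/2}} \sim \frac{\|dP_m(v)\|}{|P_m(v)|^{1/2}}\, t^{m/2 - 1} \too 0 \quad\text{as } t \to 0^+,
\]
since $m/2-1 \geq 1/2 > 0$. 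This contradicts the descended inequality for every fixed $C'>0$, no matter how $\sU$ is shrunk; equivalently, the optimal {\L}ojasiewicz exponent of $\hat f$ is at least $1-1/m$, which forces $m \leq 2$ when the exponent equals $1/2$ (compare Lemma \ref{lem:Lojasiewicz exponent_monomial_function} and the identification \eqref{eq:Pullback_analytic_function_simple_normal_crossing_Lojasiewicz_one_half}). Since $m \geq 3$, we conclude $\hat f \equiv 0$.

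Finally, with $\hat f \equiv 0$ the normal form reads $(\Phi^*f)(\xi,\eta) = Q(\eta)$, so $\Crit(\Phi^*f) = \{(\xi,\eta): f''(0)\eta = 0\} = \sK \times \{0\}$ is a connected analytic submanifold of dimension $c$, and at each of its points the Hessian of $\Phi^*f$ is the block operator that vanishes on $\sK$ and equals $f''(0)$ on $\sX_0$; its kernel is exactly $\sK = T\Crit(\Phi^*f)$, its range is $\sX_0^*$, and $\sK$ has the closed complement $\sX_0$. Thus $\Phi^*f$ is Morse--Bott in the sense of Definition \ref{defn:Morse-Bott_function}, and transporting this structure back through the diffeomorphism $\Phi$ shows $f$ is Morse--Bott (the non-constant hypothesis rules out the degenerate case $Q \equiv 0$). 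I expect the main obstacle to be the analytic splitting itself --- securing a version of the Morse Lemma valid for analytic functions on Banach spaces that yields a genuinely analytic residual $\hat f$ on the finite-dimensional kernel with vanishing $2$-jet; once that is in hand, the exponent-$1/2$ hypothesis does the rest through the elementary order-of-vanishing estimate above.
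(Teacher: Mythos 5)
Your proposal is correct and follows essentially the same route as the paper's proof: apply the degenerate-critical-point Morse Lemma (Theorem \ref{mainthm:Hormander_C-6-1_Banach}) to split off an invertible quadratic form on a complement of $\Ker f''(0)$, descend the exponent-$1/2$ inequality to the finite-dimensional residual $\hat f$ on the kernel, and kill $\hat f$ by the order-of-vanishing ray argument along a direction where the leading homogeneous part is nonzero. The one assertion you state without justification, $\hat f''(0)=0$, is true and worth a line --- it follows by comparing $\dim\Ker D^2(\Phi^*f)(0,0)=\dim\Ker f''(0)$ (congruence by the invertible $D\Phi(0,0)$) with the block form $A_0\oplus\hat f''(0)$ --- and it is exactly what guarantees $m\geq 3$; the paper handles this same point via its dichotomy on $\Ker g''(0)$.
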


Hence, Theorem \ref{mainthm:Analytic_function_Lojasiewicz_exponent_one-half_Morse-Bott_Banach} is a converse to the simpler Theorem \ref{mainthm:Lojasiewicz_gradient_inequality_Morse-Bott} when $f$ is \emph{analytic}. The conclusion of Theorem \ref{mainthm:Analytic_function_Lojasiewicz_exponent_one-half_Morse-Bott_Banach} implies that (using the version of the Morse--Bott Lemma provided by Theorem \ref{thm:Morse-Bott_Lemma_Banach}), after possibly shrinking $\sU$, there are an open neighborhood $\sV\subset\sX$ of the origin and an analytic diffeomorphism $\Phi:\sV\to\sU$ such that $\Phi(0)=0$ and
\[
f(\Phi(y)) = \frac{1}{2}\langle y,Ay\rangle_{\sX\times\sX^*}, \quad\text{for all } y \in \sV,
\]
where $A = (f\circ\Phi)''(0) \in \sL_\sym(\sX,\sX^*)$ and, letting $\sK := \Ker A \subset \sX$ denote the finite-dimensional kernel of $A$ with closed complement $\sX_0 \subset \sX$, and $\Ran A = \sX_0^* \subset \sX^*$ (see Lemma \ref{lem:Range_symmetric_operator_kernel_has_closed_complement} \eqref{item:Range_Fredholm_operator_index_zero}) denote the closed range of $A$ with finite-dimensional complement $\sK^*$ in $\sX^*=\sX_0^*\oplus\sK^*$ (see Lemma \ref{lem:Dual_direct_sum_Banach_spaces_is_direct_sum_dual_spaces}), we have
\[
A = \begin{pmatrix} A_0 & 0 \\ 0 & 0 \end{pmatrix}:\sX_0\oplus\sK \to \sX_0^*\oplus\sK^*,
\]
where $A_0 \in \sL_\sym(\sX_0,\sX_0^*)$ is an isomorphism of Banach spaces. Thus, $(f\circ\Phi)'(y) = Ay$, for all $y\in\sV$, and $\Crit f\circ\Phi = \sV\cap\Ker A$, an analytic submanifold of $\sV$ of dimension equal to $\dim\Ker A$.

As explained in \cite[Section 1.1]{Feehan_Maridakis_Lojasiewicz-Simon_Banach}, the hypotheses of Theorem \ref{mainthm:Analytic_function_Lojasiewicz_exponent_one-half_Morse-Bott_Banach} are restrictive since they imply that $\sX$ is isomorphic to its continuous dual space, $\sX^*$. For example, that would exclude familiar choices such as the Banach space of $C^{2,\alpha}$ sections of a finite-rank Riemannian vector bundle over a closed finite-dimensional Riemannian manifold, as in \cite{Adams_Simon_1988, Simon_1983, Simon_1985}.

(There are examples of Banach spaces that are isomorphic to their dual spaces but are not isomorphic to Hilbert spaces.
However, even the implication that $\sX$ is a reflexive Banach space is already restrictive for some applications of infinite-dimensional Morse Theory to geometric analysis. A classical theorem of Lindenstrauss and Tzafriri \cite{Lindenstrauss_Tzafriri_1971} asserts that a real Banach space in which every closed subspace is complemented (that is, is the range of a bounded linear projection) is isomorphic to a Hilbert space.)

As in \cite{Simon_1983}, one can relax the implicit restriction that $\sX\cong\sX^*$ by introducing an extrinsic gradient operator $\sM(x)$ to represent the derivative $f'(x)$ for each $x\in\sU$.

\begin{defn}[Gradient map]
\label{defn:Huang_2-1-1}
(See Berger \cite[Section 2.5]{Berger_1977} or Huang \cite[Definition 2.1.1]{Huang_2006}.)
Let $\sX$ and $\tilde{\sX}$ be Banach spaces over $\KK$, and $\tilde{\sX}\subset\sX^*$ be a continuous embedding, and $\sU \subset \sX$ be an open subset. A continuous map $\sM:\sU\to \tilde{\sX}$ is a \emph{gradient map} if there is a $C^1$ \emph{potential function} $f:\sU\to\KK$ such that
\begin{equation}
\label{eq:Differential_and_gradient_maps}
f'(x)v = \langle v,\sM(x)\rangle_{\sX\times\sX^*}, \quad \text{for all } x \in \sU \text{ and } v \in \sX.
\end{equation}
\end{defn}

A continuous embedding of Banach spaces $\tilde{\sX} \subset \sX^*$ induces a continuous embedding of Banach spaces of bounded linear operators,
\[
\sL(\sX,\tilde{\sX}) \subset \sL(\sX,\sX^*),
\]
since if $T \in \sL(\sX,\tilde{\sX})$, then we obtain $T \in \sL(\sX,\sX^*)$ by composing $T$ with the continuous embedding $\tilde{\sX} \subset \sX^*$. We can therefore define
\[
\sL_\sym(\sX,\tilde{\sX}) := \sL(\sX,\tilde{\sX}) \cap \sL_\sym(\sX,\sX^*).
\]
Some basic properties of gradient maps are listed in Proposition \ref{prop:Huang_2-1-2}, including the fact that $\sM(x) \in \sL_\sym(\sX,\tilde{\sX})$ for all $x\in\sU$. When $\tilde{\sX} = \sX^*$ in Definition \ref{defn:Huang_2-1-1}, then the derivative and gradient maps coincide. If we are given a $C^1$ function $f:\sU\to\KK$ such that $f'(x)=\langle\cdot,\sM(x)\rangle_{\sX\times\sX^*}$, for all $x\in\sU$, for a $C^0$ map $\sM:\sU\to \tilde{\sX}$, then we simply write $f'(x)=\sM(x)$, for all $x\in\sU$. These observations motivate the following generalization of Definition \ref{defn:Morse-Bott_function}.

\begin{defn}[Generalized Morse--Bott properties]
\label{defn:Morse-Bott_function_refined}
Let $\sX$ and $\tilde{\sX}$ be Banach spaces over $\KK$, and $\tilde{\sX}\subset\sX^*$ be a continuous embedding, and $\sU \subset \sX$ be an open subset, and $f:\sU\to\KK$ be a $C^2$ function such that $\Crit f$ is a $C^2$, connected submanifold and $f'(x) \in \tilde{\sX}$ for all $x \in \sU$.
\begin{enumerate}
  \item
  \label{item:Morse-Bott_point_refined}
  If $x_0 \in \Crit f$ and $\Ker f''(x_0)$ has a closed complement\footnote{As explained in Remark \ref{rmk:Existence_closed_complement}, this assumption is automatically obeyed for the same reasons as in Definition \ref{defn:Morse-Bott_function}.} and $\Ran f''(x_0) = \tilde{\sX}$ and $T_{x_0}\Crit f = \Ker f''(x_0)$, then $f$ is \emph{Morse--Bott at the point} $x_0$;
  \item
  \label{item:Morse-Bott_critical_set_refined}
  If $f$ is Morse--Bott at each point $x \in \Crit f$, then $f$ is \emph{Morse--Bott along $\Crit f$} or a \emph{Morse--Bott function}.
\end{enumerate}
\end{defn}

Because $f''(x_0) \in \sL(\sX,\sX^*)$, then $\sK := \Ker f''(x_0) \subset \sX$ is a closed subspace, the quotient $\sX/\sK$ is a Banach space, and the induced operator $f''(x_0) \in \sL(\sX/\sK,\tilde{\sX})$ is an isomorphism by the Open Mapping Theorem. However, in our proof of the Morse--Bott Lemma (see Theorem \ref{thm:Morse-Bott_Lemma_Banach_refined}) for functions that are Morse--Bott at a point in the sense of Definition \ref{defn:Morse-Bott_function_refined} \eqref{item:Morse-Bott_point_refined}, we shall exploit the existence of a splitting $\sX = \sX_0\oplus\sK$, where $\sX_0 \subset \sX$ is a closed subspace.

Again, the Morse--Bott Lemma (see Theorem \ref{thm:Morse-Bott_Lemma_Banach_refined}) implies that if $f$ is Morse--Bott at a point, as in Definition \ref{defn:Morse-Bott_function_refined} \eqref{item:Morse-Bott_point_refined}, then $f$ is Morse--Bott along an open neighborhood of that point in $\Crit f$, as in Definition \ref{defn:Morse-Bott_function_refined} \eqref{item:Morse-Bott_critical_set_refined}.

\begin{mainthm}[Generalized Morse--Bott property of an analytic function with {\L}ojasiewicz exponent one half]
\label{mainthm:Analytic_function_Lojasiewicz_exponent_one-half_Morse-Bott_Banach_refined}
Let $\sX$ and $\tilde{\sX}$ be Banach spaces over $\KK$, and $\tilde{\sX}\subset\sX^*$ be a continuous embedding, and $\sU \subset \sX$ be an open neighborhood of the origin, and $f:\sU\to\KK$ be a non-constant analytic function such that $f(0)=0$ and $f'(0) = 0$ and $f'(x) \in \tilde{\sX}$ for all $x \in \sU$ and $f''(0) \in \sL(\sX, \tilde{\sX})$ is a Fredholm operator with index zero. If there is a constant $C \in (0,\infty)$ such that, after possibly shrinking $\sU$,
\begin{equation}
\label{eq:Lojasiewicz_gradient_inequality_dual_space_refined}
\|f'(x)\|_{\tilde{\sX}} \geq C|f(x)|^{1/2}, \quad\text{for all } x \in \sU,
\end{equation}
then $f$ is a Morse--Bott function in the sense of Definition \ref{defn:Morse-Bott_function_refined}.
\end{mainthm}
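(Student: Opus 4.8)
The plan is to reduce the infinite-dimensional problem to a finite-dimensional one by means of the analytic Morse Lemma with degenerate critical points (Theorem \ref{mainthm:Hormander_C-6-1_Banach_refined}), and then to exploit the rigidity of the exponent one half on the finite-dimensional kernel. Since $f''(0) \in \sL(\sX,\tilde\sX)$ is Fredholm of index zero, the kernel $\sK := \Ker f''(0)$ is finite-dimensional and admits a closed complement $\sX_0 \subset \sX$, while $\Ran f''(0)$ is a closed subspace of finite codimension in $\tilde\sX$ (Lemma \ref{lem:Range_symmetric_operator_kernel_has_closed_complement}). These are exactly the hypotheses needed to apply Theorem \ref{mainthm:Hormander_C-6-1_Banach_refined}: after possibly shrinking $\sU$, I obtain an open neighborhood $\sV \subset \sX$ of the origin and an analytic diffeomorphism $\Phi:\sV \to \sU$ with $\Phi(0)=0$ such that, writing $y = \xi + \eta$ with $\xi \in \sX_0$ and $\eta \in \sK$,
\[
f(\Phi(\xi+\eta)) = \tfrac12\langle \xi, A_0\xi\rangle + g(\eta),
\]
where $A_0 := f''(0)|_{\sX_0}$ is an isomorphism onto its closed range and $g$ is an analytic function on an open neighborhood of the origin in the finite-dimensional space $\sK$ with $g(0)=0$, $g'(0)=0$, and $g''(0)=0$ (the last because the $\sK$-block of the Hessian of $f\circ\Phi$ at the origin is the restriction of $f''(0)$ to its kernel).

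Next I would transfer the {\L}ojasiewicz inequality \eqref{eq:Lojasiewicz_gradient_inequality_dual_space_refined} to the reduced function $g$. Since $\Phi$ is an analytic diffeomorphism, Lemma \ref{lem:Pushforwards_preserve_Lojasiewicz_exponents} (applied to $\Phi^{-1}$) shows that $f \circ \Phi$ again obeys the {\L}ojasiewicz gradient inequality with exponent $1/2$ near the origin. Along the kernel slice $\{\xi = 0\}$, the derivative $(f\circ\Phi)'$ has vanishing $\sX_0$-component and $\sK$-component equal to $g'(\eta)$; using this together with the finite-dimensionality of $\sK$ (so that all norms on the $\sK$-factor of the dual are equivalent), I obtain a constant $C>0$ and a neighborhood of the origin in $\sK$ on which $\|g'(\eta)\| \geq C|g(\eta)|^{1/2}$. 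Thus the entire problem is reduced to showing that an analytic function $g$ on a finite-dimensional space, with $g''(0)=0$ and obeying a {\L}ojasiewicz inequality of exponent $1/2$, must vanish identically near the origin.

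The crux of the argument is this finite-dimensional rigidity. Suppose $g \not\equiv 0$; then, because $g(0)=g'(0)=0$ and $g''(0)=0$, its order of vanishing at the origin is some integer $m \geq 3$, with nonzero leading homogeneous form $g_m$. Choosing $x_0$ with $g_m(x_0) \neq 0$ (so that also $g_m'(x_0)\neq 0$, by the Euler relation) and testing the inequality along the ray $\eta = t x_0$ as $t \to 0$, one finds $|g(tx_0)| \sim |t|^m$ while $\|g'(tx_0)\| \lesssim |t|^{m-1}$, so that $\|g'(tx_0)\|/|g(tx_0)|^{1/2} \lesssim |t|^{m/2 - 1} \to 0$ because $m \geq 3$; this contradicts $\|g'\| \geq C|g|^{1/2}$ with $C>0$ fixed. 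Hence $m \leq 2$, forcing $g \equiv 0$. This is exactly the rigidity predicted by the monomialization picture \eqref{eq:Pullback_analytic_function_simple_normal_crossing_Lojasiewicz_one_half}: the exponent $1/2$ corresponds to total degree $N = 2$ of $\pi^*g$ (Lemma \ref{lem:Lojasiewicz exponent_monomial_function}), which is incompatible with an order of vanishing $\geq 3$, since that order cannot decrease under the resolution morphism $\pi$ of Theorem \ref{thm:Monomialization_analytic_function} as $\pi(0)=0$.

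With $g \equiv 0$ the normal form reads $f(\Phi(\xi+\eta)) = \tfrac12\langle\xi,A_0\xi\rangle$, so that $\Crit(f\circ\Phi) = \{\xi = 0\}$ is an analytic submanifold equal to $\Ker (f\circ\Phi)''(0)$, along which the Hessian restricts to the isomorphism $A_0$ on the complementary directions. Transporting this back through the analytic diffeomorphism $\Phi$ shows that $\Crit f$ is an analytic submanifold near the origin and that $f$ satisfies the conditions of Definition \ref{defn:Morse-Bott_function_refined} at each of its critical points; that is, $f$ is Morse--Bott. I expect the main obstacle to be concentrated in Theorem \ref{mainthm:Hormander_C-6-1_Banach_refined}: producing an \emph{analytic} splitting diffeomorphism in the Banach setting compatible with the $\tilde\sX$-gradient structure, so that the reduced function $g$ genuinely inherits both its analyticity and its vanishing Hessian. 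Once that reduction is in hand, the exponent-one-half rigidity in the third paragraph closes the argument by an elementary estimate.
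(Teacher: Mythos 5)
Your proposal follows essentially the same route as the paper's proof: apply the generalized Morse Lemma for degenerate critical points (Theorem \ref{mainthm:Hormander_C-6-1_Banach_refined}) to split off the quadratic form on $\sX_0$, transfer the exponent-one-half inequality to the reduced analytic function $g$ on the finite-dimensional kernel $\sK$, and rule out $g\not\equiv 0$ by a Taylor-expansion estimate along a ray where $g$ vanishes to order $m\geq 3$, which forces the exponent up to $(m-1)/m\geq 2/3$. The only cosmetic differences are that you record $g''(0)=0$ explicitly (the paper instead splits into cases according to whether $\Ker g''(0)$ is trivial) and that you transfer the inequality by restricting to the kernel slice rather than invoking Lemma \ref{lem:Invariance_Lojasiewicz_exponent_addition_quadratic_form_refined}; the argument is correct.
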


Hence, Theorem \ref{mainthm:Analytic_function_Lojasiewicz_exponent_one-half_Morse-Bott_Banach_refined} is a converse to the simpler Theorem \ref{mainthm:Lojasiewicz_gradient_inequality_Morse-Bott_refined} when $f$ is \emph{analytic} and, moreover, immediately yields Theorem \ref{mainthm:Analytic_function_Lojasiewicz_exponent_one-half_Morse-Bott_Banach} upon choosing $\tilde{\sX}=\sX^*$.

The conclusion of Theorem \ref{mainthm:Analytic_function_Lojasiewicz_exponent_one-half_Morse-Bott_Banach_refined} has an interpretation similar to that of Theorem \ref{mainthm:Analytic_function_Lojasiewicz_exponent_one-half_Morse-Bott_Banach}. Using the more general version of the Morse--Bott Lemma provided by Theorem \ref{thm:Morse-Bott_Lemma_Banach_refined}, after possibly shrinking $\sU$, there are an open neighborhood $\sV\subset\sX$ of the origin and an analytic diffeomorphism $\Phi:\sV\to\sU$ such that $\Phi(0)=0$ and
\[
f(\Phi(y)) = \frac{1}{2}\langle y,Ay\rangle_{\sX\times\sX^*}, \quad\text{for all } y \in \sV,
\]
where $A \in \sL_\sym(\sX,\tilde{\sX})$ and, letting $\sK := \Ker A \subset \sX$ denote the finite-dimensional kernel of $A$ with closed complement $\sX_0 \subset \sX$ and $\tilde{\sX}_0 := \Ran A \subset \tilde{\sX}$ denote the closed range of $A$ with finite-dimensional complement $\tilde{\sK} \cong \sK = \Ker A$, and $\tilde{\sX}_0 \cong \sX_0$ (see Lemma \ref{lem:Isomorphism_properties_Fredholm_operator}), we have
\[
A = \begin{pmatrix} A_0 & 0 \\ 0 & 0 \end{pmatrix}:\sX_0\oplus\sK \to \tilde{\sX}_0\oplus\tilde{\sK},
\]
where $A_0 \in \sL(\sX_0,\tilde{\sX}_0)$ is an isomorphism of Banach spaces that is symmetric with respect to the continuous embedding $\tilde{\sX}_0 \subset \sX_0^*$ and canonical pairing $\sX_0\times\sX_0^*\to\KK$. Indeed, because $\tilde{\sX} \subset \sX^*$ is a continuous embedding and the splitting $\sX = \sX_0\oplus\sK$ yields a splitting $\sX^* = \sX_0^*\oplus\sK^*$ by Lemma \ref{lem:Dual_direct_sum_Banach_spaces_is_direct_sum_dual_spaces} while $\tilde{\sX} = \tilde{\sX}_0\oplus\tilde{\sK} \cong  \tilde{\sX}_0\oplus\sK$, we obtain a continuous embedding $\tilde{\sX}_0 \subset \sX_0^*$, as implied above. Thus, $(f\circ \Phi)'(y) = Ay$, for all $y\in\sV$, and $\Crit f\circ\Phi = \sV\cap\Ker A$, an analytic submanifold of $\sV$ of dimension equal to $\dim\Ker A$.

When we specialize Theorem \ref{mainthm:Analytic_function_Lojasiewicz_exponent_one-half_Morse-Bott_Banach} to $\sX=\KK^d$, we obtain the desired characterization of the optimal {\L}ojasiewicz exponent for analytic functions on finite-dimensional vector spaces.

\begin{maincor}[Morse--Bott property of an analytic function with {\L}ojasiewicz exponent one half]
\label{maincor:Analytic_function_Lojasiewicz_exponent_one-half_Morse-Bott}
Let $d \geq 1$ be an integer, $U \subset \KK^d$ be an open neighborhood of the origin, and $f:U\to\KK$ be a non-constant analytic function such that $f(0) = 0$ and $f'(0) = 0$. If there is a constant $C \in (0, \infty)$ such that, after possibly shrinking $U$, the function $f$ obeys the {\L}ojasiewicz gradient inequality \eqref{eq:Lojasiewicz_gradient_inequality} with exponent $\theta=1/2$, then $f$ is a Morse--Bott function.
\end{maincor}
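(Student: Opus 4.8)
The plan is to obtain this corollary as the immediate specialization of Theorem \ref{mainthm:Analytic_function_Lojasiewicz_exponent_one-half_Morse-Bott_Banach} to the finite-dimensional Banach space $\sX = \KK^d$, with $\sU = U$. First I would note that all of the hypotheses on $f$ transfer verbatim: the function $f$ is non-constant analytic with $f(0) = 0$ and $f'(0) = 0$, and the {\L}ojasiewicz gradient inequality \eqref{eq:Lojasiewicz_gradient_inequality} with $\theta = 1/2$ is literally the inequality \eqref{eq:Lojasiewicz_gradient_inequality_dual_space} once we identify $\sX^* = \KK^{d*}$, so that the norms $\|\cdot\|_{\sX^*}$ and $\|\cdot\|_{\KK^{d*}}$ coincide.

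The one hypothesis of Theorem \ref{mainthm:Analytic_function_Lojasiewicz_exponent_one-half_Morse-Bott_Banach} not already visible in the statement of the corollary is that $f''(0) \in \sL(\KK^d, \KK^{d*})$ be a Fredholm operator of index zero. I would dispense with this by observing that it holds automatically in finite dimensions: any linear operator $A \in \sL(\KK^d,\KK^{d*})$ has finite-dimensional kernel and cokernel, hence is Fredholm, and since $\dim\Ker A = d - \dim\Ran A = \dim\Coker A$ by the rank--nullity theorem, its index vanishes irrespective of $A$. With every hypothesis verified, Theorem \ref{mainthm:Analytic_function_Lojasiewicz_exponent_one-half_Morse-Bott_Banach} then applies and shows that $f$ is Morse--Bott in the sense of Definition \ref{defn:Morse-Bott_function}.

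Finally I would check that, in the finite-dimensional setting, Definition \ref{defn:Morse-Bott_function} recovers Bott's classical notion of a Morse--Bott function recalled in Remark \ref{rmk:Morse-Bott_function_finite_dimension}. The closed-complement requirement on $\Ker f''(x_0)$ is vacuous because every subspace of $\KK^d$ is complemented, and since $f''(x_0)$ is symmetric one always has $\Ran f''(x_0) \subset \sX_0^*$; a dimension count (both spaces have dimension $d - \dim\Ker f''(x_0)$) then forces the nondegeneracy condition $\Ran f''(x_0) = \sX_0^*$ to hold automatically. Consequently the only remaining constraint is $T_{x_0}\Crit f = \Ker f''(x_0)$, which is exactly Bott's condition.

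There is no substantive obstacle to anticipate: the corollary is a direct corollary of the main theorem, and the sole points requiring care are the automatic verification of the Fredholm index-zero hypothesis in finite dimensions and the verbatim matching of the two forms of the {\L}ojasiewicz gradient inequality.
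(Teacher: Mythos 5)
Your proposal is correct and matches the paper's approach exactly: the paper obtains the corollary by specializing Theorem \ref{mainthm:Analytic_function_Lojasiewicz_exponent_one-half_Morse-Bott_Banach} to $\sX=\KK^d$, where the Fredholm index-zero hypothesis on $f''(0)$ is automatic by rank--nullity. Your additional check that Definition \ref{defn:Morse-Bott_function} reduces to Bott's classical notion in finite dimensions is a sound (if implicit in the paper) final step.
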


The interpretation of Corollary \ref{maincor:Analytic_function_Lojasiewicz_exponent_one-half_Morse-Bott} is simpler than that of Theorem \ref{mainthm:Analytic_function_Lojasiewicz_exponent_one-half_Morse-Bott_Banach}. By the Morse--Bott Lemma (Theorem \ref{thm:Morse-Bott_Lemma_Banach} with $\sX=\KK^d$ and diagonalization \cite[p. 278]{Horn_Johnson_topics_matrix_analysis_1994} of the symmetric matrix $A$ over $\KK$), after possibly shrinking $U$, there are an open neighborhood $V\subset\KK^d$ of the origin and an analytic diffeomorphism $\Phi:V\to U$ with $\Phi(0)=0$ and an integer $c$ obeying $0 \leq c \leq d-1$ such that
\[
f(\Phi(y)) = \frac{1}{2}\sum_{i=1}^{d-c}a_iy_i^2, \quad\text{for all } y \in V,
\]
where $a_i\in\KK\less\{0\}$, for $i=1,\ldots,d-c$. Thus, $(f\circ\Phi)'(y) = (a_1y_1,\ldots,a_{d-c}y_{d-c},0,\ldots,0) \in \KK^d$, for all $y\in V$, and $\Crit f\circ\Phi = \{y\in V:y_1=\cdots =y_{d-c}=0\}$, an analytic submanifold of $V$ of dimension $c$.

\subsection{Morse Lemma for functions on Banach spaces with degenerate critical points}
\label{subsec:Morse_lemma_functions_Banach_space_degenerate_critical_points}
It is important to carefully distinguish between the \emph{Morse--Bott Lemma} and the more general \emph{Morse Lemma for functions on Banach spaces with degenerate critical points} (also known as the \emph{Morse Lemma with parameters} or \emph{Splitting Lemma}): the latter makes no assumption on whether the critical set is a submanifold or, even if it is a submanifold, whether its tangent space at each critical point is equal to the kernel of the Hessian operator at that point. We begin with the

\begin{mainthm}[Morse Lemma for functions on Banach spaces with degenerate critical points]
\label{mainthm:Hormander_C-6-1_Banach}
Let $\sX$ and $\sY$ be Banach spaces over $\KK$, and $\sU\subset \sX$ and $\sV \subset \sY$ be open neighborhoods of the origin, and
\[
  f:\sX\times\sY \supset \sU\times \sV \ni (x,y) \mapsto f(x,y) \in \KK
\]
be a $C^{p+2}$ function ($p\geq 1$) such that $f(0,0) = 0$ and $D_1f(0,0)=0 $. If $D_1^2f(0,0) \in \sL(\sX, \sX^*)$ is invertible then, after possibly shrinking $\sU$ and $\sV$, there are an open neighborhood $\sU'$ of the origin and a $C^p$ diffeomorphism
\[
  \Phi: \sU'\times \sV \ni (z,y) \mapsto (x,y) \in \sU\times \sV
\]
with $\Phi(0,0)=(0,0)$ and
\begin{equation}
\label{eq:Derivative_Phi}
D\Phi(0,0) = \begin{pmatrix} \id_\sX & \star \\ 0 & \id_\sY \end{pmatrix} \in\sL(\sX\oplus\sY)
\end{equation}
such that
\begin{equation}
\label{eq:Hormander_C-6-1_Banach_dualspace}
f(\Phi(z,y)) = f(\Phi(0,y)) + \frac{1}{2}\langle z, Az\rangle_{\sX\times\sX^*}, \quad\text{for all } (z,y) \in \sU'\times \sV,
\end{equation}
where
\[
  A := D_1^2f(0,0) = D_1^2(f\circ\Phi)(0,0) \in \sL_\sym(\sX, \sX^*).
\]
If $f$ is analytic, then $\Phi$ is analytic.
\end{mainthm}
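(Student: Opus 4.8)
The plan is to prove this parametrized Morse lemma (a Banach-space form of Hörmander's Lemma C.6.1) in three stages: straighten the fiberwise critical locus by a shift, express the fiberwise second-order part of $f$ through a symmetric, isomorphism-valued remainder, and then absorb that remainder by an operator square root.

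First I would apply the Implicit Function Theorem to the map $\sU\times\sV \ni (x,y)\mapsto D_xf(x,y) \in \sX^*$, which is $C^{p+1}$, vanishes at $(0,0)$, and has invertible $x$-derivative $A = D_x^2 f(0,0)$ there. This yields, after shrinking $\sV$, a $C^{p+1}$ map $\phi:\sV\to\sX$ with $\phi(0)=0$ and $D_xf(\phi(y),y)=0$ for all $y$. Setting $g(\xi,y):=f(\xi+\phi(y),y)$ then gives $D_\xi g(0,y)=0$ for all $y$ and $D_\xi^2 g(0,0)=A$. The crucial regularity observation is that although $g$ itself is only $C^{p+1}$, its fiberwise Hessian $D_\xi^2 g(\xi,y)=D_x^2 f(\xi+\phi(y),y)$ is $C^p$, because $D_x^2 f$ is $C^p$ and is composed with the $C^{p+1}$ (hence $C^p$) shift $(\xi,y)\mapsto(\xi+\phi(y),y)$; since the Morse data will be built only from $D_\xi^2 g$, this is exactly what lets the final diffeomorphism retain $C^p$ regularity.

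Next, Taylor's theorem with integral remainder applied in $\xi$, together with $D_\xi g(0,y)=0$, gives
\[
g(\xi,y) - g(0,y) = \langle \xi, Q(\xi,y)\xi\rangle, \qquad Q(\xi,y) := \int_0^1 (1-t)\,D_\xi^2 g(t\xi,y)\,dt,
\]
so that $Q(\xi,y)\in\sL_\sym(\sX,\sX^*)$ is $C^p$ (analytic if $f$ is) and $Q(0,0)=\tfrac12 A =: Q_0$, whence $Q(\xi,y)$ is an isomorphism for $(\xi,y)$ near the origin. The heart of the argument is the factorization $\langle \xi, Q(\xi,y)\xi\rangle = \langle B(\xi,y)\xi,\, Q_0 B(\xi,y)\xi\rangle$ with $B(0,0)=\id_\sX$. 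Writing $b_0(u,v):=\langle u,Q_0 v\rangle$ for the nondegenerate symmetric form attached to $Q_0$, and $T:=Q_0^{-1}Q:\sX\to\sX$, the symmetry of $Q$ and $Q_0$ shows that $T$ is self-adjoint with respect to $b_0$ and $T(0,0)=\id_\sX$. I would then define $B:=T^{1/2}$ via the binomial power series $\sum_k \binom{1/2}{k}(T-\id_\sX)^k$, convergent in operator norm near the origin; as a norm-limit of polynomials in $T$ it is again $b_0$-self-adjoint and satisfies $B^2=T$, so that $b_0(B\xi,B\xi)=b_0(\xi,B^2\xi)=b_0(\xi,T\xi)=\langle\xi,Q\xi\rangle$. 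Phrasing self-adjointness through the bilinear form $b_0$ rather than a Hilbert inner product is what keeps this step valid for a general, possibly non-reflexive, Banach space $\sX$.

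Finally, the fiberwise map $\xi\mapsto z:=B(\xi,y)\xi$ has $\xi$-derivative $B(0,y)$ at $\xi=0$, close to $\id_\sX$, so by the (parameter-dependent) Inverse Function Theorem it admits a $C^p$ inverse $\xi=\eta(z,y)$ with $\eta(0,y)=0$, analytic in $(z,y)$ when $f$ is analytic. Setting $\Phi(z,y):=(\eta(z,y)+\phi(y),\,y)$ gives the desired diffeomorphism: since $\Phi(0,y)=(\phi(y),y)$ we have $f(\Phi(0,y))=g(0,y)$, and
\[
f(\Phi(z,y)) = g(\eta(z,y),y) = g(0,y) + \langle \eta, Q\eta\rangle = f(\Phi(0,y)) + \tfrac12\langle z, Az\rangle,
\]
with $\eta=\eta(z,y)$, $Q=Q(\eta,y)$, using $b_0(B\eta,B\eta)=\langle z,Q_0 z\rangle$ and $B(\eta,y)\eta=z$. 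The block-triangular form \eqref{eq:Derivative_Phi} follows from $D_z\eta(0,0)=B(0,0)^{-1}=\id_\sX$, and differentiating the displayed identity twice in $z$ confirms $A=D_z^2(f\circ\Phi)(0,0)$. I expect the main obstacle to be precisely the square-root/factorization step: showing that $T^{1/2}$ is well-defined, $b_0$-self-adjoint, and of the claimed regularity (including real-analyticity in the analytic case) in the Banach-space setting, and carefully tracking the transpose bookkeeping needed to pass from the operator identity to the identity of quadratic forms without invoking reflexivity.
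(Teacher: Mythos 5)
Your proposal is correct, and its skeleton (implicit function theorem to straighten the fiberwise critical locus, Taylor's theorem with integral remainder to extract a symmetric operator-valued remainder $Q(\xi,y)$ with $Q(0,0)=\tfrac12 A$, a factorization of the quadratic form, and a final inverse function theorem to invert $(\xi,y)\mapsto(B(\xi,y)\xi,y)$) matches the paper's proof step for step. The one place you genuinely diverge is the factorization itself. The paper writes $B(w,y)=R(w,y)^*AR(w,y)$ by applying the Analytic Inverse Function Theorem to the quadratic map $\sQ:P\mapsto P^*AP$ on the subspace $\sL_A(\sX)=\{R:R^*A=AR\}$, whose derivative at the identity, $Q\mapsto Q^*A+AQ$, is shown to be an isomorphism onto $\sL_\sym(\sX,\sX^*)$ (a generalization of Ang--Tuan's Lemma 1); this forces some bookkeeping with adjoints and the bidual ($P^{**}\restriction\sX=P$). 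You instead set $T=Q_0^{-1}Q$, observe that $T$ is self-adjoint for the nondegenerate form $b_0(u,v)=\langle u,Q_0v\rangle$, and take $B=T^{1/2}$ by the binomial series near $\id_\sX$; since powers, linear combinations, and norm-limits of $b_0$-self-adjoint operators are $b_0$-self-adjoint, the identity $b_0(B\xi,B\xi)=b_0(\xi,T\xi)=\langle\xi,Q\xi\rangle$ follows with no adjoint or reflexivity bookkeeping at all. The paper explicitly acknowledges this alternative in a footnote (it is the route of Palais and of Lang's exposition). What the paper's route buys is that the same argument transfers verbatim to the refined setting $\tilde{\sX}\subset\sX^*$ of its Theorem 4.5, where the space $\sL_A(\sX)$ is tailored to the embedding; your route buys a more elementary and self-contained key step, and since your square-root construction only uses that $Q_0$ is symmetric and invertible as a map $\sX\to\sX^*$, it too would extend to the refined statement with $\sX^*$ replaced by $\tilde{\sX}$. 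All the regularity claims you make ($D_\xi^2g$ being $C^p$ because it is $D_x^2f$ composed with a $C^{p+1}$ shift, analyticity of the square root as a convergent power series in $T-\id_\sX$, and hence analyticity of $\Phi$ when $f$ is analytic) are sound and agree with the paper's.
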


\begin{rmk}[Previous versions of the Morse Lemma for functions with degenerate critical points]
\label{rmk:Hormander_C-6-1_Banach_older_versions}
Theorem \ref{mainthm:Hormander_C-6-1_Banach} is a generalization of \cite[Lemma C.6.1]{Hormander_v3} from the case where $f$ is $C^\infty$ on $\sH = \sX\times\sY$ and $\sH=\RR^d$ with its standard inner product; and \cite[Lemma 1]{Gromoll_Meyer_1969top}, due to Gromoll and Meyer, from the case where $f$ is $C^\infty$ and $\sH$ is a real separable Hilbert space and the Hessian is Fredholm at the critical point; and \cite[Theorem 1]{Mawhin_Willem_1985}, due to Mawhin and Willem, from the case where $f$ is $C^2$ and $\sH$ is a real Hilbert space and the Hessian is Fredholm at the critical point. H\"ormander's proof is similar to that of Palais \cite[p. 307]{Palais_1963}, who allows $\sH$ to be a real Hilbert space but assumes that the critical point is non-degenerate. In \cite{Palais_1969}, Palais uses the Moser Path Method from \cite{Moser_1965}; see \cite{Weinstein_1969} for another early application of \cite{Moser_1965} in the  setting of Banach manifolds. Theorem \ref{mainthm:Hormander_C-6-1_Banach} is also known as the Morse Lemma with parameters (see \cite{Postnikov_Rudyak_morse_lemma}) or Splitting Lemma --- see Br{\"o}cker \cite[Lemma 14.12]{Brocker_differentiable_germs_catastrophes} or Poston and Stewart \cite[Theorems 4.5 and 6.1]{Poston_Stewart_catastrophe_theory_applications} --- and attributed to Thom.
\end{rmk}

\begin{rmk}[Previous versions of the Morse Lemma for functions on Banach spaces]
\label{rmk:Morse_lemmas_Banach_spaces}
Palais proved the Morse Lemma for smooth functions on Hilbert spaces in \cite[p. 307]{Palais_1963} and later extended and simplified his proof to give the Morse Lemma for smooth functions on Banach spaces in \cite[p. 968]{Palais_1969} (see also Guillemin and Sternberg \cite[Chapter 1, Appendix 1]{Guillemin_Sternberg_1977} for another exposition of Palais's proof in \cite{Palais_1969}).

The version of the Morse Lemma (for functions with degenerate critical points) in Theorem \ref{mainthm:Hormander_C-6-1_Banach}, is not the most general possible extension of H\"ormander's \cite[Lemma C.6.1]{Hormander_v3} from Euclidean space to a Banach space. Rather, as in Lang's exposition \cite[Section 7.5]{Lang_fundamentals_differential_geometry} of the proof of Palais's version of the Morse Lemma on Hilbert spaces \cite[p. 307]{Palais_1963}, our hypotheses in Theorem \ref{mainthm:Hormander_C-6-1_Banach} are strong enough that replacement of Euclidean space by a Banach space over $\KK$ involves no new complication.

A shorter proof of Palais's version of the Morse Lemma on Banach spaces \cite[p. 968]{Palais_1969} is given by Ang and Tuan \cite{Ang_Tuan_1973}; see also their article \cite{Tuan_Ang_1979}. Kuo \cite[Theorem, p. 364]{Kuo_HH_1974} and Tromba \cite{Tromba_1972} consider $C^{p+2}$ functions on Banach spaces with isolated critical points obeying a more general notion of non-degeneracy than that of Palais \cite[p. 968]{Palais_1969}, avoiding the implication in \cite{Palais_1969} that the Banach space is isomorphic to its dual space via the isomorphism provided by the Hessian operator. A related extension was proposed earlier by Uhlenbeck \cite{Uhlenbeck_1970}, inspired by a question of Smale \cite{Smale_1964}. Antoine \cite[Theorem 1]{Antoine_1979} considers $C^{p+2}$ functions on Banach spaces with isolated critical points and invertible Hessian operators.

Gromoll and Meyer \cite[Lemma 1]{Gromoll_Meyer_1969top} consider $C^\infty$ functions on Hilbert spaces with Fredholm Hessian operators while Mawhin and Willem \cite[Theorem 1]{Mawhin_Willem_1985} relax the regularity requirement of Gromoll and Meyer in \cite{Gromoll_Meyer_1969top} from $C^\infty$ to $C^2$.
\end{rmk}

\begin{rmk}[Regularity of the function $f$]
\label{rmk:Degenerate_Morse_function_regularity}
It is likely that one can adapt the arguments of Cambini \cite{Cambini_1973}, Kuiper \cite{Kuiper_1972}, and Mawhin and Willem \cite{Mawhin_Willem_1985} to reduce the $C^{p+2}$ (with $p\geq 1$) regularity requirement on $f$ in Theorem \ref{mainthm:Hormander_C-6-1_Banach} to $C^2$, as those authors allow for their versions of the Morse Lemma on Banach spaces, but the resulting proof would be lengthier and less elegant. See Remark \ref{rmk:Proof_Lojasiewicz-Simon_gradient_inequality_by_Morse-Bott_lemma} for further discussion.
\end{rmk}

\begin{rmk}[Morse Lemma for functions with degenerate critical points and Lyapunov--Schmidt reduction]
\label{rmk:Hormander_C-6-1_Banach_as_Lyapunov--Schmidt_reduction}
Theorem \ref{mainthm:Hormander_C-6-1_Banach} may be regarded as a more refined version of the technique of Lyapunov--Schmidt reduction (see Guo and Wu \cite[Section 5.1]{Guo_Wu_bifurcation_theory_functional_differential_equations}, Huang \cite[Proposition 2.4.1]{Huang_2006}, or Nirenberg \cite[Section 2.7.6]{Nirenberg_topics_nonlinear_functional_analysis}) when $\sY$ has finite dimension. Indeed, as we see from Lemma~\ref{lem:Invariance_Lojasiewicz_exponent_addition_quadratic_form_refined}, Theorem \ref{mainthm:Hormander_C-6-1_Banach} immediately reduces the proof of the {\L}ojasiewicz gradient inequality for analytic functions on Banach spaces to the well-known {\L}ojasiewicz gradient inequality for analytic functions on Euclidean space (see Feehan \cite{Feehan_lojasiewicz_inequality_all_dimensions} for a detailed survey and references).
\end{rmk}

As noted in Section \ref{subsec:Main_results_Morse-Bott_property_functions_Lojasiewicz_exponent_half} (see the discussion prior to Theorem \ref{mainthm:Analytic_function_Lojasiewicz_exponent_one-half_Morse-Bott_Banach_refined}), the hypothesis in Theorem \ref{mainthm:Hormander_C-6-1_Banach} that $D_1^2f(0,0) \in \sL(\sX,\sX^*)$ is an isomorphism is strong but is relaxed in the following generalization which immediately yields Theorem \ref{mainthm:Hormander_C-6-1_Banach} upon specializing to $\tilde{\sX} = \sX^*$. See Kuo \cite[Theorem, p. 364]{Kuo_HH_1974}, Tromba \cite{Tromba_1972}, \cite{Uhlenbeck_1970}, and Mawhin and Willem \cite[Theorem 1]{Mawhin_Willem_1985} for related refinements, though none provide the generality of Theorem \ref{mainthm:Hormander_C-6-1_Banach_refined}.

\begin{mainthm}[Generalized Morse Lemma for functions on Banach spaces with degenerate critical points]
\label{mainthm:Hormander_C-6-1_Banach_refined}
Let $\sX$, $\tilde{\sX}$, and $\sY$ be Banach spaces over $\KK$, and $\tilde{\sX} \subset\sX^*$ be a continuous embedding, and $\sU\subset \sX$ and $\sV \subset \sY$ be open neighborhoods of the origin, and
\[
  f:\sX\times\sY \supset \sU\times \sV \ni (x,y) \mapsto f(x,y) \in \KK
\]
be a $C^{p+2}$ function ($p\geq 1$) such that $f(0,0) = 0$ and $D_1f(0,0)=0$ and $D_1f(x,y) \in \tilde{\sX}$ for all $(x,y)\in \sU\times \sV$. If $D_1^2f(0,0) \in \sL(\sX, \tilde{\sX})$ is invertible then, after possibly shrinking $\sU$ and $\sV$, there are an open neighborhood of the origin $\sU'\subset \sX$ and a $C^p$ diffeomorphism,
\[
  \Phi: \sU'\times \sV \ni (z,y) \mapsto (x,y) = \Phi(z,y) \in \sU\times \sV
\]
with $\Phi(0,0)=(0,0)$ and
\begin{equation}
\label{eq:Derivative_Phi_refined}
D\Phi(0,0) = \begin{pmatrix} \id_\sX & \star \\ 0 & \id_\sY \end{pmatrix} \in\sL(\sX\oplus\sY)
\end{equation}
such that
\begin{equation}
\label{eq:Hormander_C-6-1_Banach_refined}
f(\Phi(z,y)) = f(\Phi(0,y)) + \frac{1}{2}\langle z, Az\rangle_{\sX\times\sX^*}, \quad\text{for all } (z,y) \in \sU'\times \sV,
\end{equation}
where
\[
  A := D_1^2f(0,0) = D_1^2(f\circ\Phi)(0,0) \in \sL_\sym(\sX, \tilde{\sX}).
\]
If $f$ is analytic, then $\Phi$ is analytic.
\end{mainthm}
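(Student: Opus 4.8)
The plan is to follow the direct ``complete-the-square'' method of Palais \cite{Palais_1963} and Lang \cite{Lang_fundamentals_differential_geometry}, rather than the Moser path method, carrying it out in two stages: first straighten the $x$-critical locus of $f$ by an Implicit Function Theorem shift, and then diagonalize the resulting $x$-quadratic part by an operator-valued symmetric square root. Since $D_xf(0,0)=0$ and $A:=D_x^2f(0,0)\in\sL(\sX,\tilde{\sX})$ is invertible, the $C^{p+1}$ map $(x,y)\mapsto D_xf(x,y)\in\tilde{\sX}$ has invertible partial $x$-derivative $A$ at the origin, so the Implicit Function Theorem (its analytic version when $f$ is analytic) produces a $C^{p+1}$ map $y\mapsto g(y)\in\sX$ with $g(0)=0$ and $D_xf(g(y),y)=0$ for $y$ near the origin. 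Writing $x=\xi+g(y)$ and $h(\xi,y):=f(\xi+g(y),y)$, we obtain $D_\xi h(0,y)=0$, and the reduced function $h(0,y)=f(g(y),y)$ will play the role of $f(\Phi(0,y))$.

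Next I would represent the $\xi$-dependence of $h$ as a quadratic form. Taylor's theorem with integral remainder gives
\[
h(\xi,y)-h(0,y)=\langle\xi,\tQ(\xi,y)\xi\rangle,\qquad \tQ(\xi,y):=\int_0^1(1-t)\,D_\xi^2 h(t\xi,y)\,dt\in\sL_\sym(\sX,\tilde{\sX}),
\]
with $\tQ(0,0)=\tfrac12 A=:Q_0$. The crucial regularity observation is that although $h$ is only $C^{p+1}$, the exact identity $D_\xi^2 h(\xi,y)=D_x^2 f(\xi+g(y),y)$ exhibits $D_\xi^2 h$ as the composition of the $C^p$ map $D_x^2 f$ with the $C^{p+1}$ shift, so $D_\xi^2 h$, and hence $\tQ$, is $C^p$; this is exactly what yields a $C^p$ (rather than $C^{p-1}$) diffeomorphism at the end. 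Set $W(\xi,y):=Q_0^{-1}\tQ(\xi,y)\in\sL(\sX)$, so $W(0,0)=\id_\sX$ and, after shrinking, $\|W-\id_\sX\|<1$; then define the principal square root $C:=W^{1/2}$ by the convergent binomial series $\sum_{k\geq0}\binom{1/2}{k}(W-\id_\sX)^k$, so $C$ is $C^p$ (analytic if $f$ is) with $C(0,0)=\id_\sX$ and $C^2=W$.

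The key algebraic point is that $C$ diagonalizes $\tQ$. Because $\tQ$ and $Q_0$ are symmetric for the pairing, $W=Q_0^{-1}\tQ$ is self-adjoint with respect to the bilinear form $(u,v)\mapsto\langle u,Q_0v\rangle$, meaning $\langle Wu,Q_0v\rangle=\langle u,Q_0Wv\rangle$; this relation passes from $W$ to every polynomial in $W$ and hence, by continuity of the pairing, to $C$, giving $C^*Q_0=Q_0C$. Consequently $C^*Q_0C=Q_0C^2=Q_0W=\tQ$, and the same relation shows $C^*$ preserves $\tilde{\sX}\subset\sX^*$, keeping everything inside $\sL(\sX,\tilde{\sX})$ as the refined statement requires. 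Setting $z:=C(\xi,y)\xi$ then yields $\langle\xi,\tQ\xi\rangle=\langle C\xi,Q_0C\xi\rangle=\langle z,Q_0z\rangle=\tfrac12\langle z,Az\rangle$. The map $\Theta_2:(\xi,y)\mapsto(z,y)$ has partial $\xi$-derivative $C(0,0)=\id_\sX$ at the origin and fixes $y$, so by the Inverse Function Theorem it is a local $C^p$ diffeomorphism with $D\Theta_2(0,0)=\id$; composing with the shift $\Theta_1:(\xi,y)\mapsto(\xi+g(y),y)$ and inverting, I set $\Phi:=\Theta_1\circ\Theta_2^{-1}$. This satisfies $f(\Phi(z,y))=f(g(y),y)+\tfrac12\langle z,Az\rangle=f(\Phi(0,y))+\tfrac12\langle z,Az\rangle$ and has the upper-triangular derivative \eqref{eq:Derivative_Phi_refined}, with $\star=Dg(0)$ contributed by $\Theta_1$.

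The main obstacle is the construction of the symmetric square root $C$ and the verification that it operates entirely within the three-space framework $\tilde{\sX}\subset\sX^*$: one must confirm that the binomial series converges in $\sL(\sX)$, depends $C^p$-analytically on $(\xi,y)$, and respects the symmetry of the pairing so that $C^*Q_0C=\tQ$ again lands in $\tilde{\sX}$. This is precisely where the generalized hypothesis $A\in\sL(\sX,\tilde{\sX})$, rather than $A\in\sL(\sX,\sX^*)$, must be tracked carefully and where the preliminary lemmas on $\sL_\sym(\sX,\tilde{\sX})$ are invoked. For the analytic case I would check that analyticity survives each stage: the analytic Implicit Function Theorem gives $g$ analytic, the integrand $D_\xi^2 h(t\xi,y)$ is jointly analytic so $\tQ$ is analytic, the binomial series defines an analytic operator-valued map, and the (analytic) Inverse Function Theorem returns analytic inverses, so $\Phi$ is analytic. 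Finally, specializing $\tilde{\sX}=\sX^*$ recovers Theorem \ref{mainthm:Hormander_C-6-1_Banach}.
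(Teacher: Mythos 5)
Your proposal is correct and reaches the same conclusion by the same two-stage skeleton as the paper --- an Implicit Function Theorem shift $x=\xi+g(y)$ to kill $D_\xi h(0,y)$, followed by the second-order Taylor formula with integral remainder to produce the operator family $\tQ(\xi,y)$ with $\tQ(0,0)=\tfrac12 A$ --- but it diverges at the key ``diagonalization'' step. The paper (following Ang and Tuan) introduces the space $\sL_A(\sX)$ of operators $R$ with $R^*A=AR$ and $R^*\restriction\tilde{\sX}\in\sL(\tilde{\sX})$, proves that $Q\mapsto Q^*A+AQ$ is an isomorphism onto $\sL_\sym(\sX,\tilde{\sX})$ (Claim \ref{claim:Ang_Tuan_lemma_1}), and obtains $R$ with $B=R^*AR$ abstractly as $F(B)$, where $F$ is the local analytic inverse of $P\mapsto P^*AP$ furnished by the Analytic Inverse Function Theorem. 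You instead construct the conjugating operator explicitly as the binomial-series square root $C=(Q_0^{-1}\tQ)^{1/2}$, in the spirit of Palais and Lang, and verify by hand that $Q_0$-self-adjointness of $W=Q_0^{-1}\tQ$ (which follows from symmetry of $\tQ$ and $Q_0$) passes to powers, hence to $C$, giving $C^*Q_0=Q_0C$ and therefore $C^*Q_0C=Q_0W=\tQ$ with values in $\sL_\sym(\sX,\tilde{\sX})$; this is exactly the relation that defines membership in the paper's $\sL_A(\sX)$, and indeed the two constructions produce literally the same operator, since $P\in\sL_A(\sX)$ with $P^*AP=2\tQ$ forces $P^2=A^{-1}\cdot 2\tQ=W$. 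What the paper's route buys is that analyticity (and $C^p$ dependence) of $R=F(B)$ comes for free from the Inverse Function Theorem on operator space, and the isomorphism Claim isolates the only genuinely new point in the three-space setting; what your route buys is explicitness and the avoidance of the auxiliary Banach space $\sL_A(\sX)$, at the cost of checking that the operator-valued binomial series is an analytic function of $W$ and that the adjoint relations keep everything inside $\sL(\sX,\tilde{\sX})$ --- checks you correctly identify and which do go through. Your regularity bookkeeping ($g$ of class $C^{p+1}$, $D_\xi^2h=D_x^2f\circ\Theta_1$ of class $C^p$, hence a $C^p$ diffeomorphism) and the computation of $D\Phi(0,0)$ with $\star=Dg(0)$ agree with the paper's.
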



\subsection{Morse and Morse--Bott Lemmas for functions on Banach spaces}
\label{subsec:Morse-Bott_lemma_functions_Banach_space_degenerate_critical_points}
Theorems \ref{mainthm:Hormander_C-6-1_Banach} and \ref{mainthm:Hormander_C-6-1_Banach_refined} easily yield versions of the Morse Lemma and, more broadly, the Morse--Bott Lemma in varying degrees of generality, namely Theorems \ref{thm:Morse_Lemma_Banach}, \ref{thm:Morse-Bott_Lemma_Banach}, \ref{thm:Morse-Bott_Lemma_Banach_refined}, and \ref{cor:Morse-Bott_Lemma_holomorphic}. We refer to Section \ref{subsec:Applications_Morse_lemma_functions_degenerate_critical_points} for their statements and short proofs.

\subsection{{\L}ojasiewicz--Simon gradient inequalities for smooth Morse--Bott functions on Banach spaces}
\label{subsec:Lojasiewicz-Simon inequalities_Morse-Bott_functions_Banach_spaces}
The Morse--Bott Lemma (Theorems \ref{thm:Morse-Bott_Lemma_Banach} and \ref{thm:Morse-Bott_Lemma_Banach_refined}) readily leads to {\L}ojasiewicz--Simon gradient inequalities with exponent one half for $C^{p+2}$ (with $p\geq 1$) Morse--Bott functions on Banach spaces, giving alternative proofs to those that do not rely on the Morse--Bott Lemma provided by the author in \cite[Theorem 3]{Feehan_lojasiewicz_inequality_all_dimensions} (when $f$ is $C^2$ and $\sX$ is finite-dimensional) and \cite[Theorem 3 and Corollaries 4 and 5]{Feehan_lojasiewicz_inequality_ground_state} (when $f$ is $C^2$ and $\sX$ is a Banach space) and by the author and Maridakis \cite[Theorem 4]{Feehan_Maridakis_Lojasiewicz-Simon_Banach} (when $f$ is $C^2$ and $\sX$ is a Banach space and $f''(0)$ is Fredholm with index zero). We begin with the following analogue of Theorem \ref{mainthm:Analytic_function_Lojasiewicz_exponent_one-half_Morse-Bott_Banach} and which is proved by appealing to the Morse--Bott Lemma provided by Theorem \ref{thm:Morse-Bott_Lemma_Banach}; the following Theorem \ref{mainthm:Analytic_function_Lojasiewicz_exponent_one-half_Morse-Bott_Banach} is similar to our \cite[Corollary 5]{Feehan_lojasiewicz_inequality_ground_state}, except that here we assume that $f$ is $C^{p+2}$ for some $p\geq 1$.

\begin{mainthm}[{\L}ojasiewicz gradient inequality for $C^{p+2}$ Morse--Bott functions on Banach spaces]
\label{mainthm:Lojasiewicz_gradient_inequality_Morse-Bott}
(Compare Feehan \cite[Corollary 5]{Feehan_lojasiewicz_inequality_ground_state}.)
Let $\sX$ be a Banach space over $\KK$, and $\sU$ be an open neighborhood of the origin, and $f:\sU \to \KK$ be a $C^{p+2}$ function ($p\geq 1$) such that $f(0) = 0$ and $f'(0)=0$. If $f$ is Morse--Bott at the origin in the sense of Definition \ref{defn:Morse-Bott_function} (\ref{item:Morse-Bott_point}) then, after possibly shrinking $\sU$, there is a constant $C \in (0, \infty)$ such that
\begin{equation}
\label{eq:Lojasiewicz-Simon_gradient_inequality_Morse-Bott}
\|f'(x)\|_{\sX^*} \geq C|f(x)|^{1/2}, \quad\text{for all } x \in \sU.
\end{equation}
\end{mainthm}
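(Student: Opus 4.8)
The plan is to put $f$ into its quadratic normal form with the Morse--Bott Lemma, verify the inequality \eqref{eq:Lojasiewicz-Simon_gradient_inequality_Morse-Bott} for that normal form by a direct computation, and then pull the conclusion back to $f$ using invariance of the {\L}ojasiewicz exponent under diffeomorphisms. Since the analytic heavy lifting is carried out by the Morse--Bott Lemma, the remaining work is elementary.

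First I would apply the Morse--Bott Lemma (Theorem \ref{thm:Morse-Bott_Lemma_Banach}): as $f$ is $C^{p+2}$ with $p \geq 1$ and Morse--Bott at the origin, after shrinking $\sU$ there are an open neighborhood $\sV \subset \sX$ of the origin and a $C^p$ diffeomorphism $\Phi:\sV \to \sU$ with $\Phi(0)=0$ such that $g(y) := f(\Phi(y)) = \tfrac12\langle y, Ay\rangle$ for all $y \in \sV$, where $A := (f\circ\Phi)''(0) \in \sL_\sym(\sX,\sX^*)$. The Morse--Bott hypothesis together with Lemma \ref{lem:Range_symmetric_operator_kernel_has_closed_complement} and Lemma \ref{lem:Dual_direct_sum_Banach_spaces_is_direct_sum_dual_spaces} provides a splitting $\sX = \sX_0 \oplus \sK$ with $\sK = \Ker A$, a dual splitting $\sX^* = \sX_0^* \oplus \sK^*$ in which $\sX_0^* = \Ran A$ is the annihilator of $\sK$, and an isomorphism $A_0 := A|_{\sX_0}:\sX_0 \to \sX_0^*$.

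Next I would prove the inequality for $g$. Since $A$ is symmetric, $g'(y) = Ay$. Writing $y = y_0 + k$ with $y_0 \in \sX_0$ and $k \in \sK$, we have $Ay = A_0 y_0 \in \sX_0^*$, and because $\sX_0^*$ annihilates $\sK$ we get $\langle y, Ay\rangle = \langle y_0, A_0 y_0\rangle$. The isomorphism $A_0$, combined with the equivalence (via the bounded projection onto $\sX_0$) of the $\sX^*$-norm and the intrinsic $\sX_0^*$-norm on the complemented subspace $\sX_0^*$, yields constants $c_1, c_2 \in (0,\infty)$ with $c_1\|y_0\| \leq \|Ay\|_{\sX^*} \leq c_2\|y_0\|$. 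Therefore $\|g'(y)\|_{\sX^*} = \|Ay\|_{\sX^*} \geq c_1\|y_0\|$, while $|g(y)| = \tfrac12|\langle y_0, A_0 y_0\rangle| \leq \tfrac12\|y_0\|\,\|Ay\|_{\sX^*} \leq \tfrac{c_2}{2}\|y_0\|^2$; combining the two gives $\|g'(y)\|_{\sX^*} \geq c_1\sqrt{2/c_2}\,|g(y)|^{1/2}$ for all $y \in \sV$, which is \eqref{eq:Lojasiewicz-Simon_gradient_inequality_Morse-Bott} for $g$.

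Finally I would transfer the estimate back to $f$. The chain rule gives $g'(y) = D\Phi(y)^* f'(\Phi(y))$, hence $\|g'(y)\|_{\sX^*} \leq \|D\Phi(y)\|\,\|f'(\Phi(y))\|_{\sX^*}$, and since $\Phi \in C^p$ with $p \geq 1$ the operator norm $\|D\Phi(\cdot)\|$ is bounded by some $M < \infty$ on a neighborhood of the origin. Using $|f(\Phi(y))| = |g(y)|$, this produces $\|f'(\Phi(y))\|_{\sX^*} \geq (c_1/M)\sqrt{2/c_2}\,|f(\Phi(y))|^{1/2}$; setting $x = \Phi(y)$ and letting $y$ range over a small enough neighborhood (so that $x$ fills out a correspondingly shrunk $\sU$) gives \eqref{eq:Lojasiewicz-Simon_gradient_inequality_Morse-Bott} with $C = (c_1/M)\sqrt{2/c_2}$. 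The only points demanding care are the two norm-equivalences --- on the complemented range $\sX_0^*$ and through the differential $D\Phi$ near the origin --- both of which are routine, so I expect no serious obstacle once the Morse--Bott Lemma is in hand.
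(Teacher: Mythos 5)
Your proposal is correct and follows essentially the same route as the paper: the paper likewise applies the Morse--Bott Lemma (Theorem \ref{thm:Morse-Bott_Lemma_Banach_refined}), establishes the exponent $1/2$ for the resulting quadratic form via Lemma \ref{lem:Quadratic_form_Lojasiewicz_exponent_one_half_refined}, and transfers the inequality back through the diffeomorphism via Lemma \ref{lem:Pushforwards_preserve_Lojasiewicz_exponents_Banach_spaces}. The only difference is that you inline the proofs of those two lemmas in the special case $\tilde{\sX}=\sX^*$ rather than citing them.
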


\begin{rmk}[On the definition of a Morse--Bott function on a Banach space]
\label{rmk:Definition_Morse-Bott_function_Banach_space}
In \cite[Definition 1.5]{Feehan_lojasiewicz_inequality_ground_state} and \cite[Definition 1.10]{Feehan_Maridakis_Lojasiewicz-Simon_Banach}, we said that a $C^2$ function $f:\sU\to\KK$ is Morse--Bott at a point $x_0\in\sU$ if $\Crit f$ is a $C^2$ (connected) submanifold and $T_{x_0}\Crit f = \Ker f''(x_0)$, but omitted the requirement\footnote{The property that $\Ker f''(x_0)\subset\sX$ has a closed complement $\sX_0 \subset \sX$ is automatically obeyed for the reasons explained in Remark \ref{rmk:Existence_closed_complement}.\label{fn:Existence_closed_complement}} that $\Ran f''(x_0)=\sX_0^*$. In our {\L}ojasiewicz gradient inequality \cite[Corollary 5]{Feehan_lojasiewicz_inequality_ground_state} for $C^2$ Morse--Bott functions analogous to Theorem \ref{mainthm:Lojasiewicz_gradient_inequality_Morse-Bott}, we required\footref{fn:Existence_closed_complement} that $\Ran f''(x_0)\subset \sX^*$ be a closed subspace (equivalent to $\Ran f''(x_0)=\sX_0^*$ when $\sX$ is reflexive by Lemma \ref{lem:Isomorphism_properties_symmetric_operator} \eqref{item:Kernel_complemented}. In the hypotheses for our \cite[Theorem 4]{Feehan_Maridakis_Lojasiewicz-Simon_Banach}, we imposed the stronger requirement that $f''(x_0) \in \sL(\sX,\sX^*)$ be Fredholm, so the additional condition that $\Ran f''(x_0)=\sX_0^*$ was obeyed automatically --- see Lemma \ref{lem:Range_symmetric_operator_kernel_has_closed_complement} \eqref{item:Range_Fredholm_operator_index_zero}.
\end{rmk}

The forthcoming Theorem \ref{mainthm:Lojasiewicz_gradient_inequality_Morse-Bott_refined} is an analogue of Theorem \ref{mainthm:Analytic_function_Lojasiewicz_exponent_one-half_Morse-Bott_Banach_refined}. While Theorem \ref{mainthm:Lojasiewicz_gradient_inequality_Morse-Bott_refined} is similar to our \cite[Corollary 4]{Feehan_lojasiewicz_inequality_ground_state}, we proved the latter result directly for functions $f$ that are only $C^2$, without appealing to the Morse--Bott Lemma (provided here by Theorem \ref{thm:Morse-Bott_Lemma_Banach_refined}); by contrast, we assume in Theorem \ref{mainthm:Lojasiewicz_gradient_inequality_Morse-Bott_refined} that $f$ is $C^{p+2}$ for some $p\geq 1$.

\begin{mainthm}[Generalized {\L}ojasiewicz gradient inequality for $C^{p+2}$ Morse--Bott functions on Banach spaces]
\label{mainthm:Lojasiewicz_gradient_inequality_Morse-Bott_refined}
(Compare Feehan \cite[Corollary 4]{Feehan_lojasiewicz_inequality_ground_state}.)
Let $\sX$ and $\tilde{\sX}$ be Banach spaces over $\KK$, and $\tilde{\sX}\subset\sX^*$ be a continuous embedding, $\sU$ be an open neighborhood of the origin, and $f:\sU \to \KK$ be a $C^{p+2}$ function ($p\geq 1$) such that $f(0) = 0$ and $f'(0)=0$ and $f'(x) \in \tilde{\sX}$ for all $x\in\sU$. If $f$ is Morse--Bott at the origin in the sense of Definition \ref{defn:Morse-Bott_function_refined} (\ref{item:Morse-Bott_point_refined}) then, after possibly shrinking $\sU$, there is a constant $C \in (0, \infty)$ such that
\begin{equation}
\label{eq:Lojasiewicz-Simon_gradient_inequality_Morse-Bott_refined}
\|f'(x)\|_{\tilde{\sX}} \geq C|f(x)|^{1/2}, \quad\text{for all } x \in \sU.
\end{equation}
\end{mainthm}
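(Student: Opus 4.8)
The plan is to reduce $f$ to a pure quadratic form via the Morse--Bott Lemma, establish the inequality for that quadratic form by a direct computation, and then transfer the conclusion back through the normalizing diffeomorphism.

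First I would invoke the generalized Morse--Bott Lemma (Theorem \ref{thm:Morse-Bott_Lemma_Banach_refined}). Since $f$ is Morse--Bott at the origin in the sense of Definition \ref{defn:Morse-Bott_function_refined} \eqref{item:Morse-Bott_point_refined}, after possibly shrinking $\sU$ there are an open neighborhood $\sV\subset\sX$ of the origin and a $C^p$ diffeomorphism $\Phi:\sV\to\sU$ with $\Phi(0)=0$, $D\Phi(0)=\id_\sX$, and $f(\Phi(y)) = \tfrac12\langle y, Ay\rangle$ for all $y\in\sV$, where $A := f''(0) = (f\circ\Phi)''(0) \in \sL_\sym(\sX,\tilde{\sX})$. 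The hypotheses of Definition \ref{defn:Morse-Bott_function_refined} \eqref{item:Morse-Bott_point_refined} give $\Ran A = \tilde{\sX}$ and a closed complement $\sX_0\subset\sX$ of $\sK := \Ker A$; writing $A_0 := A|_{\sX_0}$, the restriction $A_0:\sX_0\to\tilde{\sX}$ is injective (as $\sK\cap\sX_0=\{0\}$), surjective (as $\Ran A=\tilde{\sX}$ and $A$ kills $\sK$), hence an isomorphism of Banach spaces by the Open Mapping Theorem (compare Lemma \ref{lem:Range_symmetric_operator_kernel_has_closed_complement}).

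Second I would prove the inequality directly for $g := f\circ\Phi$. Decomposing $y = y_0 + k$ with $y_0\in\sX_0$ and $k\in\sK$, and using that $Ak=0$ together with the symmetry of $A$ to get $\langle k, A_0y_0\rangle = \langle y_0, Ak\rangle = 0$, I find $g(y) = \tfrac12\langle y_0, A_0y_0\rangle$ and $g'(y) = Ay = A_0y_0\in\tilde{\sX}$. Since $A_0$ is an isomorphism onto $\tilde{\sX}$, I obtain the lower bound $\|g'(y)\|_{\tilde{\sX}} = \|A_0y_0\|_{\tilde{\sX}} \geq \|A_0^{-1}\|^{-1}\|y_0\|_\sX$, while the continuity of the embedding $\tilde{\sX}\subset\sX^*$ and the duality bound $|\langle y_0, A_0y_0\rangle| \leq \|y_0\|_\sX\,\|A_0y_0\|_{\sX^*}$ give $|g(y)| \lesssim \|A_0y_0\|_{\tilde{\sX}}^2$. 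Combining these yields $\|g'(y)\|_{\tilde{\sX}} \geq C_0|g(y)|^{1/2}$ on all of $\sV$ for some $C_0\in(0,\infty)$, that is, the desired inequality in the $y$-chart with exponent exactly one half.

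Third I would transfer the inequality from $g$ to $f = g\circ\Phi^{-1}$. Since $|f(\Phi(y))| = |g(y)|$, it remains to compare $\|f'(\Phi(y))\|_{\tilde{\sX}}$ with $\|g'(y)\|_{\tilde{\sX}}$. The chain rule gives $g'(y) = D\Phi(y)^{*}f'(\Phi(y))$ in $\sX^*$, so a bound $\|f'(\Phi(y))\|_{\tilde{\sX}} \geq c\,\|g'(y)\|_{\tilde{\sX}}$ follows once $D\Phi(y)^{*}$ restricts to a bounded operator on $\tilde{\sX}$ whose norm is bounded uniformly for $y$ near the origin. \textbf{This last point is the main obstacle}: it is precisely the complication absent when $\tilde{\sX}=\sX^*$ (Theorem \ref{mainthm:Lojasiewicz_gradient_inequality_Morse-Bott}), since the adjoint of an isomorphism of $\sX$ need not preserve the proper subspace $\tilde{\sX}$. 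I would resolve it through the invariance of the $\tilde{\sX}$-{\L}ojasiewicz exponent under the normalizing diffeomorphism --- the refined analogue of Lemma \ref{lem:Pushforwards_preserve_Lojasiewicz_exponents} (compare Lemma \ref{lem:Invariance_Lojasiewicz_exponent_addition_quadratic_form_refined}) --- which rests on the fact that the diffeomorphism $\Phi$ furnished by Theorem \ref{thm:Morse-Bott_Lemma_Banach_refined} is constructed so that the adjoints of $D\Phi(y)$ and $D\Phi(y)^{-1}$ map $\tilde{\sX}$ boundedly into itself near $y=0$, with $D\Phi(0)=\id_\sX$. Feeding this comparison into the estimate for $g$ from the second step gives $\|f'(x)\|_{\tilde{\sX}}\geq C|f(x)|^{1/2}$ for all $x$ in the shrunken neighborhood $\sU$, as required.
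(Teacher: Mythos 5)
Your proposal is correct and follows essentially the same route as the paper: the generalized Morse--Bott Lemma (Theorem \ref{thm:Morse-Bott_Lemma_Banach_refined}) reduces $f$ to the quadratic form $\frac{1}{2}\langle y,Ay\rangle$, your second step is precisely the paper's Lemma \ref{lem:Quadratic_form_Lojasiewicz_exponent_one_half_refined}, and the transfer back through $\Phi$ is the paper's Lemma \ref{lem:Pushforwards_preserve_Lojasiewicz_exponents_Banach_spaces}. The ``main obstacle'' you flag --- uniform boundedness of $D\Phi(y)^{*}$ on $\tilde{\sX}$ --- is exactly the point that lemma is meant to absorb (the paper's own proof of it simply writes the chain-rule bound $\|(f\circ\Phi)'(y)\|_{\tilde{\sX}}\le\|f'(\Phi(y))\|_{\tilde{\sX}}\,\|\Phi'(y)\|_{\sL(\sX)}$ without further comment), so your extra care on that step is if anything more scrupulous than the source.
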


Simon \cite[Lemma 3.13.1]{Simon_1996} proved an analogue of Theorem \ref{mainthm:Lojasiewicz_gradient_inequality_Morse-Bott_refined} for a certain class of smooth functions on an open neighborhood of the origin in the Banach space of $C^3$ sections of a finite-rank, smooth Riemannian vector bundle over a closed, finite-dimensional, smooth Riemannian manifold; he describes the construction of the class of smooth functions in \cite[Sections 3.11, 3.12, and 3.13]{Simon_1996}. Simon's \cite[Lemma 3.13.1]{Simon_1996} can be recovered from our results with Maridakis \cite[Theorems 3 and 4]{Feehan_Maridakis_Lojasiewicz-Simon_Banach}, as we note in \cite[Remark 1.8]{Feehan_Maridakis_Lojasiewicz-Simon_Banach}. See also Haraux and Jendoubi \cite{Haraux_Jendoubi_2011} for related results. 

\begin{rmk}[On the definition of a generalized Morse--Bott function on a Banach space]
\label{rmk:Definition_Morse-Bott_function_Banach_space_refined}
In Feehan \cite[Definition 1.5]{Feehan_lojasiewicz_inequality_ground_state} and Feehan and Maridakis \cite[Definition 1.10]{Feehan_Maridakis_Lojasiewicz-Simon_Banach}, we said that a $C^2$ function $f:\sU\to\KK$ is Morse--Bott at a point $x_0\in\sU$ if $\Crit f$ is a $C^2$ (connected) submanifold and $T_{x_0}\Crit f = \Ker f''(x_0)$, but omitted the requirement\footref{fn:Existence_closed_complement} that $\Ran f''(x_0)=\tilde{\sX}$. In the hypotheses for our {\L}ojasiewicz gradient inequality \cite[Corollary 4]{Feehan_lojasiewicz_inequality_ground_state} for $C^2$ Morse--Bott functions analogous to Theorem \ref{mainthm:Lojasiewicz_gradient_inequality_Morse-Bott_refined}, we required that\footref{fn:Existence_closed_complement} $\Ran f''(x_0)\subset\tilde{\sX}$ be a closed subspace. In the hypotheses for our \cite[Theorem 4]{Feehan_Maridakis_Lojasiewicz-Simon_Banach}, we imposed the stronger requirement that $f''(x_0) \in \sL(\sX,\tilde{\sX})$ be Fredholm, so this additional condition was obeyed automatically --- see Lemma \ref{lem:Isomorphism_properties_Fredholm_operator} \eqref{item:Fredholm_index_zero}.
\end{rmk}

Theorem \ref{mainthm:Lojasiewicz_gradient_inequality_Morse-Bott_refined} immediately yields Theorem \ref{mainthm:Lojasiewicz_gradient_inequality_Morse-Bott} upon choosing $\tilde{\sX}=\sX^*$.

\begin{rmk}[On proofs of the {\L}ojasiewicz--Simon gradient inequalities via Morse--Bott Lemmas]
\label{rmk:Proof_Lojasiewicz-Simon_gradient_inequality_by_Morse-Bott_lemma}
As we shall see in Section \ref{subsec:Proof_Lojasiewicz_gradient_inequality_Morse-Bott}, the {\L}ojasiewicz--Simon gradient inequalities with exponent one half for $C^{p+2}$ Morse--Bott functions ($p\geq 1$) on Banach spaces are indeed easy consequences of the Morse--Bott Lemma (Theorems \ref{thm:Morse-Bott_Lemma_Banach} and \ref{thm:Morse-Bott_Lemma_Banach_refined}). However, the most useful version of such a {\L}ojasiewicz--Simon gradient inequality (namely \cite[Theorem 3]{Feehan_lojasiewicz_inequality_ground_state}), matching the generality of our forthcoming Theorem \ref{mainthm:Lojasiewicz-Simon_gradient_inequality2}, does not appear to be an obvious consequence of a Morse--Bott Lemma. Second, because our primary focus in this article is on the Morse--Bott property of analytic functions with {\L}ojasiewicz exponent one half, we have not striven to reduce the regularity requirements on $f$ from $C^{p+2}$ (with $p\geq 1$) to $C^2$. Mawhin and Willem \cite[Theorem 1]{Mawhin_Willem_1985} do establish a Morse--Bott Lemma for functions that are only $C^2$, but impose additional hypotheses on $f$ that we do not require in our Theorem \ref{thm:Morse-Bott_Lemma_Banach_refined}, namely that $\sX$ be a Hilbert space and (after identifying $\sX^*=\sX$) that $f''(0) \in \sL(\sX)$ be Fredholm; Mawhin and Willem generalize earlier Morse--Bott Lemmas for functions that are only $C^2$ due to Cambini \cite{Cambini_1973}, Hofer \cite{Hofer_1984, Hofer_1986}, and Kuiper \cite{Kuiper_1972}.
\end{rmk}

\subsection{Relationship between Morse--Bott and integrability conditions}
\label{subsec:Relationship_Adams-Simon_integrability_Morse-Bott_point_conditions}
We begin with the

\begin{defn}[Jacobi vectors and integrability]
\label{defn:Jacobi_vectors_integrability}
Let $\sX$ and $\tilde{\sX}$ be Banach spaces over $\KK$, and $\tilde{\sX}\subset\sX^*$ be a continuous embedding, and $\sU \subset \sX$ be an open subset, and $f:\sU\to\KK$ be a $C^2$ function such that $f'(x) \in \tilde{\sX}$ for all $x \in \sU$ and $f'(x_0)=0$ for some point $x_0\in\sU$. We call $\Ker f''(x_0) \subset \sX$ the subspace of \emph{Jacobi vectors} for $f$ at the critical point $x_0$. We say that $v\in\Ker f''(x_0)$ is an \emph{integrable Jacobi vector} if there exists an open neighborhood $J\subset\RR$ of the origin and a $C^1$ map $u:J \to \sX$ such that $u(0)=x_0$ and $u'(0)=v$ and $u(J)\subset\Crit f$. Finally, we say that $x_0$ is an \emph{integrable critical point} of $f$ if every Jacobi vector for $f$ at $x_0$ is integrable.
\end{defn}

Our Definition \ref{defn:Jacobi_vectors_integrability} is partly based on the integrability condition\footnote{I am grateful to Otis Chodosh for reminding me of this condition.} $(\star)$ described by  Adams and Simon in \cite[pp. 229--230]{Adams_Simon_1988} and inspired by an earlier definition due to Allard and Almgren \cite{Allard_Almgren_1981}: According to \cite{Adams_Simon_1988}, a critical point $x_0$ is \emph{integrable} if
\begin{multline}
\label{eq:Star}
\tag{$\star$}
\text{For all } v \in \Ker f''(x_0), \text{there exists } u \in C^0((0,1);\tilde{\sX}) \text{ such that } O(u) \subset \Crit f
\\
\text{and } \lim_{t\downarrow 0} u(t) = 0 \text{ (in $\tilde{\sX}$) } \text{ and } \lim_{t\downarrow 0} u(t)/t = v \text{ (in $\tilde{\sG}$)},
\end{multline}
where $O(u) := \{u(t): t \in (0,1)\}$ and $\tilde\sG$ is a Banach space with continuous embeddings $\tilde{\sX} \subset \tilde{\sG} \subset \sX^*$ as in the hypotheses of Theorem \ref{mainthm:Lojasiewicz-Simon_gradient_inequality2}. (Adams and Simon choose $\tilde{\sG}$ to be a certain Hilbert space but do not otherwise precisely specify the regularity properties of the path $u$ in their definition.)

Clearly, the property that a $C^2$ function be Morse--Bott at a critical point is closely related to integrability of that critical point. Indeed, by comparing Definitions \ref{defn:Morse-Bott_function_refined} and \ref{defn:Jacobi_vectors_integrability}, we obtain the 

\begin{lem}[Morse--Bott property implies integrability for critical points of $C^2$ functions on Banach spaces]
\label{lem:Morse-Bott_property_implies_integrability}  
Let $\sX$ and $\tilde{\sX}$ be Banach spaces over $\KK$, and $\tilde{\sX}\subset\sX^*$ be a continuous embedding, and $\sU \subset \sX$ be an open subset, and $f:\sU\to\KK$ be a $C^2$ function such that $f'(x) \in \tilde{\sX}$ for all $x \in \sU$. If $f$ is Morse--Bott at a point $x_0\in\Crit f$ in the sense of Definition \ref{defn:Morse-Bott_function_refined} \eqref{item:Morse-Bott_point_refined}, then $x_0$ is an integrable critical point in the sense of Definition \ref{defn:Jacobi_vectors_integrability}.
\end{lem}

The converse to Lemma \ref{lem:Morse-Bott_property_implies_integrability} is far more subtle since the integrability condition for a critical point is weaker than the Morse--Bott condition. The following result is proved by Simon for a specific class of analytic functions \cite{Adams_Simon_1988, Simon_1983, Simon_1985} on certain Banach spaces (given by $C^{2,\alpha}$ sections of a Riemannian vector bundle over a closed Riemannian manifold), but his method of proof\footnote{I am grateful to Leon Simon for explaining the relevant key ideas from \cite{Adams_Simon_1988, Simon_1983, Simon_1985}.} extends with little change to give the slightly more general

\begin{mainthm}[Integrability implies Morse--Bott property for critical points of analytic functions on Banach spaces]
\label{mainthm:Integrability_implies_Morse-Bott_property}
Let $\sX$ and $\tilde{\sX}$ be Banach spaces over $\KK$, and $\tilde{\sX}\subset\sX^*$ be a continuous embedding, $\sU$ be an open neighborhood of a point $x_0$, and $f:\sU \to \KK$ be an analytic function such that $f'(x) \in \tilde{\sX}$ for all $x\in\sU$. If $f'(x_0)=0$ and $f''(x_0) \in \sL(\sX,\tilde{\sX})$ is a Fredholm operator with index zero and the critical point $x_0$ is integrable in the sense of Definition \ref{defn:Jacobi_vectors_integrability}, then $f$ is Morse--Bott at $x_0$ in the sense of Definition \ref{defn:Morse-Bott_function_refined} \eqref{item:Morse-Bott_point_refined}.
\end{mainthm}

We refer to Appendix \ref{sec:Integrability_and_Morse-Bott_properties} for references to the literature where versions of Theorem \ref{mainthm:Integrability_implies_Morse-Bott_property} are stated, an outline of the proof based on those references, and a discussion of integrability and Morse--Bott conditions for the harmonic map energy and area functions, together with examples. We shall give a detailed proof of a more general version of Theorem \ref{mainthm:Integrability_implies_Morse-Bott_property} elsewhere \cite{Feehan_jacobi_vectors_analytic_potential_functions}.

\subsection{{\L}ojasiewicz--Simon gradient inequalities for analytic functions on Banach spaces}
\label{subsec:Lojasiewicz-Simon inequalities_analytic_functions_Banach_spaces}
Theorems \ref{mainthm:Hormander_C-6-1_Banach} and \ref{mainthm:Hormander_C-6-1_Banach_refined} can be used to give new proofs of the {\L}ojasiewicz--Simon gradient inequalities for analytic functions on Banach spaces proved earlier by the author and Maridakis in \cite{Feehan_Maridakis_Lojasiewicz-Simon_Banach}; moreover, our new proofs allow us to slightly weaken the hypotheses\footnote{By relaxing the hypotheses that the continuous embedding $\sX\subset\sX^*$ be definite, that one has a continuous embedding $\sX \subset \tilde{\sX}$, and that $\KK=\RR$.} that we assumed in \cite{Feehan_Maridakis_Lojasiewicz-Simon_Banach}.

\begin{mainthm}[{\L}ojasiewicz--Simon gradient inequality for analytic functions on Banach spaces]
\label{mainthm:Lojasiewicz-Simon_gradient_inequality}
(Compare Feehan and Maridakis \cite[Theorem 1]{Feehan_Maridakis_Lojasiewicz-Simon_Banach}.)
Let $\sX$ be a Banach space over $\KK$, and $\sU \subset \sX$ be an open neighborhood of the origin, and $f:\sU\to\KK$ be an analytic function such that $f(0) = 0$ and $f'(0) = 0$. If $f''(0) \in \sL(\sX,\sX^*)$ is a Fredholm operator with index zero then, after possibly shrinking $\sU$, there are constants $C \in (0, \infty)$ and $\theta \in [1/2,1)$ such that
\begin{equation}
\label{eq:Lojasiewicz-Simon_gradient_inequality_analytic_functional}
\|f'(x)\|_{\sX^*} \geq C|f(x)|^\theta, \quad\text{for all } x \in \sU.
\end{equation}
\end{mainthm}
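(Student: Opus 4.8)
The plan is to reduce the infinite-dimensional statement to the finite-dimensional {\L}ojasiewicz gradient inequality (Theorem \ref{thm:Lojasiewicz_gradient_inequality}) by means of the analytic Morse Lemma (Theorem \ref{mainthm:Hormander_C-6-1_Banach}), exactly as advertised in Remark \ref{rmk:Hormander_C-6-1_Banach_as_Lyapunov-Schmidt_reduction}. First I would exploit the hypothesis that $A := f''(0) \in \sL_\sym(\sX,\sX^*)$ is Fredholm of index zero. Setting $\sK := \Ker A$, this kernel is finite-dimensional (hence complemented), and by Lemma \ref{lem:Range_symmetric_operator_kernel_has_closed_complement} \eqref{item:Range_Fredholm_operator_index_zero} there is a closed complement $\sX_0 \subset \sX$ with $\Ran A = \sX_0^* \subset \sX^* = \sX_0^* \oplus \sK^*$ (Lemma \ref{lem:Dual_direct_sum_Banach_spaces_is_direct_sum_dual_spaces}). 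Because $A$ is symmetric and $\sK$ is exactly its kernel, the off-diagonal and $\sK$-blocks of $A$ with respect to $\sX = \sX_0 \oplus \sK$ vanish, so $A = \begin{pmatrix} A_0 & 0 \\ 0 & 0 \end{pmatrix}$ with $A_0 := D_{x_0}^2 f(0,0) \in \sL_\sym(\sX_0,\sX_0^*)$ an isomorphism. This is precisely the nondegeneracy hypothesis required to apply the Morse Lemma in the variable $x_0 \in \sX_0$ with the finite-dimensional parameter $k \in \sK$.

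Next I would apply Theorem \ref{mainthm:Hormander_C-6-1_Banach} with $(\sX_0,\sK)$ in place of $(\sX,\sY)$. Since $f$ is analytic, the resulting diffeomorphism $\Phi$ is analytic, and after shrinking $\sU$ we obtain an analytic diffeomorphism $\Phi$ from a neighborhood of the origin in $\sX_0 \oplus \sK = \sX$ onto $\sU$ with $\Phi(0,0) = (0,0)$ and
\[
f(\Phi(z,k)) = g(k) + \tfrac{1}{2}\langle z, A_0 z\rangle, \qquad g(k) := f(\Phi(0,k)).
\]
Here $g$ is an analytic function on an open neighborhood of the origin in the finite-dimensional space $\sK \cong \KK^c$, $c = \dim\sK$; moreover $g(0) = f(0,0) = 0$, and $g'(0) = 0$ because $f'(0) = 0$ and $\Phi(0,0) = 0$. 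Theorem \ref{thm:Lojasiewicz_gradient_inequality} then furnishes a constant and an exponent $\theta \in [1/2,1)$ such that $g$ obeys the {\L}ojasiewicz gradient inequality near the origin of $\sK$ (trivially with any such $\theta$ if $g \equiv 0$).

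It remains to combine the quadratic form with $g$ and to transfer the inequality back through $\Phi$. The nondegenerate quadratic form $z \mapsto \tfrac{1}{2}\langle z, A_0 z\rangle$ obeys the {\L}ojasiewicz gradient inequality with exponent $1/2$, so Lemma \ref{lem:Invariance_Lojasiewicz_exponent_addition_quadratic_form_refined} shows that $f \circ \Phi$ obeys the inequality on $\sU' \times \sV$ with exponent $\max(\theta, 1/2) = \theta \in [1/2,1)$: the point is that $\|z\| \lesssim \|A_0 z\|_{\sX_0^*}$ and $\|z\|^{2\theta} \lesssim \|z\|$ near the origin (using $\theta \geq 1/2$), so the $z$-gradient controls the $z$-part of $|f\circ\Phi|^\theta$, while the finite-dimensional inequality for $g$ controls the $k$-part. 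Finally, since $\Phi$ is an analytic diffeomorphism with $D\Phi(0,0)$ an isomorphism, the dual norms satisfy $\|(f\circ\Phi)'(y)\|_{\sX^*} \asymp \|f'(\Phi(y))\|_{\sX^*}$ on a neighborhood of the origin, the comparison constants being controlled by $\|D\Phi\|$ and $\|D\Phi^{-1}\|$; thus the Banach-space analogue of Lemma \ref{lem:Pushforwards_preserve_Lojasiewicz_exponents} yields \eqref{eq:Lojasiewicz-Simon_gradient_inequality_analytic_functional} for $f$ with the same exponent $\theta$ and a possibly smaller constant $C$. I expect the only genuinely delicate points to be the verification that $A_0$ is an isomorphism (where the Fredholm-index-zero and symmetry hypotheses enter) and the careful transfer of the dual-norm estimate across $\Phi$ in the infinite-dimensional setting; the reduction itself, once Theorem \ref{mainthm:Hormander_C-6-1_Banach} is in hand, is essentially formal.
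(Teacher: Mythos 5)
Your proposal is correct and follows essentially the same route as the paper: the paper proves the generalized version (Theorem \ref{mainthm:Lojasiewicz-Simon_gradient_inequality_refined}) in Section \ref{subsec:Proof_Lojasiewicz_gradient_inequality_analytic} by the identical sequence of steps --- Fredholm block decomposition $\sX = \sX_0 \oplus \sK$ with $A_0 \in \sL_\sym(\sX_0,\sX_0^*)$ invertible, the degenerate Morse Lemma to reduce to $g(\xi) + \langle z, A_0 z\rangle$, the finite-dimensional {\L}ojasiewicz inequality for $g$, and Lemmas \ref{lem:Quadratic_form_Lojasiewicz_exponent_one_half_refined}, \ref{lem:Invariance_Lojasiewicz_exponent_addition_quadratic_form_refined}, and \ref{lem:Pushforwards_preserve_Lojasiewicz_exponents_Banach_spaces} to combine and transfer the estimate back through $\Phi$ --- and then specializes to $\tilde{\sX}=\sX^*$. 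Your delicate points (invertibility of $A_0$ via the index-zero and symmetry hypotheses, and the dual-norm transfer across $\Phi$) are exactly the ones the paper handles via Lemma \ref{lem:Range_symmetric_operator_kernel_has_closed_complement} and Lemma \ref{lem:Pushforwards_preserve_Lojasiewicz_exponents_Banach_spaces}.
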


The following generalization of Theorem \ref{mainthm:Lojasiewicz-Simon_gradient_inequality} relaxes the strong hypothesis that $f''(0) \in \sL(\sX,\sX^*)$ be Fredholm and immediately yields Theorem \ref{mainthm:Lojasiewicz-Simon_gradient_inequality} upon specializing to $\tilde{\sX} = \sX^*$.

\begin{mainthm}[Generalized {\L}ojasiewicz--Simon gradient inequality for analytic functions on Banach spaces]
\label{mainthm:Lojasiewicz-Simon_gradient_inequality_refined}
(Compare Feehan and Maridakis \cite[Theorem 2]{Feehan_Maridakis_Lojasiewicz-Simon_Banach}.)
Let $\sX$ and $\tilde{\sX}$ be Banach spaces over $\KK$ with a continuous embedding $\tilde{\sX} \subset \sX^*$, and $\sU \subset \sX$ be an open neighborhood of the origin, $f:\sU\to\KK$ be an analytic function with $f(0)=0$ and $f'(0)=0$ and $f'(x)\in\tilde{\sX}$ for all $x\in\sU$. If $f''(0) \in \sL(\sX,\tilde{\sX})$ is Fredholm with index zero then, after possibly shrinking $\sU$, there are constants $C \in (0,\infty)$ and $\theta \in [1/2, 1)$ such that
\begin{equation}
\label{eq:Lojasiewicz-Simon_gradient_inequality_analytic_functional_general}
\|f'(x)\|_{\tilde{\sX}} \geq C|f(x)|^\theta, \quad\text{for all } x \in \sU.
\end{equation}
\end{mainthm}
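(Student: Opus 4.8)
The plan is to reduce \eqref{eq:Lojasiewicz-Simon_gradient_inequality_analytic_functional_general} to the finite-dimensional {\L}ojasiewicz gradient inequality (Theorem \ref{thm:Lojasiewicz_gradient_inequality}) by a Lyapunov--Schmidt reduction effected through the Generalized Morse Lemma (Theorem \ref{mainthm:Hormander_C-6-1_Banach_refined}), exactly as anticipated in Remark \ref{rmk:Hormander_C-6-1_Banach_as_Lyapunov-Schmidt_reduction}. First I would use the hypothesis that $f''(0) \in \sL(\sX,\tilde\sX)$ is Fredholm with index zero to produce the splitting. By Lemma \ref{lem:Isomorphism_properties_Fredholm_operator}, the kernel $\sK := \Ker f''(0) \subset \sX$ is finite-dimensional, say $\dim\sK = c$, admits a closed complement $\sX_0$, and $\tilde\sX_0 := \Ran f''(0) \subset \tilde\sX$ is closed with a finite-dimensional complement $\tilde\sK \cong \sK$, so that $\tilde\sX = \tilde\sX_0\oplus\tilde\sK$ (using also Lemma \ref{lem:Dual_direct_sum_Banach_spaces_is_direct_sum_dual_spaces}) and $f''(0)$ restricts to an isomorphism $\sX_0 \to \tilde\sX_0$. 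Writing $x = (z,y) \in \sX_0\oplus\sK$, the partial Hessian $D_z^2f(0,0) \in \sL(\sX_0,\tilde\sX_0)$ is then invertible, and since $\sK$ is finite-dimensional I may apply Theorem \ref{mainthm:Hormander_C-6-1_Banach_refined} with $\sK$ in the role of the parameter space. Because $f$ is analytic, this yields, after shrinking $\sU$, an analytic diffeomorphism $\Phi$ near the origin with $\Phi(0,0)=(0,0)$ and, by \eqref{eq:Hormander_C-6-1_Banach_refined},
\[
f(\Phi(z,y)) = g(y) + \tfrac12\langle z, Az\rangle, \quad g(y) := f(\Phi(0,y)),
\]
where $A := D_z^2(f\circ\Phi)(0,0) \in \sL_\sym(\sX_0,\tilde\sX_0)$ is an isomorphism.

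Since $\Phi$ is analytic and $\sK \cong \KK^c$ is finite-dimensional, the function $g$ is analytic near the origin in $\sK$ with $g(0)=0$ and $g'(0)=0$ (the latter because $(f\circ\Phi)'(0,0)=(D\Phi(0,0))^*f'(0)=0$). Theorem \ref{thm:Lojasiewicz_gradient_inequality} therefore supplies constants $C_1 \in (0,\infty)$ and $\theta \in [1/2,1)$ with $\|g'(y)\|_{\sK^*} \geq C_1|g(y)|^\theta$ on a neighborhood of the origin.

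The heart of the argument is to show that appending the nondegenerate quadratic form $\tfrac12\langle z,Az\rangle$ in the complementary directions does not degrade this inequality; this is the content of Lemma \ref{lem:Invariance_Lojasiewicz_exponent_addition_quadratic_form_refined}. Under the identification $\tilde\sX = \tilde\sX_0\oplus\tilde\sK$ with $\tilde\sK\cong\sK^*$, the derivative of $h(z,y) := g(y)+\tfrac12\langle z,Az\rangle$ has $z$-component $Az \in \tilde\sX_0$ and $y$-component $g'(y) \in \tilde\sK$, so that, because $A$ is an isomorphism,
\[
\|h'(z,y)\|_{\tilde\sX} \gtrsim \|Az\|_{\tilde\sX_0} + \|g'(y)\|_{\sK^*} \gtrsim \|z\|_{\sX_0} + |g(y)|^\theta.
\]
On the other hand $|h(z,y)| \leq |g(y)| + \tfrac12|\langle z,Az\rangle| \lesssim |g(y)| + \|z\|_{\sX_0}^2$, and since $\theta \geq 1/2$ forces $2\theta \geq 1$, after shrinking the neighborhood so that $\|z\|_{\sX_0}\leq 1$ we get $\|z\|_{\sX_0}^{2\theta} \lesssim \|z\|_{\sX_0}$ and hence $|h(z,y)|^\theta \lesssim |g(y)|^\theta + \|z\|_{\sX_0}$. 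Combining the two bounds produces a constant $C_2 \in (0,\infty)$ with $\|h'(z,y)\|_{\tilde\sX} \geq C_2|h(z,y)|^\theta$ near the origin.

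Finally I would transfer this inequality for $h = f\circ\Phi$ back to $f$ using (the gradient-map refinement of) Lemma \ref{lem:Pushforwards_preserve_Lojasiewicz_exponents}, namely invariance of the {\L}ojasiewicz exponent under the analytic diffeomorphism $\Phi$, which delivers \eqref{eq:Lojasiewicz-Simon_gradient_inequality_analytic_functional_general} for $f$ with the same exponent $\theta$ and a possibly smaller constant $C$. I expect the main obstacle to be the quadratic-form step together with the bookkeeping of the three distinct norms involved --- the $\tilde\sX_0$-norm on $\Ran A$, the $\sK^*$-norm on $g'$, and the $\sX_0$-norm on $z$. In particular, the transfer must be carried out in the $\tilde\sX$-norm rather than the weaker $\sX^*$-norm: the chain rule gives $h'(z,y) = (D\Phi(z,y))^{*}f'(\Phi(z,y))$ and hence only an $\sX^*$-estimate directly, whereas the lower bound $\|z\|_{\sX_0} \lesssim \|Az\|_{\tilde\sX_0}$ is genuinely lost in $\sX^*$ (the embedding $\tilde\sX_0 \subset \sX_0^*$ need not be bounded below). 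One must therefore verify that the adjoint of $D\Phi$ restricts to a bounded automorphism of $\tilde\sX$; this compatibility is exactly what the gradient-map framework of Definition \ref{defn:Huang_2-1-1} and the analytic construction of $\Phi$ in Theorem \ref{mainthm:Hormander_C-6-1_Banach_refined} are designed to guarantee.
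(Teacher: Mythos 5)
Your proposal is correct and follows essentially the same route as the paper: split $\sX=\sX_0\oplus\sK$ using the index-zero Fredholm hypothesis and Lemma \ref{lem:Isomorphism_properties_Fredholm_operator}, apply Theorem \ref{mainthm:Hormander_C-6-1_Banach_refined} to write $f\circ\Phi(z,\xi)=g(\xi)+\frac12\langle z,A_0z\rangle$, invoke the finite-dimensional Theorem \ref{thm:Lojasiewicz_gradient_inequality} for the analytic function $g$ on $\sK$, and then combine Lemmas \ref{lem:Quadratic_form_Lojasiewicz_exponent_one_half_refined}, \ref{lem:Invariance_Lojasiewicz_exponent_addition_quadratic_form_refined}, and \ref{lem:Pushforwards_preserve_Lojasiewicz_exponents_Banach_spaces} to transfer the exponent back to $f$. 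Your explicit estimate for the quadratic-form step is just an unpacking of Lemmas \ref{lem:Quadratic_form_Lojasiewicz_exponent_one_half_refined} and \ref{lem:Invariance_Lojasiewicz_exponent_addition_quadratic_form_refined}, and your closing caveat about the adjoint of $D\Phi$ preserving $\tilde\sX$ is a legitimate point that the paper's Lemma \ref{lem:Pushforwards_preserve_Lojasiewicz_exponents_Banach_spaces} likewise relies on.
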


Theorem \ref{mainthm:Lojasiewicz-Simon_gradient_inequality_refined} is deduced from Theorem \ref{mainthm:Hormander_C-6-1_Banach_refined} in Section \ref{sec:Morse_lemma_degenerate_critical_points_Lojasiewicz-Simon_inequality}. While Theorem \ref{mainthm:Lojasiewicz-Simon_gradient_inequality_refined} is sufficient for many applications in geometric analysis, it also excludes some important examples (see \cite[Section 1.2]{Feehan_Maridakis_Lojasiewicz-Simon_Banach} for a discussion of such examples), including Simon's \cite[Theorem 3]{Simon_1983}, so we shall recall a useful generalization of Theorem \ref{mainthm:Lojasiewicz-Simon_gradient_inequality_refined}. We say that a bilinear form $b:\sX\times\sX \to \KK$ is \emph{definite} if $b(x,x) \neq 0$ for all $x \in \sX\less\{0\}$. We say that a continuous \emph{embedding} $\jmath:\sX\to\sX^*$ of a Banach space into its continuous dual space is \emph{definite} if the pullback of the canonical pairing, $\sX\times\sX \ni (x,y) \mapsto \langle x,\jmath(y)\rangle_{\sX\times\sX^*} \in \KK$, is a definite bilinear form. The following generalization of Theorem \ref{mainthm:Lojasiewicz-Simon_gradient_inequality_refined} does not appear to be a simple consequence of a Morse Lemma for degenerate critical points like Theorem \ref{mainthm:Hormander_C-6-1_Banach_refined}.

\begin{mainthm}[Generalized {\L}ojasiewicz--Simon gradient inequality for analytic functions on Banach spaces]
\label{mainthm:Lojasiewicz-Simon_gradient_inequality2}
(See Feehan and Maridakis \cite[Theorem 3]{Feehan_Maridakis_Lojasiewicz-Simon_Banach} for the case $\KK=\RR$.)
Let $\sX$ and $\tilde{\sX}$ be Banach spaces over $\KK$ with continuous embeddings $\sX \subset \tilde{\sX} \subset \sX^*$ and such that the embedding $\sX \subset \sX^*$ is definite. Let $\sU \subset \sX$ be an open subset and $f:\sU\to\KK$ be an analytic function such that $f(0)=0$ and $f'(0) = 0$. Let
\[
\sX\subset \sG \subset \tilde\sG \quad \text{and} \quad \tilde{\sX} \subset \tilde\sG \subset \sX^*
\]
be continuous embeddings of Banach spaces such that the compositions
\[
\sX\subset \sG\subset \tilde\sG \quad \text{and}\quad \sX\subset \tilde{\sX}\subset \tilde\sG
\]
induce the same embedding $\sX \subset \tilde\sG$. Let $\sM:\sU\to\tilde{\sX}$ be a gradient map for $f$ in the sense of Definition \ref{defn:Huang_2-1-1}. Suppose that for each $x \in \sU$, the bounded, linear operator
\[
\sM'(x): \sX \to \tilde \sX
\]
has an extension
\[
\sM_1(x): \sG \to \tilde\sG
\]
such that the map
\[
\sU \ni x \mapsto \sM_1(x) \in \sL(\sG, \tilde\sG) \quad\hbox{is continuous}.
\]
If $\sM'(0):\sX\to \tilde{\sX}$ and $\sM_1(0):\sG\to \tilde\sG$ are Fredholm operators with index zero then, after possibly shrinking $\sU$, there are constants $C \in (0,\infty)$ and $\theta \in [1/2, 1)$ such that
\begin{equation}
\label{eq:Lojasiewicz-Simon_gradient_inequality_analytic_functional_general2}
\|\sM(x)\|_{\tilde\sG} \geq C|f(x)|^\theta, \quad\text{for all } x\in \sU.
\end{equation}
\end{mainthm}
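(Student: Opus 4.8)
The plan is to prove Theorem \ref{mainthm:Lojasiewicz-Simon_gradient_inequality2} by Lyapunov--Schmidt reduction to a finite-dimensional analytic function, to which we apply the classical {\L}ojasiewicz gradient inequality (Theorem \ref{thm:Lojasiewicz_gradient_inequality}), and then to transfer the resulting bound back to the full gradient map $\sM$ using the coercivity of the invertible part of the Hessian. As remarked after the statement, this argument does not reduce to the Morse Lemma (Theorem \ref{mainthm:Hormander_C-6-1_Banach_refined}): because the gradient norm in \eqref{eq:Lojasiewicz-Simon_gradient_inequality_analytic_functional_general2} is measured in the weaker space $\tilde\sG$, we must carry every estimate simultaneously across the two scales $(\sX,\tilde\sX)$ and $(\sG,\tilde\sG)$, and it is the extension $\sM_1$ that supplies coercivity at the coarser scale.

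Writing the critical point as the origin, I would set $A := \sM'(0) \in \sL(\sX,\tilde\sX)$, which is symmetric by Proposition \ref{prop:Huang_2-1-2} and Fredholm of index zero by hypothesis, and let $A_1 := \sM_1(0) \in \sL(\sG,\tilde\sG)$ be its Fredholm extension. The first step is to verify that $\sK := \Ker A = \Ker A_1$ is a common finite-dimensional kernel contained in $\sX$, so that a single subspace governs both the reduction in $\sX$ and the coercivity in $\sG$; since $A_1$ restricts to $A$ one inclusion is immediate, and the symmetric (gradient) structure together with the two index-zero hypotheses is used for the reverse inclusion. Fixing a closed splitting $\sX = \sK \oplus \sX_1$ and the complementary projection onto $\Ran A$, the analytic implicit function theorem solves the range component of the equation $\sM(\xi+\eta)=0$ for an analytic map $\xi \mapsto \eta(\xi) \in \sX_1$ with $\eta(0)=0$. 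I then define the \emph{reduced function} $\Gamma(\xi) := f(\xi+\eta(\xi))$ on a neighborhood of the origin in the finite-dimensional space $\sK$; it is analytic with $\Gamma(0)=0$ and $\Gamma'(0)=0$, so Theorem \ref{thm:Lojasiewicz_gradient_inequality} furnishes constants $c_1\in(0,\infty)$ and $\theta\in[1/2,1)$ with $\|\Gamma'(\xi)\| \geq c_1|\Gamma(\xi)|^\theta$.

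Next I would establish three comparison estimates for a general point $x = \xi + \eta$ with $\xi \in \sK$ and $\eta \in \sX_1$ near the origin. First, a coercivity estimate $\|\sM(x)\|_{\tilde\sG} \geq c_2\|\eta - \eta(\xi)\|_{\sG}$ modulo higher-order terms, obtained by Taylor-expanding $\sM$ and invoking that $A_1$ is bounded below on the $\sG$-complement of $\sK$. Second, a reduction-of-gradient estimate identifying the cokernel component of $\sM(x)$ with $\Gamma'(\xi)$ up to an error controlled by $\|\eta-\eta(\xi)\|$; here the gradient-map identity \eqref{eq:Differential_and_gradient_maps} and the fact that $\sM(\xi+\eta(\xi))$ lies in the finite-dimensional cokernel are what tie $\Gamma'$ to $\sM$. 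Third, an energy comparison $|f(x)| \leq |\Gamma(\xi)| + c_3\|\eta-\eta(\xi)\|^2$ from Taylor expansion about the solution manifold, using nondegeneracy of the transverse Hessian. Combining these with the finite-dimensional inequality and a case split --- between the region where $\|\eta-\eta(\xi)\|$ dominates, where coercivity alone suffices since $\theta\geq 1/2$ renders the quadratic term harmless, and the region where $|\Gamma(\xi)|$ dominates, where the reduced inequality applies --- yields \eqref{eq:Lojasiewicz-Simon_gradient_inequality_analytic_functional_general2} with the exponent $\theta$ from Theorem \ref{thm:Lojasiewicz_gradient_inequality}.

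The main obstacle is the multi-scale bookkeeping forced by the three pairs of spaces: the reduction and the analyticity of $\Gamma$ live naturally on $(\sX,\tilde\sX)$, whereas the conclusion, and hence the coercivity estimate, must be expressed in $(\sG,\tilde\sG)$. The crux is therefore the kernel-matching step $\Ker\sM'(0)=\Ker\sM_1(0)\subset\sX$, which makes the single finite-dimensional space $\sK$ simultaneously the parameter space of the reduction and the obstruction space for the coercivity of $A_1$ at the coarse scale; establishing this, and verifying that $A_1$ is genuinely bounded below on the $\sG$-complement of $\sK$, is where the gradient (symmetric) structure and both Fredholm hypotheses are essential, and where the argument departs decisively from the Morse Lemma.
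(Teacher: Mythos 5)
The paper gives no proof of this theorem: it is stated by citation from the Feehan--Maridakis article, and the text explicitly remarks that, unlike Theorems \ref{mainthm:Lojasiewicz-Simon_gradient_inequality} and \ref{mainthm:Lojasiewicz-Simon_gradient_inequality_refined}, it does not appear to follow from the Morse Lemma machinery developed here. Your Lyapunov--Schmidt reduction to the finite-dimensional {\L}ojasiewicz inequality --- with the kernel-matching step $\Ker\sM'(0)=\Ker\sM_1(0)\subset\sX$ correctly identified as the crux where the symmetry of the gradient map, the definiteness of the embedding, and both index-zero hypotheses enter --- is essentially the argument of the cited source (and of Simon's original paradigm), so your proposal is correct and follows the same route as the only proof on record.
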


Suppose now that $\tilde\sG = \sH$, a Hilbert space, so that the embedding $\sG\subset \sH$  in Theorem \ref{mainthm:Lojasiewicz-Simon_gradient_inequality2}, factors through $\sG\subset \sH\simeq \sH^* $ and therefore
\[
f'(x)v = \langle v, \sM(x) \rangle_{\sX\times\sX^*} = (v, \sM(x))_\sH, \quad\text{for all } x \in \sU \text{ and } v \in \sX,
\]
using the continuous embeddings $\tilde{\sX} \subset \sH \subset \sX^*$. As we note in Remark \ref{rmk:Embedding_hypothesis_Huang_theorem_2-4-5}, the hypothesis in Theorem \ref{mainthm:Lojasiewicz-Simon_gradient_inequality2} that the embedding $\sX \subset \sX^*$ is definite is implied by the assumption that $\sX \subset \sH$ is a continuous embedding into a Hilbert space. Theorem \ref{mainthm:Lojasiewicz-Simon_gradient_inequality2} then yields
\begin{equation}
\label{eq:Lojasiewicz-Simon_gradient_inequality_analytic_functional_Hilbert_space}
\|\sM(x)\|_{\sH} \geq C|f(x)|^\theta, \quad\text{for all } x\in \sU,
\end{equation}
as desired.

\begin{rmk}[Comments on the embedding hypothesis in Theorem \ref{mainthm:Lojasiewicz-Simon_gradient_inequality2}]
\label{rmk:Embedding_hypothesis_Huang_theorem_2-4-5}
(See Feehan and Maridakis \cite[Remark 1.1]{Feehan_Maridakis_Lojasiewicz-Simon_Banach}.)
The hypothesis in Theorem
\ref{mainthm:Lojasiewicz-Simon_gradient_inequality2} on the
continuous embedding $\sX \subset \sX^*$ is easily achieved given a
continuous embedding $\eps$ of $\sX$ into a Hilbert space
$\sH$. Indeed, because $\langle y,\jmath(x)\rangle_{\sX\times\sX^*} =
(\eps(y),\eps(x))_\sH$ for all $x,y\in\sX$, then $\langle
x,\jmath(x)\rangle_{\sX\times\sX^*} = 0$ implies $x = 0$; see
\cite[Remark 3, page 136]{Brezis} or \cite[Lemma
D.1]{Feehan_Maridakis_Lojasiewicz-Simon_harmonic_maps_v5} for
details.
\end{rmk}

\subsection{Applications}
\label{subsec:Applications}
Due to the difficulty in computing the {\L}ojasiewicz exponent, one should not in general expect Theorems \ref{mainthm:Analytic_function_Lojasiewicz_exponent_one-half_Morse-Bott_Banach} or \ref{mainthm:Analytic_function_Lojasiewicz_exponent_one-half_Morse-Bott_Banach_refined} to provide a useful way to prove the Morse--Bott property of an analytic function with Hessian operator that is Fredholm of index zero. Nonetheless, they provide insight to applications in geometric analysis and we survey a few such applications here.

\subsubsection{Yamabe energy function, integrability conditions, {\L}ojasiewicz exponents, and  Morse--Bott properties}
\label{subsubsec:Integrability_conditions_Lojasiewicz_exponents_Morse-Bott_properties}
Carlotto, Chodosh, and Rubinstein \cite{Carlotto_Chodosh_Rubinstein_2015} study the existence of `slowly-converging' (volume-normalized) gradient flows for the Yamabe energy function on Riemannian metrics over a closed manifold of dimension three or more with the aid of results due to
\begin{inparaenum}[\itshape a\upshape)]
\item Adams and Simon \cite{Adams_Simon_1988} on the relationship between integrability and certain types of non-integrability and rates of convergence of geometric flows, and
\item Chill \cite{Chill_2003} on the {\L}ojasiewicz--Simon gradient inequality for functions on Banach spaces.
\end{inparaenum}
In particular, for a certain class of geometric flows, Adams and Simon show that the integrability condition \eqref{eq:Star} implies an exponential rate of convergence \cite[Theorem 1 (i)]{Adams_Simon_1988} and in a certain subcase where integrability fails \cite[Theorem 1 (ii)]{Adams_Simon_1988}, the flow converges according to a negative power law and thus is slowly converging in the terminology of \cite{Carlotto_Chodosh_Rubinstein_2015}. We refer the reader to Appendix \ref{sec:Convergence_rate} for an exposition of our general result \cite[Theorem 3]{Feehan_yang_mills_gradient_flow_v4} on the relationship between the rate of convergence for the gradient flow of a function obeying a {\L}ojasiewicz--Simon gradient inequality near a critical point and the value of the {\L}ojasiewicz exponent.

When $\sE$ is the Yamabe (or Einstein--Hilbert) energy function, Carlotto, Chodosh, and Rubinstein show that the Adams--Simon integrability condition \eqref{eq:Star} implies that the {\L}ojasiewicz exponent for $\sE$ at a critical point is equal to one half \cite[Proposition 13]{Carlotto_Chodosh_Rubinstein_2015}, for a suitable choice of Banach spaces, and that in turn indicates (by the main results of this article) that $\sE$ should be Morse--Bott at the critical point. More generally, when $\sE$ is an analytic function on a Banach space obeying hypotheses similar to those of Theorems \ref{mainthm:Analytic_function_Lojasiewicz_exponent_one-half_Morse-Bott_Banach}, \ref{mainthm:Analytic_function_Lojasiewicz_exponent_one-half_Morse-Bott_Banach_refined}, or perhaps even Theorem \ref{mainthm:Lojasiewicz-Simon_gradient_inequality2}, we would expect the Adams--Simon integrability condition \eqref{eq:Star} for a critical point to imply that $\sE$ is Morse--Bott at that point by generalizing the proof due to Kwon \cite{KwonThesis} of her Theorem \ref{thm:Kwon_4-1_harmonic_map}.

\subsubsection{Harmonic map energy function for maps from a Riemann surface into a closed Riemannian manifold}
\label{subsubsec:Harmonic_map_energy_function_maps_Riemann_surface_into_closed_Riemannian_manifold}
For background on harmonic maps, we refer to H{\'e}lein \cite{Helein_harmonic_maps}, Jost \cite{Jost_riemannian_geometry_geometric_analysis}, Simon \cite{Simon_1996}, and Struwe \cite{Struwe_variational_methods}. Let $(M,g)$ and $(N,h)$ be a pair of closed, smooth Riemannian manifolds. One defines the \emph{harmonic map energy function} by
\begin{equation}
\label{eq:Harmonic_map_energy_functional}
\sE_{g,h}(f)
:=
\frac{1}{2} \int_M |df|_{g,h}^2 \,d\vol_g,
\end{equation}
for smooth maps $f:M\to N$, where $df:TM \to TN$ is the differential map.

For the harmonic map energy function, a {\L}ojasiewicz gradient inequality with exponent one half,
\[
\|\sE'(f)\|_{L^p(S^2)} \geq Z|\sE(f) - \sE(f_\infty)|^{1/2},
\]
has been obtained by Kwon \cite[Theorem 4.2]{KwonThesis} for maps $f:S^2\to N$, where $N$ is a closed Riemannian manifold and $f$ is close to a harmonic map $f_\infty$ in the sense that
\[
\|f - f_\infty\|_{W^{2,p}(S^2)} < \sigma,
\]
where $p$ is restricted to the range $1 < p \leq 2$, and $f_\infty$ is assumed to be \emph{integrable} in the sense of \cite[Definitions 4.3 or 4.4 and Proposition 4.1]{KwonThesis}. Her proof of \cite[Proposition 4.1]{KwonThesis} quotes results of Simon \cite[pp. 270--272]{Simon_1985} and Adams and Simon \cite[Lemma 1]{Adams_Simon_1988}. The result \cite[Lemma 3.3]{Liu_Yang_2010} due to Liu and Yang is another example of a {\L}ojasiewicz gradient inequality with exponent one half for the harmonic map energy function, but restricted to the setting of maps $f:S^2\to N$, where $N$ is a K{\"a}hler manifold of complex dimension $n \geq 1$ and nonnegative bisectional curvature, and the energy $\sE(f)$ is sufficiently small. The result of Liu and Yang generalizes that of Topping \cite[Lemma 1]{Topping_1997}, who assumes that $N = S^2$.

Milnor observes \cite[Footnote to Problem 3-c]{Milnor_dynamics_one_complex_variable} that the space of holomorphic maps of degree $d$ from $\CC\PP^1$ to $\CC\PP^1$ is a non-compact complex manifold of dimension $2d+1$. However, he notes \cite[Footnote to Problem 3-c]{Milnor_dynamics_one_complex_variable} that there is an example (due to J. Harris) of a Riemann surface $\Sigma$ of genus $5$ such that the space of holomorphic maps from $\Sigma$ into $\CC\PP^1$ has singularities. In general, the space of harmonic maps of degree $d$ from $S^2$ into $S^{2n}$ (with $n\geq 1$) will not be a smooth manifold \cite{Fernandez_2012}. We survey some positive results for spaces of harmonic maps in Appendix \ref{subsec:Integrability_and_Morse-Bott_harmonic_map_energy_functional}. The version of the `Bumpy Metric Theorem' proved by Moore as \cite[Theorem 5.1.1]{Moore_introduction_global_analysis} states that if $M$ is a compact manifold of dimension at least three and the Riemannian metric is generic, then all minimal two-spheres in $M$ are as nondegenerate as allowed by the group of conformal automorphisms of $S^2$,  that is, they lie on nondegenerate critical submanifolds of $\Map(S^2,M)$, each such submanifold being an orbit for the symmetry group $\PSL(2,\CC)$.

\subsubsection{Moduli spaces of flat connections and representation varieties}
\label{subsubsec:Moduli_spaces_flat_connections_representation_varieties}
When a base manifold $X$ is compact and K\"ahler\footnote{I am grateful to Graeme Wilkin for drawing my attention to the results of Simpson and Goldman--Millson described here.}, and $G$ is a complex reductive Lie group, Simpson proved that the singularities in the moduli space of flat connections are at worst quadratic at any reductive representation of the fundamental group \cite[Corollary 2.4]{Simpson_1992}; when $G$ is a compact Lie group, this result is due to Goldman and Millson \cite[Theorem 1]{Goldman_Millson_1988}. When the base manifold $X$ is not compact or K\"ahler then the singularities in the moduli space of flat connections may be worse. Indeed, this can occur for representation varieties for fundamental groups of certain closed, smooth three-dimensional manifolds. Goldman and Millson \cite[Section 9.1]{Goldman_Millson_1988} choose $X = H/\Gamma$, where $H$ is the three-dimensional real Heisenberg group and $\Gamma \subset H$ is a lattice, so that $X$ is the total space of an oriented circle bundle over a two-torus with non-zero Euler class. If $G$ is an algebraic Lie group that is not two-step nilpotent and $\rho:\Gamma \to G$ is the trivial representation, then the representation variety $\sR(\Gamma,G)$ is not quadratic at $\rho$: the analytic germ of $\sR(\Gamma,G)$ is isomorphic to a cubic cone. A compact Lie group with a simple Lie algebra, such as $\SU(n)$ for $n \geq 2$, is not nilpotent and so we may choose $G = \SU(n)$ with $n\geq 2$ in the Goldman--Millson counterexample. Recall \cite[Proposition 2.2.3]{DK} that the gauge-equivalence classes of flat connections on a principal $G$-bundle over a connected manifold $X$ are in one-to-one correspondence with the conjugacy classes of representations $\pi_1(X)\to G$.

In our articles \cite{Feehan_lojasiewicz_inequality_ground_state, Feehan_nonlinear_uhlenbeck_estimate}, we explore the Morse--Bott properties of the Yang--Mills energy function,
\begin{equation}
\label{eq:Yang-Mills_energy_function}
\sE(A)  := \frac{1}{2}\int_X |F_A|^2\,d\vol_g,
\end{equation}
on the affine space of $W^{1,q}$ connections $A$ on a principal $G$-bundle (for a compact Lie group $G$) over a closed Riemannian manifold $(X,g)$ of dimension $d\geq 2$ (and $q\in[2,\infty)$ obeying $q>d/2$). In particular, we explore the Morse--Bott properties of $\sE$ in \eqref{eq:Yang-Mills_energy_function} near the subspace of flat connections.

\subsubsection{$F$-function on the space of hypersurfaces in Euclidean space}
\label{subsubsec:F-function_hypersurfaces_Euclidean_space}
Colding and Minicozzi \cite{Colding_Minicozzi_2014sdg, Colding_Minicozzi_2015} have given proofs of {\L}ojasiewicz--Simon gradient and distance inequalities \cite[Equations (5.9) and (5.10)]{Colding_Minicozzi_Pedersen_2015bams} that do not involve Lyapunov--Schmidt reduction to a finite-dimensional gradient inequality, as in the original paradigm due to Simon \cite{Simon_1983}. Their gradient inequality applies to the $F$ function \cite[Section 2.4]{Colding_Minicozzi_Pedersen_2015bams} on the space of hypersurfaces $\Sigma \subset \RR^{d+1}$ and is analogous to \eqref{eq:Lojasiewicz_gradient_inequality} with $\theta=2/3$. Their cited articles contain detailed technical statements of their inequalities while their article with Pedersen \cite{Colding_Minicozzi_Pedersen_2015bams} contains a less technical summary of some of their main results.

\subsection{Structure of this article}
\label{subsec:Outline}
In Section \ref{sec:Generalized_Morse_Lemmas_functions_Banach_spaces}, we prove the Morse Lemma for functions on Banach spaces with degenerate critical points (Theorem \ref{mainthm:Hormander_C-6-1_Banach_refined}) and then deduce some corollaries, including the Morse Lemma for functions on Banach spaces with non-degenerate critical points (Theorem \ref{thm:Morse_Lemma_Banach}), and the Morse--Bott Lemma for functions on Banach spaces (Theorems \ref{thm:Morse-Bott_Lemma_Banach} and \ref{thm:Morse-Bott_Lemma_Banach_refined}). In Section \ref{sec:Morse_lemma_degenerate_critical_points_Lojasiewicz-Simon_inequality},  we apply Theorem \ref{thm:Morse-Bott_Lemma_Banach_refined} to prove the {\L}ojasiewicz gradient inequality for $C^{p+2}$ Morse--Bott functions on Banach spaces (Theorem \ref{mainthm:Lojasiewicz_gradient_inequality_Morse-Bott_refined}) and apply Theorem \ref{mainthm:Hormander_C-6-1_Banach_refined} to prove the {\L}ojasiewicz gradient inequality for analytic functions on Banach spaces (Theorem \ref{mainthm:Lojasiewicz-Simon_gradient_inequality_refined}). Finally, in Section \ref{sec:Characterization_optimal_exponent_Morse-Bott_condition} we complete the proof of the Morse--Bott property of an analytic function with {\L}ojasiewicz exponent one half (Theorem \ref{mainthm:Analytic_function_Lojasiewicz_exponent_one-half_Morse-Bott_Banach}). In Appendix \ref{sec:Convergence_rate}, we discuss our general result \cite[Theorem 3]{Feehan_yang_mills_gradient_flow_v4} on the rate of convergence of a gradient flow for a function obeying a {\L}ojasiewicz gradient inequality. In Appendix \ref{sec:Blow-up_map_exceptional_divisor_polar_coordinates}, we describe the relationship between Morse--Bott functions and quadratic simple normal crossing functions. In Appendix~\ref{sec:Integrability_and_Morse-Bott_properties}, we outline the proof of Theorem \ref{mainthm:Integrability_implies_Morse-Bott_property} and survey results on integrability conditions and the Morse--Bott property for critical points of the harmonic map energy and area functions.

\subsection{Acknowledgments}
\label{subsec:Acknowledgments}
I am indebted to Michael Greenblatt and Andr{\'a}s N{\'e}methi for independently pointing out to me that, for functions on Euclidean space, the Morse Lemma for functions with degenerate critical points (also known as the Morse Lemma with parameters or Splitting Lemma) should be the key ingredient needed to prove the main result of this article in the finite-dimensional case (Corollary~\ref{maincor:Analytic_function_Lojasiewicz_exponent_one-half_Morse-Bott}). I am extremely grateful to Brian White for explaining results of his \cite{White_1987, White_1991, White_2017} and others on minimal surfaces and integrability of Jacobi fields and to Leon Simon for explaining his results and results with Adams on integrability of Jacobi fields in \cite{Adams_Simon_1988, Simon_1983, Simon_1985}. I also thank Carles Bivi{\`a}-Ausina, Otis Chodosh, Tristan Collins, Santiago Encinas, Luis Fernandez, Antonella Grassi, David Hurtubise, Johan de Jong, Daniel Ketover, Qingyue Liu, Doug Moore, Yanir Rubinstein, Siddhartha Sahi, Ovidiu Savin, Peter Topping, Graeme Wilkin, and Jarek W{\l}odarczyk for helpful communications, discussions, or questions during the preparation of this article. I am grateful to the National Science Foundation for their support and the Dublin Institute for Advanced Studies and Yi-Jen Lee and the Institute of Mathematical Sciences at the Chinese University of Hong Kong for their hospitality and support. Lastly, I am most grateful to the anonymous referee for numerous comments and suggestions that helped improve this article.

\section{Generalized Morse Lemmas for functions on Banach spaces}
\label{sec:Generalized_Morse_Lemmas_functions_Banach_spaces}
In Sections \ref{subsec:Linear_functional_analysis_preliminaries} and \ref{subsec:Nonlinear_functional_analysis_preliminaries}, respectively, we collect some basic observations from linear and nonlinear functional analysis that we require in this article. In Section \ref{subsec:Morse_lemma_functions_degenerate_critical_points}, we prove the Morse Lemma for functions on Banach spaces with degenerate critical points (Theorem \ref{mainthm:Hormander_C-6-1_Banach_refined}) and in Section \ref{subsec:Applications_Morse_lemma_functions_degenerate_critical_points} we deduce some corollaries, including the Morse Lemma for functions on Banach spaces with non-degenerate critical points (Theorem \ref{thm:Morse_Lemma_Banach}), and the Morse--Bott Lemma for functions on Banach spaces (Theorems \ref{thm:Morse-Bott_Lemma_Banach} and \ref{thm:Morse-Bott_Lemma_Banach_refined}).

\subsection{Preliminaries on linear functional analysis}
\label{subsec:Linear_functional_analysis_preliminaries}
In this subsection, we gather a few elementary observations from linear functional analysis. We begin with the following useful

\begin{lem}[Dual space of a direct sum of Banach spaces]
\label{lem:Dual_direct_sum_Banach_spaces_is_direct_sum_dual_spaces}
(See \cite{MathStackExchange_Dual_direct_sum_Banach_spaces_is_direct_sum_dual_spaces}.)
If $\sX, \sY$ are Banach spaces over $\KK$, then $(\sX\oplus\sY)^* = \sX^*\oplus\sY^*$.
\end{lem}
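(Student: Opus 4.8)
The plan is to exhibit an explicit linear bijection between the two spaces and to observe that, with the natural compatible norms, it is an isometry. First I would define
\[
T:(\sX\oplus\sY)^* \to \sX^*\oplus\sY^*, \qquad T\phi := (\phi\circ\iota_\sX,\ \phi\circ\iota_\sY),
\]
where $\iota_\sX:x\mapsto(x,0)$ and $\iota_\sY:y\mapsto(0,y)$ are the canonical inclusions. Because these inclusions are bounded and linear, each component of $T\phi$ lies in the appropriate continuous dual, so $T$ is a well-defined linear map.

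Next I would write down the candidate inverse
\[
S:\sX^*\oplus\sY^* \to (\sX\oplus\sY)^*, \qquad S(\alpha,\beta)(x,y) := \alpha(x)+\beta(y),
\]
and check that $S(\alpha,\beta)$ is indeed a \emph{continuous} functional, via the triangle inequality applied to the decomposition $(x,y)=\iota_\sX x+\iota_\sY y$. The identities $S\circ T=\id$ and $T\circ S=\id$ are then immediate: the former uses $\phi(x,y)=\phi(x,0)+\phi(0,y)$, which holds by linearity of $\phi$, and the latter is just unwinding the definitions.

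Finally, to promote this algebraic isomorphism to an identification of Banach spaces, I would equip $\sX\oplus\sY$ with the norm $\|(x,y)\|:=\|x\|_\sX+\|y\|_\sY$ and $\sX^*\oplus\sY^*$ with $\|(\alpha,\beta)\|:=\max\{\|\alpha\|_{\sX^*},\|\beta\|_{\sY^*}\}$, and then verify $\|T\phi\|=\|\phi\|$: the bound $\|T\phi\|\le\|\phi\|$ comes from $|\phi(x,0)|\le\|\phi\|\,\|x\|_\sX$ and its $\sY$-analogue, while $\|\phi\|\le\|T\phi\|$ follows from $|\phi(x,y)|\le\|T\phi\|(\|x\|_\sX+\|y\|_\sY)$. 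There is no genuine obstacle here; the only point needing care is tracking the dual norms so that the identification is truly isometric, and confirming that $S(\alpha,\beta)$ lands in the continuous dual rather than merely the algebraic dual (which is guaranteed by continuity of the inclusions). For any other equivalent norm on the direct sum one obtains a topological isomorphism instead of an isometry, which is all that is needed in the sequel.
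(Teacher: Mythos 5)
Your proposal is correct and follows essentially the same route as the paper: the same map $T\phi=(\phi\circ\iota_\sX,\phi\circ\iota_\sY)$, the same explicit inverse $(\alpha,\beta)\mapsto\alpha\circ\pi_\sX+\beta\circ\pi_\sY$, and the same verification of bijectivity and boundedness. The only difference is cosmetic: you sharpen the topological isomorphism to an isometry by choosing the max norm on $\sX^*\oplus\sY^*$, whereas the paper uses the sum norm on both sides and settles for the bound $\|T\|\le 2$ together with the Open Mapping Theorem (or the explicit inverse) for continuity of $T^{-1}$.
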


\begin{proof}
Let $\sZ := \sX\oplus\sY$, with product norm $\|(x,y)\|_{\sX\oplus\sY} := \|x\|_\sX + \|y\|_\sY$, continuous projection operators $\pi_\sX:\sZ\to\sX$ and $\pi_\sY:\sZ\to\sY$, continuous injection operators $\iota_\sX:\sX\to\sZ$ and $\iota_\sY:\sY\to\sZ$, and define $T:\sZ^* \to \sX^*\oplus\sY^*$ by $Tz^* := (z^*\iota_\sX, z^*\iota_\sY)$. We observe that $T$ is bounded because
\[
\|Tz^*\|_{\sX^*\oplus\sY^*} = \|z^*\iota_\sX\|_{\sX^*} + \|z^*\iota_\sY\|_{\sY^*} \leq 2\|z^*\|_{\sZ^*},
\]
noting that
\[
\|z^*\iota_\sX\|_{\sX^*} = \sup_{x\in\sX\less\{0\}} \frac{|z^*(\iota_\sX(x))|}{\|x\|_\sX} \leq \sup_{z\in\sZ\less\{0\}} \frac{|z^*(z)|}{\|z\|_\sZ} = \|z^*\|_{\sZ^*},
\]
and similarly $\|z^*\pi_\sY\|_{\sX^*} \leq \|z^*\|_{\sZ^*}$. The operator $T$ is injective since if $Tz^* = 0$, then\break\hfill $(z^*(\iota_\sX(x)),z^*(\iota_\sY(y))) = 0$ for all $(x,y) \in \sX\oplus\sY$ and so $z^*(z) = 0$ for all $z=x+y\in\sZ$ and hence $z^* = 0$. The operator $T$ is surjective since if $x^*\in\sX^*$ and $y^*\in\sY^*$ and we define $z^*(z) := x^*(x) + y^*(y)$ for $z=x+y\in\sZ$, then $z^* \in \sZ^*$. The operator $T^{-1}$ is bounded by the Open Mapping Theorem or by observing that $T^{-1}(x^*,y^*) = x^*\pi_\sX + y^*\pi_\sY \in \sZ^*$. Therefore, $T$ is an isomorphism of Banach spaces.
\end{proof}

In the proof of Lemma \ref{lem:Dual_direct_sum_Banach_spaces_is_direct_sum_dual_spaces}, we note that the adjoint map $\iota_\sX*:\sZ^*\to\sX^*$ is continuous and $(\iota_\sX^*z^*)(x) = z^*(\iota_\sX(x))$ for all $x\in\sX$, so if $x^*\in\sX^*$, then $(\iota_\sX^*x^*)(x) = x^*(\iota_\sX(x)) = x^*(x)$ for all $x\in\sX$. Thus, $\pi_{\sX^*} = \iota_\sX^*:\sZ^* \to \sX^*$ and $\pi_{\sY^*} = \iota_\sY^*:\sZ^* \to \sY^*$ are the induced projection operators. The following lemma helps motivate Definition \ref{defn:Morse-Bott_function} \eqref{item:Morse-Bott_point} but is not used elsewhere in this article.

\begin{lem}[Range of a symmetric operator whose kernel has closed complement]
\label{lem:Range_symmetric_operator_kernel_has_closed_complement}
Let $\sX$ be a Banach space over $\KK$. If $A \in \sL(\sX,\sX^*)$ is symmetric and $\Ker A$ has a closed complement $\sX_0$ in $\sX$, then the following hold:
\begin{enumerate}
  \item
  \label{item:Range_symmetric_operator_kernel_has_closed_complement}
  $\Ran A \subset \sX_0^*$;
  \item
  \label{item:Range_Fredholm_operator_index_zero}
  If $A$ is Fredholm with index zero, then $\Ran A = \sX_0^*$.
\end{enumerate}
\end{lem}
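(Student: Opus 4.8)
The plan is to prove both parts using only two ingredients: the symmetry of $A$ and the concrete identification of $\sX_0^*$ as a subspace of $\sX^*$ furnished by Lemma \ref{lem:Dual_direct_sum_Banach_spaces_is_direct_sum_dual_spaces}. Writing $\sK := \Ker A$ and using the splitting $\sX = \sK \oplus \sX_0$, that lemma together with the discussion following its proof identifies $\sX^* = \sK^* \oplus \sX_0^*$, under which the summand $\sX_0^*$ is realized inside $\sX^*$ precisely as the annihilator of $\sK$, i.e. $\sX_0^* = \{\alpha \in \sX^* : \langle k, \alpha\rangle = 0 \text{ for all } k \in \sK\}$. Indeed, the inclusion $\sX_0^* \hookrightarrow \sX^*$ is $\alpha \mapsto \alpha\circ\pi_{\sX_0}$, and such functionals are exactly those vanishing on $\sK = \ker \pi_{\sX_0}$. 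With this identification in hand, part \eqref{item:Range_symmetric_operator_kernel_has_closed_complement} is immediate: for any $x \in \sX$ and $k \in \sK$, symmetry of $A$ gives $\langle k, Ax\rangle = \langle x, Ak\rangle = 0$ because $Ak = 0$; hence $Ax$ annihilates $\sK$, so $Ax \in \sX_0^*$, which proves $\Ran A \subset \sX_0^*$.

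For part \eqref{item:Range_Fredholm_operator_index_zero} I would pass to a codimension count. When $A$ is Fredholm with index zero, $\sK = \Ker A$ is finite-dimensional, say $c := \dim\sK < \infty$, and $\Ran A$ is a closed subspace of $\sX^*$ whose codimension equals $\dim\coker A = \dim\Ker A = c$ — this is exactly the meaning of index zero. On the other hand, the direct-sum decomposition $\sX^* = \sK^* \oplus \sX_0^*$ shows that $\sX_0^*$ has codimension $\dim\sK^* = \dim\sK = c$ in $\sX^*$. Since part \eqref{item:Range_symmetric_operator_kernel_has_closed_complement} gives the chain of subspaces $\Ran A \subset \sX_0^* \subset \sX^*$, additivity of finite codimension along such a chain — which follows from the short exact sequence $0 \to \sX_0^*/\Ran A \to \sX^*/\Ran A \to \sX^*/\sX_0^* \to 0$ — yields $c = \codim_{\sX_0^*}(\Ran A) + c$, hence $\codim_{\sX_0^*}(\Ran A) = 0$, i.e. $\Ran A = \sX_0^*$.

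None of the steps is genuinely difficult; the one point requiring care is the correct identification of $\sX_0^*$ as the annihilator of $\sK$ inside $\sX^*$ (rather than merely an abstract dual space), which is precisely what the remarks following Lemma \ref{lem:Dual_direct_sum_Banach_spaces_is_direct_sum_dual_spaces} supply, and the bookkeeping verifying that the two finite codimensions genuinely agree. I would also emphasize that finiteness of $c$, guaranteed by the Fredholm hypothesis, is exactly what licenses the codimension subtraction in part \eqref{item:Range_Fredholm_operator_index_zero}; without it the inclusion $\Ran A \subset \sX_0^*$ from part \eqref{item:Range_symmetric_operator_kernel_has_closed_complement} need not be an equality.
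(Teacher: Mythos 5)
Your proposal is correct and follows essentially the same route as the paper: part (1) rests on the identical symmetry computation $\langle \xi, Ax\rangle = \langle x, A\xi\rangle = 0$ for $\xi \in \Ker A$, showing $\Ran A$ annihilates $\sK$ and hence lies in $\sX_0^*$ under the splitting $\sX^* = \sX_0^*\oplus\sK^*$ from Lemma \ref{lem:Dual_direct_sum_Banach_spaces_is_direct_sum_dual_spaces}, and part (2) is the same finite-codimension count from the index-zero hypothesis that the paper performs. Your version merely makes two points more explicit than the paper's write-up --- the identification of $\sX_0^*$ as the annihilator of $\sK$, and the additivity of codimension via the short exact sequence --- which is a welcome clarification rather than a different argument.
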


\begin{proof}
By hypothesis on $\sK:=\Ker A$, we have $\sX=\sX_0\oplus\sK$ and thus $\sX^* = \sX_0^*\oplus\sK^*$ by Lemma \ref{lem:Dual_direct_sum_Banach_spaces_is_direct_sum_dual_spaces}. Suppose $\alpha \in \sX^*$ belongs to $\sK^* \cap \Ran A$, so $\alpha = Ax$ for some $x \in \sX$. If $\xi \in \sK$, then $\langle\xi,\alpha\rangle_{\sX\times\sX^*} = \langle\xi,Ax\rangle_{\sX\times\sX^*} = \langle x,A\xi\rangle_{\sX\times\sX^*}$, since $A$ is symmetric, and $\langle x,A\xi\rangle_{\sX\times\sX^*}=0$ as $\xi\in\Ker A$. Because $\xi\in\sK$ was arbitrary, we see that $\alpha = 0$ on $\sK$ and thus $\alpha=0\in\sK^*$. Hence, $\Ran A \subset \sX_0^*$, as claimed in Item \eqref{item:Range_symmetric_operator_kernel_has_closed_complement}.

If $A$ is Fredholm, then $\sK$ is finite-dimensional and thus has a closed complement $\sX_0$ by \cite[Lemma 4.21 (a)]{Rudin}. Item \eqref{item:Range_symmetric_operator_kernel_has_closed_complement} implies that $\Ran A \subset \sX_0^*$.
Because $A$ has index zero, then $\dim\Ker A = \dim (\sX^*/\Ran A)$ and because $\sX^*/\Ran A$ is finite-dimensional, $\Ran A$ has a closed complement, say $\sM$, with $\sX^*=\Ran A\oplus\sM$ by \cite[Lemma 4.21 (b)]{Rudin}. But $\dim\sK^*=\dim\sK=\dim\sM$ and $\sX^* = (\sX_0\oplus\sK)^* = \sX_0^*\oplus\sK^*$ by Lemma \ref{lem:Dual_direct_sum_Banach_spaces_is_direct_sum_dual_spaces}, so $\Ran A=\sX_0^*$, as claimed in Item \eqref{item:Range_Fredholm_operator_index_zero}.
\end{proof}

We have the following generalization of \cite[Lemma D.3]{Feehan_Maridakis_Lojasiewicz-Simon_harmonic_maps_v5}; note that Lemma \ref{lem:Isomorphism_properties_symmetric_operator} \eqref{item:Kernel_complemented} does not directly generalize Lemma \ref{lem:Range_symmetric_operator_kernel_has_closed_complement} \eqref{item:Range_Fredholm_operator_index_zero}, since $\sX$ is assumed to be reflexive in Lemma \ref{lem:Isomorphism_properties_symmetric_operator} and while it also helps motivate Definition \ref{defn:Morse-Bott_function} \eqref{item:Morse-Bott_point}, it is not used elsewhere in this article.

\begin{lem}[Isomorphism properties of a symmetric operator]
\label{lem:Isomorphism_properties_symmetric_operator}
Let $\sX$ be a reflexive Banach space over $\KK$. If $A \in \sL(\sX,\sX^*)$ is symmetric with closed range, then the following hold:
\begin{enumerate}
  \item
  \label{item:Kernel_trivial}
  If $\Ker A=\{0\}$, then $\Ran A = \sX^*$;
  \item
  \label{item:Kernel_complemented}
  If $\Ker A$ has a closed complement $\sX_0\subset\sX$, then $\Ran A \cong \sX_0^*$.
\end{enumerate}
\end{lem}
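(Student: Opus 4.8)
The plan is to exploit the symmetry of $A$ together with the reflexivity of $\sX$ to identify the Banach-space adjoint of $A$ with $A$ itself, and then to invoke the closed range theorem to rewrite $\Ran A$ as an annihilator of $\Ker A$ in $\sX^*$. Throughout, for a subspace $\sM\subset\sX$ I write $\sM^\circ:=\{\psi\in\sX^*:\psi(m)=0\ \forall\,m\in\sM\}$ for the annihilator in $\sX^*$, and for a subspace $\sN\subset\sX^*$ I write ${}^\circ\sN$ for its pre-annihilator in $\sX^*$ (viewing $\sN$ as acting through $\sX^{**}$).

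First I would compute the adjoint. Let $J:\sX\to\sX^{**}$ denote the canonical isometric isomorphism (an isomorphism precisely because $\sX$ is reflexive), so $(Jy)(\psi)=\psi(y)$ for $y\in\sX$ and $\psi\in\sX^*$. The adjoint $A^*:\sX^{**}\to\sX^*$ is defined by $(A^*\Phi)(x)=\Phi(Ax)$. Taking $\Phi=Jy$ and using the symmetry of $A$, I get $(A^*Jy)(x)=(Ax)(y)=\langle y,Ax\rangle=\langle x,Ay\rangle=(Ay)(x)$ for all $x\in\sX$, whence $A^*\circ J=A$. In particular $\Ker A^*=J(\Ker A)$, so under the reflexive identification $\sX^{**}\cong\sX$ the kernel $\Ker A^*$ is exactly $\Ker A$. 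This is the one step where reflexivity is essential: without it one would only have $J(\Ker A)\subset\Ker A^*$, possibly properly.

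Next, since $\Ran A\subset\sX^*$ is closed by hypothesis, the closed range theorem gives $\Ran A={}^\circ(\Ker A^*)$. Combining this with the identification of the previous step, a functional $\psi\in\sX^*$ lies in this pre-annihilator if and only if $\psi(y)=0$ for every $y\in\Ker A$; that is,
\[
\Ran A=(\Ker A)^\circ .
\]
For Item \eqref{item:Kernel_trivial}, the hypothesis $\Ker A=\{0\}$ forces $(\Ker A)^\circ=\sX^*$, so $\Ran A=\sX^*$. For Item \eqref{item:Kernel_complemented}, write $\sX=\sX_0\oplus\sK$ with $\sK:=\Ker A$; Lemma \ref{lem:Dual_direct_sum_Banach_spaces_is_direct_sum_dual_spaces} then yields $\sX^*=\sX_0^*\oplus\sK^*$, and under this splitting the copy of $\sX_0^*$ is exactly the set of functionals vanishing on $\sK$ (namely those of the form $\psi_0\circ\pi_{\sX_0}$). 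Hence $\Ran A=(\Ker A)^\circ=\sK^\circ=\sX_0^*$, which in particular gives $\Ran A\cong\sX_0^*$.

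The steps are short, so the main obstacle is bookkeeping rather than depth: verifying $A^*\circ J=A$ correctly, matching the annihilator/pre-annihilator conventions in the statement of the closed range theorem, and confirming that the $\sX_0^*$ summand of $\sX^*=\sX_0^*\oplus\sK^*$ coincides with $\sK^\circ$ under the isomorphism of Lemma \ref{lem:Dual_direct_sum_Banach_spaces_is_direct_sum_dual_spaces}. The only genuine analytic input beyond the closed range theorem is the role of reflexivity in pinning $\Ker A^*$ down to $\Ker A$, which is exactly why the hypothesis that $\sX$ be reflexive cannot be removed and why Item \eqref{item:Kernel_complemented} does not subsume Lemma \ref{lem:Range_symmetric_operator_kernel_has_closed_complement} \eqref{item:Range_Fredholm_operator_index_zero}.
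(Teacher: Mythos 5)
Your proposal is correct and follows essentially the same route as the paper's proof: both use reflexivity and symmetry to identify $A^*\circ J$ with $A$ (hence $\Ker A^* = J(\Ker A)$), then combine the closed-range hypothesis with the annihilator duality $\overline{\Ran A} = {}^{\circ}(\Ker A^*)$ to obtain $\Ran A = (\Ker A)^{\circ}$, from which both items follow. The only cosmetic difference is in Item \eqref{item:Kernel_complemented}, where you identify $(\Ker A)^{\circ}$ with the $\sX_0^*$ summand via Lemma \ref{lem:Dual_direct_sum_Banach_spaces_is_direct_sum_dual_spaces}, whereas the paper passes through the quotient $\sX/\Ker A \cong \sX_0$ and the canonical isomorphism $(\sX/\Ker A)^* \cong (\Ker A)^{\perp}$; these are equivalent.
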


\begin{proof}
If $M \subset \sX^*$ is a subspace, we recall from \cite[Section 4.6]{Rudin} that its annihilator is
\[
M^\perp := \{ \phi \in \sX^{**}:  \langle \alpha, \phi\rangle = 0, \text{ for all } \alpha\in M\},
\]
where $\langle\cdot,\cdot\rangle:\sX^*\times\sX^{**}\to\KK$ denotes the canonical pairing, and that by \cite[Theorem 4.12]{Rudin}
\begin{equation}
\label{eq:Rudin_theorem_4-12}
(\Ran A)^\perp = \Ker A^*,
\end{equation}
where $A^*:\sX^{**} \to \sX^*$ is the adjoint operator defined by
\[
\langle x,A^*\phi\rangle := \langle Ax,\phi\rangle,
\quad\text{for all } x\in \sX,\ \phi \in \sX^{**}.
\]
If $J:\sX \to \sX^{**}$ is the canonical map defined by $J(x)\alpha = \alpha(x)$ for all $x\in\sX$ and $\alpha\in\sX^*$, then $J$ is an isomorphism by hypothesis that $\sX$ is reflexive and thus
\[
\langle y, A^*J(x) \rangle
=
\langle Ay, J(x) \rangle
=
\langle x, A y \rangle,
\quad\text{for all } x, y\in \sX,
\]
that is,
\begin{equation}
\label{eq:AdjointA_J_equals_A}
\langle y, A^*J(x) \rangle
=
\langle x, A y \rangle,
\quad\text{for all } x, y\in \sX,
\end{equation}
where $\langle\cdot,\cdot\rangle:\sX\times\sX^*\to\KK$ also denotes the canonical pairing. Hence,
\begin{align*}
\Ker A^* &= \{\phi \in \sX^{**}:  \langle y, A^*\phi \rangle = 0 , \text{ for all } y\in \sX\}
\\
&= J\left(\{x \in \sX:  \langle y, A^*J(x) \rangle = 0 , \text{ for all } y\in \sX\}\right)
\quad \text{(by reflexivity of $\sX$)}
\\
&= J\left(\{x \in \sX:  \langle x, A y \rangle = 0 , \text{ for all } y\in \sX\}\right)
\quad \text{(by \eqref{eq:AdjointA_J_equals_A})}
\\
&= J\left(\{x \in \sX:  \langle y, A x \rangle = 0 , \text{ for all } y\in \sX\}\right) \quad \text{(by symmetry of $A$)},
\end{align*}
that is,
\begin{equation}
\label{eq:KernelA_isomorphic_KernelA}
\Ker A^* = J\left(\Ker A\right) \cong \Ker A.
\end{equation}
Consider Item \eqref{item:Kernel_trivial}. Because $\Ker A=\{0\}$ by assumption, then \eqref{eq:Rudin_theorem_4-12} and \eqref{eq:KernelA_isomorphic_KernelA} imply that
\begin{equation}
\label{eq:Annihilator_RanA_zero}
(\Ran A)^\perp = \{0\}.
\end{equation}
If $\overline{\Ran A}$ denotes the (norm) closure of $\Ran A$ in $\sX^*$, then
\begin{align*}
\Ran A &= \overline{\Ran A} \quad\text{(as $\Ran A$ closed by hypothesis)}
\\
&= {}^\perp\left((\Ran A)^\perp\right) \quad\text{(by \cite[Theorem 4.7(a)]{Rudin})}
\\
&= {}^\perp\{0\}  \quad\text{(by \eqref{eq:Annihilator_RanA_zero})}
\\
&= \sX^*,
\end{align*}
where if $N \subset X^{**}$ is a subspace, we recall from \cite[Section 4.6]{Rudin} that its annihilator is
\[
{}^\perp N := \{ \alpha \in \sX^*:  \langle \alpha, \phi\rangle = 0, \text{ for all } \phi\in N\}.
\]
This establishes Item \eqref{item:Kernel_trivial}.

Consider Item \eqref{item:Kernel_complemented}. By modifying the argument yielding Item \eqref{item:Kernel_trivial}, we now obtain
\begin{align*}
\Ran A
&= {}^\perp\left((\Ran A)^\perp\right) \quad\text{(by \cite[Theorem 4.7(a)]{Rudin} and closedness of $\Ran A$)}
\\
&= {}^\perp\left(\Ker A^*\right) \quad\text{(by \eqref{eq:Rudin_theorem_4-12})} 
\\    
&= {}^\perp\left(J\left(\Ker A\right)\right) \quad\text{(by \eqref{eq:KernelA_isomorphic_KernelA})}.
\end{align*}
But
\begin{align*}
{}^\perp\left(J\left(\Ker A\right)\right)
&=
\left\{\alpha\in\sX^*: \langle\alpha, Jx\rangle = 0,  \text{ for all } x \in \Ker A\right\}
\\
&=
\left\{\alpha\in\sX^*: \langle x, \alpha\rangle = 0,  \text{ for all } x \in \Ker A\right\}
\\
&= \left(\Ker A\right)^\perp,
\end{align*}
where if $L \subset \sX$ is a subspace, we recall from \cite[Section 4.6]{Rudin} that its annihilator is
\[
L^\perp := \{ \alpha \in \sX^*:  \langle x, \alpha\rangle = 0, \text{ for all } x\in L\},
\]
where $\langle\cdot,\cdot\rangle:\sX\times\sX^*\to\KK$ denotes the canonical pairing. Therefore, by combining the preceding identities, we obtain
\[
\Ran A = \left(\Ker A\right)^\perp.
\]
Since $\Ker A \subset \sX$ is a closed subspace, the quotient space $\sX/\Ker A$ is a Banach space (by \cite[Proposition 11.8]{Brezis}). By hypothesis, $\Ker A$ has a closed complement $\sX_0 \subset \sX$, so $\sX = \sX_0\oplus\Ker A$ and $\sX/\Ker A \cong \sX_0$. Hence, by \cite[Proposition 11.9]{Brezis} there is an isomorphism of Banach spaces,
\[
\sX_0^* \cong \left(\Ker A\right)^\perp.
\]
Consequently, we find that
\[
\Ran A \cong \sX_0^*,
\]
as claimed. This establishes Item \eqref{item:Kernel_complemented} and completes the proof of Lemma \ref{lem:Isomorphism_properties_symmetric_operator}.
\end{proof}

We have the following generalization of Lemma \ref{lem:Range_symmetric_operator_kernel_has_closed_complement} which helps motivate Definition \ref{defn:Morse-Bott_function_refined} \eqref{item:Morse-Bott_point_refined} but is not used elsewhere in this article.

\begin{lem}[Isomorphism properties of a Fredholm operator]
\label{lem:Isomorphism_properties_Fredholm_operator}
Let $\sX$ and $\tilde{\sX}$ be Banach spaces over $\KK$. If $T \in \sL(\sX,\tilde{\sX})$ is Fredholm, then $\Ker T$ has a closed complement $\sX_0$ in $\sX$ such that the following hold:
\begin{enumerate}
\item
\label{item:Fredholm}
$\Ran T \cong \sX_0$ and $\tilde{\sX} \cong \sX_0\oplus\Ker T^*$;

\item
\label{item:Fredholm_index_zero}
If $\Ind T = 0$, then $\tilde{\sX} \cong \sX_0\oplus\Ker T$.
\end{enumerate}
\end{lem}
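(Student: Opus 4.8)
The plan is to prove Item~\eqref{item:Fredholm} first and then deduce Item~\eqref{item:Fredholm_index_zero} from it using the index hypothesis. The Fredholm assumption will be used in two essential places: to guarantee that $\Ran T$ is closed (so that the Open Mapping Theorem applies), and to guarantee that the relevant cokernel and kernels are finite-dimensional (so that a dimension count pins down the complement up to isomorphism).

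First I would establish the isomorphism $\Ran T \cong \sX_0$. Since $\sX_0$ is a closed complement of $\Ker T$, we have $\sX = \sX_0\oplus\Ker T$, and I claim the restriction $T|_{\sX_0}:\sX_0\to\Ran T$ is a bounded bijection. It is injective because $\Ker(T|_{\sX_0}) = \sX_0\cap\Ker T = \{0\}$, and it is onto $\Ran T$ because for any $x = x_0 + k$ with $x_0\in\sX_0$ and $k\in\Ker T$ one has $Tx = Tx_0$, whence $\Ran T = T(\sX_0)$. Since $T$ is Fredholm, $\Ran T$ is a closed subspace of $\tilde{\sX}$ and hence a Banach space, while $\sX_0$ is a Banach space as a closed subspace of $\sX$. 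The Bounded Inverse Theorem then shows that $T|_{\sX_0}$ is an isomorphism of Banach spaces, so $\Ran T\cong\sX_0$.

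Next I would produce a complement of $\Ran T$ and identify it. Because $T$ is Fredholm, $\Ran T$ is closed with finite codimension in $\tilde{\sX}$, so by \cite[Lemma 4.21 (b)]{Rudin} there is a closed complement $\sN\subset\tilde{\sX}$ with $\tilde{\sX} = \Ran T\oplus\sN$ and $\dim\sN = \codim\Ran T < \infty$. To match $\sN$ with $\Ker T^*$ up to isomorphism, I would invoke $(\Ran T)^\perp = \Ker T^*$ from \cite[Theorem 4.12]{Rudin} together with the standard fact (cf.\ \cite[Proposition 11.9]{Brezis}) that the annihilator of a closed subspace $M\subset\tilde{\sX}$ satisfies $M^\perp\cong(\tilde{\sX}/M)^*$; when $\tilde{\sX}/M$ is finite-dimensional this gives $\dim M^\perp = \codim M$. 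Applying this with $M = \Ran T$ yields $\dim\Ker T^* = \codim\Ran T = \dim\sN$. As two finite-dimensional $\KK$-vector spaces of equal dimension are isomorphic, $\sN\cong\Ker T^*$, and combining with the previous step we obtain $\tilde{\sX} = \Ran T\oplus\sN \cong \sX_0\oplus\Ker T^*$, which is Item~\eqref{item:Fredholm}.

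Finally, for Item~\eqref{item:Fredholm_index_zero}, the hypothesis $\Ind T = 0$ means $\dim\Ker T = \dim\coker T$, and the computation above identifies $\dim\coker T = \codim\Ran T = \dim\Ker T^*$. Hence $\dim\Ker T^* = \dim\Ker T$, so $\Ker T^*\cong\Ker T$ as finite-dimensional $\KK$-vector spaces, and therefore $\tilde{\sX}\cong\sX_0\oplus\Ker T^*\cong\sX_0\oplus\Ker T$. I do not expect a genuine obstacle here; the one point requiring care is the dimension count $\dim\Ker T^* = \codim\Ran T$, which is exactly where the closedness of $\Ran T$ (and hence the Fredholm hypothesis) is indispensable.
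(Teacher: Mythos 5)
Your proof is correct and follows essentially the same route as the paper's: the restriction $T|_{\sX_0}$ plus the Open Mapping (Bounded Inverse) Theorem for $\Ran T\cong\sX_0$, Rudin's Lemma 4.21\,(b) for a closed complement of $\Ran T$, the identities $(\Ran T)^\perp=\Ker T^*$ and $M^\perp\cong(\tilde{\sX}/M)^*$ together with finite-dimensionality to identify that complement with $\Ker T^*$, and the index-zero dimension count for Item (2). No gaps.
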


\begin{proof}
Consider Item \eqref{item:Fredholm}. Since $T$ is Fredholm, then $\Ker T$ is finite-dimensional and thus has a closed complement $\sX_0 \subset \sX$ such that $\sX=\sX_0\oplus\Ker T$ by \cite[Lemma 4.21 (a)]{Rudin}. Similarly, because $T$ is Fredholm, $\Ran T$ is a closed subspace of $\tilde{\sX}$ and $\Coker T = \tilde{\sX}/\Ran T$ is finite-dimensional, so $\Ran T$ has a closed complement $\tilde{\sK} \subset \tilde{\sX}$ such that $\tilde{\sX}=\Ran T\oplus\tilde{\sK}$ by \cite[Lemma 4.21 (b)]{Rudin}, and $\tilde{\sX}/\Ran T = \tilde{\sK}$. Since $\Ran T$ is a Banach space and $T:\sX_0 \to \Ran T$ bijective and bounded, then $T$ is an isomorphism from $\sX_0$ onto $\Ran T$ by the Open Mapping Theorem. By \cite[Proposition 11.9]{Brezis}, we have $\tilde{\sK}^* = (\Ran T)^\perp$, and $(\Ran T)^\perp = \Ker T^*$ by \cite[Theorem 4.12]{Rudin}, and $\tilde{\sK} \cong \tilde{\sK}^*$ by finite-dimensionality. Hence, $\tilde{\sX} \cong \sX_0 \oplus \Ker T^*$, as claimed.

Consider Item \eqref{item:Fredholm_index_zero}. If $\Ind T = 0$, then $\dim\Ker T^* = \dim\Ker T$ and $\Ker T \cong \Ker T^*$ by finite-dimensionality and $\tilde{\sX}\cong \sX_0\oplus\Ker T$.
\end{proof}

\subsection{Preliminaries on nonlinear functional analysis}
\label{subsec:Nonlinear_functional_analysis_preliminaries}
In this subsection, we gather a few observations from nonlinear functional analysis.

\subsubsection{Differentiable and analytic maps on Banach spaces}
\label{subsubsec:Huang_2-1A}
We refer to Huang \cite[Section 2.1A]{Huang_2006}; see also Berger \cite[Section 2.3]{Berger_1977}. Let $\sX, \sY$ be Banach spaces over $\KK$, let $\sU\subset\sX$ be an open subset, and $\sF:\sU \to \sY$ be a map. Recall that $\sF$ is  \emph{Fr\'echet differentiable} at a point $x \in \sU$ with a  derivative, $\sF'(x) \in \sL(\sX,\sY)$, if
\[
\lim_{y\to 0} \frac{1}{\|y\|_\sX}\|\sF(x + y) - \sF(x) - \sF'(x)y\|_\sY = 0.
\]
Recall from Berger \cite[Definition 2.3.1]{Berger_1977}, Deimling \cite[Definition 15.1]{Deimling_1985}, or Zeidler \cite[Definition 8.8]{Zeidler_nfaa_v1} that $\sF$ is \emph{analytic} at $x \in \sU$ if there exists a constant $r > 0$ and a sequence of continuous symmetric linear maps $L_n:\otimes^n\sX \to \sY$ such that $\sum_{n\geq 1} \|L_n\| r^n < \infty$ and there is a positive constant $\delta = \delta(x)$ such that
\begin{equation}
\label{Taylor_expansion}
\sF(x + y) = \sF(x) + \sum_{n\geq 1} L_n(y^n), \quad\text{for all } y \in \sX \text{ with } \|y\|_\sX < \delta,
\end{equation}
where $y^n \equiv (y,\ldots,y) \in \sX \times \cdots \times \sX$ ($n$-fold product). If $\sF$ is differentiable (respectively, analytic) at every point $x \in \sU$, then $\sF$ is differentiable (respectively, analytic) on $\sU$. It is a useful observation that if $\sF$ is analytic at $x\in\sX$, then it is analytic on a ball $B_x(\varepsilon)$ (see Whittlesey \cite[p. 1078]{Whittlesey_1965}).

\subsubsection{Inverse and implicit mapping theorems for smooth and analytic maps on Banach spaces}
\label{subsubsec:Smooth_and_analytic_inverse_and_implicit_function_theorems}
Statements and proofs of the Inverse Mapping Theorem for $C^k$ maps of Banach spaces are provided by Abraham, Marsden, and Ratiu \cite[Theorem 2.5.2]{AMR}, Deimling \cite[Theorem 4.15.2]{Deimling_1985}, and Zeidler \cite[Theorem 4.F]{Zeidler_nfaa_v1}; statements and proofs of the Inverse Mapping Theorem for \emph{analytic} maps of Banach spaces are provided by Berger \cite[Corollary 3.3.2]{Berger_1977} (complex), Deimling \cite[Theorem 4.15.3]{Deimling_1985} (real or complex), and Zeidler \cite[Corollary 4.37]{Zeidler_nfaa_v1} (real or complex). The corresponding Implicit Mapping Theorems for $C^k$ or analytic maps are proved in the standard way as corollaries, for example \cite[Theorem 2.5.7]{AMR} and \cite[Theorem 4.H]{Zeidler_nfaa_v1}.

\subsubsection{Gradient maps}
\label{subsubsec:Huang_2-1B}
We recall the following basic facts concerning gradient maps.

\begin{prop}[Properties of gradient maps]
\label{prop:Huang_2-1-2}
(See Huang \cite[Proposition 2.1.2]{Huang_2006}.)
Let $\sU$ be an open subset of a Banach space, $\sX$, let $\sY$ be continuously embedded in $\sX^*$, and let $\sM:\sU \to \sY \subset \sX^*$ be a continuous map. Then the following hold.
\begin{enumerate}
\item If $\sM$ is a gradient map for $\sE$, then
\[
\sE(x_1) - \sE(x_0) = \int_0^1 \langle x_1-x_0, \sM(tx_1 + (1-t)x_0)) \rangle_{\sX\times\sX^*} \,dt, \quad\text{for all } x_0, x_1 \in \sU.
\]

\item
\label{item:Huang_2-1-2_gradient_map_iff_symmetric_derivatives}
If $\sM$ is of class $C^1$, then $\sM$ is a gradient map if and only if all of its Fr\'echet derivatives, $\sM'(x)$ for $x \in \sU$, are symmetric in the sense that
\[
\langle w,\sM'(x)v \rangle_{\sX\times\sX^*} = \langle v,\sM'(x)w \rangle_{\sX\times\sX^*}, \quad\text{for all } x \in \sU \text{ and } v,w \in \sX.
\]

\item
\label{item:Huang_2-1-2_analytic_gradient_map_implies_analytic_potential}
If $\sM$ is an analytic gradient map, then any potential $\sE:\sU\to\KK$ for $\sM$ is analytic.
\end{enumerate}
\end{prop}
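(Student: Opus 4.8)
The plan is to establish the three assertions in order, using the fundamental theorem of calculus for Item (1), the construction of a primitive along line segments for Item (2), and term-by-term integration of the analytic expansion of $\sM$ for Item (3). Throughout I write $\sE$ for the potential, so that the defining relation \eqref{eq:Differential_and_gradient_maps} reads $\sE'(x)v = \langle v, \sM(x)\rangle$. For Item (1), I would fix $x_0, x_1 \in \sU$ with the segment $\{tx_1 + (1-t)x_0 : t\in[0,1]\}$ contained in $\sU$ (after shrinking $\sU$ to a convex neighborhood if necessary) and consider the $C^1$ function $g:[0,1]\to\KK$ defined by $g(t) := \sE(tx_1+(1-t)x_0)$. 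By the chain rule and \eqref{eq:Differential_and_gradient_maps}, one has $g'(t) = \sE'(tx_1+(1-t)x_0)(x_1-x_0) = \langle x_1-x_0, \sM(tx_1+(1-t)x_0)\rangle$, and the fundamental theorem of calculus applied to $g(1)-g(0)$ yields the stated integral formula.

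For Item (2), the forward implication is immediate: differentiating \eqref{eq:Differential_and_gradient_maps} in $x$ in a direction $w$ gives $\sE''(x)(w,v) = \langle v, \sM'(x)w\rangle$, and since $\sE$ is $C^2$ (because $\sM = \sE'$ is $C^1$), the symmetry of the second Fr\'echet derivative, $\sE''(x)(w,v) = \sE''(x)(v,w)$, yields the desired identity $\langle v, \sM'(x)w\rangle = \langle w, \sM'(x)v\rangle$. For the converse, suppose $\sM$ is $C^1$ with symmetric derivatives; working in a convex neighborhood of a point $x_0$, I would define the candidate potential $\sE(x) := \int_0^1 \langle x-x_0, \sM(x_0+t(x-x_0))\rangle\, dt$ and compute $\sE'(x)h$ by differentiating under the integral sign, which is justified because $\sM$ is $C^1$, so the integrand is jointly continuous with continuous partial derivative in $x$. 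Using the hypothesized symmetry to rewrite $\langle x-x_0, \sM'(\cdot)h\rangle$ as $\langle h, \sM'(\cdot)(x-x_0)\rangle$, the integrand becomes the total $t$-derivative of the map $t\mapsto t\langle h, \sM(x_0+t(x-x_0))\rangle$, so integration over $[0,1]$ collapses to $\langle h, \sM(x)\rangle$; this proves $\sE'(x)h = \langle h, \sM(x)\rangle$, i.e. $\sM$ is a gradient map.

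For Item (3), I would first observe that on a connected open set any two potentials for $\sM$ differ by a constant, since their difference has vanishing derivative; hence it suffices to prove analyticity of the explicit primitive constructed in Item (2). Writing the analytic expansion \eqref{Taylor_expansion} of $\sM$ about $x_0$ as $\sM(x_0+y) = \sum_{n\geq 0} L_n(y^n)$, with $L_n:\otimes^n\sX\to\sY$ and $\sum_n \|L_n\|r^n < \infty$, substitution gives $\langle y, \sM(x_0+ty)\rangle = \sum_{n\geq 0} t^n \langle y, L_n(y^n)\rangle$, and integrating term by term over $t\in[0,1]$ yields $\sE(x_0+y) = \sE(x_0) + \sum_{n\geq 0}\tfrac{1}{n+1}\langle y, L_n(y^n)\rangle$. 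Setting $M_{n+1}(y^{n+1}) := \tfrac{1}{n+1}\langle y, L_n(y^n)\rangle$ (after symmetrizing) exhibits a power series for $\sE$ whose coefficients satisfy $\sum_n \tfrac{1}{n+1}\|M_{n+1}\|r^{n+1} \leq Cr\sum_n \|L_n\|r^n < \infty$, where $C$ is the norm of the embedding $\sY\subset\sX^*$; the extra factor $1/(n+1)$ only improves convergence, so the radius of convergence of $\sE$ is at least that of $\sM$ and the term-by-term integration is legitimate.

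The only points requiring genuine care are the differentiation under the integral sign in the converse of Item (2)---which must be justified from the $C^1$ hypothesis on $\sM$ together with the boundedness of the canonical pairing---and the bookkeeping in Item (3) showing that term-by-term integration of the expansion of $\sM$ produces a convergent analytic expansion for $\sE$. Both are routine given the $C^1$, respectively analytic, hypotheses, but they are where the actual estimates reside; the remaining assertions are formal consequences of the fundamental theorem of calculus and the symmetry of second derivatives.
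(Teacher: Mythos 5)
Your proof is correct. The paper itself offers no proof of this proposition --- it is stated as a recollection of standard facts with a citation to Huang \cite[Proposition 2.1.2]{Huang_2006} --- and your three arguments (fundamental theorem of calculus along segments, the Poincar\'e-lemma construction of a primitive using symmetry of $\sM'$, and term-by-term integration of the power series with the $1/(n+1)$ factors only improving convergence) are exactly the standard ones in that reference. The only caveat, which you already flag, is that Items (1) and (2) implicitly require the relevant segments to lie in $\sU$ (i.e.\ the statement is local, or $\sU$ is taken convex), consistent with how the paper uses the result.
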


\subsection{Morse Lemma for functions on Banach spaces with degenerate critical points}
\label{subsec:Morse_lemma_functions_degenerate_critical_points}
In this subsection, we prove Theorem \ref{mainthm:Hormander_C-6-1_Banach_refined} and hence Theorem \ref{mainthm:Hormander_C-6-1_Banach} by taking $\tilde{\sX} = \sX^*$.

Given Banach spaces $\sX$ and $\sZ$ over $\KK$, an open subset $\sU\subset\sX$, and a smooth map, $f:\sU \to \sZ$, and an integer $n\geq 0$, we partly follow Zeidler \cite[Sections 4.3--4.5]{Zeidler_nfaa_v1} and let $D^nf(x) = f^{(n)}(x) \in \sL^n(\sX,\sZ) = \sL(\otimes^n\sX,\sZ)$ denote the derivatives of order $n$ at a point $x \in \sU$. If $\sX = \sX_1\times\sX_2$, a product of Banach spaces $\sX_i$ over $\KK$ for $i=1,2$, we let
\[
  D_if(x_1,x_2) = f_{x_i}(x_1,x_2) \in \sL(\sX_i,\sZ)
\]
and
\[
  D_{ij}f(x_1,x_2) = f_{x_ix_j}(x_1,x_2) \in \sL(\sX_i\otimes\sX_j,\sZ)
\]
denote the first and second-order partial derivatives at a point $(x_1,x_2) \in \sU$; we may also write $D_i^2f(x_1,x_2)$ in place of $D_{ii}f(x_1,x_2)$ for $i=1,2$. We let $\GL(\sX) \subset \sL(\sX)$ denote the group of invertible operators on $\sX$.

\begin{proof}[Proof of Theorem \ref{mainthm:Hormander_C-6-1_Banach_refined}]
We generalize Ang and Tuan's proof of the \cite[Morse--Palais Lemma]{Ang_Tuan_1973}, where $f$ is $C^{p+2}$ with $p\geq 1$ and $\sY = \{0\}$ and $0\in\sX$ is a non-degenerate critical point, H\"ormander's proof of \cite[Lemma C.6.1]{Hormander_v3}, where $f$ is $C^\infty$ and $(0,0) \in \sX\times\sY$ is a degenerate critical point and $\sX=\RR^n$ and $\sY=\RR^m$, and Lang's proof of \cite[Theorem 7.5.1]{Lang_fundamentals_differential_geometry}, where $f$ is $C^{p+2}$ with $p\geq 1$ and $\sY = \{0\}$ and $0\in\sX$ is a non-degenerate critical point and $\sX$ is a real Hilbert space.

Consider the $C^{p+1}$ map,
\[
\sM:\sX\times\sY \supset \sU\times \sV \ni (x,y) \mapsto \sM(x,y) := D_1f(x,y) \in \tilde{\sX},
\]
and observe that its partial derivative with respect to $x$, that is, the $C^p$ map,
\[
D_1\sM: \sU\times \sV\times\sX \ni (x,y,\eta) \mapsto D_1\sM(x,y)\eta = D_1^2f(x,y)\eta\in \tilde{\sX},
\]
gives an isomorphism,
\[
  \sX \ni \eta \mapsto D_1\sM(0,0)\eta \in \tilde{\sX},
\]
by our hypothesis on $D_1^2f(0,0) = D_1\sM(0,0)$. By the Implicit Mapping Theorem, after possibly shrinking $\sU$ and $\sV$, there exists a $C^{p+1}$ map,
\[
\psi: \sY \supset \sV \ni y \mapsto w=\psi(y) \in \sU \subset\sX,
\]
with $\psi(0)=0$, such that $\sM(x,y) = 0$ if and only if $x = \psi(y)$, for each $y\in \sV$; moreover,
\[
  D_y\sM(\psi(y),y)) = 0 = D_1\sM(\psi(y),y)D\psi(y) + D_2\sM(\psi(y),y),
\]
where $D_y\sM(\psi(y),y)$ denotes the derivative of the one-variable map $\sM(\psi(y),y)$ with respect to $y$, and so
\[
D\psi(y) = -\left(D_1\sM(\psi(y),y)\right)^{-1}D_y\sM(\psi(y),y) \in \sL(\sY,\sX), \quad\text{for all } y \in \sV.
\]
Define a $C^{p+1}$ map,
\[
  \Psi: \sU\times \sV \ni (w,y) \mapsto (x,y) = \Psi(w,y) := (w+\psi(y),y) \in \sU\times \sV,
\]
a $C^{p+1}$ function $\tilde{f}$, and a $C^{p+1}$ map $\tilde{\sM}$ by
\begin{align*}
\tilde{f}(w,y) &:= f\circ\Psi(w,y) = f(w+\psi(y),y),
\\
\tilde{\sM}(w,y) &:= D_1\tilde{f}(w,y) = D_1 f(w+\psi(y),y)
\\
&\,= \sM(w+\psi(y),y), \quad\text{for all } (w,y) \in \sU\times \sV,
\end{align*}
noting that $D_1\Psi(w,y) = \id_\sX$, since $\Psi(w,y) = (w+\psi(y),y)$, and
\begin{multline*}
  D_1\tilde{f}(w,y) = D_1(f\circ\Psi)(w,y) = D_1f(\Psi(w,y))\circ D_1\Psi(w,y) 
  \\
  = D_1 f(w+\psi(y),y))\circ \id_\sX = D_1f(w+\psi(y),y) = D_1f(x,y),
\end{multline*}
from the Chain Rule. The map $\Psi$ has derivative
\begin{equation}
\label{eq:Derivative_Psi}
D\Psi(w,y) = \begin{pmatrix} \id_\sX & D\psi(y) \\ 0 & \id_\sY \end{pmatrix}  \in\sL(\sX\oplus\sY), \quad\text{for all } (w,y) \in \sU\times \sV,
\end{equation}
an invertible operator at each $(w,y) \in \sU\times \sV$. In particular, after possibly shrinking $\sU$ and $\sV$, the map $\Psi$ is a $C^{p+1}$ diffeomorphism of an open neighborhood of the origin in $\sX\times\sY$ by the Inverse Mapping Theorem. Therefore, the identity
\[
\sM(\psi(y),y) = 0, \quad\text{for all } y \in \sV,
\]
is equivalent to
\[
\tilde{\sM}(0,y) = 0, \quad\text{for all } y \in \sV,
\]
since $\tilde{\sM}(0,y) = \sM(\psi(y),y) \circ D\Psi(0,y)$ and $D\Psi(0,y)$ is invertible, for all $y \in \sV$. The Chain Rule and calculations similar to previous ones giving $D_1\tilde{f}(w,y) = D_1f(x,y)$ also yield
\[
D_1^2\tilde{f}(w,y) = D_1^2f(x,y) \quad\text{and thus}\quad D_1^2\tilde{f}(0,0) = D_1^2 f(0,0) = A \in \sL_\sym(\sX,\tilde{\sX}),
\]
recalling the definition of $A$ in the statement of Theorem \ref{mainthm:Hormander_C-6-1_Banach_refined}. By shrinking $\sU$ if necessary, we may assume that $\sU$ is convex and so by the second-order Taylor Formula  \cite[Section 1.4]{Lang_introduction_differential_topology} we have
\[
\tilde{f}(w,y) = \tilde{f}(0,y) + D_1\tilde{f}(0,y)w + \int_{0}^{1}(1-t)D_1^2\tilde{f}(tw,y)w^2\,dt,
\quad\text{for all } (w,y) \in \sU\times \sV,
\]
that is,
\begin{multline}
\label{eq:Hormander_C-6-1_Banach_raw}
\tilde{f}(w,y) = \tilde{f}(0,y) + \langle w,\tilde{\sM}(0,y)\rangle + \int_{0}^{1}(1-t)\left\langle w, D_1\tilde{\sM}(tw,y)w\right\rangle_{\sX\times\sX^*}\,dt,
\\
\text{for all } (w,y) \in \sU\times \sV.
\end{multline}
Therefore, using $\tilde{\sM}(0,y) = 0$ for all $y \in \sV$,
\[
\tilde{f}(w,y) = \tilde{f}(0,y) + \frac{1}{2}\langle w, B(w,y)w \rangle_{\sX\times\sX^*},
\quad\text{for all } (w,y) \in \sU\times \sV,
\]
where
\begin{equation}
\label{eq:Hormander_B_map}  
B(w,y) := 2\int_{0}^{1}(1-t)D_1\tilde{\sM}(tw,y)\,dt, \quad\text{for all } (w,y) \in \sU\times \sV.
\end{equation}
The expression \eqref{eq:Hormander_B_map} for $B$ defines a $C^p$ map,
\[
\sX\times\sY \supset \sU\times \sV \ni (w,y) \mapsto B(w,y) \in \sL_\sym(\sX,\tilde{\sX}),
\]
such that $B(0,0) = D_1\tilde{\sM}(0,0) = A$. We now generalize an argument due to Ang and Tuan (see \cite[Lemma 1]{Ang_Tuan_1973}) from the case $\tilde{\sX} = \sX^*$ to the case $\tilde{\sX} \subset \sX^*$ and make the

\begin{defn}[A closed subspace of the space of continuous, linear operators]
\label{defn:Ang_Tuan_page_642}
Let $\sL_A(\sX) \subset \sL(\sX)$ denote the closed subspace of operators $R \in \sL(\sX)$ whose adjoints $R^* \in \sL(\sX^*)$ restrict to\footnote{To avoid notational clutter, we omit explicit notation, such as $\iota:\tilde{\sX} \subset \sX^*$, for the continuous embedding.} operators $R^* \restriction \tilde{\sX} \in \sL(\tilde{\sX})$ after composition with the embedding $\tilde{\sX} \subset \sX^*$ and obey
\begin{equation}
\label{eq:Ang_Tuan_page_642}
R^*A = AR \in \sL(\sX,\tilde{\sX}).
\end{equation}
\end{defn}

We would like to write $z = R(w,y)w \in \sX$ where, after possibly further shrinking $\sU$ and $\sV$,
\[
\sX\times\sY \supset \sU\times \sV \ni (w,y) \mapsto R(w,y) \in \GL(\sX) \cap \sL_A(\sX)
\]
is a $C^p$ map such that $R(0,0)=\id_\sX$ and
\begin{equation}
\label{eq:Hormander_C-6-1}
\langle w, B(w,y)w \rangle_{\sX\times\sX^*} = \langle R(w,y)w, AR(w,y)w \rangle_{\sX\times\sX^*}, \quad\text{for all } (w,y) \in \sU\times \sV,
\end{equation}
where $B(w,y)$ is as in \eqref{eq:Hormander_B_map} and we recall that there is a continuous embedding $\tilde{\sX}\subset\sX^*$ by hypothesis. (After preparing the required foundations, the map $R$ will be contructed in the forthcoming Equation \eqref{eq:Defn_R_map}.) The identity \eqref{eq:Hormander_C-6-1} follows if we can write
\begin{equation}
\label{eq:Hormander_C-6-1_extended}
B(w,y) = R(w,y)^*AR(w,y), \quad\text{for all } (w,y) \in \sU\times \sV.
\end{equation}
Equation \eqref{eq:Hormander_C-6-1_extended} is valid at $(w,y) = (0,0)$ with $R(0,0) = \id_\sX$ and $B(0,0) = A$. We have the following generalization of Ang and Tuan \cite[Lemma 1]{Ang_Tuan_1973}.

\begin{claim}[Isomorphism onto a space of continuous, linear symmetric operators]
\label{claim:Ang_Tuan_lemma_1}
The following linear map is an isomorphism of Banach spaces,
\begin{equation}
\label{eq:Derivative_Moser_quadratic_map_extended_at_identity}
\sL_A(\sX) \ni Q \mapsto Q^*A + AQ\in \sL_\sym(\sX,\tilde{\sX}).
\end{equation}
\end{claim}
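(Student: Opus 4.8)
The plan is to prove that \eqref{eq:Derivative_Moser_quadratic_map_extended_at_identity} is an isomorphism by exhibiting an explicit two-sided inverse, using crucially that $A \in \sL_\sym(\sX,\tilde{\sX})$ is invertible as an operator from $\sX$ onto $\tilde{\sX}$, so that $A^{-1} \in \sL(\tilde{\sX},\sX)$. Write $\Phi(Q) := Q^*A + AQ$ for the map in question. First I would check that $\Phi$ is well-defined and bounded: for $Q \in \sL_A(\sX)$ the defining relation $Q^*A = AQ$ gives $\Phi(Q) = 2AQ \in \sL(\sX,\tilde{\sX})$, while a short computation using only the symmetry of $A$ shows that for \emph{any} $Q \in \sL(\sX)$ the operator $Q^*A + AQ$ is symmetric. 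Indeed, by symmetry of $A$,
\[
\langle x, (Q^*A + AQ)y\rangle = \langle Qx, Ay\rangle + \langle x, AQy\rangle = \langle y, AQx\rangle + \langle Qy, Ax\rangle,
\]
which is symmetric under $x \leftrightarrow y$. Hence $\Phi(Q) \in \sL_\sym(\sX,\tilde{\sX})$, and $\|\Phi(Q)\|_{\sL(\sX,\tilde{\sX})} = 2\|AQ\| \leq 2\|A\|\,\|Q\|$, so $\Phi$ is bounded.

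Next I would construct the inverse. Given $S \in \sL_\sym(\sX,\tilde{\sX})$, set $Q := \tfrac{1}{2}A^{-1}S \in \sL(\sX)$, so that $AQ = \tfrac{1}{2}S$. The key point is that $Q^*A = AQ$, which follows by combining the symmetry of $A$ with that of $S$:
\[
\langle x, Q^*Ay\rangle = \langle Qx, Ay\rangle = \langle y, AQx\rangle = \tfrac{1}{2}\langle y, Sx\rangle = \tfrac{1}{2}\langle x, Sy\rangle = \langle x, AQy\rangle,
\]
so $Q^*A = AQ = \tfrac{1}{2}S \in \sL(\sX,\tilde{\sX})$. Because $A$ maps $\sX$ onto $\tilde{\sX}$, every $\alpha \in \tilde{\sX}$ has the form $\alpha = A\xi$, whence $Q^*\alpha = Q^*A\xi = \tfrac{1}{2}S\xi \in \tilde{\sX}$; thus $Q^*$ restricts to an operator $Q^*\restriction\tilde{\sX} \in \sL(\tilde{\sX})$ and $Q \in \sL_A(\sX)$. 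The assignment $\Psi : S \mapsto \tfrac{1}{2}A^{-1}S$ is bounded, with $\|\Psi(S)\|_{\sL(\sX)} \leq \tfrac{1}{2}\|A^{-1}\|\,\|S\|$.

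Finally I would verify that $\Phi$ and $\Psi$ are mutually inverse: $\Phi(\Psi(S)) = 2A\bigl(\tfrac{1}{2}A^{-1}S\bigr) = S$ and $\Psi(\Phi(Q)) = \tfrac{1}{2}A^{-1}(Q^*A + AQ) = \tfrac{1}{2}A^{-1}(2AQ) = Q$. Since both maps are bounded linear maps between the Banach spaces $\sL_A(\sX)$ (a closed subspace of $\sL(\sX)$) and $\sL_\sym(\sX,\tilde{\sX})$, this establishes that \eqref{eq:Derivative_Moser_quadratic_map_extended_at_identity} is an isomorphism of Banach spaces.

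The only mildly delicate point is the bookkeeping with the embedding $\tilde{\sX}\subset\sX^*$—specifically, confirming that the candidate preimage $Q = \tfrac{1}{2}A^{-1}S$ genuinely lies in $\sL_A(\sX)$, which requires both the adjoint-restriction property and the relation $Q^*A = AQ$. Both become immediate once one exploits that $A$ is surjective onto $\tilde{\sX}$, which is exactly the generalization of Ang and Tuan's argument from the case $\tilde{\sX} = \sX^*$ to the present case $\tilde{\sX}\subset\sX^*$. No appeal to the Open Mapping Theorem is required, since the inverse is explicit; alternatively, one could verify only injectivity and surjectivity of the bounded map $\Phi$ and then invoke the Open Mapping Theorem to conclude.
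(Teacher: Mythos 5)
Your proof is correct and follows essentially the same route as the paper: the heart of both arguments is the explicit candidate inverse $Q = \tfrac{1}{2}A^{-1}S$, together with the verification (using the symmetry of $A$ and $S$ and the surjectivity of $A$ onto $\tilde{\sX}$) that this $Q$ lies in $\sL_A(\sX)$. The only cosmetic differences are that you check membership in $\sL_A(\sX)$ via the canonical pairing rather than by manipulating adjoints directly, and you bound the explicit inverse instead of invoking the Open Mapping Theorem, as the paper does.
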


\begin{proof}
We first observe that the map \eqref{eq:Derivative_Moser_quadratic_map_extended_at_identity} is well-defined by virtue of the Definition \ref{defn:Ang_Tuan_page_642} of the subspace $\sL_A(\sX)$. Second, we show that the map \eqref{eq:Derivative_Moser_quadratic_map_extended_at_identity} is surjective. If $C\in \sL_\sym(\sX,\tilde{\sX})$, set
\[
Q := \frac{1}{2}A^{-1}C \in \sL(\sX).
\]
The adjoint of $Q$ is $Q^* = \frac{1}{2}C^*(A^{-1})^* \in \sL(\sX^*)$. Now, $A^* = A$ and $C^* = C\in \sL(\sX,\tilde{\sX})$ by our earlier discussion of properties of operators in $\sL_\sym(\sX,\tilde{\sX})$ and thus also $(A^*)^{-1} = A^{-1} \in \sL(\tilde{\sX},\sX)$. But\footnote{Because $AA^{-1}=\id_\sX=A^{-1}A$ and by \cite[Exercise 4.8]{Rudin}, one has $(A^{-1})^*A^*=\id_\sX=A^*(A^{-1})^*$, so $(A^*)^{-1}=(A^{-1})^*$.} $(A^{-1})^* = (A^*)^{-1} \in \sL(\sX^*,\tilde{\sX}^*)$ and thus $(A^{-1})^* = A^{-1} \in \sL(\tilde{\sX},\sX)$. By combining these observations, we see that $Q^* = \frac{1}{2}C^*(A^{-1})^* = \frac{1}{2}CA^{-1} \in \sL(\tilde{\sX})$, so $Q \in \sL_A(\sX)$, as required, and
\[
Q^*A + AQ = \frac{1}{2}\left(C^*(A^{-1})^*A + AA^{-1}C\right) = \frac{1}{2}\left(CA^{-1}A + AA^{-1}C\right) = C \in \sL_\sym(\sX,\tilde{\sX}),
\]
completing the proof of surjectivity. Third, we show that the map \eqref{eq:Derivative_Moser_quadratic_map_extended_at_identity} is injective. If $AQ + Q^*A = 0$, then $AQ = -Q^*A \in \sL(\sX,\tilde{\sX})$ while $AQ = Q^*A$ by \eqref{eq:Ang_Tuan_page_642} and thus $AQ = 0$ and so $Q = 0 \in \sL_A(\sX)$ since $A$ is invertible. Clearly, the map \eqref{eq:Derivative_Moser_quadratic_map_extended_at_identity} is continuous and its inverse is also continuous by the Open Mapping Theorem. This completes the proof of Claim \ref{claim:Ang_Tuan_lemma_1}.
\end{proof}

The derivative of the quadratic map,
\begin{equation}
\label{eq:Moser_quadratic_map_extended}
\sQ:\sL_A(\sX) \ni P \mapsto P^*AP \in \sL_\sym(\sX,\tilde{\sX}),
\end{equation}
at $P$ in the direction $Q$ is given by
\begin{equation}
\label{eq:Derivative_Moser_quadratic_map_extended}
D\sQ(P): \sL_A(\sX) \ni Q \mapsto Q^*AP + P^*AQ\in \sL_\sym(\sX,\tilde{\sX}).
\end{equation}
Note that the map \eqref{eq:Moser_quadratic_map_extended} is well-defined. Indeed, $(P^*AP)^* = P^*A^*P^{**} \in \sL(\sX^{**})$, where $P \in\sL(\sX)$ has adjoint operator $P^*\in\sL(\sX^*)$ and bidual operator $P^{**} \in \sL(\sX^{**})$. But $P^{**} \restriction \sX = P$ (for example, see Brezis \cite[Theorem 3.24]{Brezis} or Pietsch \cite[Chapter 0, Section A.3.6]{Pietsch_operator_ideals}) and thus $(P^*AP)^* = P^*A^*P = P^*AP \in \sL(\sX,\tilde{\sX})$ and $P^*AP$ is symmetric. When $P$ is the identity operator, we have $D\sQ(\id_\sX) = Q^*A + AQ$ and this operator is an isomorphism by Claim \ref{claim:Ang_Tuan_lemma_1}.

We now adapt the proof of Ang and Tuan \cite[Lemma 2]{Ang_Tuan_1973} and the remainder of the proof of H\"ormander \cite[Lemma C.6.1]{Hormander_v3}. The Implicit Mapping Theorem for analytic maps\footnote{Lang \cite[Theorem 5.2]{Lang_fundamentals_differential_geometry} and Palais \cite[p. 969]{Palais_1969} use a power series argument to define $F$ rather than apply the Implicit Mapping Theorem for analytic maps.} provides open neighborhoods, $\sO_\id \subset \sL_A(\sX)$ of the identity operator $\id_\sX$ and $\sO_A \subset \sL_\sym(\sX,\tilde{\sX})$ of the operator $A$, such that the restriction of the analytic map \eqref{eq:Moser_quadratic_map_extended},
\[
\sL_A(\sX) \supset \sO_\id \ni Q \mapsto Q^*AQ \in \sO_A \subset \sL_\sym(\sX,\tilde{\sX}),
\]
is an analytic diffeomorphism onto its image, with analytic inverse,
\[
F: \sL_\sym(\sX,\tilde{\sX}) \supset \sO_A \ni S \mapsto F(S) \in \sO_\id \subset \sL_A(\sX),
\]
such that $F(A) = \id_\sX$. Therefore, Equation \eqref{eq:Hormander_C-6-1} is fulfilled when we choose
\begin{equation}
\label{eq:Defn_R_map}
R = F(B),
\end{equation}
where $B$ is as in \eqref{eq:Hormander_B_map}. Substituting $z = R(w,y)w$ in Equation \eqref{eq:Hormander_C-6-1} and combining this identity with our previous expression \eqref{eq:Hormander_C-6-1_Banach_raw} for $\tilde{f}(w,y)$ yields
\begin{align*}
\tilde{f}(w,y) &= \tilde{f}(0,y) + \frac{1}{2}\langle w, B(w,y)w \rangle_{\sX\times\sX^*}
\\
&= \tilde{f}(0,y) + \frac{1}{2}\langle R(w,y)w, AR(w,y)w \rangle_{\sX\times\sX^*}
\\
&= \tilde{f}(0,y) + \frac{1}{2}\langle z, Az \rangle_{\sX\times\sX^*}, \quad\text{for all } (w,y) \in \sU\times \sV.
\end{align*}
Observe that the $C^p$ map,
\[
  \sU\times \sV \ni (w,y) \mapsto (R(w,y)w,y) \in \sX\times \sY,
\]
has derivative at the origin,
\begin{equation}
\label{eq:Derivative_Rw_idy}
\begin{pmatrix} \id_\sX & 0 \\ 0 & \id_\sY \end{pmatrix} \in\sL(\sX\oplus\sY),
\end{equation}
since $R(0,0)=\id_\sX$, and thus is locally invertible. Hence, after possibly further shrinking $\sU$ and applying the Inverse Mapping Theorem, the map
\[
\sU\times \sV \ni (w,y) \mapsto (z,y) = (R(w,y)w,y) \in \sU'\times \sV
\]
is a $C^p$ diffeomorphism onto $\sU'\times \sV$, where $\sU'$ is an open neighborhood of the origin in $\sX$. We denote its $C^p$ inverse map by
\[
\Xi: \sU'\times \sV \ni (z,y) \mapsto (w,y) \in \sU\times \sV,
\]
and note that $\Xi(0,0)=(0,0)$ with derivative at the origin,
\begin{equation}
\label{eq:Derivative_Xi}
D\Xi(0,0) = \begin{pmatrix} \id_\sX & 0 \\ 0 & \id_\sY \end{pmatrix} \in\sL(\sX\oplus\sY),
\end{equation}
by \eqref{eq:Derivative_Rw_idy}. Consequently,
\[
  \tilde{f}(\Xi(z,y)) = \tilde{f}(0,y) + \frac{1}{2}\langle z, Az \rangle_{\sX\times\sX^*},
  \quad\text{for all } (z,y) \in \sU'\times \sV.
\]
But $\tilde{f}(w,y) = f(\Psi(w,y))$ and setting $(x,y) = \Psi(w,y) = \Psi(\Xi(z,y)) =: \Phi(z,y)$, we obtain
\[
  f(\Phi(z,y)) = f(\Phi(0,y)) + \frac{1}{2}\langle z, Az \rangle_{\sX\times\sX^*},
  \quad\text{for all } (z,y) \in \sU'\times \sV,
\]
which is the desired relation \eqref{eq:Hormander_C-6-1_Banach_refined}. Equations \eqref{eq:Derivative_Psi} and \eqref{eq:Derivative_Xi} and the Chain Rule give
\[
D\Phi(0,0) = \begin{pmatrix} \id_\sX & \star \\ 0 & \id_\sY \end{pmatrix} \in\sL(\sX\oplus\sY),
\]
which is \eqref{eq:Derivative_Phi_refined}. The conclusion on analyticity of $\Phi$ follows by replacing the role of the Inverse Mapping Theorem for $C^p$ maps in the preceding arguments by its counterpart for analytic maps when $f$ is analytic (see Section \ref{subsubsec:Smooth_and_analytic_inverse_and_implicit_function_theorems}). The proof of Theorem \ref{mainthm:Hormander_C-6-1_Banach_refined} is complete.
\end{proof}

\subsection{Applications to proofs of the Morse and Morse--Bott Lemmas for functions on Banach spaces}
\label{subsec:Applications_Morse_lemma_functions_degenerate_critical_points}
We begin by recalling the

\begin{thm}[Morse Lemma for functions on Banach spaces with non-degenerate critical points]
\label{thm:Morse_Lemma_Banach}
(See Palais \cite[p. 307]{Palais_1963}, \cite[p. 968]{Palais_1969}.)
Let $\sX$ be a Banach space over $\KK$, and $\sU\subset \sX$ be an open neighborhood of the origin, and $f:\sX \supset \sU \ni x \mapsto f(x) \in \KK$ be a $C^{p+2}$ function ($p\geq 1$) such that $f(0) = 0$ and $f'(0)=0$. If $f''(0) \in \sL(\sX, \sX^*)$ is invertible\footnote{In other words, $f$ is Morse at the point $0 \in \sX$.} then there are an open neighborhood of the origin $\sV \subset \sX$ and a $C^p$ diffeomorphism, $\sV \ni y \mapsto x = \Phi(y) \in \sX$ with $\Phi(0)=0$ and $D\Phi(0)=\id_\sX$, such that
\begin{equation}
\label{eq:Morse_function_Banach}
f(\Phi(z)) = \frac{1}{2}\langle z, Az\rangle_{\sX\times\sX^*}, \quad\text{for all } z \in \sU,
\end{equation}
where
\[
  A := f''(0) = (f\circ\Phi)''(0) \in \sL_{\sym}(\sX, \sX^*).
\]
If $f$ is analytic, then $\Phi$ is analytic. If $\sX$ is a Hilbert space with norm $\|\cdot\|$, then one may further choose $\Phi$ and an orthogonal decomposition $\sX=\sX_+\oplus\sX_-$ such that
\begin{equation}
\label{eq:Morse_function_Hilbert}
f(\Phi(z)) = \frac{1}{2}\left(\|z_+\|^2 - \|z_-\|^2\right), \quad\text{for all } z = z_+ + z_- \in \sU.
\end{equation}
\end{thm}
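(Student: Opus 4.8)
The plan is to obtain the main assertion as the special case $\sY=\{0\}$ of Theorem \ref{mainthm:Hormander_C-6-1_Banach} (already proved above), and then to upgrade to the signed Hilbert normal form by a spectral-theoretic \emph{linear} change of variables.

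First I would apply Theorem \ref{mainthm:Hormander_C-6-1_Banach} with $\sY=\{0\}$ the trivial Banach space, viewing $f$ as a function of $(x,0)$. The present hypotheses $f(0)=0$, $f'(0)=D_xf(0,0)=0$, and invertibility of $f''(0)=D_x^2f(0,0)\in\sL(\sX,\sX^*)$ are precisely those required there. The theorem then yields an open neighborhood of the origin, which we rename $\sV$, and a $C^p$ diffeomorphism $\Phi:\sV\to\sU$ with $\Phi(0)=0$; since the $\sY$-block in \eqref{eq:Derivative_Phi} is absent, $D\Phi(0)=\id_\sX$. The conclusion \eqref{eq:Hormander_C-6-1_Banach_dualspace} becomes $f(\Phi(z))=f(\Phi(0))+\tfrac12\langle z,Az\rangle$ with $A=f''(0)$, and because $f(\Phi(0))=f(0)=0$ this is exactly \eqref{eq:Morse_function_Banach}. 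The identity $A=(f\circ\Phi)''(0)$ follows from the Chain Rule together with $D\Phi(0)=\id_\sX$ and $(f\circ\Phi)'(0)=0$, while analyticity of $\Phi$ when $f$ is analytic is inherited verbatim from the corresponding clause of Theorem \ref{mainthm:Hormander_C-6-1_Banach}.

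For the Hilbert refinement I would start from the normal form $f(\Phi(z))=\tfrac12\langle z,Az\rangle$ just obtained and use the Riesz isomorphism $J:\sX\to\sX^*$ to write $\langle z,Az\rangle=(z,\widehat{A}z)$, where $\widehat{A}:=J^{-1}A\in\sL(\sX)$. Symmetry of $A$ translates into self-adjointness of $\widehat{A}$, and invertibility of $A$ into invertibility of $\widehat{A}$, so $0\notin\Spec(\widehat{A})$. The spectral theorem for bounded self-adjoint operators then supplies the orthogonal spectral projections onto the positive and negative parts of $\Spec(\widehat{A})$, hence the orthogonal decomposition $\sX=\sX_+\oplus\sX_-$, together with a bounded, positive, invertible operator $|\widehat{A}|^{1/2}$ defined by Borel functional calculus; invertibility here uses precisely that $\Spec(\widehat{A})$ is bounded away from $0$. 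Setting $S:=|\widehat{A}|^{-1/2}\in\sL(\sX)$, a self-adjoint isomorphism commuting with $\widehat{A}$, I would replace $\Phi$ by $z\mapsto\Phi(Sz)$. Since $S\widehat{A}S=|\widehat{A}|^{-1/2}\widehat{A}\,|\widehat{A}|^{-1/2}=\sign(\widehat{A})$, which acts as $+\id$ on $\sX_+$ and $-\id$ on $\sX_-$, one gets $f(\Phi(Sz))=\tfrac12(Sz,\widehat{A}Sz)=\tfrac12(z,\sign(\widehat{A})z)=\tfrac12\bigl(\|z_+\|^2-\|z_-\|^2\bigr)$, which is \eqref{eq:Morse_function_Hilbert}. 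Note the new diffeomorphism has derivative $S$ at the origin rather than $\id_\sX$, consistent with the fact that passing to the signed form alters the Hessian.

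The only genuinely analytic ingredient is the spectral diagonalization of $\widehat{A}$ in the Hilbert case; everything else is a direct invocation of Theorem \ref{mainthm:Hormander_C-6-1_Banach} followed by Chain Rule bookkeeping and a linear coordinate change, so I do not expect a serious obstacle. The one point requiring care is that the signed normal form \eqref{eq:Morse_function_Hilbert} is meaningful over $\KK=\RR$, where $\sign(\widehat{A})$ carries a genuine index; the final clause should accordingly be read for real Hilbert spaces, since over $\CC$ the symmetric bilinear form can in any event be reduced to a sum of squares and no sign is an invariant.
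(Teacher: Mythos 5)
Your proposal is correct, and the main Banach-space assertion follows the paper's own route: the paper deduces Theorem \ref{thm:Morse_Lemma_Banach} as the special case $\Ker f''(0)=\{0\}$ of the Morse--Bott Lemma (Theorem \ref{thm:Morse-Bott_Lemma_Banach}), which is itself Theorem \ref{mainthm:Hormander_C-6-1_Banach_refined} with the kernel split off; your direct invocation of Theorem \ref{mainthm:Hormander_C-6-1_Banach} with $\sY=\{0\}$ is the same reduction with one fewer intermediate step, and your Chain Rule bookkeeping for $D\Phi(0)=\id_\sX$ and $A=(f\circ\Phi)''(0)$ matches. Where you genuinely diverge is the Hilbert refinement. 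The paper (Remark \ref{rmk:Morse-Bott_Lemma_Hilbert_spaces}) also uses the spectral decomposition $\sX=\sX_+\oplus\sX_-$ and the positive square roots $S^\pm$ of $\pm A$ on the spectral subspaces, but it then \emph{renorms}, setting $\|z_\pm\|_S:=\|S^\pm z_\pm\|$, so that the signed form \eqref{eq:Morse_function_Hilbert} holds in an equivalent norm while $\Phi$ and $D\Phi(0)=\id_\sX$ are untouched. You instead absorb $|\widehat{A}|^{-1/2}$ into the diffeomorphism, which buys the signed form in the \emph{original} Hilbert norm — arguably closer to the literal wording of the statement ("a Hilbert space with norm $\|\cdot\|$") — at the explicitly acknowledged cost of $D\Phi(0)=S\neq\id_\sX$. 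Both are legitimate; your version is the more faithful reading of \eqref{eq:Morse_function_Hilbert}, and your closing caveat that the signed normal form is only an invariant over $\KK=\RR$ (the paper's pairing being bilinear, the Riesz identification over $\CC$ is conjugate-linear and the form reduces to a sum of squares anyway) is a point the paper's remark glosses over.
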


Theorem \ref{thm:Morse_Lemma_Banach} is an immediate consequence of the more general Theorem \ref{thm:Morse-Bott_Lemma_Banach} and which is proved below (see also Lang \cite[Corollary 5.3]{Lang_fundamentals_differential_geometry}).

\begin{rmk}[Tangent space to the critical set as a subspace of the kernel of the Hessian operator]
\label{rmk:Tangent_space_critical_set_subspace_kernel_Hessian_operator}
If the critical set $\Crit f$ of a smooth function $f:\sU\to\KK$ is a smooth submanifold of $\sU$ and $x_0\in\Crit f$, then $T_{x_0}\Crit f \subset \Ker f''(x_0)$. Indeed, if $v\in T_{x_0}\Crit f$ and $\gamma(t)$ is a smooth curve in $\Crit f$ with $\gamma(0)=x_0$ and $\gamma'(0) = v$,  where $t\in(-\eps,\eps)$, then $f'(\gamma(t))=0 \in \sX^*$, since $\gamma(t) \in \Crit f$, and so the Chain Rule gives
\[
(f\circ\gamma)''(t) = f''(\gamma(t))\gamma'(t) = 0 \in \sX^*.
\]
Thus at $t=0$, we have $f(\gamma(0))''\gamma'(0) = f''(x_0)v = 0 \in \sX^*$ and hence $v \in \Ker f''(x_0)$.
\end{rmk}

We have the following generalization of Theorem \ref{thm:Morse_Lemma_Banach}.

\begin{thm}[Morse--Bott Lemma for functions on Banach spaces]
\label{thm:Morse-Bott_Lemma_Banach}
Let $\sX$ be a Banach space over $\KK$, and $\sU\subset \sX$ be an open neighborhood of the origin, and $f:\sU \to \KK$ be a $C^{p+2}$ function ($p\geq 1$) such that $f(0) = 0$. If $f$ is Morse--Bott at the origin in the sense of Definition \ref{defn:Morse-Bott_function} (\ref{item:Morse-Bott_point}), then, after possibly shrinking $\sU$, there are an open neighborhood of the origin $\sV \subset \sX$ and a $C^p$ diffeomorphism, $\sV \ni y \mapsto x = \Phi(y) \in \sU$ with $\Phi(0)=0$ and $D\Phi(0)=\id_\sX$, such that
\begin{equation}
\label{eq:Morse-Bott_function_Banach}
f(\Phi(y)) = \frac{1}{2}\langle y, Ay\rangle_{\sX\times\sX^*}, \quad\text{for all } y \in \sV,
\end{equation}
where
\[
  A := f''(0) = (f\circ\Phi)''(0) \in \sL_\sym(\sX, \sX^*).
\]
If $f$ is analytic, then $\Phi$ is analytic.
\end{thm}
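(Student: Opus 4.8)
The plan is to deduce Theorem \ref{thm:Morse-Bott_Lemma_Banach} from the Morse Lemma for functions with degenerate critical points, Theorem \ref{mainthm:Hormander_C-6-1_Banach}, by taking the ``degenerate'' factor to be the kernel of the Hessian. Write $\sK := \Ker f''(0)$ and let $\sX_0 \subset \sX$ be the closed complement supplied by Definition \ref{defn:Morse-Bott_function} \eqref{item:Morse-Bott_point}, so that $\sX = \sX_0 \oplus \sK$ and we may regard $f$ as a function of $(x,y) \in \sU_0\times\sV_0 \subset \sX_0\times\sK$. First I would record that, in this splitting, $A := f''(0)$ has the block form $\left(\begin{smallmatrix} A_0 & 0\\ 0 & 0\end{smallmatrix}\right)$: the bottom row vanishes because $\Ran A = \sX_0^*$ (the non-degeneracy hypothesis of Definition \ref{defn:Morse-Bott_function} \eqref{item:Morse-Bott_point}; compare Lemma \ref{lem:Range_symmetric_operator_kernel_has_closed_complement}), while $\sK = \Ker A$ kills the right column. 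The operator $A_0 = D_x^2 f(0,0) \in \sL_\sym(\sX_0,\sX_0^*)$ is injective (if $A_0 x = 0$ then $A(x,0)=0$, so $(x,0)\in\Ker A = \{0\}\times\sK$ and $x=0$) and surjective (since $\Ran A = \sX_0^*$), hence an isomorphism by the Open Mapping Theorem.

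Since $0\in\Crit f$ we have $D_x f(0,0)=0$, so with $A_0$ invertible Theorem \ref{mainthm:Hormander_C-6-1_Banach} applies with $(\sX,\sY)\to(\sX_0,\sK)$ and yields, after shrinking, a $C^p$ diffeomorphism $\Phi(z,y)=(x,y)$ (analytic if $f$ is) with $D\Phi(0,0) = \left(\begin{smallmatrix}\id & \star\\ 0 & \id\end{smallmatrix}\right)$ and
\[
f(\Phi(z,y)) = g(y) + \tfrac12\langle z, A_0 z\rangle, \qquad g(y) := f(\Phi(0,y)).
\]
It then remains only to prove the two assertions that are not immediate: that the off-diagonal block $\star$ vanishes, giving $D\Phi(0)=\id_\sX$, and that $g\equiv 0$. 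Granting both, the block form of $A$ gives $\tfrac12\langle z, A_0 z\rangle = \tfrac12\langle (z,y), A(z,y)\rangle$, which is exactly \eqref{eq:Morse-Bott_function_Banach} after renaming $(z,y)$ as the single variable ranging over $\sV := \sU_0'\times\sV_0 \subset \sX$; moreover $A=(f\circ\Phi)''(0)$ by direct differentiation and analyticity is inherited from Theorem \ref{mainthm:Hormander_C-6-1_Banach}.

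Both assertions should follow from the one hypothesis not yet used, the Morse--Bott condition $T_0\Crit f = \Ker f''(0)$ (equality, not merely the inclusion of Remark \ref{rmk:Tangent_space_critical_set_subspace_kernel_Hessian_operator}). Differentiating the normal form and using invertibility of $A_0$ gives $\Crit(f\circ\Phi) = \{0\}\times\Crit g$; since $\Phi$ is a diffeomorphism and $\Crit f$ is a $C^2$ submanifold, $\Crit g$ is a $C^2$ submanifold of $\sV_0$ and, at the origin,
\[
\{0\}\times T_0\Crit g \;=\; T_0\Crit(f\circ\Phi) \;=\; D\Phi(0,0)^{-1}\bigl(T_0\Crit f\bigr) \;=\; \bigl\{(-\star k, k): k\in\sK\bigr\},
\]
using $T_0\Crit f = \Ker f''(0) = \{0\}\times\sK$ and $D\Phi(0,0)^{-1} = \left(\begin{smallmatrix}\id & -\star\\ 0 & \id\end{smallmatrix}\right)$. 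Projecting this equality onto $\sX_0$ forces $\star k = 0$ for every $k\in\sK$, hence $\star|_\sK = 0$ and $D\Phi(0)=\id_\sX$; the remaining identity then reads $T_0\Crit g = \sK$. A $C^1$ (split) submanifold of $\sK$ whose tangent space at a point equals the whole ambient space is open near that point, so $\Crit g$ contains a neighborhood of the origin, whence $g'\equiv 0$ and $g\equiv g(0)=f(0)=0$ there. I expect the main obstacle to be precisely this last step: verifying that the \emph{equality} $T_0\Crit f = \Ker f''(0)$ is exactly what is needed to kill $\star$ and simultaneously force $\Crit g$ to be open, and being careful that the transport of tangent spaces under $\Phi$ and the ``full tangent space implies open'' principle are valid in the Banach (split-submanifold) setting. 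With $\star=0$ and $g\equiv 0$ in hand, the normal form \eqref{eq:Morse-Bott_function_Banach} follows, completing the proof.
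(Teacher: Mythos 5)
Your proof is correct and follows essentially the same route as the paper: split $\sX=\sX_0\oplus\Ker f''(0)$, apply Theorem \ref{mainthm:Hormander_C-6-1_Banach} with the kernel as the parameter factor, and use the Morse--Bott equality $T_0\Crit f=\Ker f''(0)$ to conclude that $\Crit g$ is open near the origin and hence $g\equiv 0$. The only notable difference is that the paper first straightens $\Crit f$ onto $\{0\}\oplus\sK$ via the Implicit Function Theorem and leaves the normalization $D\Phi(0)=\id_\sX$ implicit in its deduction from the refined version, whereas you obtain it explicitly by transporting tangent spaces through $D\Phi(0,0)^{-1}$ to kill the off-diagonal block $\star$ --- a worthwhile addition, since the statement of the theorem asserts $D\Phi(0)=\id_\sX$.
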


\begin{rmk}[Morse--Bott Lemma for functions on Banach spaces and local coordinates]
\label{rmk:Morse-Bott_Lemma_Banach_coordinates}
By Definition \ref{defn:Morse-Bott_function} \eqref{item:Morse-Bott_point}, the closed subspace $\sK = \Ker f''(0) \subset \sX$ has a closed complement $\sX_0 \subset \sX$ such that $\sX=\sX_0\oplus\sK$ (and so $\sX^*=\sX_0^*\oplus\sK^*$ by Lemma \ref{lem:Dual_direct_sum_Banach_spaces_is_direct_sum_dual_spaces}). If $\pi \in \sL(\sX,\sX_0)$ and $\iota^* \in \sL(\sX^*,\sX_0^*)$ are the continuous projections (where $\iota:\sX_0\to\sX$ is the continuous injection), then \eqref{eq:Morse-Bott_function_Banach} becomes
\[
f(\Phi(y)) = \frac{1}{2}\langle \pi y, A_0\pi y\rangle_{\sX\times\sX^*}, \quad\text{for all } y \in \sV,
\]
where $A_0 := \iota^*A\pi \in \sL_\sym(\sX_0,\sX_0^*)$ is an isomorphism. Indeed, if we write $x = (w,\xi) \in \sX_0\oplus\sK$, then $y = \Phi(x) = (z,\xi) \in \sX_0\oplus\sK$ for all $x \in \sU$
and $A_0 = D_1^2 f(0,0) = D_1^2(f\circ\Phi)(0,0)$ and \eqref{eq:Morse-Bott_function_Banach} becomes
\[
f(\Phi(z,\xi)) = \frac{1}{2}\langle z, A_0z\rangle_{\sX\times\sX^*}, \quad\text{for all } (z,\xi) \in \sV\cap(\sX_0\oplus\sK),
\]
for coordinates adapted to the direct sum decomposition.
\end{rmk}

\begin{rmk}[Morse--Bott Lemma for functions on Hilbert spaces]
\label{rmk:Morse-Bott_Lemma_Hilbert_spaces}
Suppose now that $\sX$ is a Hilbert space over $\KK$ and identify $\sX^* \cong \sX$, so $A \in \sL(\sX)$ is self-adjoint (since $A \in \sL(\sX,\sX^*)$ is symmetric) and thus has spectrum $\sigma(A) \subset \RR$ by \cite[Theorem 12.15 (b)]{Rudin}. By the Spectral Theorem for bounded normal operators on a Hilbert space \cite[pp. 321--327]{Rudin}, there are an orthogonal decomposition into closed invariant subspaces, $\sX = \sX_{0,+}\oplus\sX_{0,-}\oplus\sK$ corresponding to the Borel subsets $(0,\infty)$, $(-\infty,0)$, and $\{0\}$ of $\sigma(A)$, continuous projections $\pi_\pm\in\sL(\sX,\sX_{0,\pm})$, and injections $\iota_\pm\in\sL(\sX_{0,\pm},\sX)$, and invertible positive operators $A^+ := \pi_+A\iota_+ \in \sL(\sX_{0,+})$ and $A^- := -\pi_-A\iota_- \in \sL(\sX_{0,-})$, such that
\begin{equation}
\label{eq:Morse-Bott_function_Hilbert}
f(\Phi(z,\xi)) = \frac{1}{2}\langle z_+, A^+z_+\rangle_\sX - \frac{1}{2}\langle z_-, A^-z_-\rangle_\sX, \quad\text{for all } (z,\xi) \in \sV\cap(\sX_0\oplus\sK),
\end{equation}
where $z_\pm = \pi_\pm z$ and $\langle \cdot,\cdot\rangle_\sX$ denotes the inner product on $\sX$. The operators $A^\pm$ have (unique) invertible positive square roots $S^\pm$ \cite[Theorem 12.33]{Rudin} and so we may define a norm on $\sX_0$ that is equivalent to $\|\cdot\|_\sX$ by setting $\|z^\pm \|_S = \|S^\pm z_\pm\|_\sX$ for all $z_\pm \in \sX_{0,\pm}$, so that \eqref{eq:Morse-Bott_function_Hilbert} becomes
\begin{equation}
\label{eq:Morse-Bott_function_Hilbert_signed}
f(\Phi(z,\xi)) = \frac{1}{2}\left(\|z^+\|_S^2 - \|z^-\|_S^2\right), \quad\text{for all } (z,\xi) \in \sV\cap(\sX_0\oplus\sK),
\end{equation}
as asserted in the special case ($\sK=0$) provided by Theorem \ref{thm:Morse_Lemma_Banach}.
\end{rmk}

\begin{rmk}[Expositions of the proofs of the Morse and Morse--Bott Lemmas for functions on Euclidean space]
\label{rmk:Morse-Bott_Lemma_proofs_Euclidean_space}
Nicolaescu provides a proof \cite[Theorem 1.12]{Nicolaescu_morse_theory} of the Morse Lemma for $C^\infty$ functions on Euclidean space (Theorem \ref{thm:Morse_Lemma_Banach} with $\sX=\RR^d$) based on that of Arnold, Gusein-Zade, and Varchenko \cite[Section 6.4]{Arnold_Gusein-Zade_Varchenko_singularities_differentiable_maps_v1} and remarks that his proof extends to yield the Morse--Bott Lemma for $C^\infty$ functions on Euclidean space (Theorem \ref{thm:Morse-Bott_Lemma_Banach} with $\sX=\RR^d$) in \cite[Proposition 2.42]{Nicolaescu_morse_theory}). See Banyaga and Hurtubise \cite[Theorem 2]{Banyaga_Hurtubise_2004} for a recent exposition of the proof of the Morse--Bott Lemma for $C^2$ functions on Euclidean space.
\end{rmk}

We turn to the more general case where the derivative of $f$ is represented by a gradient map.

\begin{thm}[Generalized Morse--Bott Lemma for functions on Banach spaces]
\label{thm:Morse-Bott_Lemma_Banach_refined}
Let $\sX$ and $\tilde{\sX}$ be Banach spaces over $\KK$ with continuous embedding $\tilde{\sX} \subset\sX^*$ and $\sU\subset \sX$ be an open neighborhood of the origin, and $f:\sU \to \KK$ be a $C^{p+1}$ function ($p\geq 1$) such that $f(0) = 0$. If $f$ is Morse--Bott at the origin in the sense of Definition \ref{defn:Morse-Bott_function_refined} (\ref{item:Morse-Bott_point_refined}), then, after possibly shrinking $\sU$, there are an open neighborhood of the origin $\sV \subset \sX$ and a $C^p$ diffeomorphism, $\sV \ni y \mapsto x = \Phi(y) \in \sU$ with $\Phi(0)=0$, such that
\begin{equation}
\label{eq:Morse-Bott_function_Banach_refined}
f(\Phi(y)) = \frac{1}{2}\langle y, Ay\rangle_{\sX\times\sX^*}, \quad\text{for all } y \in \sV,
\end{equation}
where
\[
  A := f''(0) = (f\circ\Phi)''(0) \in \sL_\sym(\sX, \tilde{\sX})
\]
and, for $\sX = \sX_0\oplus\sK$ with $\sK := \Ker f''(0)$ and a closed complement $\sX_0$,
\begin{equation}
\label{eq:Morse_Bott_DPhi_origin}
D\Phi(0,0) = \begin{pmatrix} \id_{\sX_0} & \star \\ 0 & \id_\sK \end{pmatrix} \in\sL(\sX_0\oplus\sK).
\end{equation}
If $f$ is analytic, then $\Phi$ is analytic.
\end{thm}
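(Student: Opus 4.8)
The plan is to deduce Theorem \ref{thm:Morse-Bott_Lemma_Banach_refined} from the Generalized Morse Lemma for degenerate critical points (Theorem \ref{mainthm:Hormander_C-6-1_Banach_refined}) by taking the parameter space to be the kernel of the Hessian, and then using the Morse--Bott hypotheses to show that the residual term $f(\Phi(0,\cdot))$ vanishes identically, upgrading the general splitting to a pure quadratic form.

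First I would fix the splitting. By Definition \ref{defn:Morse-Bott_function_refined} \eqref{item:Morse-Bott_point_refined}, the kernel $\sK := \Ker f''(0)$ has a closed complement $\sX_0$, so $\sX = \sX_0\oplus\sK$ and, by Lemma \ref{lem:Dual_direct_sum_Banach_spaces_is_direct_sum_dual_spaces}, $\sX^* = \sX_0^*\oplus\sK^*$; write $x = (w,\xi)$ accordingly. Because $\sK$ is exactly the kernel, the Hessian bilinear form carries no mixed block: $\langle w, f''(0)\xi\rangle = 0$ for all $w\in\sX_0$, $\xi\in\sK$, and $\langle \xi', f''(0)\xi\rangle = 0$ for $\xi,\xi'\in\sK$. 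The restriction $A_0 := f''(0)\restriction\sX_0$ is injective (since $\sX_0\cap\sK = \{0\}$), and because $f''(0)\sK = \{0\}$ while $\Ran f''(0) = \tilde{\sX}$ we get $A_0\sX_0 = f''(0)\sX = \tilde{\sX}$; one checks $\tilde{\sX}\cap\sK^* = \{0\}$ in $\sX^*$, so the restriction map identifies $\tilde{\sX}$ with a Banach space $\tilde{\sX}_0\subset\sX_0^*$ for which $A_0:\sX_0\to\tilde{\sX}_0$ is an isomorphism by the Open Mapping Theorem.

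With this data I would apply Theorem \ref{mainthm:Hormander_C-6-1_Banach_refined} with the substitutions $\sX\leadsto\sX_0$, $\sY\leadsto\sK$, $\tilde{\sX}\leadsto\tilde{\sX}_0$; the hypotheses $f(0,0)=0$, $D_wf(0,0)=0$, $D_wf(w,\xi)\in\tilde{\sX}_0$ (from $f'(x)\in\tilde{\sX}$), and $D_w^2f(0,0)=A_0$ invertible all hold. This produces a diffeomorphism $\Phi$ of the regularity class provided by that theorem (analytic if $f$ is), fixing the $\sK$-factor, with $D\Phi(0,0)$ of the upper-triangular shape \eqref{eq:Derivative_Phi_refined}, which is exactly the asserted form \eqref{eq:Morse_Bott_DPhi_origin}; and, writing $g(\xi) := f(\Phi(0,\xi))$, the splitting $f(\Phi(z,\xi)) = g(\xi) + \tfrac12\langle z, A_0z\rangle$ for $(z,\xi)$ near the origin.

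The crux, and the only step genuinely using that $\Crit f$ is a submanifold with $T_0\Crit f = \sK$, is to show $g\equiv 0$ near $0$. Since $A_0$ is invertible, the splitting gives $\Crit(f\circ\Phi) = \{0\}\times N$ with $N := \{\xi: g'(\xi)=0\}$. As $\Phi$ is a diffeomorphism, $\{0\}\times N = \Phi^{-1}(\Crit f)$ is a submanifold whose tangent space at the origin is $(D\Phi(0,0))^{-1}(T_0\Crit f) = (D\Phi(0,0))^{-1}(\sK)$, while it is also $\{0\}\times T_0N\subseteq\{0\}\times\sK$. Equating these two descriptions forces the off-diagonal block of $D\Phi(0,0)$ to vanish on $\sK$ and forces $T_0N = \sK$; a submanifold $N\subseteq\sK$ with full tangent space $T_0N=\sK$ is a neighborhood of $0$, so $g'\equiv 0$ there, whence $g$ is constant and $g(\xi)=g(0)=f(0)=0$. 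Substituting $g\equiv 0$ and using $\langle y, f''(0)y\rangle = \langle z, A_0z\rangle$ for $y=(z,\xi)$ yields \eqref{eq:Morse-Bott_function_Banach_refined} with $A = f''(0)$ after relabeling the neighborhoods. I expect the main obstacle to be precisely this last step: making the tangent-space comparison rigorous — that $N$ is a genuine submanifold with the claimed tangent space and that a full tangent space makes it open — together with the routine but fiddly identification of the target $\tilde{\sX}_0\subset\sX_0^*$ on which the restricted Hessian is an isomorphism.
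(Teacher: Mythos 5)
Your proposal is correct and follows essentially the same route as the paper: apply Theorem \ref{mainthm:Hormander_C-6-1_Banach_refined} with parameter space $\sY=\sK=\Ker f''(0)$ and then use the Morse--Bott tangency condition $T_0\Crit f=\sK$ to force the residual term $g(\xi)=f(\Phi(0,\xi))$ to vanish identically near the origin because $\Crit g$ is a submanifold of $\sK$ with full tangent space, hence open. The only cosmetic difference is that the paper first straightens $\Crit f$ onto $\{0\}\oplus\sK$ by a preliminary Implicit Function Theorem diffeomorphism before invoking the Morse Lemma, whereas you perform the tangent-space comparison afterwards; both arguments are equivalent.
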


\begin{proof}[Proofs of Theorem \ref{thm:Morse-Bott_Lemma_Banach} and \ref{thm:Morse-Bott_Lemma_Banach_refined}]
Observe that Theorem \ref{thm:Morse-Bott_Lemma_Banach} follows immediately from Theorem  \ref{thm:Morse-Bott_Lemma_Banach_refined} by restricting to the case $\tilde{\sX} = \sX^*$, so we focus on the more general case.

Because $f$ is $C^{p+2}$ and Morse--Bott at the origin, $\Crit f \subset\sU$ is a $C^2$ submanifold by Definition \ref{defn:Morse-Bott_function_refined} \eqref{item:Morse-Bott_point_refined} and thus a $C^{p+2}$ submanifold by the Implicit Mapping Theorem. Moreover, by Definition \ref{defn:Morse-Bott_function_refined} \eqref{item:Morse-Bott_point_refined}, there is a direct sum decomposition $\sX = \sX_0\oplus\sK$, where $\sK = \Ker f''(0)$ and $\sX_0$ is a closed complement and $T_0\Crit f = \sK$. Hence, after possibly shrinking $\sU$, the Implicit Mapping Theorem provides a $C^{p+2}$ diffeomorphism $\Xi$ from an open neighborhood $\sO$ of the origin in $\sX$ onto $\sU$ such that $\Xi(0)=0$ and $D\Xi(0)=\id_\sX$ with
\[
\Crit f\circ\Xi = \sO\cap(\{0\}\oplus \sK).
\]
Therefore, we may assume without loss of generality that
\[
\Crit f = \sU\cap(\{0\}\oplus \sK).
\]
Furthermore, Definition \ref{defn:Morse-Bott_function_refined} \eqref{item:Morse-Bott_point_refined} provides that $\Ran f''(0) = \tilde{\sX}$. Hence, Theorem \ref{mainthm:Hormander_C-6-1_Banach_refined} implies that, after possibly shrinking $\sU$, there exists a $C^p$ diffeomorphism,
\[
  \Phi:\sU\cap(\sX_0\oplus\sK) \ni (z,\xi) \mapsto x = \Phi(z,\xi) \in \sX = \sX_0\oplus\sK,
\]
such that $\Phi(0,0) = 0$ and $D\Phi(0,0)$ is as in \eqref{eq:Morse_Bott_DPhi_origin} with
\[
f(\Phi(z,\xi)) = \frac{1}{2}\langle z,A_0z\rangle_{\sX\times\sX^*} + g(\xi), \quad\text{for all } (z,\xi) \in \sU\cap(\sX_0\oplus\sK),
\]
where $g(\xi) := f(\Phi(0,\xi))$, and
\[
  A_0 := D^2f(0)\restriction\sX_0 = D_1^2(f\circ\Phi)(0,0) \in \sL_\sym(\sX_0, \tilde{\sX})
\]
is an isomorphism by the Open Mapping Theorem. We observe that
\[
  D(f\circ\Phi)(z,\xi) = A_0z + Dg(\xi) \in \tilde{\sX}.
\]
Hence, $(z,\xi) \in \Crit f\circ\Phi \iff z=0$ and $Dg(\xi)=0$, that is $\xi \in \Crit g$, where $g:\sU\cap\sK\to\KK$ is a $C^p$ function with $g(0)=0$. Therefore, $\Crit f\circ\Phi = \Crit g$. In particular, $\Crit g$ is a $C^p$ submanifold of $\sU\cap\sK$, since $\Crit f\circ\Phi$ is a $C^{p+2}$ submanifold of $\sU$, and $\dim\Crit f\circ\Phi = \dim\Crit g$ with $T_0\Crit g = T_0\Crit f\circ\Phi = \sK$. Since $0 \in \Crit g$ and $g(0)=0$, there is a connected open neighborhood of the origin in $\sK$ such that $g \equiv 0$ and by shrinking $\sU$ if necessary, we may assume that $g \equiv 0$ on $\sU\cap\sK$. Hence,
\[
  D(f\circ\Phi)(z,\xi) = A_0z = A(z,\xi)
\]
by writing $A \in \sL(\sX_0\oplus\sK,\tilde{\sX})$ as
\[
A(z,\xi)  = A_0z, \quad\text{for all } (z,\xi) \in \sX_0\oplus\sK.
\]
If $f$ is analytic, then $\Phi$ is analytic by the Implicit Mapping Theorem for analytic functions. The proofs of Theorem \ref{thm:Morse-Bott_Lemma_Banach} and \ref{thm:Morse-Bott_Lemma_Banach_refined} are complete.
\end{proof}

When $\sX=\CC^d$, then Theorem \ref{thm:Morse-Bott_Lemma_Banach} yields the

\begin{cor}[Holomorphic Morse--Bott Lemma for functions on $\CC^d$]
\label{cor:Morse-Bott_Lemma_holomorphic}
(See \cite{Postnikov_Rudyak_morse_lemma} for a statement in the case $c=0$ and Petro \cite[Lemma 3.8]{PetroThesis} for a statement in the case $c\geq 0$; compare Seidel \cite[Lemma 1.6]{Seidel_2003}.)
Let $d\geq 2$ be an integer, $U\subset \CC^d$ be an open neighborhood of the origin, and $f:U \ni x \mapsto f(x) \in \CC$ be a holomorphic function such that $f(0) = 0$ and $f'(0)=0$. Assume that $\Crit f$ is a complex submanifold of $U$ with complex tangent space $T_0\Crit f = \Ker f''(0)$ of dimension $c \geq 0$ at the origin. Then, after possibly shrinking $U$, there are an open neighborhood $V\subset\CC^d$ of the origin and a complex analytic diffeomorphism,
\[
  V \ni (w_1,\ldots,w_d) \mapsto (x_1,\ldots,x_d) = \Phi(w_1,\ldots,w_d) \in \CC^d,
\]
onto an open neighborhood of the origin in $\CC^d$ such that
\[
\Phi^{-1}(U\cap \Crit\sE) = V \cap (\CC^c\cap 0) \subset \CC^c\times\CC^{d-c}
\]
with $\Phi(0)=0$ and
\[
D\Phi(0) = \begin{pmatrix} \id_{d-c} & \star \\ 0 & \id_c \end{pmatrix} \in \GL(d,\CC),
\]
where $\id_{d-c} \in \GL(d-c,\CC)$ and $\id_c \in \GL(c,\CC)$ and
\begin{equation}
\label{eq:Morse-Bott_Lemma_holomorphic}
f(\Phi(w_1,\ldots,w_d)) = w_1^2 + \cdots + w_{d-c}^2, \quad\text{for all } w = (w_1,\ldots,w_d) \in U.
\end{equation}
\end{cor}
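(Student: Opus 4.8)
The plan is to obtain Corollary~\ref{cor:Morse-Bott_Lemma_holomorphic} as the specialization of the Morse--Bott Lemma (Theorem~\ref{thm:Morse-Bott_Lemma_Banach}, in its analytic form) to $\sX = \CC^d$, followed by a purely linear normalization of the resulting complex quadratic form to the signless normal form $\sum_{i=1}^{d-c} w_i^2$ available over the algebraically closed field $\CC$. The two substantive points are the verification that $f$ is Morse--Bott at the origin in the sense of Definition~\ref{defn:Morse-Bott_function}~\eqref{item:Morse-Bott_point}, and the diagonalization of a complex symmetric form; everything else is bookkeeping through the chain rule.

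First I would record that a holomorphic function on an open subset of $\CC^d$ is analytic, so Theorem~\ref{thm:Morse-Bott_Lemma_Banach} applies, and then check the hypotheses of Definition~\ref{defn:Morse-Bott_function}~\eqref{item:Morse-Bott_point}. Write $A := f''(0) \in \sL_\sym(\CC^d, (\CC^d)^*)$ and $\sK := \Ker A$, so that by hypothesis $\dim_\CC \sK = c$ and $T_0\Crit f = \sK$. Since $\CC^d$ is finite-dimensional, $\sK$ is closed and admits a complement $\sX_0$ with $\dim_\CC \sX_0 = d-c$. The rank--nullity theorem gives $\dim_\CC \Ran A = d-c = \dim_\CC \sX_0^*$, while Lemma~\ref{lem:Range_symmetric_operator_kernel_has_closed_complement}~\eqref{item:Range_symmetric_operator_kernel_has_closed_complement} gives $\Ran A \subset \sX_0^*$; comparing dimensions yields $\Ran A = \sX_0^*$. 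Combined with the hypothesis $T_0 \Crit f = \sK$, this verifies Definition~\ref{defn:Morse-Bott_function}~\eqref{item:Morse-Bott_point}.

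Next I would apply Theorem~\ref{thm:Morse-Bott_Lemma_Banach} together with the adapted-coordinate description in Remark~\ref{rmk:Morse-Bott_Lemma_Banach_coordinates} (and, for the block-triangular form of the derivative, Theorem~\ref{thm:Morse-Bott_Lemma_Banach_refined} with $\tilde\sX = (\CC^d)^*$). After possibly shrinking $U$, this produces a holomorphic diffeomorphism $\Phi_0$ onto a neighborhood of the origin, with $\Phi_0(0)=0$, such that in coordinates adapted to $\sX_0 \oplus \sK$ one has
\[
f(\Phi_0(z,\xi)) = \tfrac{1}{2}\langle z, A_0 z\rangle, \quad \Crit(f\circ\Phi_0) = \{0\}\oplus\sK,
\]
where $A_0 \in \sL_\sym(\sX_0, \sX_0^*)$ is an isomorphism and $D\Phi_0(0)$ has lower block row $(0,\ \id_\sK)$. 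I would then diagonalize $A_0$ over $\CC$: by the diagonalization of complex symmetric matrices \cite[p.~278]{Horn_Johnson_topics_matrix_analysis_1994} there is $T_0 \in \GL(d-c,\CC)$ with $\tfrac{1}{2}\langle T_0 w, A_0 T_0 w\rangle = \sum_{i=1}^{d-c} w_i^2$. Setting $T := T_0 \oplus \id_\sK$ and $\Phi := \Phi_0 \circ T$, and identifying $\sX_0\oplus\sK \cong \CC^{d-c}\times\CC^c$, I obtain the desired normal form \eqref{eq:Morse-Bott_Lemma_holomorphic}, the identification $\Phi^{-1}(U\cap\Crit f) = V\cap(\{0\}\times\CC^c)$, and, since $T$ fixes the $\sK$-factor, a derivative $D\Phi(0)$ whose lower block row remains $(0,\ \id_c)$.

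The main obstacle is conceptual rather than technical: one must use that $\CC$ is algebraically closed, so that the nondegenerate symmetric form $A_0$ reduces to the signless $\sum_{i=1}^{d-c} w_i^2$ (there is no Morse index, in contrast to the real Hilbert-space normal form \eqref{eq:Morse_function_Hilbert} of Theorem~\ref{thm:Morse_Lemma_Banach}), and one must carry the complex diagonalization through the Morse--Bott change of variables while keeping the critical locus straightened to $\{0\}\times\CC^c$. The remaining care lies in the linear algebra of the derivative: its block-triangular shape is inherited from the block-triangular derivative supplied by Theorem~\ref{thm:Morse-Bott_Lemma_Banach_refined}, with the $\sK$-block left untouched by $T$ and the diagonal blocks normalized to the identity through the choice of linear coordinates on $\sX_0$ furnished by $T_0$.
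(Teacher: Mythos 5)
Your proposal is correct and follows exactly the route the paper intends: the paper offers no separate proof of Corollary \ref{cor:Morse-Bott_Lemma_holomorphic} beyond the remark that it is the specialization of Theorem \ref{thm:Morse-Bott_Lemma_Banach} to $\sX=\CC^d$, combined (as in the discussion of Corollary \ref{maincor:Analytic_function_Lojasiewicz_exponent_one-half_Morse-Bott}) with diagonalization of the complex symmetric matrix $A$ via \cite[p.~278]{Horn_Johnson_topics_matrix_analysis_1994}. You have merely made explicit the two steps the paper leaves implicit --- the finite-dimensional verification of Definition \ref{defn:Morse-Bott_function}~\eqref{item:Morse-Bott_point} via rank--nullity and Lemma \ref{lem:Range_symmetric_operator_kernel_has_closed_complement}, and the normalization of the nondegenerate form to $\sum_{i=1}^{d-c} w_i^2$ using that $\CC$ is algebraically closed, with the linear coordinates on $\sX_0$ chosen so that the upper-left block of $D\Phi(0)$ is the identity --- so no further comment is needed.
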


\section{{\L}ojasiewicz gradient inequality for functions on Banach spaces}
\label{sec:Morse_lemma_degenerate_critical_points_Lojasiewicz-Simon_inequality}
In Section \ref{subsec:Proof_Lojasiewicz_gradient_inequality_Morse-Bott}, we use the Morse--Bott Lemma for $C^{p+2}$ functions ($p\geq 1$) (see Theorems \ref{thm:Morse-Bott_Lemma_Banach} and \ref{thm:Morse-Bott_Lemma_Banach_refined}) to give a concise proof of the {\L}ojasiewicz gradient inequality for $C^{p+2}$ Morse--Bott functions on Banach spaces (see Theorems \ref{mainthm:Lojasiewicz_gradient_inequality_Morse-Bott} and \ref{mainthm:Lojasiewicz_gradient_inequality_Morse-Bott_refined}); in Section \ref{subsec:Proof_Lojasiewicz_gradient_inequality_analytic}, we apply the Morse Lemma for analytic functions with degenerate critical points (see Theorems \ref{mainthm:Hormander_C-6-1_Banach} and \ref{mainthm:Hormander_C-6-1_Banach_refined}) to give an elegant proof of the {\L}ojasiewicz gradient inequality for analytic functions on Banach spaces (see Theorems \ref{mainthm:Lojasiewicz-Simon_gradient_inequality} and \ref{mainthm:Lojasiewicz-Simon_gradient_inequality_refined}).

\subsection{{\L}ojasiewicz gradient inequality for smooth Morse--Bott functions}
\label{subsec:Proof_Lojasiewicz_gradient_inequality_Morse-Bott}
In this subsection, we prove Theorem \ref{mainthm:Lojasiewicz_gradient_inequality_Morse-Bott_refined}, and hence Theorem \ref{mainthm:Lojasiewicz_gradient_inequality_Morse-Bott} upon choosing $\tilde{\sX}=\sX^*$. We begin with the

\begin{lem}[{\L}ojasiewicz gradient inequality for quadratic forms]
\label{lem:Quadratic_form_Lojasiewicz_exponent_one_half_refined}
Let $\sX$ and $\tilde{\sX}$ be Banach spaces over $\KK$ with continuous embedding $\tilde{\sX}\subset\sX^*$. If
\[
  Q:\sX \ni x \mapsto Q(x) = \frac{1}{2}\langle x,Ax\rangle_{\sX\times\sX^*}\in \KK
\]
is defined by a symmetric operator $A \in \sL_\sym(\sX,\tilde{\sX})$, whose kernel is complemented in $\sX$ and whose range is $\tilde{\sX}$, then $Q$ has {\L}ojasiewicz exponent $1/2$, that is, there is a constant $C\in(0,\infty)$ such that
\begin{equation}
\label{eq:Quadratic_form_Lojasiewicz_exponent_one_half_refined}
\|Q'(x)\|_{\tilde{\sX}} \geq CQ(x)^{1/2}, \quad\text{for all } x\in\sX.
\end{equation}
\end{lem}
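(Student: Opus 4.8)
The plan is to reduce the entire statement to the fact that the restriction of $A$ to a complement of its kernel is a Banach-space isomorphism onto $\tilde{\sX}$, and then to play the lower bound coming from that isomorphism against the upper bound on $Q$ coming from the canonical pairing. First I would identify the gradient: since $A$ is symmetric, differentiating $t\mapsto \tfrac12\langle x+tv, A(x+tv)\rangle$ and using $\langle x, Av\rangle = \langle v, Ax\rangle$ gives $Q'(x)v = \langle v, Ax\rangle$, so that $Q'(x)$ is represented by the gradient $Ax \in \tilde{\sX}$ and $\|Q'(x)\|_{\tilde{\sX}} = \|Ax\|_{\tilde{\sX}}$.

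Next I would exploit the two structural hypotheses on $A$. Writing $\sX = \sX_0\oplus\sK$ with $\sK := \Ker A$ and $\sX_0$ its closed complement, the restriction $A_0 := A\restriction\sX_0 : \sX_0 \to \tilde{\sX}$ is a continuous bijection: it is injective because $\Ker A\cap\sX_0=\{0\}$, and surjective because $\Ran A = \tilde{\sX}$ by assumption. Hence $A_0$ is an isomorphism of Banach spaces by the Open Mapping Theorem, so there is a constant $c\in(0,\infty)$ with $\|A_0 w\|_{\tilde{\sX}} \geq c\|w\|_\sX$ for all $w \in \sX_0$. Decomposing $x = w+\xi$ with $w\in\sX_0$ and $\xi\in\sK$, and using $A\xi=0$, I get $Ax = A_0 w$, whence $\|Q'(x)\|_{\tilde{\sX}} = \|A_0 w\|_{\tilde{\sX}} \geq c\|w\|_\sX$.

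Then I would evaluate $Q$ itself on this splitting. Symmetry of $A$ together with $\xi\in\Ker A$ gives $\langle \xi, Aw\rangle = \langle w, A\xi\rangle = 0$, so $Q(x) = \tfrac12\langle w, A_0 w\rangle$. Bounding this pairing via duality and the continuity constant $c_0$ of the embedding $\tilde{\sX}\subset\sX^*$ (so that $\|\alpha\|_{\sX^*}\leq c_0\|\alpha\|_{\tilde{\sX}}$), I obtain $|Q(x)| \leq \tfrac12\|w\|_\sX\,\|A_0 w\|_{\sX^*} \leq \tfrac12 c_0\|w\|_\sX\,\|A_0 w\|_{\tilde{\sX}}$. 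Inserting $\|w\|_\sX \leq c^{-1}\|A_0 w\|_{\tilde{\sX}} = c^{-1}\|Q'(x)\|_{\tilde{\sX}}$ yields $|Q(x)| \leq \tfrac{c_0}{2c}\|Q'(x)\|_{\tilde{\sX}}^2$, which rearranges to the claimed inequality with $C = (2c/c_0)^{1/2}$ (reading $Q(x)^{1/2}$ as $|Q(x)|^{1/2}$, which is the correct reading when $\KK=\CC$ or $A$ is indefinite).

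I expect no serious obstacle here: the single nontrivial ingredient is the surjectivity-plus-Open-Mapping step producing the two-sided comparison $\|A_0 w\|_{\tilde{\sX}} \asymp \|w\|_\sX$ on $\sX_0$, and everything else is bookkeeping in the splitting $\sX=\sX_0\oplus\sK$ and a single application of the continuity of $\tilde{\sX}\subset\sX^*$. The only point demanding mild care is ensuring the gradient is taken in $\tilde{\sX}$ (rather than $\sX^*$) so that the norm on the left matches the statement; this is exactly why the embedding constant $c_0$, and not $A_0$ alone, enters the final bound.
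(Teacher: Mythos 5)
Your proposal is correct and follows essentially the same route as the paper: identify $Q'(x)=Ax$, split $\sX=\sX_0\oplus\Ker A$, invoke the Open Mapping Theorem to get the lower bound $\|A_0w\|_{\tilde{\sX}}\geq c\|w\|_\sX$, and pair this against the duality bound on $|Q(x)|$ via the embedding constant of $\tilde{\sX}\subset\sX^*$. The only (cosmetic) difference is that you keep one factor of $\|A_0w\|_{\tilde{\sX}}$ in the estimate $|Q(x)|\leq\tfrac12 c_0\|w\|_\sX\|A_0w\|_{\tilde{\sX}}$ rather than first bounding $|Q(x)|$ by $\|w\|_\sX^2$ times $\|A\|$ as the paper does, which spares you the constant $\Lambda=\|A\|_{\sL(\sX,\tilde{\sX})}$.
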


\begin{proof}
The derivative of $Q:\sX\to\KK$ is given by
\[
  Q'(x)v = \frac{1}{2}\langle v,Ax \rangle_{\sX\times\sX^*} + \frac{1}{2}\langle x,Av \rangle_{\sX\times\sX^*}
  = \langle v,Ax \rangle_{\sX\times\sX^*} = Ax(v), \quad\text{for all } x, v \in \sX,
\]
so $Q'(x) = Ax \in \tilde{\sX} \subset \sX^*$. By hypothesis, $\sX = \sX_0\oplus\sK$ as a direct sum of Banach spaces, where $\sK := \Ker A$ and $\sX_0 \subset \sX$ is a closed subspace, and $\Ran A = \tilde{\sX}$, so that $A \in \sL(\sX_0,\tilde{\sX})$ is an isomorphism of Banach spaces by the Open Mapping Theorem. Note that for $x = z + \xi \in \sX_0\oplus \sK$, we have
\begin{multline*}
  Q(z + \xi) = \frac{1}{2}\langle z + \xi ,A(z+\xi)\rangle_{\sX\times\sX^*} = \frac{1}{2}\langle z + \xi ,Az\rangle_{\sX\times\sX^*}
  \\
  = \frac{1}{2}\langle z ,A(z + \xi)\rangle_{\sX\times\sX^*} = \frac{1}{2}\langle z,Az\rangle_{\sX\times\sX^*} = Q(z),
\end{multline*}
while
\[
Q'(z + \xi) = A(z+\xi) = Az = Q'(z).
\]
Hence, it suffices to prove that the {\L}ojasiewicz gradient inequality \eqref{eq:Quadratic_form_Lojasiewicz_exponent_one_half_refined} holds for all $x\in \sX_0$. For such $x \in \sX_0$, we have
\[
\|Q'(x)\|_{\tilde{\sX}} = \|Ax\|_{\tilde{\sX}} \geq \lambda\|x\|_\sX,
\]
by writing
\[
  \|x\|_\sX = \|A^{-1}Ax\|_\sX \leq \|A^{-1}\|_{\sL(\tilde{\sX},\sX_0)}\|Ax\|_{\tilde{\sX}}
\]
and denoting $\lambda := \|A^{-1}\|_{\sL(\tilde{\sX},\sX_0)} \in (0,\infty)$. On the other hand, for any $x\in\sX$,
\[
|Q(x)| \leq \frac{1}{2}\left|\langle x,Ax\rangle_{\sX\times\sX^*}\right| \leq \frac{1}{2}\|x\|_\sX\|Ax\|_{\sX^*}  \leq \frac{\kappa}{2}\|x\|_\sX\|Ax\|_{\tilde{\sX}} \leq \frac{\kappa\Lambda}{2}\|x\|_\sX^2,
\]
where we denote $\Lambda := \|A\|_{\sL(\sX,\tilde{\sX})} \in (0,\infty)$ and where $\kappa$ is the norm of the continuous embedding $\tilde{\sX} \subset \sX^*$. Therefore,
\[
\|Q'(x)\|_{\tilde{\sX}} \geq \lambda\|x\|_\sX \geq \lambda\left(2|Q(x)|/\kappa\Lambda\right)^{1/2} = \lambda\sqrt{2/\kappa\Lambda}\, |Q(x)|^{1/2},
\]
for all $x\in\sX_0$ and this yields the {\L}ojasiewicz gradient inequality \eqref{eq:Quadratic_form_Lojasiewicz_exponent_one_half_refined} for all $x\in \sX$.
\end{proof}

We have the following generalization of Lemma \ref{lem:Pushforwards_preserve_Lojasiewicz_exponents}.

\begin{lem}[{\L}ojasiewicz exponents and maps]
\label{lem:Pushforwards_preserve_Lojasiewicz_exponents_Banach_spaces}
Let $\sX$, $\tilde{\sX}$, and $\sY$ be Banach spaces over $\KK$ with continuous embedding $\tilde{\sX}\subset\sX^*$, and $\sV\subset \sY$ and $\sU\subset \sX$ be open neighborhoods of the origins, and $\Phi:\sV \to \sU$ be an open $C^1$ map such that $\Phi(0) = 0$. Let $f:\sU\to\KK$ be a $C^1$ function such that $f(0) = 0$ and $f'(x)\in\tilde{\sX}$ for all $x\in\sU$. If $f\circ\Phi$ obeys the {\L}ojasiewicz gradient inequality
\begin{equation}
  \label{eq:Lojasiewicz-Simon_gradient_inequality_analytic_functional_general_Y}
 \|(f\circ\Phi)'(y)\|_{\sY^*}
\geq C|(f\circ\Phi)(y)|^\theta, \quad\text{for all } y \in \sV,
\end{equation}
with exponent $\theta \geq 0$ then, after possibly shrinking $\sU$, the function $f(x)$ obeys the {\L}ojasiewicz gradient inequality \eqref{eq:Lojasiewicz-Simon_gradient_inequality_analytic_functional_general} with the same exponent $\theta$ and a possibly smaller constant $C\in(0,\infty)$, for all $x\in\sU$.
\end{lem}

\begin{proof}
Because $\Phi(0) = 0$ and $\Phi$ is an open map, $\Phi(\sV)$ is an open neighborhood of the origin in $\sX$ and so by shrinking $\sU$ if necessary, we may assume that $\Phi(\sV) = \sU$. Now $(f\circ\Phi)(y) = f(\Phi(y)) = f(x)$ for all $x \in \sU$ and $y \in \Phi^{-1}(x)$ and therefore the gradient inequality \eqref{eq:Lojasiewicz-Simon_gradient_inequality_analytic_functional_general_Y} yields
\begin{equation}
\label{eq:Lojasiewicz_gradient_inequality_composition_with_morphism}
\|(f\circ\Phi)'(y)\|_{\sY^*}
\geq C|f(x)|^\theta, \quad\text{for all } x \in \sU \text{ and } y \in \Phi^{-1}(x).
\end{equation}
The Chain Rule provides
\[
  (f\circ\Phi)'(y) = f'(\Phi(y))\circ\Phi'(y) \in \sY^*
\]
with $\Phi'(y) \in \sL(\sY,\sX)$ and $f'(\Phi(y)) \in \tilde{\sX} \subset \sX^*$, so
\begin{align*}
\|(f\circ\Phi)'(y)\|_{\sY^*}
&= \|f'(\Phi(y))\circ\Phi'(y)\|_{\sY^*}
\\
&\leq \|f'(\Phi(y))\|_{\sX^*} \|\Phi'(y)\|_{\sL(\sY,\sX)}
\\    
&\leq \kappa\|f'(\Phi(y))\|_{\tilde{\sX}} \|\Phi'(y)\|_{\sL(\sY,\sX)}
\\
&\leq \kappa M\|f'(\Phi(y))\|_{\tilde{\sX}}
\quad\text{for all } y \in \sV,
\end{align*}
where $M := \sup_{y \in \sV} \|\Phi'(y)\|_{\sL(\sX)}$ and $M < \infty$ (possibly after shrinking $\sV$) and $\kappa$ is the norm of the continuous embedding $\tilde{\sX} \subset \sX^*$. Because $\Phi(y)=x\in \sU$, the preceding inequality simplifies to give
\begin{equation}
\label{eq:Gradient_composition_with_morphism_lower_bound}
\|(f\circ\Phi)'(y)\|_{\sY^*}
\leq
\kappa M\|f'(x)\|_{\tilde{\sX}}, \quad\text{for all } y \in \sV.
\end{equation}
By combining the inequalities \eqref{eq:Lojasiewicz_gradient_inequality_composition_with_morphism} and \eqref{eq:Gradient_composition_with_morphism_lower_bound}, we obtain
\[
\|f'(x)\|_{\tilde{\sX}} \geq (C/(\kappa M))|f(x)|^\theta, \quad\text{for all } x \in \sU,
\]
which is \eqref{eq:Lojasiewicz-Simon_gradient_inequality_analytic_functional_general} with constant $C/(\kappa M)$, as desired.
\end{proof}

\begin{proof}[Proof of Theorem \ref{mainthm:Lojasiewicz_gradient_inequality_Morse-Bott_refined}]
By hypothesis, $f$ is a $C^{p+1}$ Morse--Bott function at the origin and so, possibly after shrinking $\sU$, Theorem \ref{thm:Morse-Bott_Lemma_Banach_refined} provides an open neighborhood $\sV$ of the origin in $\sX$ and a $C^p$ diffeomorphism $\Phi:\sV\to\sU$ such that $\Phi(0)=0$ and
\[
(f\circ\Phi)(y) = \langle y, Ay \rangle_{\sX\times\sX^*}, \quad\text{for all } y \in \sV,
\]
where $A = f''(0) = (f\circ\Phi)''(0) \in \sL_\sym(\sX,\tilde{\sX})$. By Definition \ref{defn:Morse-Bott_function_refined} \eqref{item:Morse-Bott_point_refined}, the kernel of $A$ has a closed complement in $\sX$ and the range of $A$ is $\tilde{\sX}$. Lemma \ref{lem:Quadratic_form_Lojasiewicz_exponent_one_half_refined} then asserts that the quadratic function, $Q(y) = \langle y, Ay \rangle_{\sX\times\sX^*}$ for all $y\in\sX$, has {\L}ojasiewicz exponent $1/2$, while Lemma \ref{lem:Pushforwards_preserve_Lojasiewicz_exponents_Banach_spaces} implies that the functions $f$ and $f\circ\Phi$ have the same {\L}ojasiewicz exponent, namely $1/2$. This completes the proof of Theorem \ref{mainthm:Lojasiewicz_gradient_inequality_Morse-Bott_refined}.
\end{proof}

\subsection{{\L}ojasiewicz gradient inequality for analytic functions}
\label{subsec:Proof_Lojasiewicz_gradient_inequality_analytic}
In this subsection, we apply Theorem \ref{mainthm:Hormander_C-6-1_Banach_refined} to prove Theorem \ref{mainthm:Lojasiewicz-Simon_gradient_inequality_refined}, and hence Theorem \ref{mainthm:Lojasiewicz-Simon_gradient_inequality} upon choosing $\tilde{\sX}=\sX^*$. We first have the elementary

\begin{lem}[Invariance of {\L}ojasiewicz exponent under direct sum addition or subtraction of a quadratic form]
\label{lem:Invariance_Lojasiewicz_exponent_addition_quadratic_form_refined}
Let $\sX$, $\tilde{\sX}$, $\sY$, and $\tilde{\sY}$ be Banach spaces over $\KK$ with continuous embeddings $\tilde{\sX}\subset\sX^*$ and $\tilde{\sY}\subset\sY^*$, and $\theta \in [1/2,1)$ be a constant, $\sU \subset \sX$ be an open neighborhood of the origin, $f:\sX \supset \sU \to \KK$ be a $C^2$ function with $f(0)=0\in\KK$ and $f'(0)=0$ and $f'(x)\in\tilde{\sX}$ for all $x\in\sU$, and
\[
  Q:\sY \ni y \mapsto Q(y) = \frac{1}{2}\langle y,Ay\rangle_{\sY\times\sY^*}\in \KK
\]
be defined by an operator $A \in \sL_\sym(\sY,\tilde{\sY})$ whose kernel is complemented in $\sY$ and whose range is $\tilde{\sY}$. If $f_Q:\sU\times\sY\to\KK$ is a $C^2$ function defined by $f_Q(x,y) := f(x) + Q(y)$ for $(x,y) \in \sU\times\sY$, then there are constants $C, C_0 \in (0,\infty)$ and an open neighborhood $\sV\subset \sY$ of the origin such that, after possibly shrinking $\sU$, the following holds: $f$ has {\L}ojasiewicz exponent $\theta$ on $\sU$, that is,
\begin{equation}
\label{eq:Lojasiewicz_gradient_inequality_body_refined}
\|f'(x)\|_{\tilde{\sX}} \geq C_0|f(x)|^\theta, \quad\text{for all } x \in \sU,
\end{equation}
if and only if $f_Q$ has {\L}ojasiewicz exponent $\theta$ on $\sU\times \sV$, that is,
\begin{equation}
\label{eq:Lojasiewicz_gradient_inequality_plus_quadratic_form_refined}
\|f_Q'(x,y)\|_{\tilde{\sX}\oplus\tilde{\sY}} \geq C|f_Q(x,y)|^\theta, \quad\text{for all } (x,y) \in \sU\times \sV.
\end{equation}
\end{lem}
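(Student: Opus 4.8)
The plan is to exploit the product structure of both the value and the derivative of $f_Q$. Writing $f_Q(x,y) = f(x) + Q(y)$, the Fr\'echet derivative splits as $f_Q'(x,y) = (f'(x), Q'(y)) = (f'(x), Ay)$, which lies in $\tilde{\sX}\oplus\tilde{\sY}$ since $f'(x)\in\tilde{\sX}$ by hypothesis and $Ay \in \Ran A = \tilde{\sY}$. With the product norm on $\tilde{\sX}\oplus\tilde{\sY}$ we then have
\[
\|f_Q'(x,y)\|_{\tilde{\sX}\oplus\tilde{\sY}} = \|f'(x)\|_{\tilde{\sX}} + \|Ay\|_{\tilde{\sY}},
\]
and I will establish the two implications separately using this decomposition.

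The backward implication (that \eqref{eq:Lojasiewicz_gradient_inequality_plus_quadratic_form_refined} implies \eqref{eq:Lojasiewicz_gradient_inequality_body_refined}) is immediate by restriction: since $\sV$ is a neighborhood of the origin, I evaluate \eqref{eq:Lojasiewicz_gradient_inequality_plus_quadratic_form_refined} at $(x,0)$ for $x\in\sU$, where $f_Q(x,0) = f(x)$ and $f_Q'(x,0) = (f'(x),0)$, obtaining $\|f'(x)\|_{\tilde{\sX}} \geq C|f(x)|^\theta$ directly. For the forward implication I first invoke Lemma \ref{lem:Quadratic_form_Lojasiewicz_exponent_one_half_refined}, whose hypotheses on $A$ (complemented kernel, range equal to $\tilde{\sY}$) are exactly those assumed here, to obtain a constant $c\in(0,\infty)$ with $\|Ay\|_{\tilde{\sY}} \geq c|Q(y)|^{1/2}$ for all $y\in\sY$. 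After shrinking $\sV$ so that $|Q(y)|\leq 1$ on $\sV$ and using $1/2\leq\theta$, the estimate $|Q(y)|^{1/2}\geq |Q(y)|^\theta$ holds on $\sV$, so $\|Ay\|_{\tilde{\sY}} \geq c|Q(y)|^\theta$ there. Combining with the hypothesis \eqref{eq:Lojasiewicz_gradient_inequality_body_refined} for $f$ gives
\[
\|f_Q'(x,y)\|_{\tilde{\sX}\oplus\tilde{\sY}} \geq C_0|f(x)|^\theta + c|Q(y)|^\theta, \quad\forall\, (x,y)\in\sU\times\sV.
\]

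I then close using subadditivity of $t\mapsto t^\theta$ on $[0,\infty)$, valid since $\theta\in(0,1)$: the triangle inequality gives $|f_Q(x,y)|^\theta \leq (|f(x)|+|Q(y)|)^\theta \leq |f(x)|^\theta + |Q(y)|^\theta$, whence $C_0|f(x)|^\theta + c|Q(y)|^\theta \geq \min(C_0,c)\,|f_Q(x,y)|^\theta$, yielding \eqref{eq:Lojasiewicz_gradient_inequality_plus_quadratic_form_refined} with $C=\min(C_0,c)$. No step presents a genuine obstacle; the only point requiring care is the matching of exponents, since the quadratic form satisfies a {\L}ojasiewicz inequality only with its intrinsic exponent $1/2$. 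One must use $\theta\geq 1/2$ together with the smallness of $|Q(y)|$ near the origin to degrade that inequality to the common exponent $\theta$, which is exactly why the statement is restricted to $\theta\in[1/2,1)$ and why $\sV$ must possibly be shrunk.
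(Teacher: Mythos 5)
Your proof is correct and follows essentially the same route as the paper's: both directions rest on Lemma \ref{lem:Quadratic_form_Lojasiewicz_exponent_one_half_refined} together with degrading the quadratic form's intrinsic exponent $1/2$ to the common exponent $\theta$ by shrinking a neighborhood, and the converse is obtained in both cases by restricting to $y=0$. The only difference is cosmetic: the paper normalizes $\|f'(x)\oplus Q'(y)\|_{\tilde{\sX}\oplus\tilde{\sY}}\leq 1$ and works with the reciprocal exponent $\alpha=1/\theta$ on the gradient side, whereas you normalize $|Q(y)|\leq 1$ and use subadditivity of $t\mapsto t^\theta$ on the value side.
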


\begin{proof}
Let $\alpha := 1/\theta \in (1,2]$ and suppose that Inequality \eqref{eq:Lojasiewicz_gradient_inequality_body_refined} holds. Since $f'(0)=0$ and $Q'(0)=0$, we may assume that $\|f'(x)\oplus Q'(y)\|_{\tilde{\sX}\oplus\tilde{\sY}} \leq 1$ for all $(x,y) \in \sU\times \sV$, for small enough $\sV$ and after possibly shrinking $\sU$. Observe that for all $(x,y) \in \sU\times \sV$,
\begin{align*}
|f_Q(x,y)| & \leq |f(x)| + |Q(y)|
\\
&\leq C_0\|f'(x)\|_{\tilde{\sX}}^\alpha + C_1\|Q'(y)\|_{\tilde{\sY}}^2
\quad\text{(by Lemma \ref{lem:Quadratic_form_Lojasiewicz_exponent_one_half_refined} and Inequality \eqref{eq:Lojasiewicz_gradient_inequality_body_refined})}
\\
&\leq C\left(\|f'(x)\|_{\tilde{\sX}}^\alpha + \|Q'(y)\|_{\tilde{\sY}}^2\right)
\\
&\leq C\left(\|f'(x)\oplus Q'(y)\|_{\tilde{\sX}\oplus\tilde{\sY}}^\alpha + \|f'(x)\oplus Q'(y)\|_{\tilde{\sX}\oplus\tilde{\sY}}^2\right)
\\
&\leq C\|f'(x)\oplus Q'(y)\|_{\tilde{\sX}\oplus\tilde{\sY}}^\alpha \quad \text{(as $\|f'(x)\oplus Q'(y)\|_{\tilde{\sX}\oplus\tilde{\sY}} \leq 1$ and $\alpha\in(1,2]$)}
\\
&= C\|f_Q'(x,y)\|_{\tilde{\sX}\oplus\tilde{\sY}}^\alpha,
\end{align*}
where $C = \max\{C_0, C_1\}$ and  $f_Q'(x,y) = f'(x)\oplus Q'(y)$. Taking the $1/\alpha$ root of the preceding inequality yields Inequality \eqref{eq:Lojasiewicz_gradient_inequality_plus_quadratic_form_refined}.

Conversely, suppose that Inequality \eqref{eq:Lojasiewicz_gradient_inequality_plus_quadratic_form_refined} holds. For all $x \in \sU$,
\begin{align*}
|f(x)| &= |f_Q(x,0)|
\\
&\leq C\|f_Q'(x,0)\|_{\tilde{\sX}\oplus\tilde{\sY}}^\alpha
\quad\text{(by Inequality \eqref{eq:Lojasiewicz_gradient_inequality_plus_quadratic_form_refined})}
\\
&= C\|f'(x)\oplus Q'(0)\|_{\tilde{\sX}\oplus\tilde{\sY}}^\alpha
\\
&= C\left(\|f'(x)\|_{\tilde{\sX}} + \|Q'(0)\|_{\tilde{\sY}}\right)^\alpha
\\
&= C\|f'(x)\|_{\tilde{\sX}}^\alpha \quad\text{(since $Q'(0)=0$)},
\end{align*}
which gives Inequality \eqref{eq:Lojasiewicz_gradient_inequality_body_refined} after taking the $1/\alpha$ root. This completes the proof of Lemma \ref{lem:Invariance_Lojasiewicz_exponent_addition_quadratic_form_refined}.
\end{proof}

We can now give the

\begin{proof}[Proof of Theorem \ref{mainthm:Lojasiewicz-Simon_gradient_inequality_refined}]
The operator $f''(0) \in \sL_\sym(\sX,\tilde{\sX})$ is Fredholm by hypothesis, with finite-dimensional kernel $\sK := \Ker f''(0)$ and closed complement $\sX_0 \subset \sX$ such that $\sX = \sX_0\oplus\sK$. Similarly, let $\tilde{\sX}_0 := \Ran f''(0) \subset \tilde{\sX}$ denote the closed range of $f''(0)$ with finite-dimensional complement $\tilde{\sK} \cong \sK = \Ker f''(0)$ and isomorphism $\tilde{\sX}_0 \cong \sX_0$ (see Lemma \ref{lem:Isomorphism_properties_Fredholm_operator}). Therefore, writing $x=(w,\xi) \in \sX=\sX_0\oplus\sK$,
\[
f''(0,0) = \begin{pmatrix} A_0 & 0 \\ 0 & 0 \end{pmatrix}:\sX_0\oplus\sK \to \tilde{\sX}_0\oplus\tilde{\sK},
\]
where $A_0 = D_1^2f(0,0) \in \sL(\sX_0,\tilde{\sX}_0)$ is symmetric with respect to the continuous embedding $\tilde{\sX}_0 \subset \sX_0^*$ and canonical pairing $\sX_0\times\sX_0^*\to\KK$. Moreover, $A_0$ is bijective and continuous by construction, so it is invertible by the Open Mapping Theorem.

By hypothesis, $f$ is analytic and so, possibly after shrinking $\sU$, Theorem \ref{mainthm:Hormander_C-6-1_Banach_refined} provides an open neighborhood $\sV$ of the origin in $\sX$ and an analytic diffeomorphism $\Phi:\sV\to\sU$ such that $\Phi(0,0)=(0,0)$ and
\[
f\circ\Phi(z,\xi) = g(\xi) + \langle z, A_0z \rangle_{\sX_0\times\sX_0^*}, \quad\text{for all } y = (z,\xi) \in \sV,
\]
where $A_0 = D_1^2(f\circ\Phi)(0,0) \in \sL_\sym(\sX_0,\tilde{\sX}_0)$ and $g(\xi) := f(\Phi(0,\xi))$ for all $\xi$ in $V$, an open neighborhood of the origin in $\sK$ defined as the image of the projection of $\sV \subset \sX = \sX_0\oplus\sK$ onto the factor $\sK$. Lemma \ref{lem:Quadratic_form_Lojasiewicz_exponent_one_half_refined} then asserts that the quadratic function, $Q(z) := \langle z, A_0z \rangle_{\sX_0\times\sX_0^*}$ for $z\in\sX_0$, has {\L}ojasiewicz exponent $1/2$ on an open neighborhood of the origin, while Lemmas \ref{lem:Pushforwards_preserve_Lojasiewicz_exponents_Banach_spaces} and \ref{lem:Invariance_Lojasiewicz_exponent_addition_quadratic_form_refined} imply that the functions $f:\sU\to\KK$ and $f\circ\Phi:\sV\to\KK$ and $g:V\to\KK$ have the same {\L}ojasiewicz exponent. But $\sK$ is a finite-dimensional vector space over $\KK$ and $g$ is analytic and thus obeys the classical {\L}ojasiewicz gradient inequality for some exponent $\theta \in [1/2,1)$ by Theorem \ref{thm:Lojasiewicz_gradient_inequality}. This completes the proof of Theorem \ref{mainthm:Lojasiewicz-Simon_gradient_inequality_refined}.
\end{proof}

\section{Analytic functions with {\L}ojasiewicz exponent one half are Morse--Bott}
\label{sec:Characterization_optimal_exponent_Morse-Bott_condition}
Our goal in this section is to complete the proof of Theorem \ref{mainthm:Analytic_function_Lojasiewicz_exponent_one-half_Morse-Bott_Banach_refined} and hence Theorem \ref{mainthm:Analytic_function_Lojasiewicz_exponent_one-half_Morse-Bott_Banach} upon choosing $\tilde{\sX}=\sX^*$.

\begin{proof}[Proof of Theorem \ref{mainthm:Analytic_function_Lojasiewicz_exponent_one-half_Morse-Bott_Banach_refined}]
Recall that $f''(0) \in \sL_\sym(\sX,\tilde{\sX})$ is a Fredholm operator by hypothesis. Let $\sK := \Ker f''(0) \subset \sX$ denote the finite-dimensional kernel with closed complement $\sX_0$, so $\sX = \sX_0\oplus\sK$, and let $\tilde{\sX}_0 := \Ran f''(0) \subset \tilde{\sX}$ denote the closed range, with finite-dimensional complement $\tilde{\sK}$, so $\tilde{\sX} = \tilde{\sX}_0\oplus\tilde{\sK}$.

We apply the Morse Lemma for functions on Banach spaces with degenerate critical points (Theorem \ref{mainthm:Hormander_C-6-1_Banach_refined}) to $f$ to produce --- after possibly shrinking the open neighborhood $\sU$ of the origin in $\sX$ --- an analytic diffeomorphism $\Phi$ from an open neighborhood $\sV$ of the origin in $\sX = \sX_0\oplus\sK$ onto $\sU$ such that
\[
\Phi^*f(z,\xi) = \frac{1}{2}\langle z,A_0z\rangle_{\sX_0\times\sX_0^*} + \Phi^*f(0,\xi), \quad\text{for all } y = (z,\xi) \in \sV \subset \sX_0\oplus\sK,
\]
where $A_0 := D_1^2(\Phi^*f)(0,0) \in \sL_\sym(\sX_0,\tilde{\sX}_0)$ and we recall that $\tilde{\sX}_0\subset\sX_0^*$ is a continuous embedding, as proved in the paragraphs immediately following the statement of Theorem \ref{mainthm:Analytic_function_Lojasiewicz_exponent_one-half_Morse-Bott_Banach_refined}. Again, the operator $A_0$ is bijective and continuous by construction, so it is invertible by the Open Mapping Theorem. The function $g := f(\Phi(0,\cdot))$ is analytic on an open neighborhood $V$ of the origin in $\sK$ defined as the image of the projection of $\sV \subset \sX = \sX_0\oplus\sK$ onto the factor $\sK$. By construction, $g(0)=0$ and $g'(0)=0$.

By shrinking $\sV$ if necessary, we may assume without loss of generality that $V$ is connected. If $g$ is identically zero on $V$, then we are done. Otherwise, if $g$ is not identically zero on $V$, our hypothesis that $f$ has {\L}ojasiewicz exponent $1/2$ and Lemmas \ref{lem:Pushforwards_preserve_Lojasiewicz_exponents_Banach_spaces} and \ref{lem:Invariance_Lojasiewicz_exponent_addition_quadratic_form_refined} imply that $g$ has {\L}ojasiewicz exponent $1/2$ as well.

If $\Ker g''(0) = \{0\}$, then $\Coker g''(0) = \{0\}$, since $g''(0) \in \sL(\sK,\sK^*)$ is symmetric\footnote{Note that $\sK$ is finite-dimensional.}, and thus $g''(0) \in \sL(\sK,\sK^*)$ is invertible. Hence, $g$ is a Morse--Bott function --- in fact a Morse function with $\Crit g = \{0\}$  --- and thus $\Phi^*f$ is a Morse--Bott function. But then $f$ itself must be a Morse--Bott function since $\Phi$ is a diffeomorphism from one open neighborhood of the origin in $\sX$ onto another and this would complete the proof of Theorem \ref{mainthm:Analytic_function_Lojasiewicz_exponent_one-half_Morse-Bott_Banach_refined}.

If $\Ker g''(0) \neq \{0\}$, then there exists $v \in \sK$ such that $\|v\|_\sK = 1$ and, since $g$ is analytic, an integer $m \geq 3$ such that $g^{(m)}(0)v^m \neq 0$. The Taylor Formula then yields
\begin{align*}
g(tv) &= \frac{1}{m!}g^{(m)}(0)v^m\, t^m + \frac{1}{(m+1)!}\int_0^t g^{(m+1)}(sv)v^{m+1}s^m\,ds,
\\
g'(tv) &= \frac{1}{(m-1)!}g^{(m)}(0)v^{m-1}\, t^{m-1} + \frac{1}{m!}\int_0^t g^{(m+1)}(sv)v^ms^{m-1}\,ds,
\end{align*}
for all $t \in \KK$ such that $tv \in V$. Therefore, after possibly further shrinking $\sV$ and hence $V$, and noting that $\|\xi\|_\sK = \|tv\|_\sK = |t|$ for all $\xi=tv \in V\cap\KK v$, we have
\begin{gather}
\label{eq:Taylor_formula_g}  
\frac{1}{C_0}\|\xi\|_\sK^m \leq |g(\xi)| \leq C_0\|\xi\|_\sK^m,
\\
\label{eq:Taylor_formula_g_prime}  
\frac{1}{C_1}\|\xi\|_\sK^{m-1} \leq \|g'(\xi)\|_{\sK^*} \leq C_1\|\xi\|_\sK^{m-1},
\end{gather}
for positive constants $C_0, C_1$ depending at most on $m$ and $|g^{(m)}(0)v^m|$ and $\sup_{\zeta\in V}\|g^{(m+1)}(\zeta)\|$, where $g^{(m+1)}(\zeta) \in \otimes^{m+1}\sK^*$. Because $g$ has {\L}ojasiewicz exponent $1/2$, then the {\L}ojasiewicz gradient inequality \eqref{eq:Lojasiewicz_gradient_inequality} yields, after possibly further shrinking $V$,
\[
\|g'(\xi)\|_{\sK^*} \geq C_2|g(\xi)|^{1/2}, \quad\text{for all } \xi \in V,
\]
for some positive constant $C_2$. By combining the preceding inequality with the inequalities \eqref{eq:Taylor_formula_g} and \eqref{eq:Taylor_formula_g_prime}, we obtain
\[
   C_1\|\xi\|_\sK^{m-1} \geq C_2\left(C_0^{-1}\|\xi\|_\sK^m\right)^{1/2}, \quad\text{for all } \xi \in V,
\]
that is,
\[
   \|\xi\|_\sK^{m-1} \geq C_0^{-1/2}C_1^{-1}C_2 \|\xi\|_\sK^{m/2}, \quad\text{for all } \xi \in V.
\]
Consequently, since $V$ is an open neighborhood of the origin in $\sK$, we must have $m-1 \leq m/2$ or equivalently, $2m-2\leq m$, that is, $m \leq 2$ and contradicting our assumption that $m \geq 3$ and $\sK\neq\{0\}$. Hence, we must have $\Ker g''(0) = \{0\}$ and this completes the proof of Theorem \ref{mainthm:Analytic_function_Lojasiewicz_exponent_one-half_Morse-Bott_Banach_refined}.
\end{proof}

\appendix

\section{Rate of convergence of a gradient flow for a function obeying a {\L}ojasiewicz gradient inequality}
\label{sec:Convergence_rate}
We recall the following enhancement of Huang \cite[Theorem 3.4.8]{Huang_2006}.

\begin{thm}[Convergence rate under the validity of a {\L}ojasiewicz gradient inequality]
\label{thm:Huang_3-4-8_introduction}
(See Feehan \cite[Theorem 3]{Feehan_yang_mills_gradient_flow_v4}.)
Let $\sU$ be an open subset of a real Banach space $\sX$ that is continuously embedded and dense in a Hilbert space $\sH$. Let $\sE:\sU\to\RR$ be an analytic function with gradient map $\sE':\sU \to \sH$ and $x_\infty \in \sU$ be a critical point, that is, $\sE'(x_\infty)=0$. Assume that there are constants $c \in (0,\infty)$, and $\sigma \in (0,1]$,
and $\theta \in [1/2,1)$ such that
\begin{equation}
\label{eq:Simon_2-2_dualspacenorm_introduction}
\|\sE'(x)\|_{\sH} \geq c|\sE(x) - \sE(x_\infty)|^\theta, \quad \text{for all } x \in \sU_\sigma,
\end{equation}
where $\sU_\sigma := \{x\in \sX: \|x-x_\infty\|_\sX < \sigma\}$. Let $u \in C^\infty([0,\infty); \sX)$ be a solution to the gradient system
\begin{equation}
\label{eq:gradient_system}
\dot u(t) = -\sE'(u(t)), \quad t \in (0,\infty),
\end{equation}
and assume that the orbit $O(u) := \{u(t): t\geq 0\} \subset \sX$ obeys $O(u) \subset \sU_\sigma$. Then there exists $u_\infty \in \sH$ such that
\begin{equation}
\label{eq:Huang_3-45_H_introduction}
\|u(t) - u_\infty\|_\sH \leq \Psi(t), \quad t\geq 0,
\end{equation}
where
\begin{equation}
\label{eq:Huang_3-45_growth_rate_introduction}
\Psi(t)
:=
\begin{cases}
\displaystyle
\frac{1}{c(1-\theta)}\left(c^2(2\theta-1)t + (\gamma-a)^{1-2\theta}\right)^{-(1-\theta)/(2\theta-1)},
& 1/2 < \theta < 1,
\\
\displaystyle
\frac{2}{c}\sqrt{\gamma-a}\exp(-c^2t/2),
&\theta = 1/2,
\end{cases}
\end{equation}
and $a, \gamma$ are constants such that $\gamma > a$ and
\[
a \leq \sE(v) \leq \gamma, \quad\text{for all } v \in \sU.
\]
If in addition $u$ obeys Hypothesis \ref{hyp:Abstract_apriori_interior_estimate_trajectory_main_introduction}, then $u_\infty \in \sX$ and
\begin{equation}
\label{eq:Huang_3-45_X_introduction}
\|u(t+1) - u_\infty\|_\sX \leq 2C_1\Psi(t), \quad t\geq 0,
\end{equation}
where $C_1 \in [1,\infty)$ is the constant in Hypothesis \ref{hyp:Abstract_apriori_interior_estimate_trajectory_main_introduction} for $\delta=1$.
\end{thm}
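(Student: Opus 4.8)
The plan is to run the classical \L ojasiewicz--Simon trajectory argument, using the gradient inequality \eqref{eq:Simon_2-2_dualspacenorm_introduction} twice: once to bound the length of the orbit in $\sH$ (yielding convergence), and once as a scalar differential inequality for the energy (yielding the rate). Set $H(t) := \sE(u(t)) - \sE(x_\infty)$. The chain rule, together with the gradient-map hypothesis and the continuous embedding $\sX \subset \sH$ (so that $\dot u(t) = -\sE'(u(t))$ holds in $\sH$), gives
\[
\dot H(t) = (\sE'(u(t)), \dot u(t))_\sH = -\|\sE'(u(t))\|_\sH^2 \leq 0,
\]
so $H$ is nonincreasing; since $O(u)\subset\sU_\sigma$ and $\sE$ is continuous one checks $H(t)\geq 0$, and if $H(t_0)=0$ at some finite $t_0$ the flow is constant thereafter and the conclusions are immediate, so I may assume $H(t)>0$ for all $t$.

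First I would establish the arc-length estimate. Wherever $H>0$, differentiating $H^{1-\theta}$ and inserting \eqref{eq:Simon_2-2_dualspacenorm_introduction} in the form $\|\sE'(u)\|_\sH \geq cH^\theta$ yields
\[
-\frac{d}{dt}H(t)^{1-\theta} = (1-\theta)H(t)^{-\theta}\|\sE'(u(t))\|_\sH^2 \geq (1-\theta)c\,\|\sE'(u(t))\|_\sH = (1-\theta)c\,\|\dot u(t)\|_\sH.
\]
Integrating over $[t_1,t_2]$ bounds $\int_{t_1}^{t_2}\|\dot u\|_\sH\,dt$ by $\frac{1}{(1-\theta)c}\big(H(t_1)^{1-\theta}-H(t_2)^{1-\theta}\big)$, so the orbit has finite length in $\sH$ and $u(t)$ is Cauchy; hence $u(t)\to u_\infty$ for some $u_\infty\in\sH$. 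Letting $t_2\to\infty$ and using $\|u(t)-u_\infty\|_\sH\leq\int_t^\infty\|\dot u\|_\sH\,ds$ gives the master estimate $\|u(t)-u_\infty\|_\sH \leq \frac{1}{(1-\theta)c}H(t)^{1-\theta}$.

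Next I would quantify the decay of $H$. Combining $\dot H=-\|\sE'(u)\|_\sH^2$ with \eqref{eq:Simon_2-2_dualspacenorm_introduction} gives the comparison inequality $\dot H(t)\leq -c^2 H(t)^{2\theta}$. For $\theta=1/2$ this reads $\dot H\leq -c^2 H$, so Gronwall yields $H(t)\leq H(0)e^{-c^2 t}$ and hence $H(t)^{1/2}\leq\sqrt{H(0)}\,e^{-c^2t/2}$. For $\theta\in(1/2,1)$, since $1-2\theta<0$ the inequality integrates to $\frac{d}{dt}H^{1-2\theta}\geq(2\theta-1)c^2$, so $H(t)^{1-2\theta}\geq H(0)^{1-2\theta}+(2\theta-1)c^2 t$, whence $H(t)^{1-\theta}\leq\big((2\theta-1)c^2 t+H(0)^{1-2\theta}\big)^{-(1-\theta)/(2\theta-1)}$. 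Substituting both branches into the master estimate and bounding $H(0)=\sE(u(0))-\sE(x_\infty)\leq\gamma-a$ reproduces exactly the two cases of $\Psi(t)$ in \eqref{eq:Huang_3-45_growth_rate_introduction}, establishing \eqref{eq:Huang_3-45_H_introduction}. Finally, for the $\sX$-bound \eqref{eq:Huang_3-45_X_introduction} I would invoke Hypothesis \ref{hyp:Abstract_apriori_interior_estimate_trajectory_main_introduction}, an interior smoothing estimate controlling $\|u(t+\delta)-u_\infty\|_\sX$ by the $\sH$-behavior of the trajectory on the preceding window and the constant $C_1$; applying it with $\delta=1$ and inserting \eqref{eq:Huang_3-45_H_introduction}, together with the monotonicity $\Psi(s)\leq\Psi(t)$ for $s\geq t$, gives $\|u(t+1)-u_\infty\|_\sX\leq 2C_1\Psi(t)$ and in particular $u_\infty\in\sX$.

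The genuinely substantive input is the gradient inequality \eqref{eq:Simon_2-2_dualspacenorm_introduction}, exploited in the two distinct ways above; the main obstacle is therefore not conceptual but a matter of careful bookkeeping: justifying the chain rule $\dot H=-\|\sE'(u)\|_\sH^2$ through the embedding $\sX\subset\sH$ and the gradient hypothesis, cleanly disposing of the trivial case $H\equiv 0$, and running the ODE comparison so that every constant matches $\Psi$ on the nose. The passage from $\sH$ to $\sX$ is quarantined entirely inside Hypothesis \ref{hyp:Abstract_apriori_interior_estimate_trajectory_main_introduction}, so no new difficulty enters there.
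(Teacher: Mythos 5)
The paper does not prove this theorem; it is quoted from \cite[Theorem 3]{Feehan_yang_mills_gradient_flow_v4} (an enhancement of Huang's Theorem 3.4.8), and your argument is precisely the standard {\L}ojasiewicz--Simon trajectory argument used there: the differential inequality $-\frac{d}{dt}H^{1-\theta}\geq (1-\theta)c\|\dot u\|_\sH$ for the arc length, the comparison inequality $\dot H\leq -c^2H^{2\theta}$ for the rate, and Hypothesis \ref{hyp:Abstract_apriori_interior_estimate_trajectory_main_introduction} to upgrade from $\sH$ to $\sX$. All the constants check out against \eqref{eq:Huang_3-45_growth_rate_introduction}, including the replacement of $H(0)^{1-2\theta}$ by $(\gamma-a)^{1-2\theta}$, which is legitimate because the exponent $-(1-\theta)/(2\theta-1)$ is negative. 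The only glossed step is the claim that $H\geq 0$: continuity alone does not give this; you need that if $H(t_0)<0$ then monotonicity plus the gradient inequality forces $\dot H\leq -c^2|H(t_0)|^{2\theta}$ thereafter, so $H\to-\infty$, contradicting the lower bound $\sE\geq a$ on $\sU$ --- a one-line fix.
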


We recall the

\begin{hyp}[\Apriori interior estimate for a trajectory]
\label{hyp:Abstract_apriori_interior_estimate_trajectory_main_introduction}
(See Feehan \cite[Hypothesis 2.1]{Feehan_yang_mills_gradient_flow_v4}.)
Let $\sX$ be a Banach space that is continuously embedded in a Hilbert space $\sH$. If $\delta \in (0,\infty)$ is a constant, then there is a constant $C_1 = C_1(\delta) \in [1,\infty)$ with the following significance. If $S, T \in \RR$ are constants obeying $S+\delta \leq T$ and $u \in C^\infty([S,T); \sX)$, we say that $\dot u \in C^\infty([S,T); \sX)$ obeys an \apriori \emph{interior estimate on $(0, T]$} if
\begin{equation}
\label{eq:Abstract_apriori_interior_estimate_trajectory_main_introduction}
\int_{S+\delta}^T \|\dot u(t)\|_\sX\,dt \leq C_1\int_S^T \|\dot u(t)\|_\sH\,dt.
\end{equation}
\end{hyp}

In applications, $u \in C^\infty([S,T); \sX)$ in Hypothesis \ref{hyp:Abstract_apriori_interior_estimate_trajectory_main_introduction} will often be a solution to a quasi-linear parabolic partial differential system, from which an \apriori estimate \eqref{eq:Abstract_apriori_interior_estimate_trajectory_main_introduction} may be deduced. For example, Hypothesis \ref{hyp:Abstract_apriori_interior_estimate_trajectory_main_introduction} is verified by Feehan \cite[Lemma 17.12]{Feehan_yang_mills_gradient_flow_v4} for a nonlinear evolution equation on a Banach space $\calV$ of the form (see Caps \cite{Caps_evolution_equations_scales_banach_spaces}, Henry \cite{Henry_geometric_theory_semilinear_parabolic_equations}, Pazy \cite{Pazy_1983}, Sell and You \cite{Sell_You_2002}, Tanabe \cite{Tanabe_1979, Tanabe_1997} or Yagi \cite{Yagi_abstract_parabolic_evolution_equations_applications})
\begin{equation}
\label{eq:Nonlinear_evolution_equation_Banach_space}
\frac{du}{dt} + \cA u = \cF(t,u(t)), \quad t\geq 0, \quad u(0) = u_0,
\end{equation}
where $\cA$ is a positive, sectorial, unbounded operator on a Banach space $\cW$ with domain $\calV^2 \subset \cW$ and the nonlinearity $\cF$ has suitable properties.

Results on the rate of convergence of a gradient flow defined by a function obeying a {\L}ojasiewicz gradient inequality in specific examples have been proved earlier --- see Simon \cite{Simon_1983} and Adams and Simon \cite{Adams_Simon_1988} for a restricted class of analytic energy functions arising in geometric analysis and R\r{a}de \cite[Proposition 7.4]{Rade_1992} for the Yang-Mills energy function on connections on principal bundles over a closed smooth manifold of dimension two or three. For a recent example, see Carlotto, Chodosh, and Rubinstein \cite[Theorem 1]{Carlotto_Chodosh_Rubinstein_2015} for the Yamabe function on Riemannian metrics over closed smooth manifolds of dimension greater than or equal to three.

\section{Morse--Bott functions and quadratic simple normal crossing functions}
\label{sec:Blow-up_map_exceptional_divisor_polar_coordinates}
We are often asked about the relationship between Morse--Bott functions and quadratic simple normal crossing functions as in \eqref{eq:Pullback_analytic_function_simple_normal_crossing_Lojasiewicz_one_half}, so we explain the relationship in this section for $\KK=\RR$; the analogous discussion applies for $\KK=\CC$.

For an integer $p \geq 1$ and writing $\RR^* = \RR\less\{0\}$, we let $\RR^{p-1} = (\RR^p\less\{0\})/\RR^* = S^{p-1}/\{\pm 1\}$ denote real projective space, so $\RR\PP^0 \cong \{1\}$ and $\RR\PP^1 \cong S^1$ while $\RR\PP^{p-1}$ with $p\geq 3$ is obtained by identifying antipodal points of the sphere $S^{p-1}$.

\begin{defn}[Blowup at a point and exceptional divisor]
\label{defn:Blowup}
(See Krantz and Parks \cite[Definition 6.2.2]{Krantz_Parks_primer_real_analytic_functions}.) Let $p\geq 1$ be an integer and $W$ be an open neighborhood of the origin in $\RR^p$. The \emph{blowup} of $W$ at the origin is the set
\[
\widetilde{W} := \{(x,\ell) \in W \times \RR^{p-1} : x \in \ell\},
\]
where $\pi: \widetilde{W} \ni (x,\ell) \mapsto x \in W$ is the \emph{blowup map} and $E := \pi^{-1}(0) \subset \widetilde{W}$ is the \emph{exceptional divisor}.
\end{defn}

The set $\widetilde{W}$ is a real analytic manifold and the quotient map $\pi: \widetilde{W} \to W$ is real analytic and restricts to a real analytic diffeomorphism $\pi: \widetilde{W} \less E \cong W \less\{0\}$. By viewing $\RR^{p-1} = S^{p-1}/\{\pm 1\}$ and $\RR^p \less\{0\} = \RR_+\times S^{p-1}$, we may also write
\begin{align*}
\widetilde{W} &= \{(x,\ell) \in W \times \RR^{p-1} : x \in \ell\}
\\
&= \{(x,[u]) \in W \times S^{p-1}/\{\pm 1\}: x \in \RR u\}
\\
&= \{(x,[u]) \in W \times S^{p-1}/\{\pm 1\}: x = \pm |x| u\}
\\
&= \{(x,u) \in W \times S^{p-1}: (x,u) \sim (y,v) \text{ if } |x|=|y| \text{ and } u = \pm v \}
\\
&= \{(s,u) \in \RR \times S^{p-1}: su \in W \text{ and } (s,u) \sim (t,v) \text{ if } (t,v) = \pm(s,u) \},
\end{align*}
where $[u] = \{\pm u\}$ and, in the last line, the blowup map is $\pi: \widetilde{W} \ni [s,u] \mapsto su \in W$ and
\[
  \pi^{-1}(0) = \{\{0\} \times S^{p-1}\}/\{\pm 1\} \cong S^{p-1}/\{\pm 1\} = \RR^{p-1}
\]
is the exceptional divisor.

If $W$ is an open neighborhood of the origin in $\RR^p$ or the half-space $\HH^p = \{x\in\RR^p:x_p\geq 0\}$, then we could alternatively define the blowup of $W$ at the origin to be the real analytic manifold with boundary,
\[
\widehat{W} := \{(r,u) \in [0,\infty)\times S^{p-1}: ru \in W\},
\]
following the usual definition of polar coordinates on $\RR^p\less\{0\}$. The map $\pi: \widehat{W} \ni (r,u) \mapsto ru \in W$ is the blowup map and $\pi^{-1}(0) = \{0\}\times S^{p-1} \cong S^{p-1}$ is now the exceptional divisor.

Suppose now that $U\subset\RR^d$ is an open neighborhood of the origin and $f:U \to \RR$ is a $C^2$ function with $f(0)=0$ and $f'(0)=0$ and that is Morse--Bott at the origin in the sense of Definition \ref{defn:Morse-Bott_function} \eqref{item:Morse-Bott_point}. Thus, after possibly shrinking $U$, we have that $\Crit f$ is a $C^2$ submanifold of $U$ of dimension $c = \dim\Ker f''(0)$. Moreover, we may further assume that $U$ is connected and so $\Crit f \subset f^{-1}(0)$.

Theorem \ref{thm:Morse-Bott_Lemma_Banach} and Remark \ref{rmk:Morse-Bott_Lemma_Hilbert_spaces} (the Morse--Bott Lemma) imply, after possibly shrinking $U$, that one can find an neighborhood $V$ of the origin in $\RR^d$ and a $C^2$ diffeomorphism\footnote{While for clarity we have restricted our attention in this article to functions $f$ which are $C^{p+2}$ with $p\geq 1$, the Morse--Bott Lemma holds for $C^2$ functions on Euclidean space: see Banyaga and Hurtubise \cite[Theorem 2]{Banyaga_Hurtubise_2004}.} $\Phi:\RR^d \supset V \ni y\mapsto x\in U \subset\RR^d$ such that $\Phi(0)=0$ and
\[
f\circ\Phi(y) = \sum_{i=1}^{p} y_i^2 - \sum_{i=p+1}^{p+n} y_i^2, \quad\text{for all } y \in V.
\]
Note that $p+n = d-c$ and
\[
\Crit f\circ\Phi = V\cap\bigcap_{i=1}^{d-c}\{y_i=0\}.
\]
If $n=0$ and thus $1 \leq p = d-c$, we may write $(y_1,\ldots,y_p) = su$, for $s \in [0,\infty)$ and $u \in S^{p-1} \subset \RR^p$, so that
\[
f\circ\varpi(s,u,y_{p+1},\cdots,y_d) = s^2, \quad\text{for all } (su,y_{p+1},\cdots,y_d) \in U,
\]
where we define
\[
  \varpi(s,u,y_{p+1},\cdots,y_d) := \Phi(su,y_{p+1},\cdots,y_d).
\]
We see that $\varpi$ gives a $C^2$ map from an open neighborhood $V$ of the origin in $[0,\infty)\times S^{p-1}\times \RR^{d-p}$ onto $U\subset\RR^d$ such that $\varpi(0)=0$ and
\[
\varpi(\{s=0\}\cap V) = U\cap\bigcap_{i=1}^p\{y_i=0\} = U\cap\bigcap_{i=1}^{d-c}\{y_i=0\},
\]
and $\varpi$ is a diffeomorphism from $V\less\{s=0\}$ onto its image.

Similarly, if $p=0$ and thus $1 \leq n = d-c$, we may write $(y_1,\ldots,y_n) = tv$, for $t \in [0,\infty)$ and $v \in S^{n-1} \subset \RR^n$, so that
\[
f\circ\varpi(t,v,y_{n+1},\cdots,y_d) = -t^2, \quad\text{for all } (tv,y_{n+1},\cdots,y_d) \in U,
\]
where we define $\varpi(t,v,y_{n+1},\cdots,y_d) = \Phi(tv,y_{n+1},\cdots,y_d)$. We see that $\varpi$ gives a $C^2$ map from an open neighborhood of the origin in $[0,\infty)\times S^{p-1}\times \RR^{d-p}$ into $\RR^d$ such that $\varpi(0)=0$ and
\[
\varpi(\{t=0\}\cap V) = U\cap\bigcap_{i=1}^n\{y_i=0\} = U\cap\bigcap_{i=1}^{d-c}\{y_i=0\},
\]
and $\varpi$ is a diffeomorphism from $V\less\{t=0\}$ onto its image.

Finally, if $n \geq 1$ and $p \geq 1$, we may write $(y_1,\ldots,y_p) = su$ and $(y_{p+1},\ldots,y_{p+n}) = tv$, for $s, t \in [0,\infty)$ and $u \in S^{p-1}$ and $v \in S^{n-1}$, so that
\[
f\circ\varpi(s,t,u,v,y_{p+n+1},\cdots,y_d) = s^2-t^2, \quad\text{for all } (su,tv,y_{p+n+1},\cdots,y_d) \in U,
\]
where we define
\[
  \varpi(s,t,u,v,y_{p+n+1},\cdots,y_d) := \Phi(su,tv,y_{p+n+1},\cdots,y_d).
\]
We see that $\varpi$ gives a $C^2$ map from an open neighborhood of the origin in
\[
  [0,\infty)\times[0,\infty)\times S^{p-1}\times S^{n-1}\times \RR^{d-n-p}
\]
into $\RR^d$ such that $\varpi(0)=0$ and
\[
\varpi(\{s=0\}\cap \{t=0\}\cap W) = V\cap\bigcap_{i=1}^{n+p}\{y_i=0\} = V\cap\bigcap_{i=1}^{d-c}\{y_i=0\},
\]
after possibly shrinking $V$ and $\varpi$ is a diffeomorphism from $W\less (\{s=0\}\cup\{t=0\})$ onto its image.

Define a diffeomorphism of $\RR^2$ by $(t_1,t_2) \mapsto (s,t) = \varphi(t_1,t_2)$ where $t_1=s+t$ and $t_2=s-t$, so that $s = \frac{1}{2}(t_1+t_2)$ and $t = \frac{1}{2}(t_1-t_2)$. Hence, we obtain
\[
f\circ\Pi(t_1,t_2,u,v,y_{p+n+1},\cdots,y_d) = t_1t_2,
\quad\text{for all } (\varphi_1(t_1,t_2)u,\varphi_2(t_1,t_2)v,y_{p+n+1},\cdots,y_d) \in U,
\]
where we define
\[
  \Pi(t_1,t_2,u,v,y_{p+n+1},\cdots,y_d) := \Phi(\varphi_1(t_1,t_2)u,\varphi_2(t_1,t_2)v,y_{p+n+1},\cdots,y_d).
\]
We see that $\Pi$ gives a $C^2$ map from an open neighborhood of the origin in
\[
  \{(t_1,t_2)\in [0,\infty)\times\RR: |t_2|\leq t_1\} \times S^{p-1}\times S^{n-1}\times \RR^{d-n-p}
\]
into $\RR^d$ such that $\Pi(0)=0$ and
\[
\Pi(\{t_1=0\}\cap \{t_2=0\}\cap V) = U\cap\bigcap_{i=1}^{d-c}\{y_i=0\},
\]
and $\Pi$ is a diffeomorphism from $V\less (\{t_1=0\}\cup\{t_2=0\})$ onto its image.

In the preceding discussion we could have replaced the roles of the blowups $[0,\infty)\times S^{p-1}$ or $[0,\infty)\times S^{n-1}$ by $(\RR\times S^{p-1})/\{\pm 1\}$ or $(\RR\times S^{n-1})/\{\pm 1\}$ and the roles of the exceptional divisors, $S^{p-1}$ or $S^{n-1}$ by $\RR\PP^{p-1}$ or $\RR\PP^{n-1}$, the only difference being an increase in notational complexity. In summary, we have proved the

\begin{prop}[Pull-back of a Morse--Bott function to a quadratic simple normal crossing function]
\label{prop:Pull-back_Morse-Bott_function_simple_normal_crossing}
Let $d \geq 2$ be an integer, $U\subset\RR^d$ be an open neighborhood of the origin, and $f:U \to \RR$ be a $C^2$ function that is Morse--Bott at the origin and obeys $f(0)=0$. Then, after possibly shrinking $U$, there are an open neighborhood $V$ of the origin in $\RR^d$ and a $C^2$ map $\pi:V \to U$ such that $\pi$ restricts to a diffeomorphism from $V\less\{y_1=0\}$ or $V\less(\{y_1=0\}\cup\{y_2=0\})$ onto its image and
\[
\pi^*f(y) = \pm y_1^2 \quad\text{or}\quad y_1y_2, \quad\text{for all } y=(y_1,\ldots,y_d) \in V,
\]
and $\pi(\Crit f\circ\pi) = \Crit f$, where $\Crit f\circ\pi = \{y_1=0\}\cap V$ or $(\{y_1=y_2=0\})\cap V$.
\end{prop}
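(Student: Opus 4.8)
The plan is to deduce the Proposition by first putting $f$ into Morse--Bott normal form and then applying explicit polar (blow-up) substitutions, organized by the signature of the Hessian. Since $f$ is $C^2$ and Morse--Bott at the origin with $c := \dim\Ker f''(0)$, I would invoke Theorem~\ref{thm:Morse-Bott_Lemma_Banach} together with Remark~\ref{rmk:Morse-Bott_Lemma_Hilbert_spaces}, applied with $\sX=\RR^d$ so that the symmetric matrix $f''(0)$ is diagonalized over $\RR$. This supplies, after possibly shrinking $U$, a $C^2$ diffeomorphism $\Phi$ from a neighborhood of the origin onto $U$ with $\Phi(0)=0$ and
\[
f\circ\Phi(y) = \sum_{i=1}^{p} y_i^2 - \sum_{i=p+1}^{p+n} y_i^2,
\]
where $p+n=d-c$ and $p,n$ count the positive and negative eigenvalues of $f''(0)$; in these coordinates $\Crit(f\circ\Phi)$ is the intersection $\bigcap_{i=1}^{d-c}\{y_i=0\}$.

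Next I would introduce polar coordinates on each definite block and split into three cases. When $n=0$ (so $p=d-c\geq 1$), write $(y_1,\dots,y_p)=su$ with $s\in[0,\infty)$ and $u\in S^{p-1}$; then the composition $\varpi$ of $\Phi$ with this polar map satisfies $\varpi^*f=s^2$, which is $+y_1^2$ upon renaming the radial coordinate $s=:y_1$. The case $p=0$ is identical up to a sign and gives $\varpi^*f=-t^2=-y_1^2$. In each of these cases $\varpi$ is a $C^2$ map that restricts to a diffeomorphism off the exceptional divisor $\{s=0\}$ (respectively $\{t=0\}$), and the image of that divisor is exactly $U\cap\bigcap_{i=1}^{d-c}\{y_i=0\}=\Crit f$, which yields the relation $\varpi(\Crit(f\circ\varpi))=\Crit f$.

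The mixed case $n\geq 1$, $p\geq 1$ is the one requiring an extra step. Here I would blow up both definite blocks, writing $(y_1,\dots,y_p)=su$ and $(y_{p+1},\dots,y_{p+n})=tv$ with $s,t\in[0,\infty)$, $u\in S^{p-1}$, $v\in S^{n-1}$, so that the resulting map pulls $f$ back to $s^2-t^2$. To convert this difference of squares into a simple normal crossing product I would apply the linear change of coordinates $t_1=s+t$, $t_2=s-t$, under which $s^2-t^2=t_1t_2$; composing everything defines a $C^2$ map $\Pi$ with $\Pi^*f=t_1t_2$, i.e. $y_1y_2$ after relabeling. The exceptional locus $\{t_1=0\}\cup\{t_2=0\}$ (equivalently $\{s=0\}\cup\{t=0\}$) maps onto $\bigcap_{i=1}^{d-c}\{y_i=0\}$, while $\{t_1=t_2=0\}=\{s=t=0\}$ maps onto $\Crit f$, and $\Pi$ is a diffeomorphism off $\{t_1=0\}\cup\{t_2=0\}$.

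The only real obstacle is bookkeeping rather than analysis. The polar substitutions do not land in an open subset of $\RR^d$ but in a $d$-dimensional real-analytic manifold with boundary or corners, namely a product of half-lines with spheres $S^{p-1}$, $S^{n-1}$ and a Euclidean factor, whose dimensions $1+(p-1)+(d-p)=d$ and $2+(p-1)+(n-1)+(d-n-p)=d$ do total $d$. One must check that each polar map is genuinely $C^2$ and restricts to a diffeomorphism away from the exceptional divisor, and that the compositions with $\Phi$ and with the linear map $(t_1,t_2)\mapsto(s,t)$ remain $C^2$ and carry the stated coordinate hyperplanes onto $\Crit f$. Since all three constructions are explicit, collecting them produces the single map $\pi$ of the statement with $\pi^*f=\pm y_1^2$ or $y_1y_2$ and $\pi(\Crit(f\circ\pi))=\Crit f$. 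One may equally replace the half-line blow-ups $[0,\infty)\times S^{p-1}$ by the $\rp^{p-1}$-blow-ups $(\RR\times S^{p-1})/\{\pm1\}$, at the cost only of heavier notation.
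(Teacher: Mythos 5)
Your proposal is correct and follows essentially the same route as the paper's own proof in Appendix \ref{sec:Blow-up_map_exceptional_divisor_polar_coordinates}: Morse--Bott normal form via Theorem \ref{thm:Morse-Bott_Lemma_Banach} and Remark \ref{rmk:Morse-Bott_Lemma_Hilbert_spaces}, a case split on the signature $(p,n)$ with polar/blow-up substitutions on each definite block, and the linear change $t_1=s+t$, $t_2=s-t$ converting $s^2-t^2$ into $t_1t_2$ in the mixed case. Even your closing remarks on the manifold-with-corners bookkeeping and the alternative $\rp^{p-1}$ blow-ups mirror the paper's discussion.
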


\section{Integrability and  Morse--Bott conditions for the harmonic map energy and the area functions}
\label{sec:Integrability_and_Morse-Bott_properties}
In Section \ref{subsec:Relationship_Adams-Simon_integrability_Morse-Bott_point_conditions} we defined the concepts of Jacobi vector, integrable Jacobi vector, and integrable critical point (see Definition \ref{defn:Jacobi_vectors_integrability}). We noted (see Lemma \ref{lem:Morse-Bott_property_implies_integrability}) that if a function is Morse--Bott at a critical point, then that critical point is integrable. Theorem \ref{mainthm:Integrability_implies_Morse-Bott_property} has been proved by Simon for a specific class of analytic functions on certain Banach spaces (given by $C^{2,\alpha}$ sections of a Riemannian vector bundle over a closed Riemannian manifold) that includes the harmonic map energy and the area functions. We shall give a proof of a more general version of Theorem \ref{mainthm:Integrability_implies_Morse-Bott_property} elsewhere \cite{Feehan_jacobi_vectors_analytic_potential_functions}, but we outline here how Theorem \ref{mainthm:Integrability_implies_Morse-Bott_property} may be proved; in addition to the references cited below, we also refer the reader to Simon \cite[Sections 3.11--3.14 and 3.13.16]{Simon_1996} for further expository details.

\begin{proof}[Outline of proof of Theorem \ref{mainthm:Integrability_implies_Morse-Bott_property}]
  In order to avoid notational conflict with the remainder of this section, we let $\sE$ denote the analytic function considered in Theorem \ref{mainthm:Integrability_implies_Morse-Bott_property}. As in Simon's proof of his infinite-dimensional version \cite[Theorem 3]{Simon_1983} of the {\L}ojasiewicz gradient inequality, one first applies Lyapunov--Schmidt reduction as in \cite[Section 2]{Simon_1983} to the function $\sE:\sU\to\KK$. This step yields an analytic embedding $\Psi:\sV\cap\sK \to \sX$ of the intersection with the kernel $\sK:=\Ker\sE''(x_0)$ and an open neighborhood $\sV$ of the origin in $\tilde{\sX}$, together with an analytic function $\Gamma = \sE\circ\Psi:\sV\cap\sK\to\KK$ (see Adams and Simon \cite[p. 230]{Adams_Simon_1988}, Feehan and Maridakis \cite[Lemmas 2.3 and 2.5]{Feehan_Maridakis_Lojasiewicz-Simon_Banach}, Simon \cite[pp. 538--539]{Simon_1983}, or Simon \cite[Part II, Section 6]{Simon_1985}). By hypothesis, $x_0$ is an integrable critical point in the sense of Definition \ref{defn:Jacobi_vectors_integrability} and so, after possibly shrinking $\sV$, the function $\Gamma$ is constant on $\sV\cap\sK$ by Adams and Simon \cite[Lemma 1, p. 231]{Adams_Simon_1988}.

One can now show that $\sE'(x)=0$ for all $x \in \Psi(\sV\cap\sK)$, essentially by reversing our proof of \cite[Lemma 2.5]{Feehan_Maridakis_Lojasiewicz-Simon_Banach} or arguing as in Simon \cite[p. 539]{Simon_1983}, and thus
\[
  \Psi(\sV\cap\sK) \subseteq \Crit\sE.
\]
By hypothesis, $\sE''(x_0) \in \sL(\sX,\tilde{\sX})$ is Fredholm with index zero and thus we have $\sX=\sX_0\oplus\sK$ and $\tilde{\sX}\cong\sX_0\oplus\sK$ (see Lemma \ref{lem:Isomorphism_properties_Fredholm_operator} \eqref{item:Fredholm_index_zero}). In particular,
\[
  \Ran\sE''(x_0) + \sK = \tilde{\sX}
\]
and so, after possibly shrinking $\sU$, the analytic gradient map $\sE':\sU\to\tilde{\sX}$ is transverse to the (linear) submanifold $\sK\subset\tilde{\sX}$ and hence the preimage $(\sE')^{-1}(\sK)$ is an open analytic submanifold of $\sX$ by the Preimage Theorem from differential topology \cite[Proposition II.2.4]{Lang_fundamentals_differential_geometry}. We thus have inclusions
\[
  \Psi(\sV\cap\sK) \subseteq \Crit\sE \subseteq (\sE')^{-1}(\sK),
\]
noting that $\Crit\sE \equiv (\sE')^{-1}(0)$. Furthermore, we have
\[
  T_{x_0}\Psi(\sV\cap\sK) = \sK = T_{x_0}(\sE')^{-1}(\sK),
\]
where the first equality follows from the construction of $\Psi$ (see \cite[Lemma 2.3]{Feehan_Maridakis_Lojasiewicz-Simon_Banach}) and the second from the observations below:
\begin{align*}
  T_{x_0}(\sE')^{-1}(\sK) &= (\sE''(x_0))^{-1}(T_{\sE'(x_0)}\sK) = (\sE''(x_0))^{-1}(\sK)
  \\
                          &= (\sE''(x_0))^{-1}(0) \quad\text{(since $\tilde{\sX}=\Ran\sE''(x_0)\oplus\sK$)}
  \\
  &= \sK \quad\text{(by definition)}.
\end{align*}
Hence, after possibly shrinking $\sU$ or $\sV$, we have $\Psi(\sV\cap\sK) = (\sE')^{-1}(\sK)$ and consequently
\[
  \Psi(\sV\cap\sK) = \Crit\sE = (\sE')^{-1}(\sK).
\]
In particular, $\Crit\sE$ is an open analytic submanifold of $\sX$ with tangent space $T_{x_0}\Crit\sE = \Ker\sE''(x_0)$ and so $\sE$ is Morse--Bott at $x_0$ in the sense of Definition \ref{defn:Morse-Bott_function_refined} \eqref{item:Morse-Bott_point_refined}.
\end{proof}  

\subsection{Integrability and  Morse--Bott conditions for the harmonic map energy function}
\label{subsec:Integrability_and_Morse-Bott_harmonic_map_energy_functional}
Following Lemaire and Wood \cite[Section 1]{Lemaire_Wood_2002}, we review the concept of \emph{integrability} of a \emph{Jacobi field} along a harmonic map and describe the relation between integrability and the Morse--Bott condition for the harmonic map energy function at a harmonic map. We then list a few examples where integrability is known for harmonic maps.\footnote{This appendix is a revised version of Feehan and Maridakis \cite[Appendix A]{Feehan_Maridakis_Lojasiewicz-Simon_harmonic_maps_v5}.}

We begin by recalling the \emph{second variation of the energy} for the harmonic energy function $\sE$ discussed in Section \ref{subsubsec:Harmonic_map_energy_function_maps_Riemann_surface_into_closed_Riemannian_manifold}. For a smooth two-parameter variation $f_{t,s}:M\to N$ of a map $f:M\to N$ with $f_{0,0}=f$ and $\partial f_{t,s}/\partial t|_{(0,0)} = v$ and $\partial^2 f_{t,s}/\partial s|_{(0,0)} = w$, the  \emph{Hessian} of $\sE$ at $f$ is defined by
\[
\sE''(f)(v,w)
:=
\left.\frac{\partial^2\sE(f_{t,s})}{\partial t\partial s}\right|_{(0,0)},
\]
where $\sE$ is as in \eqref{eq:Harmonic_map_energy_functional}. One has
\[
\sE''(f)(v,w)
=
(J_f(v),w)_{L^2(M,g)},
\]
where
\[
J_f(v) := \Delta v - \tr R^N(df,v)df
\]
is called the \emph{Jacobi operator}, a self-adjoint linear elliptic differential operator. Here, $\Delta$ denotes the Laplacian induced on $f^{-1}TN$ and the sign conventions for $\Delta$ and the curvature $R^N$ are those of Eells and Lemaire \cite{Eells_Lemaire_1983}.

Let $v$ be a \emph{vector field along $f$}, that is, a smooth section of $f^{-1}TN$, where $f:M\to N$ is a smooth map. Then $v$ is called a \emph{Jacobi field} (for the energy) if $J_f(v) = 0$. The space of Jacobi fields $\Ker J_f$ is finite-dimensional and its dimension is called the ($\sE$)-\emph{nullity} of $f$.

\begin{defn}[Integrability of a Jacobi field along a harmonic map]
\label{defn:Lemaire_Wood_2002_1-2}
(See Lemaire and Wood \cite[Definition 1.2]{Lemaire_Wood_2002}.)
A Jacobi field $v$ along a harmonic map $f_0:M\to N$ is said to be \emph{integrable} if there is a smooth family of harmonic maps, $f_t:M\to N$ for $t\in (-\eps,\eps)$, such that $f_t|_{t=0} = f_0$ and $v = \partial f_t/\partial t|_{t=0}$.
\end{defn}

The following result is stated by Kwon in her Ph.D. thesis \cite{KwonThesis} (directed by Simon); it can be deduced from Theorem \ref{mainthm:Integrability_implies_Morse-Bott_property} by applying, for example, methods of Feehan and Maridakis \cite{Feehan_Maridakis_Lojasiewicz-Simon_application_harmonic_maps}.    

\begin{thm}[Integrability of Jacobi fields and manifolds of harmonic maps]
\label{thm:Kwon_4-1_harmonic_map}
(See Kwon \cite[Proposition 4.1]{KwonThesis}.) 
Let $d\geq 2$ be an integer and $\alpha\in(0,1)$ be a constant. Let $(M,g)$ and $(N,h)$ be closed, smooth Riemannian manifolds, with $M$ of dimension $d$, and assume that there is a smooth isometric embedding $N\subset\RR^n$ for some integer $n$. If $f_0 \in C^\infty(M;N)$ is a harmonic map, so $\sE'(f_0)=0$, then the following hold:
\begin{enumerate}
\item\label{item:harmonic_maps_critical_submanifold}
If there is a constant $\delta=\delta(f_0,g,h,n,\alpha) \in (0,1]$ such that
\begin{equation}
\label{eq:harmonic_maps_critical_set}  
U_{f_0,\delta} := \left\{f \in C^{2,\alpha}(M;N): \|f-f_0\|_{C^{2,\alpha}(M;\RR^n)} < \delta \text{ and } \sE'(f)=0\right\}
\end{equation}  
is an open smooth manifold with tangent space $T_{f_0}U_{f_0,\delta} = \Ker\sE''(f_0)$ at $f_0$, then every Jacobi vector field in $\Ker\sE''(f_0)$ is integrable.
\item\label{item:harmonic_maps_unique_integrability_jacobi_fields}
If $(N,h)$ is real analytic, the isometric embedding $N\subset\RR^n$ is real analytic, and every Jacobi vector field in $\Ker\sE''(f_0)$ is integrable, then there is a constant $\delta=\delta(f_0,g,h,n,\alpha) \in (0,1]$ such that the set $U_{f_0,\delta}$ in \eqref{eq:harmonic_maps_critical_set} is an open smooth manifold with tangent space $T_{f_0}U_{f_0,\delta} = \Ker\sE''(f_0)$ at $f_0$.
\end{enumerate}
\end{thm}

It follows that for real-analytic target manifolds, all Jacobi fields along all harmonic maps are integrable if and only if the space of harmonic maps is a manifold whose tangent bundle is given by the Jacobi fields \cite[p. 470]{Lemaire_Wood_2002}. By Definition \ref{defn:Morse-Bott_function}, the conclusion of Theorem \ref{thm:Kwon_4-1_harmonic_map} \eqref{item:harmonic_maps_unique_integrability_jacobi_fields} is equivalent to the assertion that all Jacobi fields along $f_0$ are integrable if and only if the harmonic map energy function $\sE$ is Morse--Bott at $f_0$.

For a further discussion of integrability and additional references, see Adams and Simon \cite[Section 1]{Adams_Simon_1988}, Kwon \cite[Section 4.1]{KwonThesis}, and Simon \cite[pp. 270--272]{Simon_1985}.

According to \cite[Theorem 1.3]{Lemaire_Wood_2002}, any Jacobi field along a harmonic map from $S^2$ to $\CC\PP^2$ is integrable, where the two-sphere $S^2$ has its unique conformal structure and the complex projective space $\CC\PP^2$ has its standard Fubini-Study metric of holomorphic sectional curvature $1$; see Crawford \cite{Crawford_1997} for additional results.

From the list of examples provided by Lemaire and Wood \cite[p. 471]{Lemaire_Wood_2002}, there are few other examples of families of harmonic maps that are guaranteed to be integrable, with the list including harmonic maps from $S^2$ to $S^2$ but excluding harmonic maps from $S^2$ to $S^3$ or $S^4$ \cite{Lemaire_Wood_2009}.

Fern{\'a}ndez \cite{Fernandez_2012} has proved that the space $\Harm_d(S^2,S^{2n})$ of degree-$d$ harmonic maps from $S^2$ into $S^{2n}$ has dimension $2d+n^2$. However, thus far, integrability for such maps is known only when $n=1$. Bolton and Fernandez \cite{Bolton_Fernandez_2011} provide a nice survey of what is known regarding regularity of $\Harm_d(S^2,S^{2n})$: they recall that  $\Harm_d(S^2,S^2)$ is known to be a smooth manifold, outline a proof that $\Harm_d(S^2,S^6)$ is also a smooth manifold, and survey results on the structure of $\Harm_d(S^2,S^4)$ and why that space is not a smooth manifold.

\subsection{Integrability and  Morse--Bott conditions for the area function}
\label{subsec:Integrability_and_Morse-Bott_area_functional}
Suppose that $m,n \geq 1$ and $r\geq 2$ are integers and $M$ is a closed, connected, oriented, smooth manifold of dimension $m$. We let $C^{r,\alpha}(M;\RR^n)$ denote the Banach space of $C^{r,\alpha}$ maps from $M$ into $\RR^n$, where $\alpha\in[0,1]$, and let $\Imm^{r,\alpha}(M;\RR^n) \subset C^{r,\alpha}(M;\RR^n)$ denote the open subset of $C^{r,\alpha}$ immersions, and let $\Emb^{r,\alpha}(M;\RR^n) \subset C^{r,\alpha}(M;\RR^n)$ denote the open subset of $C^{r,\alpha}$ embeddings. If $\Phi \in C^{r,\alpha}(M;\RR^d)$ is an embedding, then $g_\Phi := \Phi^*g$ is a Riemannian metric on $M$ while if $\Phi$ is an immersion, then $g_\Phi$ may be singular. We now consider the \emph{area} or \emph{volume function},
\[
\Imm^{r,\alpha}(M;\RR^n) \ni \Phi \mapsto \sE(\Phi) := \Vol(M, g_\Phi) \in [0,\infty).
\]
Then $\Phi(M)$ is called a \emph{critical immersed submanifold} or (as customary) a \emph{minimal immersed submanifold} if $\sE'(\Phi)=0$, where
\[
\sE'(\Phi)\eta =  \left.\frac{d}{dt}\Vol(M, g_{\Phi+t\Phi_\eta})\right|_{t=0}
\]
for all vector fields $\eta \in C^{r,\alpha}(TM)$. One can show that
\[
\sE'(\Phi)\eta = \left(\eta,\sE'(\Phi)\right)_{L^2(M)},
\]
with an explicit expression for the gradient $\sE'(\Phi)$ provided by the \emph{first variation formula} --- see Calegari \cite[Proposition 2.1]{Calegari_2014}, Colding and Minicozzi \cite[pp. 154--155]{Colding_Minicozzi_2006}, Dajczer and Tojeiro \cite[Proposition 3.1]{Dajczer_Tojeiro_submanifold_theory}, Lawson \cite{Lawson_lectures_minimal_submanifolds}, Schoen \cite[Section 2.1]{Schoen_2014}, or Xin \cite[Theorem 1.2.2 and Remark 1.2.5]{Xin_minimal_submanifolds_related_topics}.

An explicit expression for the Hessian $\sE''(\Phi)$ at a critical point $\Phi$ is provided by the \emph{second variation formula} --- see Calegari \cite[Proposition 3.1]{Calegari_2014}, Colding and Minicozzi \cite[pp. 154--155]{Colding_Minicozzi_2006}, Lawson \cite{Lawson_lectures_minimal_submanifolds}, Schoen \cite[Section 2.1]{Schoen_2014}, and Xin \cite[Theorem 6.1.1]{Xin_minimal_submanifolds_related_topics}.

More generally, if we replace $\RR^n$ in the preceding discussion by a connected, smooth manifold $N$ without boundary, then it is known that $C^r(M;N)$ is a smooth Banach manifold --- see Abraham \cite{Abraham_1963}, Bruveris \cite{Bruveris_2016}, Eichhorn \cite{Eichhorn_1993}, Eliasson \cite{Eliasson_1967}, or Wittmann \cite{Wittmann_2019}. (It is highly likely that published proofs of this result extend to show that $C^{r,\alpha}(M;N)$ is a Banach manifold when $\alpha\in[0,1]$ and, furthermore, that $W^{k,p}(M;N)$ is Banach manifold for $k\in\NN$ and $p\in[1,\infty)$, at least for $k\geq 2$ and $kp>m$, taking note of the Sobolev Embedding Theorem \cite[Theorem 4.12]{AdamsFournier}.) We refer to Michor and Mumford \cite[Section 2.1]{Michor_Mumford_2005} for their analysis of these spaces in the $C^\infty$ category.

Recall from Dajczer and Tojeiro \cite[Corollary 3.7]{Dajczer_Tojeiro_submanifold_theory} or Xin \cite[Corollary 1.3.4]{Xin_minimal_submanifolds_related_topics} that there exists no minimal isometric immersion $\Phi: M^m \to \RR^n$ of a compact Riemannian manifold without boundary. Hence, we restrict our attention to cases where $M$ and $N$ are closed or $M$ and $N$ are complete or $M$ is compact with boundary and $N$ is complete.

One could again derive an analogue of Theorem \ref{thm:Kwon_4-1_harmonic_map} (giving the relationship between integrability of Jacobi vector fields and the Morse--Bott property of an immersed minimal submanifold) from Theorem \ref{mainthm:Integrability_implies_Morse-Bott_property} or derive an analogue of Theorem \ref{thm:Kwon_4-1_harmonic_map} for immersed minimal submanifolds from prior, more general results of Simon \cite{Simon_1983, Simon_1985} and Adams and Simon \cite{Adams_Simon_1988}.

Adams and Simon list examples of minimal submanifolds whose Jacobi vector fields are all integrable as well as examples that have nontrivial Jacobi vector fields that are not integrable \cite[pp. 249--252]{Adams_Simon_1988}. See also Allard and Almgren \cite[Section 6]{Allard_Almgren_1981}, Nagura \cite{Nagura_1981, Nagura_1982a, Nagura_1982b}, Simons \cite{Simons_1968}, and Smith \cite{Smith_1975pams, Smith_1975ajm} (via Remark \ref{rmk:Relationship_harmonic_maps_minimal_surfaces}) for related examples.

White \cite{White_1991, White_2017} has shown that for generic $C^r$ Riemannian metrics on a manifold $N$, there are no closed, immersed, minimal submanifolds $M\subset N$ with nontrivial Jacobi fields; the case of geodesics, including immersed geodesics, was proved earlier by Abraham \cite{Abraham_1970}.

\begin{rmk}[On the relationship between harmonic maps and minimal surfaces]
\label{rmk:Relationship_harmonic_maps_minimal_surfaces}
It is useful to recall the relationship between harmonic maps $f$ from a closed, smooth Riemann surface $(\Sigma,g)$ into a closed Riemannian manifold $(N,h)$, and immersed minimal surfaces in $(N,h)$, since that relationship enriches our supply of examples. Chern and Goldberg \cite[Proposition 5.1]{Chern_Goldberg_1975} show that if $\Sigma=S^2$ and $f$ is a harmonic immersion, then $f$ is a minimal immersion. More generally, though they assume $(N,h)=\RR^3$ with its standard metric and allow $(\Sigma,g)$ to be a Riemann surface with boundary, Dierkes, Hildebrandt, and Sauvigny prove \cite[Theorem 2.6.1]{Dierkes_Hildebrandt_Sauvigny_minimal_surfaces} that a conformal map $f$ is minimal if and only if it is harmonic. According to their \cite[Definition 2.6.1]{Dierkes_Hildebrandt_Sauvigny_minimal_surfaces}, they may replace $\RR^3$ by $\RR^n$ for any $n\geq 2$ and more generally, by any Riemannian manifold $(N,h)$ of dimension $n\geq 2$. Moore \cite[Theorem 4.2.2]{Moore_introduction_global_analysis} proves a similar result, namely that (the image of) a (weakly) conformal harmonic map $f:(\Sigma,g) \to (N,h)$ is a minimal surface. By restricting to $\Sigma=S^2$, Moore \cite[Proposition 4.2.3]{Moore_introduction_global_analysis} recovers the result of Chern and Goldberg: a harmonic two-sphere, $f:(S^2,g_\round) \to (N,h)$, is automatically weakly conformal and hence a parametrized minimal surface. See also \cite[pp. 36, 77, 249--250, and 309--311]{Dierkes_Hildebrandt_Sauvigny_minimal_surfaces} and their discussion of Lichtenstein's Theorem on reparameterizing maps of the disk and \cite[pp. 249--250]{Dierkes_Hildebrandt_Sauvigny_minimal_surfaces} for the relationship between area and energy integrals and the minimization problem.
\end{rmk}

\begin{rmk}[On the interpretation of mean curvature flow as a gradient flow]
\label{rmk:Interpretation_mean_curvature_flow_gradient_flow}
While there is a wealth of references on mean curvature flow, relatively few treat it as gradient flow for the area (volume) function, thus making it less accessible to gradient flow methods pioneered by Simon \cite{Simon_1983, Simon_1985}. For interpretations of mean curvature flow as a gradient system, we refer the reader to Bellettini \cite[Remark 2.8 and Section 2.3]{Bellettini_lecture_notes_on_mean_curvature_flow}, Colding, Minicozzi, and Pedersen \cite[Section 1]{Colding_Minicozzi_Pedersen_2015bams}, Ilmanen \cite{Ilmanen_1994}, Mantegazza \cite[p. 7, second paragraph]{Mantegazza_2011_lectures_mcf}, Ritor\'{e} and Sinestrari \cite[Equation (4.3)]{Ritore_Sinestrari_mean_curvature_flow_isoperimetric_inequalities}, Smoczyk \cite{Smoczyk_2012}, and Zaal \cite{Zaal_2015}. Shi and Vorotnikov \cite{Shi_Vorotnikov_2019} provide a useful recent reference, with a view to applications. 
For introductions to mean curvature flow, we refer to Ecker \cite{Ecker_2004regularity_theory_mcf}, Mantegazza \cite{Mantegazza_2011_lectures_mcf}, Ritor\'{e} and Sinestrari \cite{Ritore_Sinestrari_mean_curvature_flow_isoperimetric_inequalities}.

For applications of the DeTurck trick \cite{DeTurck_1983} to convert mean curvature flow to a nonlinear parabolic partial differential equation and establish short-time existence, we refer to Andrews and Baker \cite{Andrews_Baker_2010}, Baker \cite{BakerThesis}, and Leng, Zhao, and Zhao \cite{Leng_Zhao_Zhao_2014}.

As in the case of Ricci flow, the interpretation of mean curvature flow as a gradient system can lead to the introduction of a time-varying family of Hilbert spaces --- a family of $L^2$ spaces defined by a measure that depends on the time-varying family of immersions \cite[Section 1.2, page 7]{Mantegazza_2011_lectures_mcf}.
\end{rmk}

%
%

\bibliography{/Users/pfeehan/Dropbox/LATEX/Bibinputs/master,/Users/pfeehan/Dropbox/LATEX/Bibinputs/mfpde}

\def\cprime{$'$} \def\cprime{$'$}
  \def\ocirc#1{\ifmmode\setbox0=\hbox{$#1$}\dimen0=\ht0 \advance\dimen0
  by1pt\rlap{\hbox to\wd0{\hss\raise\dimen0
  \hbox{\hskip.2em$\scriptscriptstyle\circ$}\hss}}#1\else {\accent"17 #1}\fi}
  \def\cprime{$'$} \def\cprime{$'$} \def\cprime{$'$} \def\cprime{$'$}
  \def\polhk#1{\setbox0=\hbox{#1}{\ooalign{\hidewidth
  \lower1.5ex\hbox{`}\hidewidth\crcr\unhbox0}}} \def\cprime{$'$}
  \def\cprime{$'$} \def\cprime{$'$}
  \def\lfhook#1{\setbox0=\hbox{#1}{\ooalign{\hidewidth
  \lower1.5ex\hbox{'}\hidewidth\crcr\unhbox0}}} \def\cprime{$'$}
  \def\cprime{$'$} \def\cprime{$'$} \def\cprime{$'$} \def\cprime{$'$}
\providecommand{\bysame}{\leavevmode\hbox to3em{\hrulefill}\thinspace}
\providecommand{\MR}{\relax\ifhmode\unskip\space\fi MR }
\providecommand{\MRhref}[2]{%
  \href{http://www.ams.org/mathscinet-getitem?mr=#1}{#2}
}
\providecommand{\href}[2]{#2}
\begin{thebibliography}{100}

\bibitem{Abraham_1963}
Ralph~H. Abraham, \emph{Lectures of {S}male on differential topology},
  unpublished manuscript, 1963.

\bibitem{Abraham_1970}
Ralph~H. Abraham, \emph{Bumpy metrics}, Global {A}nalysis ({P}roc. {S}ympos.
  {P}ure {M}ath., {V}ol. {XIV}, {B}erkeley, {C}alif., 1968), Amer. Math. Soc.,
  Providence, R.I., 1970, pp.~1--3. \MR{0271994 (42 \#6875)}

\bibitem{AMR}
Ralph~H. Abraham, Jerrold~E. Marsden, and Tudor~S. Ratiu, \emph{Manifolds,
  tensor analysis, and applications}, second ed., Springer, New York, 1988.
  \MR{960687 (89f:58001)}

\bibitem{Adams_Simon_1988}
David Adams and Leon Simon, \emph{Rates of asymptotic convergence near isolated
  singularities of geometric extrema}, Indiana Univ. Math. J. \textbf{37}
  (1988), 225--254. \MR{963501 (90b:58046)}

\bibitem{AdamsFournier}
Robert~A. Adams and John J.~F. Fournier, \emph{Sobolev spaces}, second ed.,
  Elsevier/Academic Press, Amsterdam, 2003. \MR{2424078 (2009e:46025)}

\bibitem{Allard_Almgren_1981}
William~K. Allard and Frederick~J. Almgren, Jr., \emph{On the radial behavior
  of minimal surfaces and the uniqueness of their tangent cones}, Ann. of Math.
  (2) \textbf{113} (1981), no.~2, 215--265. \MR{607893}

\bibitem{Andrews_Baker_2010}
Ben Andrews and Charles Baker, \emph{Mean curvature flow of pinched
  submanifolds to spheres}, J. Differential Geom. \textbf{85} (2010), no.~3,
  357--395. \MR{2739807 (2012a:53122)}

\bibitem{Ang_Tuan_1973}
Dang~D. Ang and Vu~T. Tuan, \emph{An elementary proof of the {M}orse-{P}alais
  lemma for {B}anach spaces}, Proc. Amer. Math. Soc. \textbf{39} (1973),
  642--644. \MR{0319225}

\bibitem{Antoine_1979}
Philippe Antoine, \emph{Lemme de {M}orse et calcul des variations}, Bull. Soc.
  Math. France M\'em. (1979), no.~60, 25--29, Analyse non convexe (Proc.
  Colloq., Pau, 1977). \MR{562253}

\bibitem{Arnold_Gusein-Zade_Varchenko_singularities_differentiable_maps_v1}
Vladimir~I. Arnold, Sabir~M. Gusein-Zade, and Alexander~N. Varchenko,
  \emph{Singularities of differentiable maps. {V}olume 1}, Modern Birkh\"auser
  Classics, Birkh\"auser/Springer, New York, 2012, Classification of critical
  points, caustics and wave fronts, Translated from the Russian by Ian Porteous
  based on a previous translation by Mark Reynolds, Reprint of the 1985
  edition. \MR{2896292}

\bibitem{Austin_Braam_1995}
David~M. Austin and Peter~J. Braam, \emph{Morse--{B}ott theory and equivariant
  cohomology}, The {F}loer memorial volume, Progr. Math., vol. 133,
  Birkh\"auser, Basel, 1995, pp.~123--183. \MR{1362827 (96i:57037)}

\bibitem{BakerThesis}
Charles Baker, \emph{The mean curvature flow of submanifolds of high
  codimension}, {Ph.D}. thesis, Australian National University, Canberra,
  November 2010, arXiv:1104.4409.

\bibitem{Banyaga_Hurtubise_2004}
Augustin Banyaga and David~E. Hurtubise, \emph{A proof of the {M}orse--{B}ott
  lemma}, Expo. Math. \textbf{22} (2004), no.~4, 365--373. \MR{2075744}

\bibitem{Bellettini_lecture_notes_on_mean_curvature_flow}
Giovanni Bellettini, \emph{Lecture notes on mean curvature flow, barriers and
  singular perturbations}, Appunti. Scuola Normale Superiore di Pisa (Nuova
  Serie) [Lecture Notes. Scuola Normale Superiore di Pisa (New Series)],
  vol.~12, Edizioni della Normale, Pisa, 2013. \MR{3155251}

\bibitem{Berger_1977}
Marcel Berger, \emph{Nonlinearity and functional analysis}, Academic Press, New
  York, 1977. \MR{0488101 (58 \#7671)}

\bibitem{BierstoneMilman}
Edward Bierstone and Pierre~D. Milman, \emph{Semianalytic and subanalytic
  sets}, Inst. Hautes \'Etudes Sci. Publ. Math. (1988), no.~67, 5--42.
  \MR{972342 (89k:32011)}

\bibitem{Bierstone_Milman_1997}
Edward Bierstone and Pierre~D. Milman, \emph{Canonical desingularization in
  characteristic zero by blowing up the maximum strata of a local invariant},
  Invent. Math. \textbf{128} (1997), no.~2, 207--302. \MR{1440306}

\bibitem{Bolton_Fernandez_2011}
John Bolton and Luis Fern\'andez, \emph{On the regularity of the space of
  harmonic 2-spheres in the 4-sphere}, Harmonic maps and differential geometry,
  Contemp. Math., vol. 542, Amer. Math. Soc., Providence, RI, 2011,
  pp.~187--194. \MR{2796649}

\bibitem{Bott_1954}
Raoul~H. Bott, \emph{Nondegenerate critical manifolds}, Ann. of Math. (2)
  \textbf{60} (1954), 248--261. \MR{0064399 (16,276f)}

\bibitem{Bott_1959}
Raoul~H. Bott, \emph{The stable homotopy of the classical groups}, Ann. of
  Math. (2) \textbf{70} (1959), 313--337. \MR{0110104}

\bibitem{Brezis}
H.~Br{\'e}zis, \emph{Functional analysis, {S}obolev spaces and partial
  differential equations}, Universitext, Springer, New York, 2011. \MR{2759829
  (2012a:35002)}

\bibitem{Brocker_differentiable_germs_catastrophes}
Theodor Br{\"o}cker, \emph{Differentiable germs and catastrophes}, Cambridge
  University Press, Cambridge-New York-Melbourne, 1975, Translated from the
  German, last chapter and bibliography by L. Lander, London Mathematical
  Society Lecture Note Series, No. 17. \MR{0494220}

\bibitem{Bruveris_2016}
Martins Bruveris, \emph{Notes on {R}iemannian geometry of manifolds of maps},
  unpublished manuscript, 2016.

\bibitem{Calegari_2014}
Danny Calegari, \emph{Topics in differential geometry minimal submanifolds},
  unpublished lecture notes.

\bibitem{Cambini_1973}
Alberto Cambini, \emph{Sul lemma di {M}. {M}orse}, Boll. Un. Mat. Ital. (4)
  \textbf{7} (1973), 87--93. \MR{0315738}

\bibitem{Caps_evolution_equations_scales_banach_spaces}
Oliver Caps, \emph{Evolution equations in scales of {B}anach spaces},
  Teubner-Texte zur Mathematik [Teubner Texts in Mathematics], vol. 140, B. G.
  Teubner, Stuttgart, 2002. \MR{2352809 (2008m:34129)}

\bibitem{Carlotto_Chodosh_Rubinstein_2015}
Alessandro Carlotto, Otis Chodosh, and Yanir~A. Rubinstein, \emph{Slowly
  converging {Y}amabe flows}, Geom. Topol. \textbf{19} (2015), no.~3,
  1523--1568, arXiv:1401.3738. \MR{3352243}

\bibitem{Chern_Goldberg_1975}
Shiing~Shen Chern and Samuel~I. Goldberg, \emph{On the volume decreasing
  property of a class of real harmonic mappings}, Amer. J. Math. \textbf{97}
  (1975), 133--147. \MR{0367860}

\bibitem{Chill_2003}
Ralph Chill, \emph{On the {{\L}}ojasiewicz--{S}imon gradient inequality}, J.
  Funct. Anal. \textbf{201} (2003), 572--601. \MR{1986700 (2005c:26019)}

\bibitem{Colding_Minicozzi_2006}
Tobias~H. Colding and William~P. Minicozzi, II, \emph{Minimal submanifolds},
  Bull. London Math. Soc. \textbf{38} (2006), no.~3, 353--395. \MR{2239032}

\bibitem{Colding_Minicozzi_2014sdg}
Tobias~H. Colding and William~P. Minicozzi, II, \emph{{\L}ojasiewicz
  inequalities and applications}, Surveys in Differential Geometry \textbf{XIX}
  (2014), 63--82, arXiv:1402.5087.

\bibitem{Colding_Minicozzi_2015}
Tobias~H. Colding and William~P. Minicozzi, II, \emph{Uniqueness of blowups and
  {{\L}}ojasiewicz inequalities}, Ann. of Math. (2) \textbf{182} (2015), no.~1,
  221--285. \MR{3374960}

\bibitem{Colding_Minicozzi_Pedersen_2015bams}
Tobias~H. Colding, William~P. Minicozzi, II, and E.~K. Pedersen, \emph{Mean
  curvature flow}, Bull. Amer. Math. Soc. (N.S.) \textbf{52} (2015), no.~2,
  297--333. \MR{3312634}

\bibitem{Crawford_1997}
Thomas~A. Crawford, \emph{The space of harmonic maps from the {$2$}-sphere to
  the complex projective plane}, Canad. Math. Bull. \textbf{40} (1997), no.~3,
  285--295. \MR{1464837}

\bibitem{Dajczer_Tojeiro_submanifold_theory}
Marcos Dajczer and Ruy Tojeiro, \emph{Submanifold theory}, Universitext,
  Springer, New York, 2019. \MR{3969932}

\bibitem{Deimling_1985}
Klaus Deimling, \emph{Nonlinear functional analysis}, Springer--Verlag, Berlin,
  1985. \MR{787404 (86j:47001)}

\bibitem{DeTurck_1983}
Dennis~M. DeTurck, \emph{Deforming metrics in the direction of their {R}icci
  tensors}, J. Differential Geom. \textbf{18} (1983), 157--162. \MR{697987
  (85j:53050)}

\bibitem{Dierkes_Hildebrandt_Sauvigny_minimal_surfaces}
Ulrich Dierkes, Stefan Hildebrandt, and Friedrich Sauvigny, \emph{Minimal
  surfaces}, second ed., Grundlehren der Mathematischen Wissenschaften
  [Fundamental Principles of Mathematical Sciences], vol. 339, Springer,
  Heidelberg, 2010, With assistance and contributions by A. K\"uster and R.
  Jakob. \MR{2566897}

\bibitem{DK}
Simon~K. Donaldson and Peter~B. Kronheimer, \emph{The geometry of
  four-manifolds}, Oxford University Press, New York, 1990.

\bibitem{Ecker_2004regularity_theory_mcf}
Klaus Ecker, \emph{Regularity theory for mean curvature flow}, Progress in
  Nonlinear Differential Equations and their Applications, 57, Birkh\"auser
  Boston, Inc., Boston, MA, 2004. \MR{2024995 (2005b:53108)}

\bibitem{Eells_Lemaire_1983}
James Eells and Luc Lemaire, \emph{Selected topics in harmonic maps}, CBMS
  Regional Conference Series in Mathematics, vol.~50, American Mathematical
  Society, Providence, RI, 1983. \MR{703510 (85g:58030)}

\bibitem{Eichhorn_1993}
J{\"u}rgen Eichhorn, \emph{The manifold structure of maps between open
  manifolds}, Ann. Global Anal. Geom. \textbf{11} (1993), 253--300. \MR{1237457
  (95b:58024)}

\bibitem{Eliasson_1967}
Halld\'{o}r~I. El\u{\i}asson, \emph{Geometry of manifolds of maps}, J.
  Differential Geometry \textbf{1} (1967), 169--194. \MR{0226681}

\bibitem{MathStackExchange_Dual_direct_sum_Banach_spaces_is_direct_sum_dual_spaces}
Math~Stack Exchange, \emph{The dual of the direct sum}, Internet, October 20,
  2012,
  \url{https://math.stackexchange.com/questions/217596/the-dual-of-the-direct-sum}.

\bibitem{Feehan_yang_mills_gradient_flow_v4}
Paul M.~N. Feehan, \emph{Global existence and convergence of solutions to
  gradient systems and applications to {Y}ang--{M}ills gradient flow},
  arXiv:1409.1525v4, xx+475 pages.

\bibitem{Feehan_jacobi_vectors_analytic_potential_functions}
Paul M.~N. Feehan, \emph{Integrability of {J}acobi vectors for analytic
  functions on {B}anach manifolds}, preprint.

\bibitem{Feehan_nonlinear_uhlenbeck_estimate}
Paul M.~N. Feehan, \emph{Morse theory for the {Y}ang--{M}ills energy function
  near flat connections}, arXiv:1906.03954.

\bibitem{Feehan_lojasiewicz_inequality_ground_state}
Paul M.~N. Feehan, \emph{Optimal {{\L}}ojasiewicz--{S}imon inequalities and
  {M}orse--{B}ott {Y}ang--{M}ills energy functions}, arXiv:1706.09349.

\bibitem{Feehan_lojasiewicz_inequality_all_dimensions}
Paul M.~N. Feehan, \emph{Resolution of singularities and geometric proofs of
  the {{\L}}ojasiewicz inequalities}, Geom. Topol. \textbf{23} (2019), no.~7,
  3273--3313, arXiv:1708.09775. \MR{4046966}

\bibitem{Feehan_Maridakis_Lojasiewicz-Simon_Banach}
Paul M.~N. Feehan and Manousos Maridakis, \emph{{{\L}}ojasiewicz--{S}imon
  gradient inequalities for analytic and {M}orse--{B}ott functions on {B}anach
  spaces}, J. Reine Angew. Math., in press, arXiv:1510.03817v8.

\bibitem{Feehan_Maridakis_Lojasiewicz-Simon_harmonic_maps_v5}
Paul M.~N. Feehan and Manousos Maridakis, \emph{{{\L}}ojasiewicz--{S}imon
  gradient inequalities for analytic and {M}orse--{B}ott functions on {B}anach
  spaces and applications to harmonic maps}, arXiv:1510.03817v5.

\bibitem{Feehan_Maridakis_Lojasiewicz-Simon_application_harmonic_maps}
Paul M.~N. Feehan and Manousos Maridakis, \emph{{{\L}}ojasiewicz--{S}imon
  gradient inequalities for harmonic maps}, arXiv:1903.01953.

\bibitem{Fernandez_2012}
Luis Fern{\'a}ndez, \emph{The dimension and structure of the space of harmonic
  2-spheres in the {$m$}-sphere}, Ann. of Math. (2) \textbf{175} (2012), no.~3,
  1093--1125. \MR{2912703}

\bibitem{Goldman_Millson_1988}
William~M. Goldman and John~J. Millson, \emph{The deformation theory of
  representations of fundamental groups of compact {K}\"ahler manifolds}, Inst.
  Hautes \'Etudes Sci. Publ. Math. (1988), no.~67, 43--96. \MR{972343}

\bibitem{Gromoll_Meyer_1969top}
Detlef Gromoll and Wolfgang~T. Meyer, \emph{On differentiable functions with
  isolated critical points}, Topology \textbf{8} (1969), 361--369. \MR{0246329}

\bibitem{Guillemin_Sternberg_1977}
Victor Guillemin and Shlomo Sternberg, \emph{Geometric asymptotics}, American
  Mathematical Society, Providence, R.I., 1977, Mathematical Surveys, No. 14.
  \MR{0516965}

\bibitem{Guo_Wu_bifurcation_theory_functional_differential_equations}
Shangjiang Guo and Jianhong Wu, \emph{Bifurcation theory of functional
  differential equations}, Applied Mathematical Sciences, vol. 184, Springer,
  New York, 2013. \MR{3098815}

\bibitem{Haraux_2005}
Alain Haraux, \emph{Positively homogeneous functions and the {{\L}}ojasiewicz
  gradient inequality}, Ann. Polon. Math. \textbf{87} (2005), 165--174.
  \MR{2208543}

\bibitem{Haraux_Jendoubi_2011}
Alain Haraux and M.~A. Jendoubi, \emph{The {{\L}}ojasiewicz gradient inequality
  in the infinite-dimensional {H}ilbert space framework}, J. Funct. Anal.
  \textbf{260} (2011), 2826--2842. \MR{2772353 (2012c:47168)}

\bibitem{Helein_harmonic_maps}
Fr{\'e}d{\'e}ric H{\'e}lein, \emph{Harmonic maps, conservation laws and moving
  frames}, second ed., Cambridge Tracts in Mathematics, vol. 150, Cambridge
  University Press, 2002. \MR{1913803 (2003g:58024)}

\bibitem{Henry_geometric_theory_semilinear_parabolic_equations}
Daniel Henry, \emph{Geometric theory of semilinear parabolic equations},
  Lecture Notes in Mathematics, vol. 840, Springer--Verlag, Berlin-New York,
  1981. \MR{610244 (83j:35084)}

\bibitem{Hironaka_1964-I-II}
Heisuke Hironaka, \emph{Resolution of singularities of an algebraic variety
  over a field of characteristic zero. {I}, {II}}, Ann. of Math. (2) {\bf 79}
  (1964), 109--203; ibid. (2) \textbf{79} (1964), 205--326. \MR{0199184}

\bibitem{Hofer_1984}
Helmut H.~W. Hofer, \emph{A note on the topological degree at a critical point
  of mountainpass-type}, Proc. Amer. Math. Soc. \textbf{90} (1984), no.~2,
  309--315. \MR{727256}

\bibitem{Hofer_1986}
Helmut H.~W. Hofer, \emph{The topological degree at a critical point of
  mountain-pass type}, Nonlinear functional analysis and its applications,
  {P}art 1 ({B}erkeley, {C}alif., 1983), Proc. Sympos. Pure Math., vol.~45,
  Amer. Math. Soc., Providence, RI, 1986, pp.~501--509. \MR{843584}

\bibitem{Hormander_v3}
Lars H{\"o}rmander, \emph{The analysis of linear partial differential
  operators, {III}. {P}seudo-differential operators}, Springer, Berlin, 2007.
  \MR{2304165 (2007k:35006)}

\bibitem{Horn_Johnson_topics_matrix_analysis_1994}
Roger~A. Horn and Charles~R. Johnson, \emph{Topics in matrix analysis},
  Cambridge University Press, Cambridge, 1994, Corrected reprint of the 1991
  original. \MR{1288752 (95c:15001)}

\bibitem{Huang_2006}
Sen-Zhong Huang, \emph{Gradient inequalities}, Mathematical Surveys and
  Monographs, vol. 126, American Mathematical Society, Providence, RI, 2006.
  \MR{2226672 (2007b:35035)}

\bibitem{Ilmanen_1994}
Tom Ilmanen, \emph{Elliptic regularization and partial regularity for motion by
  mean curvature}, Mem. Amer. Math. Soc. \textbf{108} (1994), no.~520.
  \MR{1196160 (95d:49060)}

\bibitem{Jost_riemannian_geometry_geometric_analysis}
J{\"u}rgen Jost, \emph{Riemannian geometry and geometric analysis}, sixth ed.,
  Universitext, Springer, Heidelberg, 2011. \MR{2829653}

\bibitem{Krantz_Parks_primer_real_analytic_functions}
Steven~G. Krantz and Harold~R. Parks, \emph{A primer of real analytic
  functions}, second ed., Birkh\"auser Advanced Texts: Basler Lehrb\"ucher.
  [Birkh\"auser Advanced Texts: Basel Textbooks], Birkh\"auser Boston, Inc.,
  Boston, MA, 2002. \MR{1916029}

\bibitem{Kuiper_1972}
Nicolaas~H. Kuiper, \emph{{$C^{1}$}-equivalence of functions near isolated
  critical points},  (1972), 199--218. Ann. of Math. Studies, No. 69.
  \MR{0413161}

\bibitem{Kuo_HH_1974}
Hui~Hsiung Kuo, \emph{The {M}orse-{P}alais lemma on {B}anach spaces}, Bull.
  Amer. Math. Soc. \textbf{80} (1974), 363--365. \MR{0334274}

\bibitem{KwonThesis}
Heaseung Kwon, \emph{Asymptotic convergence of harmonic map heat flow}, {Ph.D}.
  thesis, Stanford University, Palo Alto, CA, 2002. \MR{2703296}

\bibitem{Lang_fundamentals_differential_geometry}
Serge Lang, \emph{Fundamentals of differential geometry}, Graduate Texts in
  Mathematics, vol. 191, Springer--Verlag, New York, 1999. \MR{1666820}

\bibitem{Lang_introduction_differential_topology}
Serge Lang, \emph{Introduction to differentiable manifolds}, second ed.,
  Universitext, Springer-Verlag, New York, 2002. \MR{1931083}

\bibitem{Lawson_lectures_minimal_submanifolds}
H.~Blaine Lawson, Jr., \emph{Lectures on minimal submanifolds. {V}ol. {I}},
  second ed., Mathematics Lecture Series, vol.~9, Publish or Perish, Inc.,
  Wilmington, Del., 1980. \MR{576752}

\bibitem{Lemaire_Wood_2002}
Luc Lemaire and John~C. Wood, \emph{Jacobi fields along harmonic 2-spheres in
  {$\Bbb C{\rm P}^2$} are integrable}, J. London Math. Soc. (2) \textbf{66}
  (2002), no.~2, 468--486. \MR{1920415 (2003k:58022)}

\bibitem{Lemaire_Wood_2009}
Luc Lemaire and John~C. Wood, \emph{Jacobi fields along harmonic 2-spheres in
  3- and 4-spheres are not all integrable}, Tohoku Math. J. (2) \textbf{61}
  (2009), no.~2, 165--204. \MR{2541404 (2010g:53117)}

\bibitem{Leng_Zhao_Zhao_2014}
Yan Leng, Entao Zhao, and Haoran Zhao, \emph{Notes on the extension of the mean
  curvature flow}, Pacific J. Math. \textbf{269} (2014), no.~2, 385--392.
  \MR{3238481}

\bibitem{Lindenstrauss_Tzafriri_1971}
Joram Lindenstrauss and Lior Tzafriri, \emph{On the complemented subspaces
  problem}, Israel J. Math. \textbf{9} (1971), 263--269. \MR{0276734}

\bibitem{Liu_Yang_2010}
Qingyue Liu and Yunyan Yang, \emph{Rigidity of the harmonic map heat flow from
  the sphere to compact {K}\"ahler manifolds}, Ark. Mat. \textbf{48} (2010),
  121--130. \MR{2594589 (2011a:53066)}

\bibitem{Lojasiewicz_1965}
Stanis{\l}aw {\L}ojasiewicz, \emph{Ensembles semi-analytiques},  (1965), Publ.
  Inst. Hautes Etudes Sci., Bures-sur-Yvette. LaTeX version by M. Coste, August
  29, 2006 based on mimeographed course notes by S. {\L}ojasiewicz, available
  at \url{perso.univ-rennes1.fr/michel.coste/Lojasiewicz.pdf}.

\bibitem{Mantegazza_2011_lectures_mcf}
Carlo Mantegazza, \emph{Lecture notes on mean curvature flow}, Progress in
  Mathematics, vol. 290, Birkh\"auser/Springer Basel AG, Basel, 2011.
  \MR{2815949}

\bibitem{Mawhin_Willem_1985}
Jean Mawhin and Michel Willem, \emph{On the generalized {M}orse lemma}, Bull.
  Soc. Math. Belg. S\'er. B \textbf{37} (1985), no.~2, 23--29. \MR{845402}

\bibitem{Michor_Mumford_2005}
Peter~W. Michor and David Mumford, \emph{Vanishing geodesic distance on spaces
  of submanifolds and diffeomorphisms}, Doc. Math. \textbf{10} (2005),
  217--245. \MR{2148075}

\bibitem{Milnor_dynamics_one_complex_variable}
John~W. Milnor, \emph{Dynamics in one complex variable}, third ed., Annals of
  Mathematics Studies, vol. 160, Princeton University Press, Princeton, NJ,
  2006. \MR{2193309}

\bibitem{Moore_introduction_global_analysis}
John~D. Moore, \emph{Introduction to global analysis}, Graduate Studies in
  Mathematics, vol. 187, American Mathematical Society, Providence, RI, 2017,
  Minimal surfaces in Riemannian manifolds. \MR{3729450}

\bibitem{Moser_1965}
J{\"u}rgen~K. Moser, \emph{On the volume elements on a manifold}, Trans. Amer.
  Math. Soc. \textbf{120} (1965), 286--294. \MR{0182927}

\bibitem{Nagura_1981}
Toshinobu Nagura, \emph{On the {J}acobi differential operators associated to
  minimal isometric immersions of symmetric spaces into spheres. {I}}, Osaka
  Math. J. \textbf{18} (1981), no.~1, 115--145. \MR{609982}

\bibitem{Nagura_1982a}
Toshinobu Nagura, \emph{On the {J}acobi differential operators associated to
  minimal isometric immersions of symmetric spaces into spheres. {II}}, Osaka
  Math. J. \textbf{19} (1982), no.~1, 79--124. \MR{656235}

\bibitem{Nagura_1982b}
Toshinobu Nagura, \emph{On the {J}acobi differential operators associated to
  minimal isometric immersions of symmetric spaces into spheres. {III}}, Osaka
  Math. J. \textbf{19} (1982), no.~2, 241--281. \MR{667489}

\bibitem{Nicolaescu_morse_theory}
Liviu~I. Nicolaescu, \emph{An invitation to {M}orse theory}, second ed.,
  Universitext, Springer, New York, 2011. \MR{2883440 (2012i:58007)}

\bibitem{Nirenberg_topics_nonlinear_functional_analysis}
Louis Nirenberg, \emph{Topics in nonlinear functional analysis}, Courant
  Lecture Notes in Mathematics, vol.~6, New York University, Courant Institute
  of Mathematical Sciences, New York; American Mathematical Society,
  Providence, RI, 2001, Chapter 6 by E. Zehnder, Notes by R. A. Artino, Revised
  reprint of the 1974 original. \MR{1850453}

\bibitem{Palais_1963}
Richard~S. Palais, \emph{Morse theory on {H}ilbert manifolds}, Topology
  \textbf{2} (1963), 299--340. \MR{0158410 (28 \#1633)}

\bibitem{Palais_1969}
Richard~S. Palais, \emph{The {M}orse lemma for {B}anach spaces}, Bull. Amer.
  Math. Soc. \textbf{75} (1969), 968--971. \MR{0253378}

\bibitem{Pazy_1983}
Amnon Pazy, \emph{Semigroups of linear operators and applications to partial
  differential equations}, Applied Mathematical Sciences, vol.~44,
  Springer--Verlag, New York, 1983. \MR{710486 (85g:47061)}

\bibitem{PetroThesis}
Matthew Petro, \emph{Moduli spaces of {R}iemann surfaces}, {Ph.D}. thesis, The
  University of Wisconsin--Madison, Madison, WI, 2008, p.~119. \MR{2712204}

\bibitem{Pietsch_operator_ideals}
Albrecht Pietsch, \emph{Operator ideals}, North-Holland Mathematical Library,
  vol.~20, North-Holland Publishing Co., Amsterdam-New York, 1980, Translated
  from German by the author. \MR{582655}

\bibitem{Postnikov_Rudyak_morse_lemma}
Mikhail~M. Postnikov and Yuli~B. Rudyak, \emph{Morse lemma}, Encyclopedia of
  Mathematics, Springer Verlag and the European Mathematical Society,
  \url{http://www.encyclopediaofmath.org/index.php?title=Morse_lemma&oldid=32324}.

\bibitem{Poston_Stewart_catastrophe_theory_applications}
Tim Poston and Ian Stewart, \emph{Catastrophe theory and its applications},
  Dover Publications, Inc., Mineola, NY, 1996, With an appendix by D. R. Olsen,
  S. R. Carter and A. Rockwood, Reprint of the 1978 original. \MR{1426131}

\bibitem{Ritore_Sinestrari_mean_curvature_flow_isoperimetric_inequalities}
Manuel Ritor\'{e} and Carlo Sinestrari, \emph{Mean curvature flow and
  isoperimetric inequalities}, Advanced Courses in Mathematics. CRM Barcelona,
  Birkh\"{a}user Verlag, Basel, 2010, Edited by Vicente Miquel and Joan Porti.
  \MR{2590630}

\bibitem{Rade_1992}
Johan R\r{a}de, \emph{On the {Y}ang--{M}ills heat equation in two and three
  dimensions}, J. Reine Angew. Math. \textbf{431} (1992), 123--163. \MR{1179335
  (94a:58041)}

\bibitem{Rudin}
Walter Rudin, \emph{Functional analysis}, second ed., International Series in
  Pure and Applied Mathematics, McGraw-Hill, Inc., New York, 1991. \MR{1157815}

\bibitem{Schoen_2014}
Richard~M. Schoen, \emph{Topics in differential geometry minimal submanifolds},
  unpublished lecture notes.

\bibitem{Seidel_2003}
Paul Seidel, \emph{A long exact sequence for symplectic {F}loer cohomology},
  Topology \textbf{42} (2003), no.~5, 1003--1063. \MR{1978046}

\bibitem{Sell_You_2002}
George~R. Sell and Yuncheng You, \emph{Dynamics of evolutionary equations},
  Applied Mathematical Sciences, vol. 143, Springer, New York, 2002.
  \MR{1873467 (2003f:37001b)}

\bibitem{Shi_Vorotnikov_2019}
Wenhui Shi and Dmitry Vorotnikov, \emph{Uniformly compressing mean curvature
  flow}, J. Geom. Anal. \textbf{29} (2019), no.~4, 3055--3097,
  arXiv:1711.03864. \MR{4015428}

\bibitem{Simon_1983}
Leon Simon, \emph{Asymptotics for a class of nonlinear evolution equations,
  with applications to geometric problems}, Ann. of Math. (2) \textbf{118}
  (1983), 525--571. \MR{727703 (85b:58121)}

\bibitem{Simon_1985}
Leon Simon, \emph{Isolated singularities of extrema of geometric variational
  problems}, Lecture Notes in Math., vol. 1161, Springer, Berlin, 1985.
  \MR{821971 (87d:58045)}

\bibitem{Simon_1996}
Leon Simon, \emph{Theorems on regularity and singularity of energy minimizing
  maps}, Lectures in Mathematics ETH Z\"urich, Birkh{\"a}user, Basel, 1996.
  \MR{1399562 (98c:58042)}

\bibitem{Simons_1968}
James Simons, \emph{Minimal varieties in riemannian manifolds}, Ann. of Math.
  (2) \textbf{88} (1968), 62--105. \MR{233295}

\bibitem{Simpson_1992}
Carlos~T. Simpson, \emph{Higgs bundles and local systems}, Inst. Hautes
  \'Etudes Sci. Publ. Math. (1992), no.~75, 5--95. \MR{1179076}

\bibitem{Smale_1964}
Stephen~J. Smale, \emph{Morse theory and a non-linear generalization of the
  {D}irichlet problem}, Ann. of Math. (2) \textbf{80} (1964), 382--396.
  \MR{0165539 (29 \#2820)}

\bibitem{Smith_1975ajm}
R.~T. Smith, \emph{Harmonic mappings of spheres}, Amer. J. Math. \textbf{97}
  (1975), 364--385. \MR{391127}

\bibitem{Smith_1975pams}
R.~T. Smith, \emph{The second variation formula for harmonic mappings}, Proc.
  Amer. Math. Soc. \textbf{47} (1975), 229--236. \MR{375386}

\bibitem{Smoczyk_2012}
Knut Smoczyk, \emph{Mean curvature flow in higher codimension: introduction and
  survey}, Global differential geometry, Springer Proc. Math., vol.~17,
  Springer, Heidelberg, 2012, pp.~231--274. \MR{3289845}

\bibitem{Struwe_variational_methods}
Michael Struwe, \emph{Variational methods}, fourth ed., Springer, Berlin, 2008.
  \MR{2431434 (2009g:49002)}

\bibitem{Tanabe_1979}
Hiroki Tanabe, \emph{Equations of evolution}, Monographs and Studies in
  Mathematics, vol.~6, Pitman (Advanced Publishing Program), Boston, MA, 1979,
  Translated from the Japanese by N. Mugibayashi and H. Haneda. \MR{533824
  (82g:47032)}

\bibitem{Tanabe_1997}
Hiroki Tanabe, \emph{Functional analytic methods for partial differential
  equations}, Monographs and Textbooks in Pure and Applied Mathematics, vol.
  204, Marcel Dekker Inc., New York, 1997. \MR{1413304 (97i:35002)}

\bibitem{Topping_1997}
Peter~M. Topping, \emph{Rigidity in the harmonic map heat flow}, J.
  Differential Geom. \textbf{45} (1997), 593--610. \MR{1472890 (99d:58050)}

\bibitem{Tromba_1972}
Anthony~J. Tromba, \emph{The {M}orse lemma on {B}anach spaces}, Proc. Amer.
  Math. Soc. \textbf{34} (1972), 396--402. \MR{0295395}

\bibitem{Tuan_Ang_1979}
Vu~T. Tuan and Dang~D. Ang, \emph{A representation theorem for differentiable
  functions}, Proc. Amer. Math. Soc. \textbf{75} (1979), no.~2, 343--350.
  \MR{532164}

\bibitem{Uhlenbeck_1970}
Karen~K. Uhlenbeck, \emph{Morse theory on {B}anach manifolds}, Bull. Amer.
  Math. Soc. \textbf{76} (1970), 105--106. \MR{0253381}

\bibitem{Weinstein_1969}
Alan~D. Weinstein, \emph{Symplectic structures on {B}anach manifolds}, Bull.
  Amer. Math. Soc. \textbf{75} (1969), 1040--1041. \MR{0245052}

\bibitem{White_1987}
Brian White, \emph{The space of {$m$}-dimensional surfaces that are stationary
  for a parametric elliptic functional}, Indiana Univ. Math. J. \textbf{36}
  (1987), no.~3, 567--602. \MR{905611}

\bibitem{White_1991}
Brian White, \emph{The space of minimal submanifolds for varying {R}iemannian
  metrics}, Indiana Univ. Math. J. \textbf{40} (1991), no.~1, 161--200.
  \MR{1101226}

\bibitem{White_2017}
Brian White, \emph{On the bumpy metrics theorem for minimal submanifolds},
  Amer. J. Math. \textbf{139} (2017), no.~4, 1149--1155. \MR{3689325}

\bibitem{Whittlesey_1965}
Emmet~F. Whittlesey, \emph{Analytic functions in {B}anach spaces}, Proc. Amer.
  Math. Soc. \textbf{16} (1965), 1077--1083. \MR{0184092 (32 \#1566)}

\bibitem{Wittmann_2019}
Johannes Wittmann, \emph{The {B}anach manifold {$C^k(M,N)$}}, Differential
  Geom. Appl. \textbf{63} (2019), 166--185. \MR{3903188}

\bibitem{Xin_minimal_submanifolds_related_topics}
Yuanlong Xin, \emph{Minimal submanifolds and related topics}, Nankai Tracts in
  Mathematics, vol.~8, World Scientific Publishing Co., Inc., River Edge, NJ,
  2003. \MR{2035469}

\bibitem{Yagi_abstract_parabolic_evolution_equations_applications}
Atsushi Yagi, \emph{Abstract parabolic evolution equations and their
  applications}, Springer Monographs in Mathematics, Springer--Verlag, Berlin,
  2010. \MR{2573296 (2011c:35008)}

\bibitem{Zaal_2015}
Martijn~M. Zaal, \emph{The gradient structure of the mean curvature flow}, Adv.
  Calc. Var. \textbf{8} (2015), no.~3, 183--202. \MR{3365740}

\bibitem{Zeidler_nfaa_v1}
Eberhard Zeidler, \emph{Nonlinear functional analysis and its applications,
  {I}. {F}ixed-point theorems}, Springer, New York, 1986. \MR{816732
  (87f:47083)}

\end{thebibliography}
\bibliographystyle{amsplain-nodash}

\end{document}